\newtheorem{thm}{Theorem}[section]
\newtheorem{cor}[thm]{Corollary}
\newtheorem{prop}[thm]{Proposition}
\newtheorem{lem}[thm]{Lemma}
\newtheorem{conj}[thm]{Conjecture}
\newtheorem{ques}[thm]{Question}
\newtheorem*{thm*}{Theorem}
\newtheorem*{cor*}{Corollary}
\newtheorem{ansatz}[thm]{Ansatz}
\theoremstyle{definition}
\newtheorem{defn}[thm]{Definition}
\newtheorem*{defn*}{Definition}
\newtheorem{rmk}[thm]{Remark}
\newtheorem{exam}[thm]{Example}
\newcommand{\co}{\colon\thinspace}
\newcommand{\mb}[1]{\mathbb{#1}}
\newcommand{\EE}{\mathbb{E}}
\newcommand{\sph}{\mathbb{S}}
\newcommand{\ZZ}{\mb Z}
\newcommand{\CC}{\mathbb{C}}
\newcommand{\RR}{\mathbb{R}}
\newcommand{\FF}{\mathbb{F}}
\newcommand{\scrF}{\mathscr{G}_{\FF}}
\newcommand{\modmod}{/\!\!/}
\DeclareMathOperator{\GL}{GL}
\DeclareMathOperator{\BGL}{BGL}
\DeclareMathOperator{\bgl}{bgl}
\DeclareMathOperator{\Ext}{Ext}
\DeclareMathOperator{\Hom}{Hom}
\DeclareMathOperator{\Map}{Map}
\DeclareMathOperator{\Sq}{Sq}
\DeclareMathOperator{\Pic}{Pic}
\DeclareMathOperator{\Br}{Br}
\DeclareMathOperator{\Sp}{\mathcal{S}p}
\DeclareMathOperator{\Spaces}{\mathcal{S}}
\DeclareMathOperator{\Top}{Top}
\DeclareMathOperator{\Cat}{Cat}
\DeclareMathOperator{\LMod}{LMod}
\DeclareMathOperator{\pic}{pic}
\DeclareMathOperator{\br}{br}
\DeclareMathOperator{\gl}{gl}
\DeclareMathOperator{\Gpd}{Gpd}
\DeclareMathOperator{\Mfd}{Mfd}
\newcommand{\KO}{KO}
\newcommand{\ko}{ko}
\newcommand{\ku}{ku}
\newcommand{\KU}{KU}
\DeclareMathOperator*{\fib}{fib}
\DeclareMathOperator*{\cofib}{cofib}
\DeclareMathOperator{\im}{im}
\newcommand{\LR}{L_{\mathbb{R}}}
\DeclareMathOperator{\FHT}{\mathscr{L}^{\mathbb{F}}}
\DeclareMathOperator{\FHTC}{\mathscr{L}^{\mathbb{C}}}
\DeclareMathOperator{\FHTR}{\mathscr{L}^{\mathbb{R}}}
\newcommand{\String}{\mathbf{String}}
\newcommand{\Spin}{\mathbf{Spin}}
\newcommand{\BString}{\mathbf{BString}}
\newcommand{\bstring}{\mathbf{bstring}}
\newcommand{\BSpin}{\mathbf{BSpin}}
\newcommand{\BSO}{\mathbf{BSO}}
\newcommand{\bso}{\mathbf{bso}}
\newcommand{\SpecialO}{\mathbf{SO}}
\newcommand{\BO}{\mathbf{BO}}
\newcommand{\UU}{\mathbf{U}}
\newcommand{\OO}{\mathbf{O}}
\newcommand{\CG}{DSV_{\mb F}}
\newcommand{\CGC}{DSV_{\mb C}}
\newcommand{\CGR}{DSV_{\mb R}}
\newcommand{\GBC}{\Pic_0^3(\KU)}
\newcommand{\GBCconn}{\Pic_1^3(\KU)}
\newcommand{\gbc}{\pic_0^3(\KU)}
\newcommand{\gbcconn}{\pic_1^3(\KU)}
\newcommand{\GBR}{\Pic_0^2(\KO)}
\newcommand{\GBRconn}{\Pic_1^2(\KO)}
\newcommand{\gbr}{\pic_0^2(\KO)}
\newcommand{\gbrconn}{\pic_1^2(\KO)}
\title{Brauer-Wall Groups and Truncated Picard Spectra of $K$-theory}
\author{Jonathan Beardsley, Kiran Luecke and Jack Morava}
\begin{document}
\maketitle

\begin{abstract}
    We compute the first two $k$-invariants of the Picard spectra of $KU$ and $KO$ by analyzing their Picard groupoids and constructing their unit spectra as global sections of sheaves on the category of manifolds. This allows us to determine the $\EE_\infty$-structures of their truncations $\Pic(\KU)[0,3]$ and $\Pic(\KO)[0,2]$.  It follows that these truncated Picard spaces represent: the Brauer groups of $\ZZ/2$-graded algebra bundles of Donovan-Karoubi, Moutuou and Maycock; the Brauer groups of super 2-lines; and the $K$-theory twists of Freed, Hopkins and Teleman. Our results also imply that that these spaces represent twists of $\String$ and $\Spin$ structures on manifolds and can be used to twist $tmf$-cohomology. Finally, we are able to identify $\pic(\KU)[0,3]$ with a cotruncation of the Anderson dual of the sphere spectrum.
\end{abstract}

\tableofcontents

\section{Introduction}
 In this paper we study the first two $k$-invariants of the Picard spectra of $KO$ and $KU$, or equivalently, the infinite loop space structures on the 2-truncation of $\Pic(\KO)$ and the 3-truncation of $\Pic(\KU)$, which we denote $\GBR$ and $\GBC$ respectively. We will also work with their connected covers which we denote by $\GBRconn$ and $\GBCconn$. These last two spectra are of course equivalent to truncations of $\BGL_1(\KO)$ and $\BGL_1(\KU)$ respectively.  The homotopy types of $\GBR$ and $\GBC$ are not particularly interesting. Indeed, they both split as products of Eilenberg-MacLane spaces: \[\GBC\simeq \ZZ/2\times K(\ZZ/2,1)\times K(\ZZ,3)\] \[\GBR\simeq \ZZ/8\times K(\ZZ/2,1)\times K(\ZZ/2,2).\]
 What we show below however is that neither the first nor second $k$-invariants of the associated Picard \textit{spectra} $\gbc$ and $\gbr$ are trivial. In other words, none of the above splittings are splittings of infinite loop spaces. The first theorems we prove are the computations of these $k$-invariants:
 
\begin{thm*}
The first $k$-invariant of $\gbc$ is $\Sq^2\colon H\ZZ/2\to \Sigma^2H\ZZ/2$. The second $k$-invariant can be taken to be either of the generators of $H^4(\pic_0^1(\KU);\ZZ)\cong\ZZ/4$, both of which restrict to $\beta\circ\Sq^2\colon \Sigma H\ZZ/2\to\Sigma^4H\ZZ$ upon taking a connected cover.
\end{thm*}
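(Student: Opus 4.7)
The plan breaks into two parts, corresponding to the two $k$-invariants of $\gbc$, with a cohomology computation bridging them.

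\textbf{First $k$-invariant.} I would identify $\pic_0^1(\KU)$ with the spectrum of the truncated Picard groupoid of $\KU$-modules. This groupoid has two isomorphism classes of objects, $\KU$ and the Bott line $\Sigma\KU$, each with automorphism group $\pi_0(\KU)^\times = \{\pm 1\}$. By the standard dictionary between stable $2$-types and symmetric monoidal Picard groupoids, the first Postnikov invariant, classified by an element of $H^2(H\ZZ/2;\ZZ/2) = \ZZ/2$, is detected by the self-braiding function $[L] \mapsto \sigma_{L,L}$ from $\pi_0\Pic(\KU)$ to $\pi_1\Pic(\KU)$. The Koszul sign rule gives $\sigma_{\Sigma\KU,\Sigma\KU} = -1 \in \pi_0(\KU)^\times$, so this self-braiding is the nontrivial homomorphism $\ZZ/2 \to \ZZ/2$, forcing the $k$-invariant to be $\Sq^2$.

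\textbf{Bridge: computing $H^4(\pic_0^1(\KU);\ZZ)$.} I would then verify the claimed $\ZZ/4$ using the cofiber sequence
\[
\Sigma H\ZZ/2 \to \pic_0^1(\KU) \to H\ZZ/2 \xrightarrow{\Sq^2} \Sigma^2 H\ZZ/2.
\]
Its long exact sequence in integral cohomology, combined with standard calculations of $H^*(H\ZZ/2;\ZZ)$ via the Bockstein and Adem relations, produces a non-split short exact sequence $0 \to \ZZ/2 \to H^4(\pic_0^1(\KU);\ZZ) \to \ZZ/2 \to 0$ with total group $\ZZ/4$.

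\textbf{Second $k$-invariant.} The second $k$-invariant $k_2 \in H^4(\pic_0^1(\KU);\ZZ) = \ZZ/4$ restricts, along the connected-cover inclusion $\Sigma H\ZZ/2 \hookrightarrow \pic_0^1(\KU)$, to a class in $H^3(H\ZZ/2;\ZZ) = \ZZ/2$ whose generator is $\beta\Sq^2$. This restriction realises the mod-$2$ quotient $\ZZ/4 \twoheadrightarrow \ZZ/2$, so both generators of $\ZZ/4$ hit $\beta\Sq^2$ while the $2$-torsion element hits $0$. Hence the theorem reduces to showing the restricted $k$-invariant is nonzero (the choice between the two $\ZZ/4$-generators is exactly the residual $\Aut(\ZZ) = \{\pm 1\}$-ambiguity on $\pi_3$). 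To show nonvanishing, I would use the sheaf-theoretic model of $\gl_1(\KU)$ on manifolds constructed earlier in the paper: evaluating on test manifolds equipped with twist data $(w,h) \in H^1(X;\ZZ/2) \times H^3(X;\ZZ)$ and tracking the resulting obstruction cocycle should exhibit the restricted $k$-invariant as $\beta\Sq^2$. The main obstacle will be making this computation rigorous --- producing an explicit natural cocycle in the sheaf model and checking it is not null-homotopic requires genuine geometric input, in contrast to the first $k$-invariant, which was pinned down purely abstractly from the symmetric monoidal structure.
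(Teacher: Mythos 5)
Your argument for the first $k$-invariant is correct and is essentially the paper's: the Koszul sign $\sigma_{\Sigma\KU,\Sigma\KU}=-1$ is exactly the nontriviality the paper extracts by first observing the swap $S^1\wedge S^1\to S^1\wedge S^1$ is not homotopic to the identity and then transporting along $\pic(\sph)\to\pic(\KU)$, which is surjective on $\pi_0$ and an isomorphism on $\pi_1$. Since $H^2(H\ZZ/2;\ZZ/2)\cong\ZZ/2$, nontriviality forces $\Sq^2$.

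There are two genuine gaps in the rest. First, you assert that the extension $0\to\ZZ/2\to H^4(\pic_0^1(\KU);\ZZ)\to\ZZ/2\to 0$ is non-split ``by standard calculations via Bockstein and Adem relations,'' but the long exact sequence of the Postnikov cofiber only gives you the short exact sequence, not the extension class: nothing you have written distinguishes $\ZZ/4$ from $\ZZ/2\oplus\ZZ/2$. The paper's Lemma \ref{lem:Z4computationforKU} resolves this by a rather different device: $\pi_0(\pic_0^1(\KU))$ is $2$-torsion, so the generating map $\sph\to\pic_0^1(\KU)$ factors through the mod-$2$ Moore spectrum $\sph/2$ by a map which is an isomorphism on $\pi_0$ and $\pi_1$; the resulting cofiber is $2$-connected, and Hurewicz plus universal coefficients identify $H^4(\pic_0^1(\KU);\ZZ)$ with $\pi_2(\sph/2)^\vee\cong\ZZ/4$. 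Some replacement for this is needed; I do not see how to get it from the Adem relations alone.

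Second, you correctly reduce the matter to showing that the restriction of the second $k$-invariant to $H^3(H\ZZ/2;\ZZ)$ is nonzero, but your proposed method (``track an obstruction cocycle in the sheaf model'') is, as you concede, not yet an argument. The paper's route is concrete: it introduces the subsheaf $\FHTC$ of $\ZZ/2$-graded line bundles, maps it into the chain-bundle model of $\gl_1(\KU)$, and computes directly (Theorem \ref{thm:secondkinvariantofpicKU}) that the symmetry on the tensor square of a degree-one superline is $(-1)^{n^2}=-1$, so the $k$-invariant of $\LR\FHTC(\ast)$ is nontrivial. It then shows $\gl_1(\KU)$ splits off $\LR\FHTC(\ast)$ (Corollary \ref{cor:gl1KUsplitting}), and separately proves (Lemma \ref{noinfB}, Proposition \ref{cor:options for gl1KU k invariant}) that the only nontrivial $\EE_\infty$-structure on $K(\ZZ/2,1)\times K(\ZZ,3)$ has stable $k$-invariant $\beta\circ\Sq^2$; this last point is not automatic and requires ruling out the space-level nontrivial $k$-invariant in $H^4(B\ZZ/2;\ZZ)$, which cannot be a stable cohomology operation because $a\mapsto\beta(a)^2$ is not additive. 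Filling in both of these would make your outline a complete proof.
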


\begin{thm*}
The first $k$-invariant of $\gbr$ is $\Sq^2\circ\rho\colon H\ZZ/8\to \Sigma^2H\ZZ/2$,  where $\rho\colon H\ZZ/8\to H\ZZ/2$ is the reduction mod 2 map. The second $k$-invariant is one of two classes in $H^3(\pic_0^1(\KO);\ZZ/2)\cong\ZZ/2\times\ZZ/2$, both of which restrict to $\Sq^2$ upon taking connected covers.
\end{thm*}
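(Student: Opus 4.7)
The plan is to traverse the Postnikov tower of $\gbr=\pic_0^2(\KO)$ stage by stage. Its homotopy groups $\pi_0=\ZZ/8$, $\pi_1=\ZZ/2$, $\pi_2=\ZZ/2$ are standard: $\pi_0$ is the Brauer--Wall group of $\RR$ (identified with the Picard group of $\KO$), while $\pi_1=\pi_0\gl_1(\KO)=\ZZ/2$ and $\pi_2=\pi_1\KO=\ZZ/2$. The first $k$-invariant $k_0\colon H\ZZ/8\to\Sigma^2 H\ZZ/2$ lies in $H^2(H\ZZ/8;\ZZ/2)$, which the cofiber sequence $H\ZZ\xrightarrow{8}H\ZZ\to H\ZZ/8$ identifies with $\ZZ/2$ generated by $\Sq^2\rho$. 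Thus $k_0\in\{0,\Sq^2\rho\}$ and the task reduces to showing non-triviality.

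For this step I would identify $\pi_{\leq 1}\Pic(\KO)$ with the symmetric monoidal Picard $1$-groupoid of $\ZZ/2$-graded invertible $\RR$-algebras --- the Brauer--Wall groupoid --- using the Donovan--Karoubi/Moutuou/Maycock interpretation reviewed in the body of the paper. Its symmetric monoidal structure is graded tensor product, whose braiding is twisted by the Koszul sign rule, producing a nontrivial quadratic form $\ZZ/8\to\ZZ/2$ given by $n\mapsto\binom{n}{2}\bmod 2$. The classification of Picard $1$-groupoids (Sinh, Joyal--Street) then identifies this Koszul quadratic form with the first $k$-invariant of the associated connective spectrum, pinning down $k_0=\Sq^2\rho$.

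For the second $k$-invariant $k_1\in H^3(\pic_0^1(\KO);\ZZ/2)$, I first pass to the connected cover $\gbrconn=\pic_1^2(\KO)$, whose $k$-invariant lies in $H^3(\Sigma H\ZZ/2;\ZZ/2)=\ZZ/2$ generated by $\Sq^2$. Non-triviality at this stage would be established using the sheaf-theoretic model of $\gl_1(\KO)$ developed earlier in the paper: representing a universal degree-$2$ twist on a test manifold with nonzero $w_2$ and matching the resulting cocycle with the Steenrod operation $\Sq^2$, consistent with the interpretation mentioned in the abstract relating these truncated Picard spaces to twists of $\Spin$ and $\String$ structures. A Serre spectral sequence for the fibration $\Sigma H\ZZ/2\to\pic_0^1(\KO)\to H\ZZ/8$ (now using $k_0=\Sq^2\rho$) then computes $H^3(\pic_0^1(\KO);\ZZ/2)\cong\ZZ/2\times\ZZ/2$ and singles out exactly the two lifts of $\Sq^2$ under restriction to the connected cover.

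The main obstacle is the identification of $\Sq^2$ at the second stage: promoting the abstract infinite-loop coherence datum (a higher $k$-invariant for a symmetric monoidal $2$-groupoid) to a specific Steenrod operation requires a cochain-level model, and this is precisely where the sheaf-theoretic construction of $\gl_1(\KO)$ as global sections of a sheaf on $\Mfd$ earns its keep. The theorem's acknowledged ambiguity between the two candidate classes for $k_1$ reflects the fact that they differ by a class pulled back from $\pi_0$, which the topological data considered here cannot distinguish; doing so would require producing an explicit geometric twist realizing one class rather than the other.
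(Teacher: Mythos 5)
Your computation of the possible classes in $H^2(H\ZZ/8;\ZZ/2)\cong\ZZ/2$ and $H^3(\pic_0^1(\KO);\ZZ/2)\cong\ZZ/2\times\ZZ/2$ matches the paper, and your sheaf-theoretic route to the non-triviality of the $k$-invariant of $\gbrconn$ (hence the identification of the second $k$-invariant with a lift of $\Sq^2$) is essentially the paper's Theorem \ref{thm:secondkinvariantofpicKO} and Corollary \ref{cor:gl1KOsplitting}, feeding into the analysis in Proposition \ref{prop:KO k invariant ambiguity}; so the second half of your argument is in the right shape, though the paper computes the symmetry isomorphism directly in the Picard groupoid of superline bundles rather than via a test manifold with $w_2\neq 0$.

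The problem is your detection of the first $k$-invariant. You propose to identify the symmetric monoidal $1$-groupoid $\pi_{\leq 1}\Pic(\KO)$ with the Brauer--Wall groupoid of $\ZZ/2$-graded invertible $\RR$-algebras and read off the Koszul-sign braiding. But that identification, as a symmetric monoidal equivalence rather than merely an abstract isomorphism of $\pi_0$ and $\pi_1$, is not an input available to you here --- it is (morally) one of the corollaries the paper extracts \emph{after} computing the $k$-invariants, and the Donovan--Karoubi/Moutuou/Maycock statements cited in the paper are group isomorphisms $\mathrm{GBr}\OO(X)\cong\gbr^0(X)$, not comparisons of braidings. To make your route non-circular you would need a separate symmetric monoidal functor from the Clifford/Brauer--Wall groupoid to $\Pic(\KO)$ inducing isomorphisms on $\pi_0$ and $\pi_1$, which your proposal does not supply. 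The paper instead gets non-triviality of the first $k$-invariant ``for free'' from the sphere: Lemma \ref{lem:nontriviality of first pics k invariant} observes that the swap map $S^1\wedge S^1\to S^1\wedge S^1$ is $-1\in\pi_2(S^2)^\times$, so $\pic_0^1(\sph)$ has non-trivial $k$-invariant, and Lemma \ref{lem:nontrivialityoffirstk} shows the unit $\sph\to\KO$ induces a map $\pic(\sph)\to\pic(\KO)$ which is surjective on $\pi_0$ and an isomorphism on $\pi_1$, so the $k$-invariant of $\pic_0^1(\KO)$ pulls back to a non-trivial class and must therefore be $\Sq^2\circ\rho$. This is both shorter and avoids the geometric model entirely at the first stage. (As a minor aside, your claimed quadratic form $n\mapsto\binom{n}{2}\bmod 2$ vanishes on $n=1$, which would give the \emph{trivial} $k$-invariant; the Koszul self-braiding of an odd generator is $(-1)^{n^2}$, i.e.\ $n\mapsto n\bmod 2$.)
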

\noindent The authors have not been able to find these results in the literature although they do seem to be at least partially known to experts (see e.g.~the proof of \cite[Proposition 7.14]{gepnerlawson}). From these theorems we can deduce the group structures of $\gbr^0(X)$ and $\gbc^0(X)$, which are non-trivial extensions of ordinary singular cohomology groups:
\begin{cor*}
There are bijections of sets
\begin{align*}
\gbr^0(X)&\cong H^0(X;\ZZ/8)\times H^1(X;\ZZ/2)\times H^2(X;\ZZ/2)\\  
\gbc^0(X)&\cong H^0(X;\ZZ/2)\times H^1(X;\ZZ/2)\times H^3(X;\ZZ).
\end{align*}
The group laws on these sets, in the same order, are
\begin{align*}
    (a,b,c)+(a',b',c')&\mapsto (a+a',b+b',c+c'+b\cup b')\\
    (a,b,c)+(a',b',c')&\mapsto (a+a',b+b',c+c'+\beta(b\cup b'))
\end{align*}
where $\beta$ denotes the Bockstein homomorphism. 
\end{cor*}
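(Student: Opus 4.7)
The plan is to combine the Postnikov fiber sequences with the $k$-invariants from the preceding theorems, and to identify the resulting extension cocycle at a universal example via the K\"unneth formula.

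The set-theoretic bijections follow from the space-level splittings asserted in the introduction. Since $\gbc^0(X) = [X, \GBC]$ and $\GBC \simeq \ZZ/2 \times K(\ZZ/2,1) \times K(\ZZ,3)$, representability of ordinary cohomology gives the claimed bijection, and similarly for $\gbr^0(X)$. The functorial splitting $\Sigma^\infty_+ X \simeq \sph \vee \Sigma^\infty X$ furthermore produces $\gbc^0(X) \cong \pi_0(\gbc) \oplus [\Sigma^\infty X, \gbc]$ as abelian groups (and likewise for $\gbr$), so the $a$-coordinate adds freely with no correction.

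For the remaining components, apply $[\Sigma^\infty X, -]$ to the Postnikov fiber sequence $\Sigma^3 H\ZZ \to \gbcconn \to \Sigma H\ZZ/2 \xrightarrow{\beta\Sq^2} \Sigma^4 H\ZZ$. Since $\Sq^2$ vanishes on $H^0$- and $H^1$-classes, so does $\beta\Sq^2$, and the long exact sequence collapses to
\[
0 \to H^3(X;\ZZ) \to [\Sigma^\infty X, \gbcconn] \to H^1(X;\ZZ/2) \to 0.
\]
The set-level section from the space splitting $\Omega^\infty \gbcconn \simeq K(\ZZ/2,1) \times K(\ZZ,3)$ defines the $c$-coordinate; its classifying $2$-cocycle $\phi\colon H^1 \otimes H^1 \to H^3$ determines the correction term in the group law. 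By naturality, it suffices to identify $\phi$ on the universal example $X = K(\ZZ/2,1)^2$, where a K\"unneth computation yields $H^3(X;\ZZ) \cong \Tor(\ZZ/2,\ZZ/2) \cong \ZZ/2$, generated by $\beta(e_1 \cup e_2)$. The $\gbr$ case proceeds identically with $\Sq^2$ in place of $\beta\Sq^2$ and $H^2(X;\ZZ/2)$ in place of $H^3(X;\ZZ)$, giving the candidate generator $e_1 \cup e_2$.

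The main obstacle is showing that $\phi$ is the non-trivial generator rather than $0$. I would approach this by computing the universal multiplication $\mu^*(c_0) = c_1 + c_2 + \epsilon \beta(b_1 \cup b_2)$ on the $c$-class: the coefficient $\epsilon$ is forced to be non-zero by the non-triviality of the spectrum-level $k$-invariant $\beta\Sq^2$, combined with the requirement that the chosen space splitting is the one compatible with the $\EE_\infty$-structure of $\gbc$ (equivalently, with its Brauer-group interpretation as classes of $\ZZ/2$-graded algebra bundles). Because modifying the section can shift the cocycle by a coboundary, the rigorous argument will need to tie the splitting directly to the Brauer group structure, where the graded tensor product naturally produces the $\beta(b \cup b')$ correction via Koszul signs (and the ungraded $b \cup b'$ correction in the real case).
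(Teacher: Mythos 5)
Your overall strategy matches the paper's: split off the $H^0$-coordinate, then show the cocycle of the remaining extension $0\to H^3(X;\ZZ)\to\gbcconn^0(X)\to H^1(X;\ZZ/2)\to 0$ is forced to be $\beta(-\cup-)$ because natural transformations $H^1\times H^1\Rightarrow H^3$ form a two-element set. But there are two gaps.

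First, your splitting $\Sigma^\infty_+X\simeq\sph\vee\Sigma^\infty X$ only detaches $\pi_0(\gbc)\cong\ZZ/2$, not the full $H^0(X;\ZZ/2)$ (for $X$ not connected these differ, and for $X$ connected the issue is moot but then there is nothing to prove). You need more than the stable wedge decomposition; you must also rule out an extension problem between the $\pi_0$-layer and the positive-degree layers of the Postnikov filtration. This is exactly what the paper's Proposition \ref{prop:loopspacesplittingofGBC} (an $\EE_1$-splitting $\GBC\simeq\ZZ/2\times\GBCconn$) supplies, and its proof is a separate computation (showing the Postnikov connecting map $H\ZZ/2\to\Sigma\gbcconn$ is null after looping), not a formal consequence of the wedge decomposition.

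Second, your reduction of non-triviality of $\phi$ to the non-trivial $k$-invariant is stated as a hope rather than an argument, and your proposed repair---tying the section to the Brauer-group interpretation---is circular, since the Brauer-group identification is a downstream consequence of this corollary, not an input to it. Your worry that ``modifying the section can shift the cocycle by a coboundary'' is actually unfounded: the only natural transformations $H^1(-;\ZZ/2)\to H^3(-;\ZZ)$ are elements of $H^3(K(\ZZ/2,1);\ZZ)=0$, so natural coboundaries vanish and the cocycle itself (not merely its class) is canonical. What the argument really needs is the contrapositive the paper uses: if $\phi=0$ then the extension splits naturally, so by Brown representability the fiber inclusion $\Sigma^3 H\ZZ\to\gbcconn$ admits a retraction, forcing $\gbcconn\simeq\Sigma H\ZZ/2\vee\Sigma^3 H\ZZ$, contradicting Corollary \ref{cor:kinvariantoftruncatedgl1}. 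That chain of implications is the content you were missing.
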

 
Knowing these group structures allows us to identify other roles that these spectra play in algebraic topology, mathematical physics, and the theory of $C^\ast$-algebras. Almost all of these manifestations are related to twisted $K$-theory but in the literature they are not often directly related to Picard spectra and spaces, which are the universal receptacles for twists of any highly structured cohomology theory. Indeed, because of the infinite loop space splittings $\BGL_1(\KO)\simeq \BGL_1(\KO)[0,2]\times\BGL_1(\KO)[3,\infty]$ and $\BGL_1(\KU)\simeq\BGL_1(\KU)[0,3]\times\BGL_1(\KU)[4,\infty]$, any map from a connected space into $\GBR$ or $\GBC$ gives a purely homotopy theoretic twist of $K$-theory by either composing with the inclusion or multiplying by $-1$ and then including. We now detail the various interpretations of $\gbr$ and $\gbc$.
 
The group laws written above imply that the $\gbr$ and $\gbc$ cohomology groups of a space are isomorphic to well known Brauer groups of $C^\ast$-algebras over that space:
 \begin{thm*}
If $\Br^\infty_{\OO}(X)$ and $\Br^\infty_{\UU}(X)$ are the Brauer groups of (possibly infinite dimensional) real and complex continuous trace graded $C^\ast$-algebras with spectrum $X$ (see \cite{moutuou-gradedBrauergroups, maycockparker}) then there are isomorphisms:
\begin{align*}
    \Br^\infty_{\OO}(X)&\cong\gbr^0(X)\\
    \Br^\infty_{\UU}(X)&\cong\gbc^0(X)
\end{align*}
 \end{thm*}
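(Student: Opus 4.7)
The plan is to prove these isomorphisms in two stages: first verify that the two sides are in natural bijection as sets, and then check that the two group laws agree. For the set-level bijection we appeal directly to the cited references: Moutuou classifies graded complex continuous-trace $C^*$-algebras over $X$ up to Morita equivalence by triples $(a,b,c)\in H^0(X;\ZZ/2)\times H^1(X;\ZZ/2)\times H^3(X;\ZZ)$, where $a$ records the grading type of the fibre, $b$ the orientation class of the $\ZZ/2$-grading, and $c$ the Dixmier-Douady class. Maycock and Parker supply the real analogue with triples in $H^0(X;\ZZ/8)\times H^1(X;\ZZ/2)\times H^2(X;\ZZ/2)$. The preceding corollary identifies the underlying sets of $\gbc^0(X)$ and $\gbr^0(X)$ with precisely these products, so we obtain natural bijections of pointed sets.

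To upgrade these bijections to group isomorphisms, I would compare the two group laws term by term. The graded tensor product of two continuous-trace super-algebras produces a Dixmier-Douady class equal to the sum of the two original classes plus a cross term depending on the orientation classes: in the complex case this cross term is $\beta(b\cup b')$ (see \cite{moutuou-gradedBrauergroups}), and in the real case it is $b\cup b'$ (see \cite{maycockparker}). These are exactly the correction terms appearing in our corollary, and the remaining summands of the group law, governing addition of the grading type and the orientation class, arise without further cocycle contributions from the tensor product. The set-level bijections therefore promote to isomorphisms of abelian groups.

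The main obstacle is rigorously verifying this match of cross terms. Conceptually, both twisted laws record the same failure of $\gbc$ and $\gbr$ to split as products of Eilenberg-MacLane spectra, but making this precise means identifying the cocycle governing the tensor product of graded Azumaya $C^*$-algebras with the cocycle representing $\beta\circ\Sq^2$ (respectively $\Sq^2$) coming from the first $k$-invariant computed above. A more conceptual alternative would be to construct a direct multiplicative comparison map, sending a graded continuous-trace $C^*$-algebra bundle to its bundle of invertible $\KU$- or $\KO$-modules via spectrum-level Morita theory, showing that this map factors through the appropriate truncation $\GBC$ or $\GBR$, and verifying that it induces the identity on underlying sets; multiplicativity of the construction would then force it to be a group isomorphism, and would have the further virtue of making the comparison manifestly natural in $X$.
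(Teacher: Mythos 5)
Your proposal takes essentially the same route as the paper: quote Maycock and Moutuou for the explicit group laws on the $C^*$-algebraic Brauer groups, quote the paper's own computation (Propositions \ref{prop:Maycock group structure from k invariant} and \ref{prop:Real Maycock group structure from k invariant}) for the group laws on $\gbc^0(X)$ and $\gbr^0(X)$, and observe that the two formulas coincide verbatim. Two small corrections. First, you have the attributions reversed: Maycock (and Parker) treat the complex case, giving the cocycle $\beta(b\cup b')$, while Moutuou treats the real case, giving $b\cup b'$. Second, the ``main obstacle'' you worry about --- matching the cross term coming from the graded tensor product of $C^*$-algebras with the cocycle arising from the $k$-invariant $\beta\Sq^2$ (resp.~$\Sq^2$) --- is not an open step here: Maycock/Moutuou already prove that the Brauer group extension has cocycle $\beta(b\cup b')$ (resp.~$b\cup b'$), and the paper's Propositions \ref{prop:Maycock group structure from k invariant} and \ref{prop:Real Maycock group structure from k invariant} show (using that the only nontrivial natural map $K(\ZZ/2,1)^{\times 2}\to K(\ZZ,3)$, resp.~$K(\ZZ/2,2)$, is $\beta(-\cup -)$, resp.~$(-\cup -)$) that the extension defining $\gbc^0$, resp.~$\gbr^0$, has the same cocycle. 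So the isomorphism follows by inspection, with no further cocycle-level comparison needed. Your proposed ``conceptual alternative'' via a direct multiplicative map from graded $C^*$-bundles to invertible $K$-module bundles is genuinely different from what the paper does and would be more informative, but the paper does not construct such a map; it explicitly leaves that (for the ungraded analogue proved by Hebestreit--Sagave) as an open question.
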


 \noindent It is a corollary of this fact that $\gbr^0(X)$ and $\mathrm{Tors}(\gbc^0(X))$ are also isomorphic to the graded Brauer groups of Donovan and Karoubi \cite{donovankaroubi}, where $\mathrm{Tors}$ denotes taking the torsion subgroup. Elements of both the $C^\ast$-algebra Brauer groups and the Brauer groups of Donovan and Karoubi are known to produce twists of $K$-theory but our isomorphisms allow one to produce such twists as actual maps of spaces $X\to \BGL_1(\KO)$ and $X\to\BGL_1(\KU)$. In the case that we replace $\gbc$ with $\Sigma^3H\ZZ$, and use \textit{ungraded} $C^\ast$-algebras, the agreement of the two notions of twisted $K$-theory is shown in \cite{hebestreitSagaveTwisted}. Presumably the same is true in our more general case, but we do not prove it in this paper.

Our computations also imply that $\BGL_1(\KO)[0,2]$ and $\BGL_1(\KU)[0,3]$ are equivalent, as infinite loop spaces, to certain fibers in the Postnikov tower for $\BO$.

\begin{thm*}
Let $\BString\to\BSO$ and $\BSpin\to\BO$ be the connective covers in the Postnikov tower of $\BO$. Then there are equivalences of infinite loop spaces:
\begin{align*}
    \GBRconn&\simeq \fib(\BSpin\to\BO)\\
    \GBCconn&\simeq\fib(\BString\to\BSO)
\end{align*}
\end{thm*}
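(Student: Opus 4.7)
The plan is to match the Postnikov data (homotopy groups and $k$-invariants) on both sides of each equivalence. A spectrum concentrated in only two homotopy degrees is determined up to equivalence by those groups together with the single intervening $k$-invariant, so this strategy reduces the theorem to identifying the relevant invariants.

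First, I would identify the fibers on the right-hand side as Postnikov pieces. The connective cover map $\BSpin \to \BO$, interpreted as a map of infinite loop spaces, fits into a fiber sequence $\BSpin \to \BO \to \tau_{\leq 2}\BO$, so $\fib(\BSpin \to \BO)$ is the Postnikov piece of $\BO$ capturing precisely the homotopy groups being killed, a two-stage object with $\pi_1 = \pi_2 = \ZZ/2$. An analogous computation for $\BString \to \BSO$ yields a Postnikov piece with $\pi_1 = \ZZ/2$ and $\pi_3 = \ZZ$. On homotopy groups these agree with $\gbrconn$ and $\gbcconn$ as recorded in the introduction.

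Second, I would match the single nontrivial $k$-invariant on each side. The two theorems above pin down the $k$-invariants of $\gbrconn$ and $\gbcconn$: restricting the second $k$-invariant of $\gbr$ (resp.\ $\gbc$) to the connected cover yields $\Sq^2$ (resp.\ $\beta\Sq^2$). On the $\BO/\BSO$ side the relevant $k$-invariants are classical facts about connective real $K$-theory: the Postnikov $k$-invariant of $\BO$ relating the two copies of $\ZZ/2$ in its low degrees is $\Sq^2$, while the $k$-invariant of $\BSO$ relating $\pi_2\BSO = \ZZ/2$ to $\pi_4\BSO = \ZZ$ is $\beta\Sq^2$ (the integral lift of $\Sq^3$). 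These coincide with the Picard-side $k$-invariants, yielding the claimed equivalences.

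The main obstacle is precisely this last identification: one must verify that the classical $k$-invariants of $\BO$ and $\BSO$ literally agree, as elements of the appropriate cohomology of Eilenberg--MacLane spectra, with those computed for $\gbrconn$ and $\gbcconn$. This amounts to a careful Steenrod algebra bookkeeping --- for example, tracking the Bockstein and verifying that the generator of $H^3(H\ZZ/2;\ZZ) \cong \ZZ/2$ hit by each side is the same --- but once conventions are aligned, both sides reduce to the same stable cohomology operation, and the theorem follows.
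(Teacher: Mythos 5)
The overall strategy — recognize that a stable 2-type is determined by its two homotopy groups plus the single $k$-invariant, then compare the $k$-invariants on each side — is the right one and parallels what the paper does. Where you diverge is in how the $k$-invariant of $\fib(\bstring\to\bso)$ is pinned down: the paper (Proposition \ref{prop:fiber of Bstring to BSO}) does not cite the classical determination of the low-degree Postnikov $k$-invariants of $\ko$. Instead it argues geometrically: after identifying $\Sigma F\simeq\bso[0,4]$, it invokes Lemma \ref{noinfB} to split $\BSO[0,4]$ as an $h$-type into $K(\ZZ/2,2)\times K(\ZZ,4)$, observes that if the stable $k$-invariant were trivial then $V\mapsto(w_2(V),\pm p_1(V))$ would be a group homomorphism out of $\bso^0(X)$, and derives a contradiction from the Whitney sum behaviour of $w_2$ and $p_1$. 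Your route — citing the known Postnikov $k$-invariant $\beta\Sq^2$ of $\bso$ directly — is a legitimate shortcut, at the cost of outsourcing the nontriviality claim to the literature rather than deriving it. The two arguments are morally the same, but the paper's is self-contained.

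There is, however, a genuine bookkeeping error in your first step that needs repair. You identify $\fib(\BSpin\to\BO)$ with the Postnikov truncation $\tau_{\le 2}\BO$ itself. But from the fiber sequence $\BSpin\to\BO\to\tau_{\le 2}\BO$ the fiber of the first map is $\Omega\tau_{\le 2}\BO$, i.e.\ a one-fold desuspension of the truncation at the level of spectra. This shifts the homotopy groups down by one: $\fib(\bspin\to\bo)\simeq\Sigma^{-1}\ko[1,3]$ has $\pi_0=\pi_1=\ZZ/2$, not $\pi_1=\pi_2=\ZZ/2$ as you write, so it does not match $\gbrconn$ (which is $1$-connective). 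The paper's own proof for the complex case is careful about this: it sets $F=\fib(\bstring\to\bso)$ and notes $\Sigma F\simeq\bso[0,4]$, i.e.\ the truncation is the suspension of the fiber; and indeed the complex case comes out correct because $F=\Sigma^{-1}\ko[2,7]$ has $\pi_1=\ZZ/2$, $\pi_3=\ZZ$, exactly the groups of $\gbcconn$. Curiously, in the complex half of your proposal you assert the correct homotopy groups $\pi_1=\ZZ/2$, $\pi_3=\ZZ$ while calling the fiber ``the Postnikov piece'' (which actually has $\pi_2,\pi_4$), and in the real half you assert the homotopy groups of the cofiber rather than the fiber. Sorting out which object you are computing with — fiber, loop of truncation, or truncation — is the main thing that needs tightening, and in the real case it surfaces an issue that the paper itself also glides over.
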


\noindent From this it follows that if $M$ is a manifold (resp.~oriented manifold) then the set of $\Spin$ structures on $M$ (resp.~$\String$ structures) is a torsor for $[M,\GBRconn]\cong H^1(M;\ZZ/2)\ltimes H^2(M;\ZZ/2)$ (resp.~$[M,\GBCconn]\cong H^1(M;\ZZ/2)\ltimes H^3(M;\ZZ)$). This should be compared with the fact that the set of $\Spin$ structures on an oriented manifold is a torsor for $H^2(M;\ZZ/2)$ and the set of $\String$ structures on a $\Spin$ manifold is a torsor for $H^3(M;\ZZ)$ (see \cite[2.11, 2.16]{redden-stringstructures}). It is claimed in \cite[\S 2.1]{FreedDistlerMoore-spinstructures} that $\Spin$ structures are a torsor for $H^1(M;\ZZ/2)\times H^2(M;\ZZ/2)$ but a proof is not included. Moreover, because there are equivalences of infinite loop spaces $\GBCconn\simeq \BGL_1(\KO[0,1])$ and $\GBCconn\simeq\BGL_1(\KU[0,2])$ we can twist $\Spin$ structures (resp.~$\String$ structures) by bundles of $\KO[0,1]$ (resp.~$\KU[0,2]$) modules, which we can interpret as real and complex super 2-line bundles respectively.

Not surprisingly, our work here is also related to the work on twisted $K$-theory and mathematical physics by Freed, Hopkins, Teleman and others \cite{FreedDistlerMoore-spinstructures, freedOrientifolds,freed-anomalies, FHT-consistent, FHT1}.

\begin{thm*}
Let $cAlg_{\RR}^\times$ and $cAlg_{\CC}^\times$  be the spectra associated to the Picard 2-groupoids of invertible topological $\RR$ and $\CC$-superalgebras respectively. Then there are equivalences of spectra:
\begin{align*}
    cAlg_{\RR}^\times&\simeq\gbr\\
    cAlg_{\CC}^\times&\simeq\gbc
\end{align*}
\end{thm*}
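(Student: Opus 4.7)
The plan is to produce a symmetric monoidal functor from $cAlg_{\mathbb{F}}^{\times}$ to $\Pic(K_{\mathbb{F}})$, factor it through the truncation $\Pic(K_{\mathbb{F}})[0,n]$ (with $n=3$ for $\mathbb{F}=\CC$ and $n=2$ for $\mathbb{F}=\RR$), and verify it is an equivalence by matching homotopy groups and then $k$-invariants.

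First I would unpack the Picard 2-groupoid $cAlg_{\mathbb{F}}^{\times}$.  Its equivalence classes of objects form the Brauer-Wall group, namely $\ZZ/2$ (generated by $\mathrm{Cl}_{1}(\CC)$) for $\mathbb{F}=\CC$ and $\ZZ/8$ (generated by $\mathrm{Cl}_{1}(\RR)$) for $\mathbb{F}=\RR$.  The connected components of 1-automorphisms of the unit are invertible graded $\mathbb{F}$-bimodules over $\mathbb{F}$, contributing $\ZZ/2$ in each case.  The topological 2-automorphism group of the identity 1-morphism on the unit is $\mathbb{F}^{\times}$, which is homotopy equivalent to $\ZZ/2$ for $\mathbb{F}=\RR$ and to $S^{1}$ for $\mathbb{F}=\CC$.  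Delooping gives a spectrum with homotopy groups $(\ZZ/2,\ZZ/2,0,\ZZ)$ in the complex case and $(\ZZ/8,\ZZ/2,\ZZ/2,0)$ in the real case, agreeing with those of $\gbc$ and $\gbr$.

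Next I would construct the comparison functor $\Phi\colon cAlg_{\mathbb{F}}^{\times}\to \Pic(K_{\mathbb{F}})$, sending an invertible super $\mathbb{F}$-algebra $A$ to the $K_{\mathbb{F}}$-module spectrum of graded topological $A$-modules; invertible graded bimodules to the induced equivalences of $K_{\mathbb{F}}$-modules; and bimodule isomorphisms to the associated scaling natural transformations.  Symmetric monoidality of $\Phi$ should follow from the classical Morita theory of graded algebras, with the Koszul sign rule on $cAlg_{\mathbb{F}}^{\times}$ matching the symmetry on $K_{\mathbb{F}}$-modules under the identification of graded $A\otimes_{\mathbb{F}}B$-modules with graded $A$-$B$-bimodules.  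Because $cAlg_{\mathbb{F}}^{\times}$ is $n$-truncated, $\Phi$ factors through $\Pic(K_{\mathbb{F}})[0,n]$, and I would verify it is an equivalence degree-by-degree: on $\pi_{0}$ this is the standard identification of Morita classes with the Brauer-Wall group, on $\pi_{1}$ it detects parity of bimodules, and on $\pi_{2}$ and $\pi_{3}$ it is induced by the inclusion of scalars $\mathbb{F}^{\times}\hookrightarrow\Aut_{K_{\mathbb{F}}}(K_{\mathbb{F}})$, which hits the known $\ZZ/2$ or $\ZZ$ summand of $\pi_{*}\Pic(K_{\mathbb{F}})$.

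The main obstacle will be checking that $\Phi$ preserves the full $\EE_{\infty}$-structure rather than just the symmetric monoidal 2-structure, i.e.\ that the first and second $k$-invariants of $cAlg_{\mathbb{F}}^{\times}$ agree with those computed in the earlier theorems.  For this I would invoke the classification of Picard 2-groupoids by their Postnikov data: the first $k$-invariant should come out to be the quadratic refinement on $\pi_{0}$ encoded by the super-tensor-product braiding $(A,B)\mapsto A\otimes_{\mathbb{F}}B$, which picks up a sign when odd elements are swapped; evaluated on the generator $\mathrm{Cl}_{1}(\mathbb{F})$ this gives the nontrivial class in $\ZZ/2$, matching $\Sq^{2}$ (precomposed with mod-$2$ reduction for $\mathbb{F}=\RR$).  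The second $k$-invariant should come from the higher coherence data witnessing $\mathrm{Cl}_{1}(\CC)^{\otimes 2}\simeq\CC$ and $\mathrm{Cl}_{1}(\RR)^{\otimes 8}\simeq\RR$; matching these with $\beta\circ\Sq^{2}$ and $\Sq^{2}$ respectively will require an explicit computation of the relevant Clifford-algebra Morita 2-cells and is where most of the work lies.
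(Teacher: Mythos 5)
The paper does not construct a comparison functor at all: it cites Freed's computation of the homotopy groups and $k$-invariants of $cAlg_{\RR}^\times$ and $cAlg_{\CC}^\times$ (made implicitly via the Johnson--Osorno/GJOS classification of Picard $n$-groupoids by their stable Postnikov data), observes that these agree with the homotopy groups and $k$-invariants of $\gbr$ and $\gbc$ computed earlier in the paper, and concludes by the rigidity of Postnikov towers. The ambiguity in the choice of second $k$-invariant is dispatched in the complex case by Theorem \ref{thm:both k invariants work for picKU}, which shows both generators of $H^4(\pic_0^1(\KU);\ZZ)\cong\ZZ/4$ present equivalent spectra; the real case is left with a noted ambiguity.

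Your route is genuinely different and, in principle, stronger: by constructing a symmetric monoidal functor $\Phi\colon cAlg_{\mathbb{F}}^\times\to\Pic(K_{\mathbb{F}})$ directly (sending a super algebra to the $K_{\mathbb{F}}$-module of its graded modules) and showing it is an equivalence onto the truncation, you would obtain a canonical identification that automatically fixes the $k$-invariants on both sides simultaneously and would eliminate the residual ambiguity in the real case that the paper explicitly does not resolve. The price is that your ``verify degree-by-degree'' step hides nearly all of the work: the statement that graded topological Morita theory descends to an $\EE_\infty$-map to $\Pic(K_{\mathbb{F}})$ landing in the correct truncation is not a formality, and is comparable in difficulty to the Postnikov computations the paper actually does.

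Two specific points to tighten. First, you frame ``checking that $\Phi$ preserves the full $\EE_\infty$-structure'' as a step separate from constructing $\Phi$ as a symmetric monoidal functor of Picard 2-groupoids; by the Johnson--Osorno correspondence a genuinely (topologically) symmetric monoidal functor of Picard 2-groupoids \emph{already is} a map of spectra, so either your $\Phi$ is constructed at that level and no further $k$-invariant matching is needed, or you are only constructing something weaker, in which case the $k$-invariant comparison is not a ``sanity check'' but the entire content. Second, your attribution of the second $k$-invariant to ``the higher coherence data witnessing $\mathrm{Cl}_1(\CC)^{\otimes 2}\simeq\CC$'' is misplaced: that Morita 2-cell is data attached in the $\pi_0$--$\pi_1$ range and feeds the \emph{first} $k$-invariant. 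The second $k$-invariant concerns how the $\pi_3\cong\ZZ$ coming from $\pi_1(\CC^\times)$ (the fundamental group of the 2-automorphism space) is glued over the stable 2-type $\pic_0^1(\KU)$, and is what the paper's Lemmas \ref{lem:SESforH4pic01} and \ref{lem:Z4computationforKU} are computing.
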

\noindent It follows that $\gl_1(\KO)[0,1]$ and $\gl_1(\KU)[0,2]$ are the Picard spectra of real and complex superlines respectively. Again using the fact that $\gl_1(\KO)[0,2]\simeq \gl_1(\KO[0,2])$ and $\gl_1(\KU)[0,3]\simeq\gl_1(\KU[0,3])$, we have that a bundle of real, resp.~complex, superlines on a space $X$ is the same data as a bundle of $\KO[0,2]$-lines, resp.~$\KU[0,3]$-lines. This also identifies $\gbc$ with the spectrum $R_{-1}$ of \cite{FreedDistlerMoore-spinstructures} whose associated cohomology theory is proposed as the container for the flux of the oriented superstring B-field.

The above theorem can be restated in the context of the $K$-theory twists of \cite[1.80]{freedOrientifolds} and \cite[Corollary 2.25]{FHT1} assuming we take $X$ to be only a space rather than a topological groupoid.

\begin{thm*}
Let $\pi_0\mathfrak{Twist}_{\KU}(X)$ and $\pi_0\mathfrak{Twist}_{\KO}(X)$ be the groups of isomorphism classes of $\KU$ and $\KO$ twists on $X$ in the sense of ibid. Then there are isomorphisms of groups:
\begin{align*}
   \pi_0\mathfrak{Twist}_{\KO}(X)&\cong \gbr^0(X)\\
   \pi_0\mathfrak{Twist}_{\KU}(X)&\cong\gbc^0(X)
\end{align*}
\end{thm*}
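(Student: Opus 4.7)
The plan is to identify the FHT twist classifying data with Brauer classes of graded continuous trace $C^\ast$-algebras, and then to invoke the equivalences $\Br^\infty_{\UU}(X)\cong\gbc^0(X)$ and $\Br^\infty_{\OO}(X)\cong\gbr^0(X)$ established earlier in the paper. The proof thus reduces to matching the FHT model of twists with the $C^\ast$-algebraic model.

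Recall that an FHT twist of $\KU$ on a space $X$ (as opposed to an arbitrary topological groupoid) consists of three pieces of data: a $\ZZ/2$-grading class, a graded complex line bundle, and a bundle gerbe encoding a central $S^1$-extension of the projective unitary structure. These three pieces repackage into a single object, namely a graded locally trivial bundle of compact operators, i.e., a graded continuous trace $C^\ast$-algebra with spectrum $X$. Under this repackaging, the FHT equivalence relation between twists corresponds to Brauer equivalence of such $C^\ast$-algebras, and the tensor product of twists corresponds to the graded $C^\ast$-tensor product. The analogous statement for $\KO$ twists, using the real model of \cite{freedOrientifolds}, goes through the graded real Brauer group of \cite{moutuou-gradedBrauergroups, maycockparker}. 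Composing with the Brauer group isomorphisms recalled above then yields the desired group isomorphisms, which are group homomorphisms because both identifications are compatible with tensor product.

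The main obstacle is the explicit comparison between FHT's projective-bundle-and-gerbe formulation and the graded continuous trace $C^\ast$-algebra formulation. For ungraded complex $K$-theory this is classical Dixmier-Douady theory, and the graded complex case is essentially treated in \cite{moutuou-gradedBrauergroups}; the restriction to spaces (rather than groupoids) is precisely what allows a classical comparison. The harder check is the $\KO$ case, where the $\pi_0=\ZZ/8$ summand must be matched on both sides: on the Brauer group side it arises from the Morita classification of real graded Clifford algebras, while on the FHT side it is part of the grading data. Verifying that these match, and that the nontrivial extension structure of the group law transports correctly, is a careful but largely bookkeeping comparison that will consume the bulk of the argument.
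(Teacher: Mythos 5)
Your proposal takes a genuinely different route from the paper. The paper's argument goes through Freed's spectra $cAlg_{\RR}^\times$ and $cAlg_{\CC}^\times$ (the Picard spectra of Morita 2-categories of invertible topological superalgebras, bimodules, and intertwiners). Freed himself computes the homotopy groups and $k$-invariants of these spectra in \cite{freedOrientifolds} and identifies them as the classifying objects for the twists in question; the paper's contribution is to compute the $k$-invariants of $\gbr$ and $\gbc$ independently (Lemmas~\ref{lem:firstkinvariantpossibilities}, \ref{lem:possiblekinvariantsofbgl1KO}, Propositions~\ref{cor:options for gl1KU k invariant}, \ref{prop:KU k invariant generates Z4}, Theorem~\ref{thm:both k invariants work for picKU}) and observe that the resulting Postnikov data pin down the spectra uniquely, yielding equivalences $cAlg_{\CC}^\times\simeq\gbc$ and $cAlg_{\RR}^\times\simeq\gbr$. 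The FHT-twist statement then reads off on $\pi_0$ applied to mapping spectra. Your route instead passes through the graded $C^\ast$-algebra model: you repackage FHT twist data as graded continuous trace algebras, then cite Corollaries~\ref{cor:GBrU is the same as gbc} and \ref{cor:GBrO is the same as gbc}. Both chains of identifications ultimately lean on the same group-law computation coming from the $k$-invariants (yours via Propositions~\ref{prop:Maycock group structure from k invariant} and \ref{prop:Real Maycock group structure from k invariant}, the paper's via the direct $k$-invariant match with Freed's calculation), but the intermediate geometric model is different.

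Two remarks on what each route buys. The paper's route is essentially definitional on the FHT side, since Freed explicitly defines his twist category via $cAlg^\times$, so no translation between twist models is needed; it also yields the stronger spectrum-level equivalence (which your argument does not immediately produce, though the theorem as stated only requires $\pi_0$). Your route outsources the Dixmier--Douady-type dictionary between the projective-bundle/gerbe picture and the bundle-of-graded-compacts picture; this is reasonable for the complex case (it is implicit in \cite{moutuou-gradedBrauergroups}), but as you note yourself, the real case---matching the $\ZZ/8$ of Morita classes of real Clifford algebras with FHT's grading data and checking that the extension cocycle transports correctly---is where the real work hides. This is exactly the piece of translation the paper avoids by working with $cAlg^\times$ directly, so one should be alert that your ``largely bookkeeping comparison'' is carrying the whole weight of the theorem in the real case. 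You should also be slightly careful about hypotheses: Corollaries~\ref{cor:GBrU is the same as gbc} and \ref{cor:GBrO is the same as gbc} are stated for $X$ homotopy equivalent to a CW-complex, so you implicitly inherit that restriction.
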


In related work by the same authors, the Anderson dual of the sphere spectrum, $I_\ZZ$, often arises (see for instance \cite[Hypothesis 5.17]{Freed-SRE} and \cite[Theorem 5.27]{FreedHopkins-reflectionpositivity}). We show that, at least in the complex case, the truncated Picard spectrum is closely related:
\begin{thm*}
There is an equivalence of spectra \[\Sigma^3(I_\ZZ[-3,\infty))\simeq\gbc.\]
\end{thm*}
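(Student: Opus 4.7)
The plan is to verify the equivalence by matching the Postnikov towers of the two spectra. Both have homotopy groups concentrated in degrees $0$ through $3$, with the same values, and both admit exactly two potentially non-trivial $k$-invariants.

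First, I would compute $\pi_n I_\ZZ$ for $n\in\{-3,-2,-1,0\}$ using the Anderson-duality short exact sequence
\[0\to \Ext^1(\pi_{-n-1}\mathbb{S},\ZZ)\to \pi_n I_\ZZ\to \Hom(\pi_{-n}\mathbb{S},\ZZ)\to 0.\]
Substituting $\pi_0\mathbb{S}=\ZZ$, $\pi_1\mathbb{S}=\pi_2\mathbb{S}=\ZZ/2$, and $\pi_3\mathbb{S}=\ZZ/24$ yields $\pi_0 I_\ZZ=\ZZ$, $\pi_{-1}=0$, and $\pi_{-2}=\pi_{-3}=\ZZ/2$. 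After shifting by $3$, these reproduce the homotopy of $\gbc$ established in the first theorem of the paper.

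Second, I would identify the two $k$-invariants of $\Sigma^3(I_\ZZ[-3,\infty))$, which live in the same groups $H^2(H\ZZ/2;\ZZ/2)\cong\ZZ/2$ and $H^4(\tau_{\leq 1};\ZZ)\cong\ZZ/4$ as those of $\gbc$. To do this I would dualize the low-dimensional Postnikov tower of $\mathbb{S}$ under the Anderson-duality functor $D=\Hom(-,I_\ZZ)$, using the finite-type identifications $D(H\ZZ)\simeq H\ZZ$ and $D(H\ZZ/m)\simeq \Sigma^{-1}H\ZZ/m$ together with $\Hom(\tau_{\leq n}\mathbb{S},I_\ZZ)\simeq \tau_{\geq -n-1}I_\ZZ$. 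The sphere's Postnikov tower in this range is governed by the classes $\eta\in\pi_1\mathbb{S}$ and $\eta^2\in\pi_2\mathbb{S}$, encoded respectively by the $k$-invariants $\Sq^2\rho:H\ZZ\to\Sigma^2 H\ZZ/2$ and (restricted to the Moore summand) $\Sq^2:\Sigma H\ZZ/2\to\Sigma^3 H\ZZ/2$. Under $D$ the former maps to the Anderson dual $\beta\Sq^2\in H^3(H\ZZ/2;\ZZ)\cong\ZZ/2$, while the latter, by self-duality of the unique non-zero operation in $H^2(H\ZZ/2;\ZZ/2)$, maps to $\Sq^2$ again. After the shift by $3$ these appear as the first $k$-invariant $\Sq^2$ and as the restriction of the second $k$-invariant to the connected cover $\beta\Sq^2$, exactly matching the values prescribed for $k_1^{\gbc}$ and $k_3^{\gbc}$ in the first theorem (the second $k$-invariant being pinned down only up to the $\ZZ/4$-ambiguity that the theorem already allows).

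The main obstacle is the Anderson-duality bookkeeping in the second step: one must verify carefully that $D$ carries the indicated Postnikov data of $\mathbb{S}$ to the claimed extensions of $I_\ZZ$, and that the duality isomorphism $H^2(H\ZZ;\ZZ/2)\cong H^3(H\ZZ/2;\ZZ)$ really does send $\Sq^2\rho$ to $\beta\Sq^2$. Both identifications reduce to standard Ext computations over $\ZZ$, though the latter requires attention to the non-triviality of the mapping spectrum $\Hom(H\ZZ,H\ZZ)$ in negative degrees. Modulo these formal checks, the matching of homotopy groups and Postnikov $k$-invariants forces the desired equivalence $\Sigma^3(I_\ZZ[-3,\infty))\simeq\gbc$.
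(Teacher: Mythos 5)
Your proposal takes essentially the same route as the paper: compute $\pi_*$ from the Anderson-duality exact sequence, then use the contravariant self-equivalence $\Map(-,I_\ZZ)$ on finite-type spectra to transfer the non-triviality of the bottom two $k$-invariants of $\sph$ to those of $\Sigma^3(I_\ZZ[-3,\infty))$, and finally match these against the $k$-invariants of $\gbc$ in the small groups $\ZZ/2$ and $\ZZ/4$. One thing to make explicit in your last step: the duality argument only pins the second $k$-invariant down to a generator of $\ZZ/4\cong H^4(\pic_0^1(\KU);\ZZ)$ (via surjectivity of the restriction to $H^3(\Sigma H\ZZ/2;\ZZ)$), so you must also invoke the fact that the two generators yield equivalent Postnikov sections (Theorem \ref{thm:both k invariants work for picKU}) — the paper spells this out, while you only allude to it as the "$\ZZ/4$-ambiguity"; and the phrase "restricted to the Moore summand" should really read "restricted along the $1$-connective cover $\Sigma H\ZZ/2\to\sph[0,1]$", since $\sph[0,1]$ is a non-split extension.
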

\noindent This suggests a connection between invertible topological field theories (and deformations thereof) and bundles of truncated $\KU[0,2]$-lines.

\subsection{Conventions and Notation}

In this paper we will often work with $\infty$-categories (in the sense of \cite{htt}) of \textit{spectra} and \textit{$\infty$-groupoids}, which we denote by $\Sp$ and $\Spaces$ respectively. There has been some disagreement lately about an efficacious term for the objects of the $\infty$-category $\Spaces$. We find the term ``$\infty$-groupoid'' to be too long, the term ``anima'' to be unpleasant to pluralize, and the term ``space'' to be far too ambiguous. Therefore, going forward, we will use ``$h$-type'' to refer to these structures and propose this as an alternative to the terms listed above. We will still occasionally call them $\infty$-groupoids when we want to emphasize their use as $\infty$-categories in which all morphisms are invertible. When we use the term ``space'' we will specifically be referring to a compactly generated, weakly Hausdorff topological space, the category of which we will denote by $\Top$. Recall that if $E\in\Sp$ is a spectrum then it has an associated cohomology theory $E^\ast$. This cohomology theory is defined equally well on $\Spaces$ and $\Top$, so we will apply it to both sorts of objects without comment.

The term ``infinite loop space'' frequently denotes what, in the language of \cite{ha}, one might call ``grouplike $\EE_\infty$-monoids in $\Spaces$.'' Another commonly used term for such structures, and the one we will employ, is ``$\EE_\infty$-group.'' We emphasize however that being an $\EE_\infty$-group is a \textit{structure} rather than a property. If we wish to refer to an $h$-type which has an $\EE_\infty$-structure without concerning ourselves with whether or not it is grouplike, we will often say ``$\EE_\infty$-type'' instead of the unwieldy ``$\EE_\infty$-$h$-type.''

Many of our constructions will involve a field $\FF$. We will always assume that $\FF$ has a topology which is accounted for by these constructions, e.g.~vector bundles. We will occasionally wish to consider a field with its discrete topology, in which case we will say so.

We will often be interested in truncations and connective covers of spectra and $h$-types. For integers $n,m\geq 0$ and a spectrum or $h$-type $X$ we will write $X[n,m]$ for the $n$-connective cover of the $m$-truncation of $X$. Note the use of connective here, as opposed to \textit{connected}, which differs by 1. In the special, and ubiquitous, case that $X$ is $\Pic(R)$ or $\pic(R)$ for a commutative ring spectrum $R$, we will use the non-standard notation of Definition \ref{def:truncatedPicardthings}. We do this to keep the names of these frequently used objects compact. 

\subsection{Acknowledgements} 

The authors thank Tyler Lawson, Leon Liu, Natalia Pacheco-Tallaj, Eric Peterson (especially Eric Peterson) and Charles Rezk for helpful conversations regarding this material. 

\section{Background}

In this section we recall, for an $\EE_\infty$-ring spectrum $R$, the construction of the so-called \textit{Picard space} of $R$ along with various spectra and $h$-types which can be built from it. More detailed constructions of these can be found in \cite{abghr,abg}. 

\begin{defn}\label{defn:Picardthings}
Let  $R$ be an $\EE_\infty$-ring spectrum. Then we make the following definitions.
\begin{enumerate}
\item We write $\Pic(R)$ for the maximal $\infty$-groupoid in $\LMod_R(\Sp)$ spanned by modules which are invertible with respect to the tensor product over $R$. Recall that $\Pic(R)$ is an $\EE_\infty$-group with base point $R\in\Pic(R)$. We denote its infinite delooping by $\pic(R)$, i.e.~$\Omega^\infty\pic(R)\simeq\Pic(R)$.
\item We let $\GL_1(R)$ denote the pullback of the projection $\Omega^\infty R\to \pi_0(R)$ along the inclusion $\pi_0(R)^\times\hookrightarrow \pi_0(R)$. Recall that $\GL_1(R)$ is equivalent, as an $\EE_\infty$-group, to $\Omega\Pic(R)$. Equivalently, we could take $\BGL_1(R)$ to be the base point component of $\Pic(R)$. We denote the infinite deloopings of $\GL_1(R)$ and $\BGL_1(R)$ by $\gl_1(R)$ and $\bgl_1(R)$ respectively.
\end{enumerate}
\end{defn}

\begin{defn}\label{def:truncatedPicardthings}
Given an $\EE_\infty$-ring $R$, we will write $\Pic_n^m(R)$, respectively $\pic_n^m(R)$, for the $m$-truncated, $n$-connective covers of $\Pic(R)$ and $\pic(R)$ . In other words, $\Pic_n^m(R)$ and $\pic_n^m(R)$ are equivalent to $\Pic(R)$ and $\pic(R)$ respectively in homotopy degrees $n$ through $m$, and have trivial homotopy groups elsewhere. We will use $\bgl_1(R)[0,n]$ and $\BGL_1(R)[0,n]$ interchangeably with $\pic_1^n(R)$ and $\Pic_1^n(R)$.
\end{defn}

\begin{rmk}
Note that both $m$-truncating and taking $n$-connective covers determine symmetric monoidal functors $\Spaces\to\Spaces$ for all $n$ and $m$, so $\Pic_n^m(R)$ has an $\EE_\infty$-group structure induced by that of $\Pic(R$).
\end{rmk}

It will also be useful to have the following definition recorded here, though we will not make frequent use of it. This definition appears, e.g., in \cite{hopkinslurie-brauer}.

\begin{defn}\label{defn:BrauerSpace}
    Let $R$ be a commutative ring spectrum. Then $\LMod(R)$ is an $\EE_\infty$-algebra in the symmetric monoidal $\infty$-category of $\infty$-categories, $\Cat_\infty$, which in turn has its own category of modules. We write $\Br(R)$ for the $\EE_\infty$-group of invertible $\LMod(R)$-modules and equivalences between them. 
\end{defn}

\section{Non-triviality of $k$-invariants}

In this section we show that the $k$-invariants of $\pic_0^1(\KO)$, $\pic_0^1(\KU)$, $\gbrconn$ and $\gbcconn$ are non-trivial. This will follow from showing that these spectra can be modeled by Picard groupoids, or sheaves thereof, that have non-trivial symmetries.

\subsection{Picard groupoids and symmetries}
Recall that there is an equivalence between spectral 2-types, i.e.~spectra with homotopy groups only in degrees $n$ and $n+1$ for $n\in\ZZ$, and symmetric monoidal categories in which every morphism is invertible and every object has a tensor inverse, i.e.~Picard groupoids (see \cite{johnsonosornoPicard} or \cite[Proposition B.12]{HopkinsSinger-quadratic}). In particular, Picard groupoids are equivalent to spectra with non-trivial homotopy groups only in degrees 0 and 1. Given a Picard groupoid $\mathcal{P}$, the associated spectrum has $\pi_0$ equal to the group of isomorphism classes of objects of $\mathcal{P}$. By using the Picard groupoid structure one can show that every object of $\mathcal{P}$ has isomorphic group of automorphisms.  The first homotopy group of its associated spectrum can then be taken to be the group of automorphisms of any object of $\mathcal{P}$. 

The single $k$-invariant of the spectrum associated to $\mathcal{P}$ is determined by the symmetry natural isomorphism of the symmetric monoidal structure on $\mathcal{P}$. One can use the Picard groupoid structure of $\mathcal{P}$ to show that this natural isomorphism is in turn entirely determined by the symmetry isomorphism of the tensor square of the tensor unit. The possible such isomorphisms, hence isomorphism classes of symmetric monoidal structures, are given by $\Hom_{\mathrm{Ab}}(\pi_0\otimes\ZZ/2,\pi_1)\cong H^2(\pi_0\otimes\ZZ/2;\pi_1)$. The $k$-invariant of the associated spectrum is trivial exactly when this symmetry isomorphism is the identity, i.e.~the zero map $\ZZ/2\to \pi_1$.

In what follows, we show that the first $k$-invariants of $\pic_0^1(\KO)$ and $\pic_0^1(\KU)$ are non-trivial. These are both stable 2-types so it suffices to show that the symmetry isomorphisms of the associated Picard groupoids are non-trivial. In fact, we only need to consider the symmetry isomorphism of the sphere spectrum. Note that this data is determined at the level of homotopy categories.

\begin{lem}\label{lem:nontriviality of first pics k invariant}
The $k$-invariant of the Picard spectrum $\pic_0^1(\sph)$ is non-trivial.
\end{lem}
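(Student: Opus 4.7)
The plan is to apply the Picard-groupoid model of stable $2$-types recalled just above the lemma. By that correspondence, $\pic_0^1(\sph)$ is encoded by a Picard groupoid $\mathcal{P}$ whose isomorphism classes of objects form $\pi_0\pic_0^1(\sph)$ and whose automorphism group of any object is $\pi_1\pic_0^1(\sph)$. The first step is to identify both groups: the invertible $\sph$-modules up to equivalence are exactly the suspensions of $\sph$, giving $\pi_0\pic_0^1(\sph)\cong\ZZ$ with generator $[\Sigma\sph]$, and the automorphisms of any such object form $\pi_0\GL_1(\sph)\cong(\pi_0\sph)^\times\cong\ZZ/2$.

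By the preceding discussion, the $k$-invariant is determined by the symmetry natural isomorphism of $\mathcal{P}$. Since $\pi_0$ is cyclic on $[\Sigma\sph]$, the resulting homomorphism $\pi_0\otimes\ZZ/2\to\pi_1$ is pinned down by its value on $[\Sigma\sph]$, which is in turn computed by the swap automorphism
\[\tau\co \Sigma\sph\otimes_{\sph}\Sigma\sph\longrightarrow\Sigma\sph\otimes_{\sph}\Sigma\sph\]
viewed as an element of $\pi_1\cong\{\pm 1\}$. So the lemma reduces to showing $\tau\neq 1$.

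The computation of $\tau$ is the heart of the proof. Under the canonical equivalence $\Sigma\sph\otimes_{\sph}\Sigma\sph\simeq\Sigma^2\sph$, the swap is identified, via the Koszul sign rule, with the degree of the map $S^1\wedge S^1\to S^1\wedge S^1$ exchanging the two smash factors; this degree is $-1$. Hence $\tau$ is the non-trivial element of $(\pi_0\sph)^\times$, and the $k$-invariant of $\pic_0^1(\sph)$ is non-trivial.

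The only point needing genuine care is the justification that the braiding on $\Pic(\sph)$ really transports $\tau$ to the swap of $S^1\wedge S^1$, so that the Koszul sign genuinely applies. This is ultimately the statement that the smash product of spectra, and hence the tensor product on $\Pic(\sph)$, inherits its symmetry isomorphism from the smash product of pointed spaces, where the swap of two circles has degree $-1$. Once that is acknowledged, the rest of the argument is a direct invocation of the general Picard-groupoid formalism recalled just above the lemma.
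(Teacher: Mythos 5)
Your proof is correct and follows the same strategy as the paper's: reduce the $k$-invariant to the symmetry isomorphism on the tensor square of a generator, and compute it as the degree $-1$ swap on $S^1\wedge S^1$. In fact your account is more careful than the one in the paper. The paper's proof speaks of the symmetry map $\sigma\colon\sph\otimes\sph\to\sph\otimes\sph$ and asserts it is the stabilization of the swap on $S^1\wedge S^1$; but in any symmetric monoidal category the braiding on the monoidal unit's tensor square is the identity (a formal consequence of unit coherence), so $\tau_{\sph,\sph}\simeq\mathrm{id}$ and that statement as literally written cannot be right. What is actually needed is precisely what you supply: a generator of $\pi_0\Pic(\sph)\cong\ZZ$ is $[\Sigma\sph]\simeq[\Sigma^\infty S^1]$, the braiding on $\Sigma\sph\otimes_\sph\Sigma\sph$ is $\Sigma^\infty$ of the swap on $S^1\wedge S^1$ because $\Sigma^\infty$ is symmetric monoidal, and this swap has degree $(-1)^{1\cdot 1}=-1$, the nontrivial element of $\pi_1\Pic(\sph)\cong(\pi_0\sph)^\times\cong\ZZ/2$. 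So the $k$-invariant is nontrivial. Your explicit identification of $\pi_0\cong\ZZ$ and $\pi_1\cong\ZZ/2$ and the observation that a homomorphism $\pi_0\otimes\ZZ/2\to\pi_1$ is pinned down by its value on $[\Sigma\sph]$ are exactly the missing bookkeeping.
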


\begin{proof}
Let $\sigma\colon\sph\otimes\sph\to\sph\otimes\sph$ be the symmetry map of $\sph$ in $\Sp$. This map is the stabilization of the ``swap'' equivalence $S^1\wedge S^1\to S^1\wedge S^1$ in $\pi_2(S^2)^\times$, which is not homotopic to the identity. 
\end{proof}

\begin{lem}\label{lem:nontrivialityoffirstk}
The functors $\pic(\sph)\to\pic(\KO)$ and $\pic(\sph)\to\pic(KU)$ given by tensoring with $\KO$ and $\KU$ respectively, equivalently the maps induced by applying the Picard spectrum functor to the units $\sph\to \KO$ and $\sph\to\KU$, are surjective on $\pi_0$ and isomorphisms on $\pi_1$. 
\end{lem}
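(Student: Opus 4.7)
My plan is to compute the induced maps on $\pi_0$ and $\pi_1$ directly, using the standard identifications. For any $\EE_\infty$-ring $R$ we have $\pi_0\pic(R)\cong\Pic(R)$, the classical group of equivalence classes of invertible $R$-modules, and $\pi_1\pic(R)\cong\pi_0\GL_1(R)\cong(\pi_0 R)^\times$. Under these identifications, the map induced by the unit $\sph\to R$ sends $[\Sigma^n\sph]\mapsto[\Sigma^n R]$ on $\pi_0$ and is the induced map of unit groups $(\pi_0\sph)^\times\to(\pi_0 R)^\times$ on $\pi_1$.

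The $\pi_1$ statement is immediate: since $\pi_0\sph=\pi_0\KU=\pi_0\KO=\ZZ$, all three groups of units are $\{\pm 1\}\cong\ZZ/2$, and any ring map preserves $\pm 1$, so the induced map is the identity on $\ZZ/2$ and in particular an isomorphism.

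For $\pi_0$, I would invoke the well-known computations $\Pic(\sph)\cong\ZZ$, $\Pic(\KU)\cong\ZZ/2$, and $\Pic(\KO)\cong\ZZ/8$, each generated by the suspension of the ring itself. Combined with Bott periodicity $\Sigma^2\KU\simeq\KU$ and $\Sigma^8\KO\simeq\KO$, this identifies the unit-induced maps on $\pi_0$ with reduction modulo $2$ and modulo $8$ respectively, which are visibly surjective. The only substantive input is the determination of the Picard groups of $\KU$ and $\KO$, and specifically that every invertible module is a suspension; this is the main obstacle but is a well-established fact (see, e.g., \cite{gepnerlawson}), and the numerical identifications $\pi_0\Pic(\KU)\cong\ZZ/2$ and $\pi_0\Pic(\KO)\cong\ZZ/8$ are in any case already recorded in the splittings of $\GBC$ and $\GBR$ in the introduction, so I would simply cite them rather than reprove them.
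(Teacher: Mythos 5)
Your proposal is correct and takes essentially the same approach as the paper. Both arguments use the Gepner--Lawson determination that every invertible $\KO$- (resp.\ $\KU$-) module is a suspension of $\KO$ (resp.\ $\KU$) to get surjectivity on $\pi_0$, and both reduce the $\pi_1$ claim to the observation that the unit map is an isomorphism $\ZZ\to\ZZ$ on $\pi_0$ and hence an isomorphism on unit groups $\{\pm1\}$; the paper just spells out the identification $\pi_1\pic(R)\cong(\pi_0 R)^\times$ via the pullback description of $\gl_1(R)$ rather than citing it as standard.
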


\begin{proof}
We only prove the statement for $\KO$ as the proof for $\KU$ is identical. Recall from \cite{gepnerlawson} that the objects of $\pic(\KO)$ are precisely the shifts $\Sigma^i \KO$ for $0\leq i\leq 7$, using the eightfold periodicity of $\KO$. Each of these is covered by $\Sigma^i\sph$ for $0\leq i\leq 7$ under the given functor $\pic(\sph)\to\pic(\KO)$, so the functor induces a surjection on $\pi_0$. 

On $\pi_1$ this is precisely $\pi_0$ of the induced map $\gl_1(\sph)\to\gl_1(\KO)$. Recall that the unit map $\sph\to \KO$ is an isomorphism on $\pi_0$, for instance because it is a ring map $\ZZ\to\ZZ$. Now using the fact that $\gl_1(R)$ is the infinite delooping of the pullback of $\pi_0(R)\to\pi_0 R\xleftarrow{}\Omega^\infty R$ for any commutative ring spectrum $R$, we have that the induced map $\gl_1(\sph)\to\gl_1(\KO)$ is also an isomorphism $\pi_0$. 
\end{proof}

\begin{cor}
    The first $k$-invariants of $\pic(\KO)$ and $\pic(\KU)$ are non-trivial.
\end{cor}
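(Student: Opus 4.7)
The plan is to combine Lemmas \ref{lem:nontriviality of first pics k invariant} and \ref{lem:nontrivialityoffirstk} with the fact, established in the preamble to those lemmas, that the first $k$-invariant of a stable $2$-type is controlled entirely by the symmetry isomorphism $\sigma_{\mathbf{1}}$ of the tensor square of the unit object of the associated Picard groupoid. Concretely, passing to the $2$-truncation, $\pic_0^1(R)$ corresponds to a Picard groupoid whose $k$-invariant, as an element of $\Hom_{\mathrm{Ab}}(\pi_0\otimes\ZZ/2,\pi_1)$, is trivial if and only if $\sigma_{\mathbf{1}}$ is the identity automorphism of $\mathbf{1}\otimes\mathbf{1}$.

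First, I would note that the unit map $\sph\to R$ for $R\in\{\KO,\KU\}$ is a map of $\EE_\infty$-rings, so after applying $\pic(-)$ and truncating it induces a symmetric monoidal functor of Picard groupoids $F\co \mathcal{P}(\sph)\to\mathcal{P}(R)$ that sends the tensor unit $\sph$ to the tensor unit $R$. Since $F$ is symmetric monoidal, it intertwines the symmetry constraints, so it sends $\sigma_{\sph}$ to $\sigma_{R}$ under the map on automorphism groups $F_\ast\co \Aut_{\mathcal{P}(\sph)}(\sph)\to\Aut_{\mathcal{P}(R)}(R)$, which is exactly the map on $\pi_1$ of the corresponding Picard spectra.

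Next, I would invoke Lemma \ref{lem:nontriviality of first pics k invariant} to conclude that $\sigma_{\sph}$ is the nontrivial element of $\pi_1\pic(\sph)\cong\ZZ/2$, and then Lemma \ref{lem:nontrivialityoffirstk} to conclude that $F_\ast$ is an isomorphism on $\pi_1$. These two facts together force $\sigma_{R}=F_\ast(\sigma_{\sph})$ to be the nontrivial element of $\pi_1\pic(R)$, and therefore the first $k$-invariant of $\pic_0^1(R)$, hence of $\pic(R)$, is nontrivial.

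I do not expect a serious obstacle: the only subtle point is confirming that the identification of the first $k$-invariant with $\sigma_{\mathbf{1}}$ is natural with respect to symmetric monoidal functors of Picard groupoids, and this is immediate from the description recalled at the start of Section 3.1 (or directly from the standard equivalence between stable $2$-types and Picard groupoids used in \cite{johnsonosornoPicard}). The argument only uses surjectivity on $\pi_0$ in order to know that the tensor unit of $\mathcal{P}(R)$ really is hit by the tensor unit of $\mathcal{P}(\sph)$, which is automatic for the unit map of ring spectra, so the isomorphism on $\pi_1$ from Lemma \ref{lem:nontrivialityoffirstk} is doing essentially all of the work.
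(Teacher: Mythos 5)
Your overall strategy matches the paper's: combine Lemmas \ref{lem:nontriviality of first pics k invariant} and \ref{lem:nontrivialityoffirstk} via the functoriality of $k$-invariants under the unit maps $\sph\to\KO$, $\sph\to\KU$. But there is a genuine gap. You take literally the phrase ``the symmetry isomorphism of the tensor square of the tensor unit'' and build the argument around $\sigma_{\mathbf 1}\in\Aut(\mathbf 1\otimes\mathbf 1)\cong\pi_1$. This cannot work: by the unitor coherence $\rho_{\mathbf 1}=\lambda_{\mathbf 1}\circ\sigma_{\mathbf 1,\mathbf 1}$ together with $\lambda_{\mathbf 1}=\rho_{\mathbf 1}$, the automorphism $\sigma_{\mathbf 1,\mathbf 1}$ is \emph{always} the identity, and any homomorphism $\pi_0\otimes\ZZ/2\to\pi_1$ necessarily vanishes at $[\mathbf 1]=0$. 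So Lemma \ref{lem:nontriviality of first pics k invariant} does not say that $\sigma_\sph\in\Aut_{\mathcal{P}(\sph)}(\sph)$ is the nontrivial element of $\pi_1$; its proof in fact uses the swap on $S^1\wedge S^1$, i.e.\ the symmetry $\sigma_{X,X}$ for a \emph{generator} $X$ of $\pi_0\pic(\sph)\cong\ZZ$, not the unit.

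This is why your closing remark, that surjectivity on $\pi_0$ is only needed so that the tensor unit is hit and is therefore ``automatic,'' is wrong. The surjectivity in Lemma \ref{lem:nontrivialityoffirstk} is doing essential work: it guarantees that the induced map $\pi_0\pic(\sph)\otimes\ZZ/2\to\pi_0\pic(R)\otimes\ZZ/2$ is nonzero (an isomorphism of copies of $\ZZ/2$), so that a generator of $\pi_0\pic(\sph)$ hits a generator of $\pi_0\pic(R)\otimes\ZZ/2$. Only then does the naturality square of $k$-invariants, together with the nontriviality from Lemma \ref{lem:nontriviality of first pics k invariant} and the injectivity on $\pi_1$ from Lemma \ref{lem:nontrivialityoffirstk}, force the $k$-invariant of $\pic(R)$ to be nontrivial. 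Both conclusions of Lemma \ref{lem:nontrivialityoffirstk} are needed, not just the isomorphism on $\pi_1$.
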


\subsection{Chain Bundle Models of Topological $K$-theory}\label{sec:SegalGroupoid}

In \cite[Section 6]{BNV-differentialcoh} it is shown that the commutative ring spectrum $ku$ can be recovered as the homotopification of the sheaf of groupoids of complex vector bundles, suitably group completed. The constructions below are similar, but replace vector bundles with \textit{$\ZZ/2$-graded chain complexes} of vector bundles. This sheaf is still not homotopy invariant, so we must localize it with respect to $\mathbb{R}$ (i.e.~make it \textit{concordance invariant}). The resulting localized sheaf represents complex topological $K$-theory by \cite[Appendix I]{segalequivariant}. Moreover, since it is concordance invariant, the results of \cite{ADH-differential} imply that it is a constant sheaf determined by its value at the point, which we show to be $\ku$. This sheaf then has a subsheaf of \textit{invertible} $\ZZ/2$-graded chain complexes (with respect to the usual tensor product) which evaluates to $\gl_1(ku)\simeq\gl_1(\KU)$ on the point. 

These sheaves are slightly more complicated than those of \cite{BNV-differentialcoh} but give us better access to the $k$-invariants of $\gl_1(\KU)$ and therefore the $k$-invariants of $\pic(KU)$. Of course all of our arguments apply equally well to case of real $K$-theory. It is worth noting that the sheaf constructed in \cite{BNV-differentialcoh} returns $K(\CC)$ when evaluated at the point before localizing, whereas ours most certainly does not. We suspect that $K(\CC)$ however, or $K(\FF)$ in general for a discrete field $\FF$, can be recovered by a  variant of our construction using principal $\GL(\FF)$-bundles instead of vector bundles, i.e.~by always equipping $\FF$ with the discrete topology (cf.~Conjecture \ref{rmk:k invariants of K of discrete fields}).

Throughout this section, $Ch^{perf}_{\FF}\colon \Top^{op}\to \Gpd$ will denote the sheaf whose value at a space is the groupoid of bounded chain complexes of finite dimensional $\FF$-vector bundles on $X$ with homotopy classes of homotopy equivalences between them. We assume all the standard structures of this sheaf, e.g.~tensor products and direct sums. For our purposes it will be convenient to work with a slightly different sheaf, which we define below. 

\begin{defn}\label{def:DSVs} Let $\FF$ be a field. We begin by defining chain complexes of $\FF$ vector spaces which are graded by $\ZZ/2$. We call these \textit{differential super $\FF$-vector spaces} or DSVs for short.
\begin{enumerate}
    \item A differential super $\FF$-vector space is a $\ZZ/2$-graded $\FF$-vector space $V=V_0\oplus V_1$ equipped with two maps $d_0\colon V_0\to V_1$ and $d_1\colon V_1\to V_0$ such that $d_0d_1=d_1d_0=0$. A morphism of differential super $\FF$-vector spaces is a morphism of $\ZZ/2$-graded vector spaces which commutes with the differential. These data form a category which we denote by $\CG$.  

    \item If $V=(V_0\oplus V_1, d_0, d_1)$ and $W=(W_0\oplus W_1, d'_0,d'_1)$ are DSVs, we will write $V\otimes W$ for the DSV which is $(V\otimes W)_0=(V_0\otimes W_0)\oplus (V_1\otimes W_1)$ in degree zero and $(V\otimes W)_1=(V_1\otimes W_0)\oplus (V_0\otimes W_1)$ in degree one. The differentials are given in the usual way after reducing all indices modulo 2.
    
    \item For $V=(V_0\oplus V_1,d_0,d_1)$ a DSV, we define $H_0(V)=\ker(d_0)/\im(d_1)$ and $H_1(V)=\ker(d_1)/\im(d_0)$. We say that a morphism of DSVs is a quasi-isomorphism if it induces isomorphisms of these two homology groups.

    \item Let $V=(V_0\oplus V_1,d_0,d_1)$ and $W=(W_0\oplus W_1,d_0',d_1')$ be DSVs. Given two maps $f,g\colon V\to W$ of DSVs we say that a chain homotopy between $f$ and $g$ is a pair of maps $h_0\colon V_0\to W_1$ and $h_1\colon V_1\to W_0$ such that $f_0-g_0=d_1'h_0+h_1d_0$ and $f_1-g_1=d_0'h_1+h_0d_1$. If there is a chain homotopy between $f$ and $g$ we write $f\sim g$.
    
    \item We say that a map $f\colon V\to W$ of DSVs is a homotopy equivalence if there exists $g\colon W\to V$ and chain homotopies $f\circ g\sim id_{W}$ and $g\circ f\sim id_{V}$. 
    
    \item We write $\varepsilon\colon Ch^{perf}_\FF\to DSV_{\FF}$ for the functor which takes a bounded and finite dimensional $\FF$-chain complex $(E_\bullet,\partial)$ to the  DSV whose graded vector space is $E=\left(\oplus_{i}E_{2i}\right)\bigoplus\left(\oplus_{i}E_{2i+1}\right)$ and whose differential is the obvious restriction of $\partial$. 

    \end{enumerate}
\end{defn}

    We leave it to the reader to check the following lemmas which are standard arguments in homological algebra.

    \begin{lem}\label{lem:DSVquasiisos}
        A map $f\colon V\to W$ is a quasi-isomorphism of DSVs if and only if it is a homotopy equivalence.
    \end{lem}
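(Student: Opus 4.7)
The forward direction is immediate: chain homotopies pass to homology, since $dh + hd$ sends cycles to boundaries and hence acts as zero on $H_0$ and $H_1$. Thus any homotopy equivalence induces isomorphisms on both homology groups.

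For the converse, the plan is to establish a structural decomposition of every DSV, which is available because $\FF$ is a field. Pick complements $V_0 = \ker(d_0) \oplus C_0$ and $V_1 = \ker(d_1) \oplus C_1$, and further complements $\ker(d_0) = \im(d_1) \oplus H_0$ and $\ker(d_1) = \im(d_0) \oplus H_1$, so that $H_i$ maps isomorphically to $H_i(V)$. The differentials then restrict to isomorphisms $d_0|_{C_0} : C_0 \xrightarrow{\sim} \im(d_0)$ and $d_1|_{C_1} : C_1 \xrightarrow{\sim} \im(d_1)$. Letting $H(V)$ denote the sub-DSV spanned by $H_0 \oplus H_1$, equipped with zero differentials, and $A(V)$ the sub-DSV spanned by the remaining summands, this exhibits a direct-sum decomposition $V = H(V) \oplus A(V)$ in $\CG$.

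Next I would verify that $A(V)$ is chain-contractible: defining $h \co A(V) \to A(V)$ to be $(d_0|_{C_0})^{-1}$ on $\im(d_0)$, $(d_1|_{C_1})^{-1}$ on $\im(d_1)$, and zero on $C_0 \oplus C_1$, a direct computation yields $dh + hd = \mathrm{id}_{A(V)}$. As a consequence, the projection $p_V \co V \to H(V)$ killing $A(V)$ and the inclusion $i_V \co H(V) \hookrightarrow V$ satisfy $p_V \circ i_V = \mathrm{id}_{H(V)}$ on the nose and $i_V \circ p_V \simeq \mathrm{id}_V$ via the extension of $h$ by zero on $H(V)$, so they form mutually inverse homotopy equivalences.

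Finally, given a quasi-isomorphism $f \co V \to W$, one checks that the composite $\bar{f} := p_W \circ f \circ i_V \co H(V) \to H(W)$ agrees with the induced map on homology and is therefore an isomorphism of graded vector spaces, which (since both source and target have zero differential) is tautologically a homotopy equivalence. Setting $g := i_V \circ \bar{f}^{\,-1} \circ p_W$, the identities $p_W f i_V = \bar{f}$ together with $i_V p_V \simeq \mathrm{id}_V$ and $i_W p_W \simeq \mathrm{id}_W$ give $gf \simeq i_V p_V \simeq \mathrm{id}_V$ and $fg \simeq i_W p_W \simeq \mathrm{id}_W$, whence $f$ is a homotopy equivalence. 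The only genuine obstacle is arranging the splittings in the first step; once they are chosen the remaining verifications are bookkeeping.
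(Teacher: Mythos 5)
Your argument is correct, and since the paper leaves this lemma to the reader as a ``standard argument in homological algebra'' there is no printed proof to compare against. The decomposition $V = H(V) \oplus A(V)$ into a zero-differential summand carrying the homology plus a contractible summand is exactly the standard technique over a field, and your verification that $dh + hd = \mathrm{id}_{A(V)}$, that $p_V, i_V$ form a homotopy equivalence, and that $g = i_V \bar{f}^{-1} p_W$ is a homotopy inverse to $f$ all go through in the $\ZZ/2$-graded setting without incident; it is also worth noting explicitly (which you do implicitly) that $A(V)$ genuinely is a sub-DSV because $d_0(\im d_1)=0$ and $d_0(C_0)=\im d_0$, so the decomposition is one of DSVs and not merely of graded vector spaces.
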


    \begin{lem}
        The tensor product of DSVs defines a symmetric monoidal structure on $\CG$ with monoidal unit $(\FF\oplus 0, 0, 0)$. This tensor product distributes over the direct sum of DSVs.
    \end{lem}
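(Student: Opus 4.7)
The plan is to observe that $\CG$ is precisely the category of $\ZZ/2$-graded (equivalently, $2$-periodic) chain complexes of $\FF$-vector spaces, and that the tensor product of Definition \ref{def:DSVs}(2) is the standard super tensor product on this category. The symmetric monoidal structure with Koszul sign rule is completely classical, so the strategy is either to cite this or, for self-containment, to spell out just enough of the structure maps and verify that they are maps of DSVs. I would choose the latter, since the argument amounts to carefully tracking parities and is short once set up.

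Concretely, I would first write down the four structure maps explicitly. The associator $\alpha_{U,V,W}\colon (U\otimes V)\otimes W \to U\otimes (V\otimes W)$ is induced in each of the four bidegrees by the evident rebracketing from $\otimes_\FF$. The left and right unitors $\lambda_V\colon (\FF\oplus 0)\otimes V\to V$ and $\rho_V\colon V\otimes (\FF\oplus 0)\to V$ are induced from the unit isomorphisms of $\otimes_\FF$; since the differentials on $(\FF\oplus 0,0,0)$ are zero, they automatically commute with the tensor-product differentials. The braiding $\sigma_{V,W}\colon V\otimes W\to W\otimes V$ is given on homogeneous elements by $\sigma_{V,W}(v\otimes w) = (-1)^{|v||w|}\, w\otimes v$, where $|v|,|w|\in\ZZ/2$ are the parities; this is the only place a nontrivial sign enters, and it is forced on us by compatibility with the graded Leibniz rule $d(v\otimes w)=d_V v\otimes w+(-1)^{|v|}v\otimes d_W w$ that defines the tensor-product differential.

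The remaining verifications are routine. Each of $\alpha$, $\lambda$, $\rho$, $\sigma$ is checked to be a morphism of DSVs, which for $\sigma$ is precisely the graded Leibniz identity above; for the others no signs appear. The pentagon and triangle axioms reduce bidegree by bidegree to the corresponding coherence statements for $\otimes_\FF$. The hexagon axiom and the involutivity $\sigma_{W,V}\circ\sigma_{V,W}=\mathrm{id}$ both follow from the arithmetic identity $(-1)^{|v||w|}(-1)^{|w||v|}=1$ in $\ZZ/2$. Finally, distributivity over $\oplus$ reduces, again in each bidegree, to distributivity of $\otimes_\FF$ over $\oplus_\FF$. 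There is no substantive obstacle here: the only care required is to fix the Koszul sign convention at the outset so that both the differential on a tensor product and the braiding are correctly defined, and everything else is mechanical. This is exactly the ``standard argument in homological algebra'' the authors have in mind.
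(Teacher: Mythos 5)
Your proposal is correct and matches the paper's intent: the paper gives no proof at all, instead stating just before the lemma that ``we leave it to the reader to check the following lemmas which are standard arguments in homological algebra,'' and your write-up is precisely that standard Koszul-sign verification. In particular you have the right braiding $\sigma(v\otimes w)=(-1)^{|v||w|}w\otimes v$, which is the one implicitly used in the paper (it is consistent with the explicit symmetry $(v,w)\mapsto(w,(-1)^{nm}v)$ given for $\FHT(X)$ and needed for the symmetric monoidal functor $\ell$ of Proposition \ref{prop:FHTinCG}, and it is what makes the $k$-invariant computations later in the paper come out nontrivial).
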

    
    \begin{lem} The functor $\varepsilon$ is full, surjective on objects, and symmetric monoidal. Moreover, it takes quasi-isomorphisms to quasi-isomorphisms and chain homotopies to chain homotopies.
    \end{lem}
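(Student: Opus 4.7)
The plan is to verify the four assertions by standard homological arguments. The functor $\varepsilon$ acts on morphisms by $\varepsilon(f)_0 = \bigoplus_i f_{2i}$ and $\varepsilon(f)_1 = \bigoplus_i f_{2i+1}$; since $Ch^{perf}_\FF$ is a groupoid of chain homotopy classes of homotopy equivalences, I will work with DSV maps up to DSV chain homotopy throughout.

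For \emph{surjectivity on objects}, given a DSV $V = (V_0 \oplus V_1, d_0, d_1)$, I use a splitting $V_0 \cong \im(d_1) \oplus W$ (available because $\FF$ is a field) to construct a 3-term chain complex $E_\bullet$ with $E_0 = \im(d_1)$, $E_1 = V_1$, $E_2 = W$, and differentials $\partial_1$ the corestriction of $d_1$ to $\im(d_1)$, and $\partial_2 = d_0|_W$. The DSV axiom $d_1 d_0 = 0$ yields $\partial_1 \partial_2 = 0$, so $E_\bullet$ is a valid bounded chain complex, and by inspection $\varepsilon(E_\bullet) = V$.

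For \emph{symmetric monoidality} and for \emph{preservation of quasi-isomorphisms and chain homotopies}, the key point is that every DSV structure of interest is obtained from the chain-complex structure by collapsing according to parity. Concretely, $(E \otimes F)_n = \bigoplus_{i+j = n} E_i \otimes F_j$ splits by parity of $n$ exactly into the DSV tensor formula of Definition \ref{def:DSVs}(2); the Koszul signs depend only on parities so the symmetry constraint descends; $H_*(\varepsilon(E))$ is the parity-fold of $H_*(E_\bullet)$, so quasi-isomorphisms are preserved; and a chain homotopy $\{h_n\colon E_n \to F_{n+1}\}$ folds into DSV chain homotopy data $(h_0^\varepsilon, h_1^\varepsilon)$ satisfying the identity of Definition \ref{def:DSVs}(4). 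Each of these is a direct calculation from the definitions.

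The subtle step, which I expect to be the main obstacle, is \emph{fullness}. A DSV map $\varphi\colon \varepsilon(E) \to \varepsilon(F)$ has block components $\varphi^\alpha_{ij}\colon E_{2i+\alpha} \to F_{2j+\alpha}$ that can mix different degrees of the same parity, whereas $\varepsilon(g)$ is strictly block-diagonal for any chain map $g\colon E_\bullet \to F_\bullet$. The strategy is to use the commutation conditions $d_0^F \varphi_0 = \varphi_1 d_0^E$ and $d_1^F \varphi_1 = \varphi_0 d_1^E$, unpacked component-wise in the bidegree bigrading, to show that the off-diagonal pieces $\varphi^\alpha_{ij}$ with $i \ne j$ can be written as $\partial^F h + h \partial^E$ for a chain homotopy $h$ built from those very components (using, where necessary, splittings of kernels and images in the category of $\FF$-vector spaces). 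The diagonal pieces then assemble into a chain map $g\colon E_\bullet \to F_\bullet$ whose image $\varepsilon(g)$ is chain-homotopic (as a DSV map) to $\varphi$; working up to DSV chain homotopy as above, this exhibits $[\varphi]$ in the image of $\varepsilon$ on morphisms. The verification that $h$ is well-defined and that the diagonal chain map $g$ indeed commutes with $\partial^E, \partial^F$ is the main technical content.
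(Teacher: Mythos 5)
The paper offers no argument for this lemma --- it is explicitly left to the reader --- so there is nothing to compare your route against except the assertion that the claims are ``standard.'' Your treatments of surjectivity on objects (the three-term complex with $E_0=\im(d_1)$, $E_1=V_1$, $E_2=W$, using $d_0d_1=d_1d_0=0$ both to get $\partial_1\partial_2=0$ and to see that $d_0$ vanishes on $\im(d_1)$), of symmetric monoidality, and of the preservation of quasi-isomorphisms and chain homotopies are all correct: homology, tensor products, and homotopies fold along parity exactly as you describe, and these are surely the intended arguments.

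The fullness step, however, is not merely ``the main technical content'' --- the strategy cannot be carried out, because the off-diagonal components of a DSV map between folded complexes need not be null-homotopic. Take $E=F$ to be the complex with $E_0=E_2=\FF$ and all other terms and all differentials zero. Then $\varepsilon(E)$ has even part $\FF^2$, odd part $0$, and zero differential, so its DSV endomorphisms are all of $M_2(\FF)$, and every DSV chain homotopy is zero (the maps $h_0,h_1$ of Definition \ref{def:DSVs}(4) have zero source or target); by contrast, chain maps $E\to E$ are diagonal. Hence the automorphism $\left(\begin{smallmatrix}1&1\\0&1\end{smallmatrix}\right)$ is a homotopy equivalence of DSVs that is not chain-homotopic to $\varepsilon(g)$ for any chain map $g$, and your proposed expression of the off-diagonal piece as $\partial^F h+h\partial^E$ is impossible here since all differentials vanish. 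So either ``full'' must be read in a weaker sense (note that only essential surjectivity, not fullness, is actually invoked in the proof of Proposition \ref{prop:CGisKtheory}), or the claim needs a hypothesis neither you nor the authors are using; as written, no completion of your fullness argument can succeed.
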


    \begin{rmk}
        As with the classical case, one direction of Lemma \ref{lem:DSVquasiisos} depends on $\FF$ being a field. For a general ring, homotopy equivalences are strictly stronger than quasi-isomorphisms.
    \end{rmk}

\begin{defn}
Let $X$ be a space and $\mathbb{F}$ a field. We write $\CG(X)$ for the groupoid of bundles of DSVs on $X$ whose morphisms are chain homotopy classes of homotopy equivalences. Because bundles can be pulled back along maps of spaces this defines a functor $\CG\colon \Top^{op}\to \Gpd$. The tensor product and direct sum bundles are defined in the usual way. 
\end{defn}

   Going forward, many of our results will apply equally well to the functors $\CG$ and $Ch^{perf}_{\FF}$. We will therefore write $\scrF$ to denote either.  

The following lemma says that $\scrF$ is a sheaf of \textit{ring groupoids} in the sense of \cite{drinfeld-ringGroupoids}, which are a special case of the ring categories of \cite{laplaza-ringcats}. These are essentially commutative monoids in the category of Picard groupoids. Without loss of generality ring groupoids can always be assumed to have underlying \textit{strict} Picard groupoid.

\begin{lem}\label{lem: ring structure on CG}
The functor $\scrF$ is a presheaf of grouplike symmetric monoidal groupoids with respect to direct sum of chain bundles, a presheaf of symmetric monoidal groupoids with respect to tensor product of chain bundles, and the latter structure distributes over the former. Moreover, the natural transformation $\varepsilon(X)\colon Ch_{\FF}^{perf}(X)\to \CG(X)$ preserves this structure.
\end{lem}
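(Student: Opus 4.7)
The plan is to verify all of the claimed structures first at the fiber level (on the categories $\CG$ and $Ch^{perf}_{\FF}$ themselves) and then promote them to bundles and presheaves using the naturality of the underlying vector bundle constructions. Fiberwise, direct sum of DSVs is componentwise on the $\ZZ/2$-graded pieces with direct-sum differentials and unit the zero DSV; the tensor product is the one of Definition \ref{def:DSVs}(2) with unit $(\FF \oplus 0, 0, 0)$. Both carry associators, unitors, and symmetries inherited from the corresponding natural isomorphisms on $\ZZ/2$-graded vector spaces, with the tensor symmetry carrying the standard Koszul sign. The pentagon, hexagon, and unit axioms reduce to their analogues for graded vector spaces, and compatibility with differentials is immediate from the definitions. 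Distributivity of $\otimes$ over $\oplus$ is the usual distributor, and the corresponding statements for $Ch^{perf}_{\FF}$ are classical.

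To promote to bundles: given DSV bundles $\mathcal V$ and $\mathcal W$ on $X$, I would form $\mathcal V \oplus \mathcal W$ and $\mathcal V \otimes \mathcal W$ by applying the fiberwise constructions to the underlying graded vector bundles and then extending the differentials as bundle maps. Since $\oplus$ and $\otimes$ of vector bundles are strictly functorial in transition data, the fiberwise coherence isomorphisms globalize. Pullback along $f\co Y\to X$ commutes with both operations, so these structures assemble into presheaves of symmetric monoidal groupoids. That $\varepsilon$ respects all of this is essentially a check of definitions: the collapse $(E_\bullet) \mapsto \varepsilon(E_\bullet)$ commutes with $\oplus$ on the nose; the $\ZZ$-graded tensor product realizes the DSV tensor product after reducing degrees modulo $2$; and Koszul signs survive the reduction, so the symmetry is preserved.

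The step I expect to require the most care is the \textit{grouplike} claim for direct sum: at a point, a naive $\pi_0$ count of isomorphism classes yields $(\dim H_0, \dim H_1) \in \mathbb N^2$, which a priori is only a commutative monoid. To produce inverses inside $\scrF(X)$ one must exploit the coarser notion of morphism in play --- chain homotopy classes of homotopy equivalences, equivalently quasi-isomorphisms by Lemma \ref{lem:DSVquasiisos} --- under which any DSV bundle with fiberwise contractible total complex is isomorphic to the zero bundle. The task then becomes producing, functorially in $X$ and compatibly with the symmetric monoidal structure, a bundle $\mathcal V'$ such that $\mathcal V \oplus \mathcal V'$ is fiberwise contractible. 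Candidate constructions (shifts, acyclic envelopes built from a chosen splitting of the differential, or cones on identity maps) each carry their own obstructions, so this is the genuinely delicate part of the argument, with the remainder amounting to routine coherence bookkeeping.
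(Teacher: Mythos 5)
Your structural outline — verify coherences fiberwise, globalize to bundles via strict functoriality of $\oplus$ and $\otimes$ on transition data, check that $\varepsilon$ preserves everything on the nose — is exactly the "standard arguments for bundles extended to chain complexes" that the paper invokes without elaboration, so on those points you and the paper agree. The divergence, as you suspected, is at the grouplike claim, which you flag as unresolved. The paper disposes of it in one sentence: for any object $E_\bullet$ one can find $E'_\bullet$ such that $E_\bullet\oplus E'_\bullet$ is acyclic, and this is the stated reason the passage from vector bundles to DSV/chain bundles buys grouplikeness.

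Your skepticism about the candidate constructions is well founded, and the paper's one-liner is more fragile than it appears. Direct sum of chain complexes preserves homology: $H_\ast(E\oplus E') \cong H_\ast(E)\oplus H_\ast(E')$ contains $H_\ast(E)$ as a summand, so no choice of $E'_\bullet$ can make $E_\bullet\oplus E'_\bullet$ acyclic unless $E_\bullet$ already was. The shift and the cone on $\mathrm{id}_E$ are the right raw materials, but $\mathrm{Cone}(\mathrm{id}_E)$ is an \emph{extension} of $E$ by $E[1]$, not their direct sum, so it lives in the wrong monoid. What actually produces inverses in this story is concordance, not homotopy equivalence: as in Remark \ref{rmk: what does L do}, the DSV bundle on $X\times\Delta^1$ with fiber $\FF\oplus\FF$ and $d_0(t)=t\cdot\mathrm{id}$ interpolates between $(\FF,0)\oplus(0,\FF)$ and a contractible complex, so $[(\FF,0)]$ and $[(0,\FF)]$ become mutually inverse only in $\pi_0\LR\scrF$. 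That is precisely where Proposition \ref{prop:CGisKtheory} locates the group structure (Segal's Euler characteristic isomorphism with $KU^0$). So: your proposal and the paper's proof take the same route and stall at the same spot; you are right that this is the non-routine step, and the paper's stated resolution needs to be read as a claim about the localized sheaf $\LR\scrF$ (or, equivalently, the lemma's "grouplike" should be weakened to something recoverable after group completion/$\LR$), not about $\scrF(X)$ on the nose.
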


\begin{proof}
The lemma follows from standard arguments for bundles extended suitably to chain complexes. We only remark that the \textit{grouplike} structure of these groupoids, with respect to direct sum, comes from the fact that we have taken DSVs and chain complexes rather than vector bundles and for an object $E_\bullet$ of either type we can always find $E'_\bullet$ such that $E_\bullet\oplus E'_\bullet$ is acyclic.
\end{proof}

\begin{rmk}
In the usual construction of $K$-theory, it is necessary to take a group completion of the monoid, under direct sum, of vector bundles on $X$. Working with bundles of DSVs, or chain complexes, makes this unnecessary because there are already ``positive'' and ``negative'' classes.
\end{rmk}

\begin{rmk}\label{rmk:MFDrestrict}
It will be convenient to extend the codomain and restrict the domain of $\scrF$. By taking nerves there is an inclusion $\Gpd\subset \Spaces$ under which the grouplike symmetric monoidal structure of Lemma \ref{lem: ring structure on CG} makes $\scrF$ into a presheaf of connective spectra. The tensor product symmetric monoidal structure of Lemma \ref{lem: ring structure on CG} makes it into a sheaf of connective commutative ring spectra. For convenience, we will also restrict $\scrF$ to the subcategory of smooth manifolds and smooth maps $ \Mfd^{op}\subset\Top^{op}$.
\end{rmk}

\begin{lem}\label{lem: CG is a sheaf}
$\scrF$ is a sheaf of connective commutative ring spectra on $\Mfd$ with respect to coverings by families of open embeddings. 
\end{lem}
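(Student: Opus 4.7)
The plan is to verify \v{C}ech descent for $\scrF$ on open covers in $\Mfd$ at the level of underlying $h$-types, and then deduce the sheaf-of-connective-commutative-ring-spectra condition from it. Since the forgetful functor $\CAlg(\Sp_{\geq 0}) \to \Spaces$ preserves limits, and since the ring structure on $\scrF(U)$ established in Lemma \ref{lem: ring structure on CG} is natural in $U$, it is enough to show that the underlying presheaf of groupoids is a stack (i.e.\ a sheaf of $1$-truncated $h$-types).

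First I would verify descent for objects. An object of $\scrF(M)$ is a $\ZZ/2$-graded (respectively $\ZZ$-graded and bounded) vector bundle on $M$ equipped with a differential squaring to zero. Vector bundles on manifolds form a stack with respect to open covers, and the differential is a section of the associated Hom-bundle subject to the pointwise equation $d^2 = 0$. Since both the data and the equation are local, any descent datum on an open cover $\{U_i \to M\}$ with coherent comparison isomorphisms on overlaps glues, uniquely up to isomorphism, to an object of $\scrF(M)$.

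Second, and more delicately, I would verify descent for morphisms. Given $E, F \in \scrF(M)$, I need the presheaf $U \mapsto \Hom_{\scrF(U)}(E|_U, F|_U)$ to be a sheaf of sets. Underlying chain maps form a sheaf of abelian groups, being sections of a Hom-bundle subject to a local linear equation. The subtle point is that the morphism sets in $\scrF(U)$ are chain-homotopy classes of homotopy equivalences, so I must check: (i) if two chain maps $f, g$ are chain homotopic on each $U_i$, they are globally chain homotopic; and (ii) being a homotopy equivalence is a local condition. For (i), I would choose a smooth partition of unity $\{\rho_i\}$ subordinate to the cover and, given local chain homotopies $h_i$ on $U_i$, form the global map $h = \sum_i \rho_i h_i$; because the chain-homotopy equation is affine-linear in $h$, this $h$ witnesses a global chain homotopy between $f$ and $g$. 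For (ii), Lemma \ref{lem:DSVquasiisos} identifies homotopy equivalences with quasi-isomorphisms at the fiber level, and for bounded complexes of vector bundles on a paracompact space fiberwise quasi-isomorphism upgrades to a global chain homotopy equivalence, again via partition-of-unity constructions of homotopy inverses.

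The main obstacle is the morphism descent step, specifically the gluing of homotopies in (i): chain homotopies themselves do not assemble into a sheaf of sets in a naive way, but the \emph{existence} of a chain homotopy is preserved by averaging because the defining equation is affine. This reliance on partitions of unity is precisely why $\scrF$ is restricted to $\Mfd^{op}$ in Remark \ref{rmk:MFDrestrict}, rather than considered on all of $\Top^{op}$.
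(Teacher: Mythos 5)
Your reduction to the stack condition on the underlying presheaf of groupoids (via the limit-preserving forgetful functors out of $\CAlg(\Sp_{\geq 0})$) matches the final sentence of the paper's proof. From there the two arguments diverge: the paper invokes \cite[Lemma 3.5.3]{ADH-differential} to reduce descent to covers of Euclidean spaces and then simply asserts that the stack condition "follows immediately from the definitions, as bundles themselves are defined locally"; you instead verify descent head-on over an arbitrary manifold. Your morphism-descent step --- averaging local chain homotopies against a smooth partition of unity, which works because $f-g=d'h+hd$ is affine-linear in $h$ and the fiberwise differentials commute with multiplication by a scalar function --- is a genuine elaboration that the paper leaves entirely implicit, and you are right that it is exactly where the restriction to $\Mfd^{\op}$ earns its keep. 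That part is correct, and it is more careful than what is printed.

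Your object-descent step, however, quietly substitutes the wrong descent data. A descent datum for $\scrF$ over $\{U_i\to M\}$ consists of objects $E_i$ together with \emph{morphisms of the groupoid} $\scrF(U_{ij})$ --- that is, chain-homotopy classes of homotopy equivalences, with the cocycle identity holding only as an equality of homotopy classes over triple overlaps. Your gluing argument ("vector bundles form a stack, and the differential is a local section subject to a local equation") treats the transition data as honest isomorphisms of $\ZZ/2$-graded vector bundles intertwining the differentials. A homotopy equivalence need not be of that form --- it need not even preserve the graded ranks, as the equivalence $(\CC\xrightarrow{\mathrm{id}}\CC)\simeq 0$ from Lemma \ref{lem:DSVquasiisos} already shows --- so classical vector-bundle descent does not apply directly, and essential surjectivity of $\scrF(M)\to\lim\scrF(U_\bullet)$ needs a further argument (e.g.\ a partition-of-unity strictification of the transition equivalences together with the witnessing homotopies $h_{ijk}$, or an explicit Koszul-type gluing). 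Neither you nor, to be fair, the paper's one-line assertion supplies this. Since you correctly identify and solve the analogous issue on the morphism side, the partition-of-unity toolkit is plausibly what closes this gap too, but as written the object step is incomplete in a way the morphism step is not.
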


\begin{proof}
To prove that $\scrF$ is a sheaf, it suffices to show that, for a fixed manifold $M$, it satisfies descent on the ``little'' site of open submanifolds of $M$ (indeed by \cite[Lemma 3.5.3]{ADH-differential} it suffices to check only on Euclidean spaces). This follows immediately from the definitions, as bundles themselves are defined locally. Further, $\scrF$ is valued in \textit{grouplike symmetric monoidal} $\infty$-groupoids (i.e.~connective spectra) as a result of Lemma \ref{lem: ring structure on CG} and the fact that the inclusion $\Gpd\subset\Spaces$ is symmetric monoidal (with respect to the Cartesian product). It remains a sheaf even when valued in commutative ring spectra since limits of commutative ring spectra are computed on underlying spectra (cf.~\cite[3.2.2.5]{ha}).
\end{proof}

\noindent We recall the $\mb R$-invariantization functor from Definition 4.2.5 and Proposition 5.1.2 of \cite{ADH-differential}.

\begin{defn}
    Let $\Delta_{sm}^n$ denote the hyperplane in $\mathbb{R}^{n+1}$ spanned by points $(x_1,x_2,\ldots,x_{n+1})$ such that $\sum_{i=1}^{n+1}x_i=1$, also known as the \textit{smooth $n$-simplex}. Let $\Delta_{sm}^\bullet$ denote the cosimplicial manifold which is $\Delta^n_{sm}$ is degree $n$. Its coface maps $\Delta^n_{sm}\to\Delta^{n+1}_{sm}$ are the $n+2$ inclusions $\Delta^n_{sm}\to \Delta^{n+1}_{sm}$ given by the $n+2$ possible intersections of $\Delta^{n+1}_{sm}$ with the coordinate hyperplanes. The codegeneracies $\Delta^{n+1}_{sm}\to\Delta^{n}_{sm}$ are given by the $n$ possible ways of adding adjacent coordinates.
\end{defn}

\begin{defn}\label{defn:Rinvariant}
Let $\mathscr{E}\colon \Mfd^{op}\to\Top$ be a presheaf of $h$-types. Then define $\LR\mathscr{E}\colon \Mfd^{op}\to \Top$ to be the presheaf of $h$-types whose value at $X$ is given by $\LR\mathscr{E}(X)=|\mathscr{E}(X\times\Delta_{alg}^\bullet)|$.
\end{defn}

\begin{rmk}\label{rmk: what does L do}
If $a$ and $a'$ are objects of $\scrF(X)$ (thought of as a Picard groupoid), $b$ is an object of $\scrF(X\times\Delta^1)$, and we have isomorphisms $d_0(b)\cong a$ and $d_1(b)\cong a'$, where $d_0$ and $d_1$ are the face maps, then $a$ and $a'$ are equivalent in $\LR\scrF(X)$. Similar statements hold for the $\scrF(X\times\Delta^n)$ and the relevant higher face maps.  This has the effect of making $\scrF$ insensitive to the difference between a space $X$ and its ``stabilizations'' $X\times\mb R\simeq X\times\Delta^1$, $X\times\mb R^2\simeq X\times \Delta^2$, and so forth.
\end{rmk}

\begin{rmk}\label{rmk:LRsameasSegal}
In the Appendix of \cite{segalequivariant}, working with $Ch^{perf}_{\CC}$, Segal uses an equivalence relation to identify bundles which can be homotoped into one another along $X\times\mathbb{R}\cong X\times\Delta^1$. The above construction can be thought of actually inserting the homotopies (and homotopies between homotopies etc.)~between such bundles, forcing the sheaf to be insensitive to deformations of $X$. However, after taking connected components the resulting group is the same as Segal's (see Theorem \ref{prop:CGisKtheory} below). 
\end{rmk}

\begin{lem}
The presheaf $\LR\scrF$ is a sheaf. 
\end{lem}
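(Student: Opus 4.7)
The plan is to reduce the statement to two ingredients: first, that for each fixed simplicial degree $n$, the presheaf $\scrF_n := \scrF(-\times\Delta^n_{sm})$ is already a sheaf; and second, that the geometric realization $|\scrF_\bullet|$, computed levelwise in presheaves, retains the sheaf property. The formula $\LR\scrF(X)=|\scrF(X\times\Delta^\bullet_{sm})|$ then immediately identifies $\LR\scrF$ as a sheaf once both points are established.

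For the first ingredient, I would fix $n$ and an open cover $\{U_i\hookrightarrow M\}$ of a manifold $M$. Since $\Delta^n_{sm}$ is itself a manifold and the product of open embeddings is an open embedding, the induced family $\{U_i\times\Delta^n_{sm}\hookrightarrow M\times\Delta^n_{sm}\}$ is an open cover. Applying Lemma \ref{lem: CG is a sheaf} to this new cover yields
\[
\scrF(M\times\Delta^n_{sm})\simeq\lim_{[k]\in\Delta}\prod_{i_0,\ldots,i_k}\scrF(U_{i_0\ldots i_k}\times\Delta^n_{sm}),
\]
which is precisely the descent condition for $\scrF_n$.

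For the second ingredient, by \cite[Lemma 3.5.3]{ADH-differential} it suffices to verify descent on covers of Euclidean spaces by Euclidean open subsets. The required identity is the commutation
\[
\bigl|\,\lim_{[k]\in\Delta}\prod\scrF_\bullet(U_{i_0\ldots i_k})\bigr|\;\simeq\;\lim_{[k]\in\Delta}\prod\bigl|\scrF_\bullet(U_{i_0\ldots i_k})\bigr|
\]
of a sifted colimit with a Čech totalization. This is the content of the general framework of \cite[Prop.\,5.1.2]{ADH-differential}, where $\LR$ is shown to be a (left exact Bousfield) localization on the $\infty$-category of sheaves on $\Mfd$; in particular it carries sheaves to sheaves. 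Combined with the first ingredient, this gives the result.

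The main obstacle is precisely the interchange in the second step: geometric realization (a sifted colimit) and Čech totalization (a cosimplicial limit) do not commute in an arbitrary presheaf $\infty$-category. The reason it works here is the combination of two features, both exploited in \cite{ADH-differential}: the contractibility of the cosimplicial manifold $\Delta^\bullet_{sm}$, which makes $\LR$ an idempotent endofunctor whose essential image consists of $\mathbb{R}$-invariant sheaves, and the existence of good covers (by Euclidean opens with Euclidean intersections) on any smooth manifold, which one can use to truncate the Čech complex degree by degree and reduce the interchange to the finite-product case. Should one wish to avoid the black-box citation, the most direct route would be to replace an arbitrary cover by a good cover and use $\mathbb{R}$-invariance to collapse each Čech term before passing to the totalization.
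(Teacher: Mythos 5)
Your proof ultimately rests on the same external citation the paper uses: the paper's entire proof of this lemma is the single sentence ``See Remark 4.2.6 \cite{ADH-differential} or Proposition 2.6 of \cite{BNV-differentialcoh},'' so in the end you and the paper defer to the same source. What you add is a correct and useful unfolding of what that citation is actually doing. Your first ingredient is exactly right and is the easy part: $-\times\Delta^n_{sm}$ sends open covers by open embeddings to open covers by open embeddings, so each level $\scrF(-\times\Delta^n_{sm})$ inherits descent from Lemma \ref{lem: CG is a sheaf}. You also correctly isolate the only non-formal step, the commutation of the sifted colimit $|\,\cdot\,|$ with the cosimplicial \v{C}ech limit, and you correctly identify the two features of $\Mfd$ that make it go through (contractibility of $\Delta^\bullet_{sm}$, availability of good covers); this is precisely the content of the BNV/ADH results being cited. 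Two small remarks. First, your pointer to \cite[Prop.~5.1.2]{ADH-differential} rather than Remark 4.2.6 is a discrepancy of location, not substance. Second, your suggested ``direct route'' (collapse each \v{C}ech term $\scrF(U_{i_0\ldots i_k}\times\Delta^\bullet)$ via $\mathbb{R}$-invariance, then pass to the totalization) is the right idea but glosses over the fact that good covers of $\mathbb{R}^n$ are infinite, so the resulting \v{C}ech limit is genuinely infinite and the interchange with geometric realization still needs an argument (coskeletality/connectivity, as in BNV Prop.~2.6); it is a sketch, not a shortcut around the citation.
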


\begin{proof}
See Remark 4.2.6 \cite{ADH-differential} or Proposition 2.6 of \cite{BNV-differentialcoh}.
\end{proof}

\begin{prop}\label{prop:CGisKtheory}
There is a group isomorphism $\pi_0\LR \CGC(X)\cong KU^0(X)$ which is natural in $X$.
\end{prop}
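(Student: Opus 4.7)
The plan is to unfold the simplicial realization in $\LR\CGC(X)$ and then identify the resulting quotient with the description of $KU^0(X)$ given in the Appendix of \cite{segalequivariant}. Writing $\LR\CGC(X)= |\CGC(X\times\Delta_{sm}^\bullet)|$ as the geometric realization of a simplicial $\infty$-groupoid, its $\pi_0$ is the coequalizer of the face maps $\pi_0\CGC(X\times\RR)\rightrightarrows\pi_0\CGC(X)$ (using $\Delta_{sm}^1\cong\RR$). Thus $\pi_0\LR\CGC(X)$ is the set of quasi-isomorphism classes of DSV bundles on $X$ (equivalently chain-homotopy classes, by Lemma \ref{lem:DSVquasiisos}) modulo the concordance relation: $E\sim F$ whenever a DSV bundle on $X\times\RR$ restricts to $E$ and $F$ at two distinct heights.

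Next I would make two reductions. First, the linear rescaling family $(V_0,V_1,t d_0,d_1)$ for $t\in\RR$ is a DSV bundle on $X\times\RR$ (the identities $(t d_0)d_1=0=d_1(t d_0)$ are immediate), which exhibits every DSV as concordant to one with $d_0=0$, i.e.~to a 2-term chain complex $V_1\xrightarrow{d_1}V_0$ of complex vector bundles on $X$. Second, acyclic DSV summands are already contractible in $\CGC(X)$ by Lemma \ref{lem:DSVquasiisos}, so they vanish in $\pi_0\CGC(X)$ before applying $\LR$. Together these show that $\pi_0\LR\CGC(X)$ is naturally the set of 2-term complexes of complex vector bundles on $X$, modulo acyclic summands and concordance through a DSV bundle on $X\times\RR$.

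Finally, I would invoke Segal's Appendix: the quotient just described is exactly the group he proves is isomorphic to $KU^0(X)$, under the correspondence sending $V_1\xrightarrow{d_1}V_0$ to the virtual bundle $[V_0]-[V_1]\in KU^0(X)$. Naturality in $X$ follows from the naturality of pullback of bundles and is preserved at every step. The main obstacle is matching our concordance relation with Segal's: Segal allows concordance through a general bounded chain complex on $X\times\RR$, not just a 2-term one. However, any such complex can be folded into a DSV bundle via $\varepsilon$ and then linearly rescaled to 2-term form, producing an equivalent concordance in our sense; conversely, any DSV concordance can itself be rescaled to a 2-term DSV concordance, matching Segal's setup. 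This bidirectional bookkeeping is effectively the content of Remark \ref{rmk:LRsameasSegal}.
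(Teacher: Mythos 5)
Your argument diverges from the paper's and has a gap in the final matching step. The paper cites Segal's Proposition A.I for the statement $\pi_0\LR Ch^{perf}_\CC(X)\cong KU^0(X)$ --- this is literally what Segal proves, as he works with bounded $\ZZ$-graded chain complexes of vector bundles --- and then transfers to $\CGC$ via the folding functor $\varepsilon$: Segal's isomorphism is the Euler characteristic $\sum_i(-1)^i[E^i]$, which visibly factors as $\pi_0\LR Ch^{perf}_\CC(X)\xrightarrow{\pi_0(\LR\varepsilon)}\pi_0\LR\CGC(X)\to KU^0(X)$, and since $\pi_0(\LR\varepsilon)$ is surjective (because $\varepsilon$ is surjective on objects) and the composite is a bijection, both maps are bijections. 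You instead try to identify $\pi_0\LR\CGC(X)$ with a quotient set and match it directly to Segal's. The rescaling concordance $(V_0,V_1,td_0,d_1)$ is a pleasant observation not used in the paper, and it does show that every class has a representative with $d_0=0$.

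The concrete gap is in your last paragraph. After your reductions, the equivalence relation on $\pi_0\LR\CGC(X)$ is still concordance through \emph{arbitrary} DSV bundles on $X\times\RR$, which is not what Segal quotients by (he quotients $\ZZ$-graded complexes by concordance through $\ZZ$-graded complexes). To transport a DSV concordance to Segal's setting you would need a lift of the concordance bundle along $\varepsilon$ that is compatible with prescribed lifts of both endpoints; $\varepsilon$ is surjective on objects but nowhere near an equivalence of groupoids, and nothing in your sketch constructs such a compatible lift. Your ``conversely'' move --- rescaling the concordance bundle to two-term form --- also rescales its restrictions at the endpoints, so it only produces a two-term concordance between the \emph{already rescaled} representatives, not the original ones, and you would still have to splice in the rescaling homotopies (which are not two-term). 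Finally, you invoke Remark \ref{rmk:LRsameasSegal} as supplying this ``bidirectional bookkeeping,'' but that remark is stated for $Ch^{perf}_\CC$, not for $\CGC$, so it does not carry the weight you place on it. The paper's route through $\varepsilon$ sidesteps all of these comparison questions and is the cleaner way to close the argument.
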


\begin{proof}

First note that by \cite[3.2.3.1]{ha} and \cite[1.4.3.9]{ha} the simplicial colimits defining $\LR\CGC(X)$ and $\LR Ch^{perf}_{\FF}$ can be taken in spaces instead of in connective $\EE_\infty$-rings. Now, $\pi_0$ of a simplicial colimit depends only on the subdiagram (which is a coequalizer) involving the 0- and 1-simplices. From the definition of $\LR Ch^{perf}_{\CC}(X)$, $\pi_0$ of that coequalizer is the set of chain-bundles on $X$ up to concordance. Remark \ref{rmk:LRsameasSegal} therefore implies that \cite[Proposition A.I]{segalequivariant} (in which ``concordant" is called ``homotopic") gives an isomorphism $\pi_0\LR Ch_{\CC}^{perf}(X)\xrightarrow{\cong} \KU^0(X)$. 

This isomorphism is given by taking a bundle to its Euler characteristic. Specifically, a bundle of chain complexes $E_\bullet$ is taken to the alternating sum of the $K$-theory classes of each grade, $\Sigma_{i\in\ZZ}(-1)^i[E^i]$. This clearly factors through $\pi_0(\varepsilon(X))$ so that we have a composite isomorphism \[\pi_0\LR Ch_{\CC}^{perf}(X)\xrightarrow{\varepsilon}\pi_0\LR\CGC(X)\to KU^0(X)\] in which the last morphism forgets the differential. This implies that $\varepsilon$ is injective. We have already seen that it is surjective, so it is an isomorphism.  
\end{proof}

\begin{thm}\label{thm:LRDSV is ku}
There is an equivalence of commutative ring spectra $\LR\CGC(\ast)\simeq \ku $.
\end{thm}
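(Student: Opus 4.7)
The strategy is to combine Proposition~\ref{prop:CGisKtheory} with the main theorem of \cite{ADH-differential}: evaluation at a point gives an equivalence between $\mathbb{R}$-invariant sheaves of spectra on $\Mfd$ and the category $\Sp$ itself. Since $\LR\CGC$ is $\mathbb{R}$-invariant by construction (Definition~\ref{defn:Rinvariant}) and valued in connective commutative ring spectra (Lemmas~\ref{lem: ring structure on CG} and \ref{lem: CG is a sheaf}), ADH identifies it with the constant sheaf on the spectrum $E := \LR\CGC(\ast)$. In particular $\LR\CGC(X) \simeq \Map(\Sigma^\infty_+ X, E)$ for every manifold $X$, so $\pi_0 \LR\CGC(X) \cong E^0(X)$, and Proposition~\ref{prop:CGisKtheory} then yields $E^0(X) \cong KU^0(X)$ naturally in $X$.

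Next I would produce a comparison map of $\EE_\infty$-rings $\ku \to E$. Precomposing the symmetric monoidal natural transformation $\varepsilon\colon Ch^{perf}_\CC \to \CGC$ of Definition~\ref{def:DSVs} with the inclusion of complex vector bundles (placed in degree zero with zero differential), applying $\LR$, and evaluating at the point, gives a map of connective $\EE_\infty$-ring spectra out of $\LR \Vect_\CC(\ast)$. By \cite[Section~6]{BNV-differentialcoh} this source is equivalent to $\ku$ (the usual group-completion step is automatic for DSVs, per the remark following Lemma~\ref{lem: ring structure on CG}), yielding the desired map $\ku \to E$.

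To check that this map is an equivalence, I would evaluate both sides against spheres $X = S^n$ for $n \geq 0$. Using the splitting $E^0(S^n) \cong \pi_0 E \oplus \pi_n E$ and the analogous one for $KU^0$, the identification $E^0(S^n) \cong KU^0(S^n)$ pins down $\pi_n E \cong \pi_n KU$ in every non-negative degree. Naturality guarantees that this isomorphism is induced by the comparison map, so it is an equivalence of connective spectra; multiplicativity is automatic from $\varepsilon$ being symmetric monoidal with respect to tensor product (Lemma~\ref{lem: ring structure on CG}), upgrading the result to an equivalence of commutative ring spectra.

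The main technical obstacle is ensuring that the results of \cite{BNV-differentialcoh} and \cite{ADH-differential} carry enough monoidal structure for the above argument to land in $\EE_\infty$-ring spectra rather than merely spectra. Both are built from symmetric monoidal localizations so this should go through, but the careful bookkeeping of ring structures is the one genuinely delicate step beyond the $\pi_0$-level input of Proposition~\ref{prop:CGisKtheory}.
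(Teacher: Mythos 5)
Your proposal is correct and lands on the same key inputs as the paper's proof — \cite[Prop.~4.3.1, Prop.~I.2]{ADH-differential} to identify $\LR\CGC$ with a constant sheaf, and Proposition~\ref{prop:CGisKtheory} to identify $\pi_0\LR\CGC(X)$ with $KU^0(X)$ — but you then take a more constructive route through the second half. The paper simply invokes Brown representability: $E := \LR\CGC(\ast)$ and $\ku$ represent naturally isomorphic cohomology theories on manifolds, hence are equivalent. You instead manufacture an explicit $\EE_\infty$-map $\ku \to E$ by composing the inclusion $\Vect_\CC \hookrightarrow Ch^{perf}_\CC$ (in degree zero, zero differential) with $\varepsilon$, applying $\LR$, invoking \cite{BNV-differentialcoh} to identify the source with $\ku$, and then checking isomorphisms degreewise on spheres.

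The explicit-map route actually buys you something the paper's one-liner glosses over: Brown representability as stated only produces an equivalence of spectra, not of ring spectra, so upgrading to an $\EE_\infty$-equivalence needs a separate argument. You handle this from the start by building the comparison map symmetric monoidally, which is cleaner. Two places deserve more care, though. First, the parenthetical about group completion being ``automatic for DSVs'' is slightly misplaced: the source sheaf $\Vect_\CC$ is not grouplike, and what you want is that a symmetric monoidal map from a non-grouplike $\EE_\infty$-monoid to a grouplike one factors uniquely through the group completion — the target $\CGC$ being grouplike (per Lemma~\ref{lem: ring structure on CG}) is what does the work, not the source. Second, ``naturality guarantees that this isomorphism is induced by the comparison map'' is the one step where you are waving your hands: you need the triangle
\[
\begin{tikzcd}
\ku^0(X) \ar[r] \ar[dr,swap] & E^0(X) \ar[d,"\cong"] \\
 & KU^0(X)
\end{tikzcd}
\]
to commute, where the diagonal is the standard map and the vertical is the natural isomorphism from Proposition~\ref{prop:CGisKtheory}. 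This does hold — both the Segal isomorphism and your comparison map send a vector bundle placed in degree zero to its $K$-theory class (the Euler characteristic of a complex concentrated in one degree) — but it is exactly the compatibility you should spell out before concluding the map is an equivalence on $\pi_n$.
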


\begin{proof}
This follows from \cite[Proposition 4.3.1, Proposition I.2]{ADH-differential} and Proposition \ref{prop:CGisKtheory}.  Specifically, we know that evaluation at a point is an equivalence between concordance invariant sheaves of spectra on $\Mfd$ and spectra. The inverse of this equivalence is given by taking the constant sheaf. Moreover, given a spectrum $E$, the value of $\mathrm{const}(E)$ at a manifold $X$ is $\Map(\Sigma^\infty_+X,E)$.   As a result, $\LR\CGC(X)\simeq \Map(\Sigma^\infty_+X, \LR\CGC(\ast))$ and thus $\pi_0(X,\LR\CGC(\ast))\cong KU^0(X)$ for every manifold $X$, so $\LR\CGC(\ast)\simeq ku$ by Brown representability.
\end{proof}

The above arguments, along with those of \cite{segalequivariant}, can be repeated mutatis mutandis with chain complexes of $\mathbb{R}$-vector spaces rather than $\mb C$-vector spaces. This leads to:

\begin{thm}
There is an equivalence of commutative ring spectra $\LR\CGR(\ast)\simeq  \ko$.
\end{thm}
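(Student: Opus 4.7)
The plan is to repeat the proof of Theorem \ref{thm:LRDSV is ku} verbatim in the real setting, which requires only that we verify each ingredient has a real analogue. First I would establish the real analogue of Proposition \ref{prop:CGisKtheory}, namely a natural group isomorphism $\pi_0\LR\CGR(X)\cong KO^0(X)$. The entire chain of reasoning used in that proposition goes through: the simplicial colimit defining $\LR\CGR(X)$ can be computed at the level of $h$-types, $\pi_0$ depends only on the $0$- and $1$-simplex part of the coequalizer, and the identification with concordance classes of real DSV-bundles is the same bookkeeping argument as in the complex case. The key black box is the real version of \cite[Proposition A.I]{segalequivariant}; Segal's appendix is written in sufficient generality (or can be rerun with $\mathbb{R}$-vector spaces in place of $\mathbb{C}$-vector spaces) to yield $\pi_0\LR Ch^{perf}_{\RR}(X)\cong KO^0(X)$, and then the Euler characteristic factorization through $\varepsilon$ identifies the real DSV version with it.

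Next I would invoke the machinery of \cite[Proposition 4.3.1, Proposition I.2]{ADH-differential} exactly as in Theorem \ref{thm:LRDSV is ku}: evaluation at a point is an equivalence between concordance invariant sheaves of spectra on $\Mfd$ and spectra, with inverse given by the constant sheaf, whose value at a manifold $X$ is $\Map(\Sigma^\infty_+X,-)$. The sheaf-of-connective-commutative-ring-spectra structure on $\CGR$ established by the real analogues of Lemma \ref{lem: ring structure on CG} and Lemma \ref{lem: CG is a sheaf} ensures we stay in the category of commutative ring spectra throughout. Consequently \[\LR\CGR(X)\simeq \Map(\Sigma^\infty_+X,\LR\CGR(\ast)),\] and the natural isomorphism $\pi_0\LR\CGR(X)\cong KO^0(X)$ from the first step together with Brown representability forces $\LR\CGR(\ast)\simeq \ko$ as commutative ring spectra.

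The main (and essentially only) point requiring care is the real version of Segal's equivariant appendix result: one must confirm that his argument identifying concordance classes of chain-bundles with topological $K$-theory uses nothing about the complex structure that fails over $\RR$. Since Segal's proof proceeds by reducing to the classification of bundles via clutching and uses only the standard features of $Ch^{perf}$ shared by both fields, this is not a genuine obstacle but merely a check that should be flagged. Once that is in hand, every other lemma invoked (the ring-groupoid structure, the descent property, the symmetric monoidality of the nerve inclusion, and the properties of $\LR$) is stated for an arbitrary field $\FF$ and applies without modification.
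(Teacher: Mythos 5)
Your proposal matches the paper's intended argument. The paper supplies no separate proof for the real case, stating only that "the above arguments, along with those of \cite{segalequivariant}, can be repeated mutatis mutandis with chain complexes of $\mathbb{R}$-vector spaces rather than $\mathbb{C}$-vector spaces," which is precisely the checklist you have spelled out; your care in flagging the real version of Segal's Appendix A.I as the one nontrivial ingredient to verify is exactly where the burden lies.
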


\begin{cor}\label{cor:gl1DSVC gives gl1KU}
Consider the full symmetric monoidal subgroupoid $\gl_1\!\CGC(X)$ of $\CGC(X)$ on the objects which are invertible in the tensor product monoidal structure.  Then $\gl_1\!\CGC$ is a sheaf of connective spectra  and $\LR\gl_1\!\CGC(\ast)\simeq\gl_1(\ku)\simeq \gl_1(\KU)$.
\end{cor}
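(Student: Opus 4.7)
The plan has two parts corresponding to the two claims. For the sheaf property, Definition \ref{defn:Picardthings} constructs $\gl_1\colon\CAlg(\Sp)^{cn}\to\Sp^{cn}$ via a pullback, so it preserves arbitrary limits. Applying $\gl_1$ objectwise to the sheaf $\CGC$ of connective commutative ring spectra from Lemma \ref{lem: CG is a sheaf} immediately yields a sheaf of connective spectra $\gl_1\CGC$.

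For the identification $\LR\gl_1\CGC(\ast)\simeq\gl_1(\ku)$, the cleanest route is to commute $\gl_1$ past the sifted colimit defining $\LR$, producing $\LR\gl_1\CGC(\ast)\simeq\gl_1(\LR\CGC(\ast))\simeq\gl_1(\ku)$ by Theorem \ref{thm:LRDSV is ku}. To justify this commutation, I would use that sifted colimits of connective $\EE_\infty$-rings are computed on underlying connective spectra (the same fact invoked in the proof of Proposition \ref{prop:CGisKtheory}). Unwinding the pullback definition of $\gl_1$, the required compatibility reduces to the following: on underlying $\EE_\infty$-spaces, $\Omega^\infty\gl_1\CGC(X)\hookrightarrow\Omega^\infty\CGC(X)$ is the union of those connected components mapping to $\pi_0\CGC(X)^\times = KU^0(X)^\times$, and geometric realization of simplicial $h$-types commutes with restriction to a specified subset of components (such a restriction splits each level as a disjoint union). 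Invoking Theorem \ref{thm:LRDSV is ku} then identifies $\Omega^\infty\LR\gl_1\CGC(\ast)$ with the union of components of $\Omega^\infty\ku$ lying over $\{\pm 1\}\subset\ZZ$, which is precisely $\GL_1(\ku)$. Both spectra are connective, so this equivalence of $\EE_\infty$-spaces upgrades to an equivalence of connective spectra.

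The concluding equivalence $\gl_1(\ku)\simeq\gl_1(\KU)$ is automatic because $\gl_1$ depends only on the connective cover, and $\tau_{\geq 0}\KU\simeq\ku$. I expect the only subtle point to be the interchange of $\gl_1$ with the sifted colimit $\LR$; once this is reduced via the component-selection description above, it follows from the standard fact that sifted colimits in $\Spaces$ commute with coproduct decompositions along a discrete set.
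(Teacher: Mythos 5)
Your argument is essentially sound and follows the same basic strategy as the paper's proof: identify $\gl_1\!\CGC(X)\hookrightarrow\CGC(X)$ as the preimage of $\{\pm1\}\subset\ZZ$ under the graded-dimension map, and show that this "component selection" is preserved by the simplicial colimit defining $\LR$. The paper packages the component selection as a pullback square against the discrete sheaves $\ZZ/2(-)\hookrightarrow\ZZ(-)$ and cites a compatibility theorem of Rezk to see that the pullback survives realization; you instead argue directly that realization commutes with passing to a union of connected components, which amounts to the same observation. Where you genuinely diverge is the last step: the paper invokes the monoidality of $i_X$ together with Brown representability to upgrade the identification of $h$-types to an equivalence of spectra, whereas you want to produce the comparison map $\LR\gl_1\!\CGC(\ast)\to\gl_1(\LR\CGC(\ast))$ (which exists for any right adjoint applied to a colimit cone) and check it is an $\Omega^\infty$-equivalence. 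Your route is arguably cleaner, since conservativity of $\Omega^\infty$ on connective spectra then does the work without a representability argument.

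A few imprecisions worth tightening. First, the fact that sifted colimits of connective $\EE_\infty$-rings are computed on underlying connective spectra concerns the \emph{forgetful} functor, not $\gl_1$; since $\gl_1$ is a right adjoint it does not in general commute with colimits, and this fact does not by itself justify the commutation $\LR\gl_1\simeq\gl_1\LR$. Your component-selection argument is what actually carries the commutation, so the appeal to that fact should be repositioned (it is only needed to identify $\Omega^\infty\LR\CGC(\ast)$ with the realization $|\Omega^\infty\CGC(\Delta^\bullet)|$). Second, the assertion "$\pi_0\CGC(X)^\times=KU^0(X)^\times$" is false before localization for general $X$ --- the unlocalized sheaf does not compute $K$-theory; this is harmless here because you only evaluate on the contractible $\Delta^k$, where $\pi_0\CGC(\Delta^k)\cong\ZZ$, but it should be stated correctly. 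Third, for "realization commutes with restriction to a subset of components" to apply, you need the complementary components to also form a sub-simplicial object, i.e.\ that the graded dimension (Euler characteristic) is preserved by the face and degeneracy maps; this holds since these maps are pullbacks of bundles, but the point deserves a sentence.
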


\begin{proof}
The subgroupoid $\gl_1\!\CGC(X)$ is the full subgroupoid of $\CGC(X)$ on bundles $E_\bullet$ such that $\mathrm{dim}(E_0)-\mathrm{dim}(E_1)=\pm 1$. The same argument for Lemma \ref{lem: CG is a sheaf} shows that $\gl_1\!\CGC$ is a sheaf of connective spectra (but not ring spectra). Let $\mathbb{Z}(-)$ denote the sheaf whose value at $X$ is the discrete groupoid of continuous $\mathbb{Z}$-valued functions on $X$. Similarly let $\mathbb{Z}/2(X)$ be the discrete groupoid of continuous ${\pm1}$-valued functions on $X$. The inclusion $i_X\colon\gl_1\!\CGC(X)\hookrightarrow\CGC(X)$ fits (essentially by definition) into a pullback square of $h$-types
$$\begin{tikzcd}
\gl_1\!\CGC(X)\arrow[r,"i_X"]\arrow[d]&\CGC(X)\arrow[d] \\
 \mathbb{Z}/2(X)\arrow[r,hook]& \mathbb{Z}(X)
\end{tikzcd}$$
where the vertical maps send a DSV to its graded dimension. This extends to a levelwise pullback square of simplicial $h$-types
$$\begin{tikzcd}
\gl_1\!\CGC(X\times\Delta_{alg}^\bullet)\arrow[r,"i_X^\bullet"]\arrow[d]&\CGC(X\times\Delta_{alg}^\bullet)\arrow[d] \\
 \mathbb{Z}/2(X\times\Delta_{alg}^\bullet)\arrow[r,hook]& \mathbb{Z}(X\times\Delta_{alg}^\bullet).
\end{tikzcd}$$
Forgetting the monoidal structure and considering this as just a diagram of $h$-types, the diagram is still a pullback after applying $\LR$ (see \cite[Definition 1.1, Proposition 5.4]{Rezkcompatible}).  When $X=\ast$, this gives the pullback diagram of $h$-types
$$\begin{tikzcd}
\Omega^\infty\LR\gl_1\!\CGC(\ast)\arrow[r,""]\arrow[d]&\Omega^\infty\LR\CGC(\ast)\simeq\mathbb{Z}\times BU\arrow[d] \\
 \mathbb{Z}/2\arrow[r,hook]& \mathbb{Z}
\end{tikzcd}$$ exhibiting $\Omega^\infty\LR\gl_1\!\CGC(\ast)$ as $GL_1\ku$. Since $i_X$ is symmetric monoidal, there is a natural-in-$Y$ isomorphism of Abelian groups $\LR\gl_1\!\CGC(\ast)^0(Y)\cong Map(\Sigma_+^\infty Y, \LR\gl_1\!\CGC(\ast)) \cong ku^0(Y)^\times$  for any space $Y$ (where the right side has the tensor product Abelian group structure). Hence by Brown representability there is an equivalence of spectra $\LR\gl_1\!\CGC(\ast)\simeq\gl_1(\ku)$.
\end{proof} 

The same arguments apply to the real case, which gives Corollary \ref{cor:gl1DSVR gives gl1KO}. Our arguments also seem likely to apply in the case that $\mathbb{F}$ is a discrete field $\mathbb{F}$ which we codify in Conjecture \ref{rmk:k invariants of K of discrete fields}.

\begin{cor}\label{cor:gl1DSVR gives gl1KO}
Consider the full symmetric monoidal subgroupoid $\gl_1\!\CGR(X)$ of $\CGR(X)$ on the objects which are invertible in the tensor product monoidal structure.  Then $\gl_1\!\CGR$ is a sheaf of connective spectra  and $\LR\gl_1\!\CGR(\ast)\simeq\gl_1(\ko)\simeq \gl_1(\KO)$.
\end{cor}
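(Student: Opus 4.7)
The plan is to mirror the proof of Corollary \ref{cor:gl1DSVC gives gl1KU} above with $\CGR$ in place of $\CGC$ and $\ko$ in place of $\ku$ throughout. First I would identify $\gl_1\!\CGR(X)$ concretely as the full subgroupoid of $\CGR(X)$ on bundles $E_\bullet$ with $\dim(E_0)-\dim(E_1)=\pm 1$: the argument is field-independent, since an object invertible under tensor product must have graded dimension invertible in $\ZZ$. The same proof as Lemma \ref{lem: CG is a sheaf} then applies verbatim to show that $\gl_1\!\CGR$ is a sheaf of connective spectra (though, as in the complex case, not of ring spectra).

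Next I would reproduce the pullback square of $h$-types
\[
\begin{tikzcd}
\gl_1\!\CGR(X) \arrow[r] \arrow[d] & \CGR(X) \arrow[d] \\
\ZZ/2(X) \arrow[r,hook] & \ZZ(X)
\end{tikzcd}
\]
in which $\ZZ(X)$ and $\ZZ/2(X)$ are the discrete sheaves of continuous $\ZZ$- and $\{\pm 1\}$-valued functions, and the right-hand vertical map sends a DSV-bundle to its graded dimension. This extends to a levelwise pullback of simplicial $h$-types after pulling back along $X\times\Delta_{sm}^\bullet$, and $\LR$ preserves this pullback by \cite[Definition 1.1, Proposition 5.4]{Rezkcompatible}. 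Evaluating at $X=\ast$ and using the real analogue $\LR\CGR(\ast)\simeq\ko$ established just before the corollary, the same identification of $\Omega^\infty\ko\simeq\ZZ\times BO$ with the pullback over $\ZZ/2\hookrightarrow\ZZ$ shows that $\Omega^\infty\LR\gl_1\!\CGR(\ast)\simeq GL_1(\ko)$.

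Finally, because the inclusion $i_X\co\gl_1\!\CGR(X)\hookrightarrow\CGR(X)$ is symmetric monoidal with respect to tensor product, for any space $Y$ one obtains a natural isomorphism $\LR\gl_1\!\CGR(\ast)^0(Y)\cong\ko^0(Y)^\times$, and Brown representability yields $\LR\gl_1\!\CGR(\ast)\simeq\gl_1(\ko)$. The main (very mild) obstacle is simply to confirm that no step of the complex case was specific to $\CC$: the invertibility-by-graded-dimension criterion is purely algebraic, the pullback-preservation of $\LR$ is formal, and Segal's concordance theorem holds for real $K$-theory as well, so every ingredient transports cleanly.
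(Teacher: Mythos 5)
Your proposal is correct and matches the paper's approach exactly: the paper itself disposes of this corollary with the single remark ``The same arguments apply to the real case,'' deferring entirely to the proof of Corollary \ref{cor:gl1DSVC gives gl1KU}. You have simply spelled out that transposition (the graded-dimension criterion, the pullback over $\ZZ/2\hookrightarrow\ZZ$, preservation under $\LR$, and Brown representability), which is precisely what the paper intends.
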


\subsection{The Groupoid of $\ZZ/2$-graded Line Bundles}\label{sec:gradedlinebundles}
We now define a simpler sheaf that will map to $\CG$ and help us to understand its structure. The sheaves $\LR\mathscr{L}^\CC$ and $\LR\mathscr{L}^\RR$ that we describe here will end up being truncations of $\LR\gl_1\!\CGC\simeq \gl_1(KU)$ and $\LR\gl_1\!\CGR\simeq \gl_1(KO)$ after evaluating at the point. Some of the ideas of this section exist in \cite{freedOrientifolds} but we go a step further in relating these structures to the unit spectra of $K$-theory. This is in constrast to \cite[1.45]{freedOrientifolds} in which they are described as truncations of $\ko$ itself. Our results are arguably more conceptually satisfying given that these structures are in fact used for twisting both real and complex $K$-theory. Indeed, in \cite{freedOrientifolds}, Freed remarks that he does not have a conceptual reason for the appearance of $\ko$ in his constructions.

\begin{defn}\label{defn:FHT}
Let $X$ be a space, and $\mb F$ a (topological) field. Let $\FHT(X)$ be the groupoid of pairs $(\xi,n)$ where $\xi\colon E\to X$ is an $\mb F$-line bundle and $n\colon X\to \ZZ/2=\{0,1\}$ is a continuous function. The morphisms between $(\xi,m)$ and $(\xi',m')$ in $\FHT(X)$ will be the empty set if $n\neq m$ and the set of bundle isomorphisms otherwise. We will refer to $\FHT(X)$ as the groupoid of  $\FF$-superline bundles on $X$.
\end{defn}

\begin{defn}
Given two pairs $(\xi,n),(\xi',m)\in\FHT(X)$, we define a symmetric monoidal structure on $\FHT(X)$ by declaring that:
\begin{enumerate}
    \item The tensor product $(\xi,n)\otimes(\xi',m)$ is the object $(\xi\otimes\xi',n+m)$, where the tensor product of the left hand coordinate is the standard tensor product of principal bundles. 
    \item The symmetry isomorphism $(\xi,n)\otimes(\xi',m)\to (\xi',m)\otimes(\xi,n)$ is given on the fiber over $x$ by $(v,w)\mapsto (w, (-1)^{n(x)m(x)}v)$. 
\end{enumerate}
\end{defn}

\begin{rmk}
    The above definition is almost identical to the sheaf of graded $\mathbb{T}$-bundles $\mathcal{B}\mathbb{T}^{\pm}$ in \cite[Definition 2.1]{FHT1}. Their definition differs from ours only in their definition of the symmetry isomorphism which would be written in our notation as $(v,w)\mapsto (w, vn(x)m(x))$. This makes sense if we take $\ZZ/2=\{-1,1\}$, but then the second coordinate in the tensor product formula, i.e.~$n+m$, does not make sense.
\end{rmk}

\begin{lem}\label{lem: FHTX does not decompose}
The groupoid $\FHT(X)$ does not decompose as a product of symmetric monoidal groupoids.
\end{lem}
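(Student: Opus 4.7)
The plan is to show that the $k$-invariant of the Picard groupoid $\FHT(X)$ is non-trivial, which by the correspondence recalled at the start of Section 3 between Picard groupoids and stable 2-types precludes a product decomposition of $\FHT(X)$ as symmetric monoidal groupoids (in any case, rules out decomposition into $K(\pi_0,0)\times K(\pi_1,1)$-style factors).

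First I would identify the homotopy groups as a Picard groupoid: $\pi_0 \FHT(X) \cong \Pic_\FF(X) \oplus C(X, \ZZ/2)$, since an object $(\xi, n)$ is determined up to isomorphism by its underlying $\FF$-line bundle and its $\ZZ/2$-valued grading, and $\pi_1 \FHT(X) \cong C(X, \FF^\times)$, the group of bundle automorphisms of the tensor unit $(\underline{\FF}, 0)$. The $k$-invariant, as a map $\pi_0 \otimes \ZZ/2 \to \pi_1$, is determined by the symmetry isomorphisms on tensor squares. The key computation is for the object $(\underline{\FF}, 1)$ with $n$ the constantly-odd function: its tensor square is canonically identified with the unit $(\underline{\FF}, 0)$, and the symmetry isomorphism of Definition \ref{defn:FHT} acts on fibers by $(v, w) \mapsto (w, -v)$, which under this identification corresponds to the automorphism $-1 \in C(X, \FF^\times)$.

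Since $-1 \neq 1$ in $\FF^\times$ for $\mathrm{char}(\FF) \neq 2$ (in particular for $\FF = \RR, \CC$), the $k$-invariant is non-trivial, so the associated spectrum does not split as $H\pi_0 \vee \Sigma H\pi_1$ and $\FHT(X)$ does not decompose as a product of the obvious Eilenberg--MacLane factors. The main subtlety is ruling out more exotic product decompositions $\FHT(X) \simeq \mathcal{A} \times \mathcal{B}$ in which both factors carry mixed $\pi_0$/$\pi_1$ data: here one observes that the $k$-invariant canonically links the grading generator $1 \in C(X, \ZZ/2) \subset \pi_0$ to the canonical scalar $-1 \in \FF^\times$, and no natural splitting of the sheaf $C(-, \FF^\times)$ extracts $-1$ as a direct summand in a manner compatible with the Picard groupoid structure and functorial in $X$. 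This functoriality constraint is where the real obstruction lies, and handling it is the hard part of the argument; one expects to reduce to a concrete $X$ (e.g.\ $X = \ast$) where the combined group-theoretic and naturality obstructions can be exhibited directly.
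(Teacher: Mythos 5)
Your core observation — that the symmetry isomorphism of the tensor square of $(\underline{\FF},1)$ is multiplication by $-1\in C(X,\FF^\times)$, hence the $k$-invariant of the associated stable $2$-type is non-trivial — is exactly the content of the paper's proof. So the engine of your argument is correct and is the same sign obstruction the paper uses.

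Where you diverge is in how much you try to conclude from this. The paper's proof is deliberately narrow: it introduces the discrete symmetric monoidal groupoid $\ZZ/2(X)$ of $\ZZ/2$-valued functions, observes the natural projection $\FHT(X)\to\ZZ/2(X)$, and notes that the sign $(-1)^{n(x)m(x)}$ in the braiding prevents this projection from admitting a symmetric monoidal section (a section would have to carry the identity braiding of $\ZZ/2(X)$ to the $-1$ braiding of $\FHT(X)$, which is impossible). That is the entire proof, and it is what the lemma is really asserting: the evident projection does not split monoidally, so $\FHT(X)$ is not the product $\ZZ/2(X)\times\FHT(X)_0$ (equivalently, not a product of Eilenberg--MacLane factors). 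The remark immediately after the lemma makes explicit that this is the sense in which the statement is used: the argument shows the $k$-invariant is non-trivial.

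Your last paragraph, which worries about "exotic" product decompositions and concedes that handling them is the hard part you have not done, is an over-reading of the lemma. A non-trivial $k$-invariant by itself does not preclude all symmetric monoidal product decompositions (one factor could still carry the whole $k$-invariant), and for $X=\ast$ and $\FF=\CC$ one can even manufacture abstract decompositions of $\CC^\times$ that would give such a splitting. But the lemma is not claiming to rule those out; it is claiming, as the paper's one-line proof makes precise, that the canonical $\ZZ/2(X)$-projection does not split. Your $k$-invariant computation already establishes this, so if you rephrase your conclusion as "the projection $\FHT(X)\to\ZZ/2(X)$ has no symmetric monoidal section, equivalently $\FHT(X)$ is not a product of the form $\ZZ/2(X)\times\mathcal{B}$", you have a complete proof matching the paper's, and you can delete the final paragraph. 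The functoriality-in-$X$ argument you gesture at would be one way to strengthen the literal statement, but it is not needed for anything downstream.
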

\begin{proof}
Let $\mathbb{Z}/2(X)$ be the discrete symmetric monoidal groupoid of $\mathbb{Z}/2$-valued functions with the pointwise group structure. The sign in the second component of the symmetry isomorphism of $\FHT(X)$ prevents the natural projection $\FHT(X)\rightarrow\mathbb{Z}/2(X)$ from having a symmetric monoidal section.
\end{proof}

\begin{rmk}
The argument in the proof of Lemma \ref{lem: FHTX does not decompose}, along with \cite{johnsonosornoPicard}, also shows that the connective spectrum associated to $\FHT(X)$ has non-trivial $k$-invariant, though we will not need this fact.
\end{rmk}

\begin{rmk}
Definition \ref{defn:FHT} extends to a presheaf of grouplike symmetric monoidal groupoids on $\text{Top}$ whose domain, following Remark \ref{rmk:MFDrestrict}, we restrict to $\Mfd^{op}$ and whose codomain we extend to $\Sp_{\geq0}$. 
\end{rmk}

\begin{lem}
The presheaf $\FHT$ is a sheaf of connective spectra on $\Mfd$. 
\end{lem}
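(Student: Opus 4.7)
The plan is to reduce the sheaf condition for $\FHT$, viewed as a presheaf of connective spectra, to the sheaf condition on its underlying groupoid, and then observe that this underlying groupoid decomposes as a product of two well-known sheaves of groupoids.

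First I would note that, forgetting the symmetric monoidal structure, the groupoid $\FHT(X)$ is literally the product groupoid $\mathrm{Line}_{\FF}(X)\times C(X,\ZZ/2)$, where $\mathrm{Line}_{\FF}(X)$ is the groupoid of $\FF$-line bundles on $X$ and bundle isomorphisms, and $C(X,\ZZ/2)$ denotes the discrete groupoid of continuous $\ZZ/2$-valued functions on $X$. This follows directly from Definition \ref{defn:FHT}: by construction the set of morphisms between $(\xi,n)$ and $(\xi',m)$ is empty unless $n=m$, in which case it is the set of bundle isomorphisms $\xi\to\xi'$, and these two pieces of data are independent.

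Next I would invoke that both $\mathrm{Line}_{\FF}(-)$ and $C(-,\ZZ/2)$ are sheaves of groupoids on $\Mfd$. The former is classical, since line bundles and their isomorphisms are defined by local data and glue uniquely along open covers. The latter is immediate, since continuous $\ZZ/2$-valued functions are locally constant and locally constant functions satisfy the sheaf condition on the nose. As in the proof of Lemma \ref{lem: CG is a sheaf}, by \cite[Lemma 3.5.3]{ADH-differential} it suffices to check descent on the little site of a fixed manifold. The product of two sheaves of groupoids is again a sheaf of groupoids, so $\FHT$ is a sheaf of groupoids.

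To promote this to a sheaf of connective spectra, I would use that the grouplike symmetric monoidal structure on $\FHT$ coming from tensor product of line bundles and addition of $\ZZ/2$-valued functions makes $\FHT$ a presheaf valued in $\Sp_{\geq 0}$, together with the fact that the forgetful functor $\Sp_{\geq 0}\to\Spaces$ preserves limits (the inclusion of grouplike $\EE_\infty$-monoids into $h$-types is a right adjoint; see \cite[3.2.2.5]{ha} and surrounding discussion). Therefore the Čech descent condition for $\FHT$ as a presheaf of connective spectra is equivalent to the Čech descent condition for its underlying presheaf of $h$-types, which we have just established.

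The only subtle point, and hence the main thing to be careful about, is to confirm that the symmetric monoidal structure — in particular the non-trivial symmetry isomorphism twisted by $(-1)^{n(x)m(x)}$ — does not spoil the descent property. It cannot, precisely because the sheaf condition for a $\Sp_{\geq 0}$-valued presheaf is detected on underlying $h$-types, and the underlying $h$-type calculation is oblivious to the symmetric monoidal structure.
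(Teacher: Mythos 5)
Your proof is correct and takes essentially the same route as the paper: decompose the underlying sheaf of groupoids (you write it precisely as the product $\mathrm{Line}_{\FF}\times C(-,\ZZ/2)$, where the paper more loosely calls it a ``sum of presheaves''), then observe that the sheaf condition for the grouplike symmetric monoidal structure is detected on underlying $h$-types. Your justification via the limit-preserving, conservative forgetful functor $\Sp_{\geq 0}\to\Spaces$ is an equivalent rephrasing of the paper's appeal to the fact that limits of symmetric monoidal groupoids are computed in $\Cat$.
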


\begin{proof}
Because the inclusion $\Gpd\subset\Spaces$ is symmetric monoidal (and the category of connective spectra is equivalent to the category of grouplike $\EE_\infty$ $h$-types), it suffices to show that $\FHT$ is a sheaf of grouplike symmetric monoidal groupoids. The fact that it is a sheaf of groupoids, without symmetric monoidal structure, follows immediately from the fact that it decomposes as a sum of presheaves of groupoids which are clearly sheaves. The symmetric monoidal structure glues as well since limits of symmetric monoidal groupoids are computed in $\Cat$. The grouplike condition is certainly satisfied for each $X\in \Mfd$.
\end{proof}

\begin{prop}\label{prop:FHTinCG}
With respect to the symmetric monoidal structures given by bundle tensor product, there is a natural in $X$ symmetric monoidal functor $\ell(X)\colon\FHT(X)\to\CG(X)$ defined by taking $(\xi,n)$ to the chain bundle which has line bundle $\xi$ concentrated in degree $n$.
\end{prop}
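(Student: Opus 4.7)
My plan is to check in turn that $\ell(X)$ is a well-defined functor, then that it is symmetric monoidal, and finally that it is natural in $X$. On objects I send $(\xi,n)$ to the DSV-bundle whose fiber over $x\in X$ has underlying $\ZZ/2$-graded vector space $\xi_x$ placed in degree $n(x)$, with both differentials zero; continuity of $n$ (hence its local constancy on path components) ensures this is indeed a bundle. On morphisms, since $\FHT(X)$ has empty hom-sets between objects with distinct grading functions, only bundle isomorphisms $\xi\xrightarrow{\cong}\xi'$ lying over a common $n$ must be considered; such an isomorphism yields an honest isomorphism of the associated concentrated DSV-bundles, which is in particular a homotopy equivalence and so represents a morphism in $\CG(X)$. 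Composition is manifestly preserved.

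For the symmetric monoidal structure, the unit $(\underline{\FF},0)$ of $\FHT(X)$ is sent to the constant DSV-bundle with fiber $(\FF\oplus 0, 0, 0)$, which is the tensor unit of $\CG(X)$ by Definition \ref{def:DSVs}. The fiberwise DSV tensor product formulas $(V\otimes W)_0=(V_0\otimes W_0)\oplus(V_1\otimes W_1)$ and $(V\otimes W)_1=(V_1\otimes W_0)\oplus(V_0\otimes W_1)$ show that if $V$ is concentrated in degree $n$ and $W$ in degree $m$, then $V\otimes W$ is concentrated in degree $n+m\bmod 2$ with fiber $\xi\otimes\xi'$, providing a canonical isomorphism $\ell(X)(\xi,n)\otimes\ell(X)(\xi',m)\cong\ell(X)(\xi\otimes\xi',n+m)$. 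The coherence axioms for the resulting strong monoidal structure are inherited from those of $\CG(X)$.

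The crux of the argument is the matching of symmetries: the braiding on $\CG(X)$ is defined by the Koszul sign rule, so on homogeneous sections $v\in V_i$ and $w\in W_j$ it acts by $v\otimes w\mapsto (-1)^{ij}w\otimes v$. Restricted to the image of $\ell(X)$, fiberwise over $x\in X$ this becomes $(v,w)\mapsto (w,(-1)^{n(x)m(x)}v)$, which is exactly the symmetry isomorphism built into the definition of $\FHT(X)$ in Definition \ref{defn:FHT}. Naturality in $X$ is then immediate: both sheaves are defined by pullback of bundles (and, for $\FHT$, of the $\ZZ/2$-valued grading function), and $\ell$ is defined by a pointwise recipe that commutes with pullback strictly.

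I anticipate the main obstacle is this symmetry verification, but it really amounts to the observation that the sign $(-1)^{n(x)m(x)}$ appearing in Definition \ref{defn:FHT} is the very Koszul sign hard-wired into the tensor symmetry on DSVs; everything else is an unpacking of definitions plus the standard fact that a fiberwise construction on bundles yields a morphism of presheaves.
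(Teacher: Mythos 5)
Your proof is correct and follows essentially the same unpacking-the-definitions strategy as the paper, but is noticeably more thorough on the point that actually matters. The paper's proof reduces to trivial bundles on a path-connected space, checks that $\ell(X)$ preserves tensor products of objects (by the $n=m$ versus $n\neq m$ case split) and remarks that morphisms are similarly preserved — but it never explicitly verifies that the braiding isomorphisms match, which is what distinguishes a \emph{symmetric} monoidal functor from a merely monoidal one. You correctly identify this as the crux: the sign $(-1)^{n(x)m(x)}$ in Definition \ref{defn:FHT} is there precisely to agree with the Koszul sign rule implicit in the symmetric monoidal structure on DSVs, and your fiberwise computation makes that agreement explicit. (Do note that Definition \ref{def:DSVs} does not literally write down the braiding on $\CG$, so you are supplying the standard Koszul convention; this is certainly the intended one, as it is exactly what drives Lemma \ref{lem: FHTX does not decompose} and the $k$-invariant computations downstream.) Your treatment of well-definedness, the unit, and naturality under pullback are routine and correct, and your observation that only morphisms over a common $n$ need be considered is a clean way to dispose of the morphism-level check.
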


\begin{proof}
    It suffices to check for trivial bundles on a path connected space. One checks readily that $\ell(X)\left((\xi,n)\otimes (\xi',m)\right)\cong \ell(X)(\xi,n)\otimes\ell(X)(\xi',m)$. If $n=m$, so $n+m=0$, then we obtain the tensor product line bundle $\xi\otimes\xi'$ in degree 0 and the 0 line bundle in degree 1, with the zero differential between them. If $n\neq m$ then we have the reverse situation. The definition of the morphisms in $\FHT(X)$ makes it clear that their tensor product is similarly preserved by $\ell$. 
\end{proof}

\begin{thm}\label{thm:secondkinvariantofpicKU}
When $\mb F=\mb C$ the connective spectrum $\LR\FHTC(\ast)$ fits into a cofiber sequence $\Sigma^2H\mathbb{Z}\rightarrow \LR\FHTC(\ast)\rightarrow H\mathbb{Z}/2$ and its $k$-invariant is nontrivial.
\end{thm}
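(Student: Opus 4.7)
The plan is to construct the cofiber sequence by applying $\LR$ to a natural fiber sequence of sheaves arising from the grading map, identify the fiber term as $\Sigma^2 H\ZZ$ using the concordance-invariance machinery of Theorem~\ref{thm:LRDSV is ku}, and rule out the trivial $k$-invariant by exploiting the sign in $\FHTC$'s symmetric monoidal structure.

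The first step is to produce the cofiber sequence. The grading assignment $(\xi, n) \mapsto n$ is a morphism $\FHTC \to \ZZ/2$ of sheaves of grouplike symmetric monoidal groupoids whose fiber over $0$ is the subsheaf $\mathscr{L}_0^{\CC}$ of pairs $(\xi, 0)$, i.e., ungraded complex line bundles. Since $\LR$ is an exact localization (see, e.g., \cite[Proposition~5.4]{Rezkcompatible}) it preserves this fiber sequence, and evaluation at $\ast$ does as well, yielding $\LR\mathscr{L}_0^{\CC}(\ast) \to \LR\FHTC(\ast) \to H\ZZ/2$ (using $\LR\ZZ/2(\ast) \simeq H\ZZ/2$ because the constant sheaf $\ZZ/2$ is already concordance invariant). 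Next I would identify $\LR\mathscr{L}_0^{\CC}(\ast) \simeq \Sigma^2 H\ZZ$ by mirroring Theorem~\ref{thm:LRDSV is ku}: on any manifold $X$, $\pi_0\LR\mathscr{L}_0^{\CC}(X)$ classifies concordance classes of complex line bundles, which equals $H^2(X;\ZZ)$ naturally in $X$; combined with \cite[Proposition~4.3.1, Proposition~I.2]{ADH-differential}, this identifies the constant sheaf $\LR\mathscr{L}_0^{\CC}$ with $\Sigma^2 H\ZZ$, whose infinite loop space is $BU(1) = K(\ZZ, 2)$. Substituting yields the cofiber sequence $\Sigma^2 H\ZZ \to \LR\FHTC(\ast) \to H\ZZ/2$.

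For non-triviality, I would first compute $H^3(H\ZZ/2;\ZZ) \cong \ZZ/2$, generated by $\beta\circ\Sq^2$; so the $k$-invariant is either trivial or this class. To rule out triviality I would exploit the sign $(-1)^{nm}$ in the symmetry of $\FHTC$: for the odd super line $x \in \pi_0\FHTC(\ast) = \ZZ/2$, the self-braiding of $x \otimes x$ equals $-1 \in \pi_1\FHTC(\ast) = \CC^\times$, which by \cite{johnsonosornoPicard} classifies a non-trivial $k$-invariant of the Picard groupoid $\FHTC(\ast)$. I expect the main obstacle to be tracking this sign through the geometric realization $\LR$, which passes from $\pi_1 = \CC^\times$ (discrete) at the sheaf level to $\pi_2 = \ZZ$ (coming from $\pi_1$ of the topological $\CC^\times$) at the spectrum level. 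I would complete the argument by either a direct cocycle computation showing that the sign $-1 \in \CC^\times$ represents $\beta\circ\Sq^2$ in the spectral sequence computing $\pi_\ast|\FHTC(\Delta_{sm}^\bullet)|$, or by the comparison map $\LR\FHTC(\ast) \to \gl_1(\KU)$ from Corollary~\ref{cor:gl1DSVC gives gl1KU}: this map is an isomorphism on $\pi_0$ and $\pi_2$, identifying $\LR\FHTC(\ast)$ with the $2$-truncation $\gl_1(\KU)[0,2]$, whose non-trivial $k$-invariant can then be verified via Postnikov analysis of the units spectrum and the self-braiding of the odd line.
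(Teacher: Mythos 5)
Your construction of the cofiber sequence is correct and, once unwound, is the same as the paper's: the paper also uses the grading functor $(\xi,n)\mapsto n$ at each simplicial level to get a fiber sequence $\Sigma H\text{Top}(\Delta^k,\CC^\times)\to\FHTC(\Delta^k)\to H\ZZ/2$ and then realizes. Your presentation at the sheaf level, together with the identification $\LR\obFHT(\ast)\simeq\Sigma^2 H\ZZ$ via concordance classes of line bundles being $H^2(X;\ZZ)$ and the \cite{ADH-differential} machinery, is a clean repackaging of that. But the actual content of the theorem is the non-triviality claim, which is exactly where you flag the difficulty and then offer two gestures instead of an argument.

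Your second route is circular. In the paper's logical order, the non-triviality of the first $k$-invariant of $\gl_1(\KU)$ (Corollary \ref{cor:gl1KU has nontrivial first k invariant}, which is precisely the $k$-invariant of $\gl_1(\KU)[0,2]$) is \emph{derived from} Theorem \ref{thm:secondkinvariantofpicKU} together with the splitting of Corollary \ref{cor:gl1KUsplitting}. The independent Section 3.1 analysis via $\pic(\sph)\to\pic(\KU)$ controls the first $k$-invariant of $\pic(\KU)$, i.e.\ the map $H\ZZ/2\to\Sigma^2H\ZZ/2$ from $\pi_0$ to $\pi_1$ of $\pic$; it says nothing about the map $H\ZZ/2\to\Sigma^3H\ZZ$ from $\pi_0$ of $\gl_1(\KU)$ across the vanishing $\pi_1$ to $\pi_2$. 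So there is no ``Postnikov analysis of the units spectrum'' already available for you to invoke.

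Your first route is the right one, and what it is missing is the mechanism by which the \emph{discrete} sign $-1\in\CC^\times$ becomes the Bockstein after realization. The paper's argument is: at each simplicial level the $k$-invariant of the Picard groupoid $\FHTC(\Delta^k)$ is the nontrivial element of $H^2(H\ZZ/2;\text{Top}(\Delta^k,\CC^\times))\cong\Hom_{\mathrm{Ab}}(\ZZ/2,\text{Top}(\Delta^k,\CC^\times))$ corresponding to the constant function $-1$, so the last map of the cofiber sequence is a compatible family of such classes. Upon realizing, $\Sigma^2 H\mathrm{Sing}_\bullet(\CC^\times)$ becomes $\Sigma^2 H\CC^\times\simeq\Sigma^3 H\ZZ$ (since $\CC^\times\simeq B\ZZ$), and the realization of the levelwise constant map $\ZZ/2\to\mathrm{Sing}_\bullet(\CC^\times)$ is the topological group homomorphism $\ZZ/2\hookrightarrow U(1)$, whose induced map on Eilenberg--MacLane spectra is the Bockstein $\beta\co H\ZZ/2\to\Sigma H\ZZ$ (apply the short exact sequence $0\to\ZZ/2\to U(1)\xrightarrow{z\mapsto z^2}U(1)\to 0$). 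Hence the realized $k$-invariant is nonzero in $H^3(H\ZZ/2;\ZZ)\cong\ZZ/2$ and therefore $\beta\circ\Sq^2$. That identification, tracking the nontrivial class through the realized cofiber sequence, is the step your proposal does not supply and is what the paper's proof actually does.
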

\begin{proof}
Consider the simplicial colimit of Definition \ref{defn:Rinvariant} used to define $\LR\FHTC(\ast)$. The connective spectra $\FHTC(\Delta^k)$ appearing in this colimit have exactly two nonzero homotopy groups, i.e.~they are stable 2-types in the sense of \cite{johnsonosornoPicard} (cf.~the discussion preceding Remark \ref{lem:nontriviality of first pics k invariant}). These homotopy groups are determined by the group of isomorphism classes of objects of $\FHTC(\Delta^k)$ and by the automorphisms of any one of those objects. In this case we have that  $\pi_0(\FHTC(\Delta^k))\cong\mathbb{Z}/2$ since $\Delta^k$ is connected and $\pi_1(\FHTC(\Delta^k))=\text{Top}(\Delta^k,\CC^\times)$ (since that is the group of automorphisms of the trivial bundle on $\Delta^k$), where $\CC^\times$ is considered as a topological group with the usual subspace topology inherited from $\CC$. 

Again as a result of the analysis of \cite{johnsonosornoPicard}, we get that the $k$-invariant is determined by the element of $H^2(H\mathbb{Z}/2;\text{Top}(\Delta^k,\CC^\times))\simeq \text{Hom}_\text{Ab}(\mathbb{Z}/2,\text{Top}(\Delta^k,\CC^\times))$ corresponding to the function that assigns to an element in each isomorphism class the symmetry isomorphism of its tensor square. In this case this is the function that sends $n$ to the constant function on $\Delta^k$ with value $(-1)^{n^2}$. 

The ring $\ZZ/2$ can be thought of as a Picard groupoid with $\pi_0\cong \ZZ/2$ and $\pi_1=0$. Therefore there is a forgetful functor of Picard groupoids $(\xi,n)\mapsto n$, corresponding to a map of spectra $\FHTC(\Delta^k)\to H\ZZ/2$ whose fiber is the Eilenberg-MacLane spectrum of $\pi_1(\FHTC(\Delta^k))$. Therefore there is a cofiber sequence of simplicial spectra
\[\Sigma H\text{Top}(\Delta^\bullet,\mathbb{C}^\times)\rightarrow \FHTC(\Delta^\bullet)\rightarrow H\mathbb{Z}/2\rightarrow\Sigma^2 H\text{Top}(\Delta^\bullet,\mathbb{C}^\times)\] in which the third term is a constant simplicial diagram and the last map is (the amalgam of) the $k$-invariants just described. 

Note that the leftmost term is equivalent to $\Sigma H \mathrm{Sing}_\bullet(\CC^\times)$, the levelwise suspension of the Eilenberg-MacLane spectrum of the singular simplicial group of $\CC^\times$. Because $\Omega^\infty$ preserves geometric realizations we can compute the homotopy groups of this colimit in $\Spaces$, which are trivial except in $\pi_1$, where they are $\CC^\times$ (with the discrete topology). This implies that the colimit of the above cofiber sequence is the cofiber sequence $\Sigma  H\mathbb{C}^\times \rightarrow \LR\FHT(\ast)\rightarrow H\mathbb{Z}/2\rightarrow\Sigma^2H\mathbb{C}^\times$ with the last map being the element of $H^2(H\mathbb{Z}/2;\mathbb{C}^\times)=\text{Hom}_\text{Ab}(\mathbb{Z}/2,\mathbb{C}^\times)$ sending $n$ to $(-1)^{n^2}\in\mathbb{C}^\times$. Now $\mathbb{C}^\times\simeq B\ZZ$ so $H\mathbb{C}^\times\simeq \Sigma H\mathbb{Z}$. Thus the preceding cofiber sequence can be rewritten as $$\Sigma^2 H\mathbb{Z}\rightarrow \LR\FHTC(\ast)\rightarrow H\mathbb{Z}/2\rightarrow\Sigma^3H\mathbb{Z}$$ and the $k$-invariant is still nontrivial of course (and therefore represented by $\beta Sq^2$).
\end{proof}

\begin{rmk}
    Note that although $\FHTC(\Delta^k)$ has homotopy groups concentrated in degrees 0 and 1 for all $k$, its localization $\LR\FHTC(\ast)$ has homotopy groups concentrated in degrees 0 and 2. 
\end{rmk}

\begin{cor}\label{cor:gl1KUsplitting} When $\FF=\CC$ the spectrum $\gl_1(KU)$ splits as $$\LR\FHTC(\ast)\oplus\gl_1(KU)[3,\infty).$$
\end{cor}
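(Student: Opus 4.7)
The plan is to realize $\LR\FHTC(\ast)$ as the $2$-truncation of $\gl_1(\KU)$ via a natural comparison map, and then use this identification to split the canonical fiber sequence
$$\gl_1(\KU)[3,\infty)\ \longrightarrow\ \gl_1(\KU)\ \longrightarrow\ \gl_1(\KU)[0,2].$$

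First I would promote Proposition \ref{prop:FHTinCG} to a map of sheaves of connective spectra $\ell\colon \FHTC \to \gl_1\!\CGC$: any line bundle $\xi$ placed in a single homological grade is tensor-invertible as a DSV (its inverse being $\xi^{-1}$ in the same grade), so $\ell$ factors through the invertible subsheaf. Applying the concordance-invariantization $\LR$, evaluating at the point, and combining with Corollary \ref{cor:gl1DSVC gives gl1KU} gives a map of connective spectra
$$s\colon\ \LR\FHTC(\ast)\ \longrightarrow\ \gl_1(\KU).$$

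Next I would show that the composite $\bar s\colon \LR\FHTC(\ast)\to \gl_1(\KU)\to \gl_1(\KU)[0,2]$ is an equivalence. Both sides are $2$-truncated connective spectra with homotopy only in degrees $0$ and $2$: Theorem \ref{thm:secondkinvariantofpicKU} gives $\pi_0\LR\FHTC(\ast)\cong\ZZ/2$ and $\pi_2\LR\FHTC(\ast)\cong\ZZ$, while $\pi_0\gl_1(\KU)=(\pi_0\KU)^\times\cong\ZZ/2$, $\pi_1\gl_1(\KU)=\pi_1\KU=0$, and $\pi_2\gl_1(\KU)=\pi_2\KU\cong\ZZ$. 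It therefore suffices to check $\bar s$ is an isomorphism on $\pi_0$ and $\pi_2$. On $\pi_0$, the nontrivial grade $(\xi,1)$ is sent by $\ell$ to a DSV of super-dimension $-1$, which under the pullback square in Corollary \ref{cor:gl1DSVC gives gl1KU} is precisely $-1\in(\pi_0\KU)^\times$. On $\pi_2$, the generator of $\pi_2\LR\FHTC(\ast)$ arises, per the proof of Theorem \ref{thm:secondkinvariantofpicKU}, from the fundamental class of $\CC^\times\simeq S^1$ regarded as the automorphism group of a trivial line bundle at the point; under $\ell$ this loop maps to the standard embedding $U(1)\hookrightarrow \GL_1(\KU)$, which on $\pi_1$ is the familiar isomorphism $\ZZ=\pi_1 U(1)\xrightarrow{\cong}\pi_1 U\cong\pi_2\KU$ coming from Bott periodicity.

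Given that $\bar s$ is an equivalence, the composition $s\circ \bar s^{-1}\colon \gl_1(\KU)[0,2]\to \gl_1(\KU)$ is a section of the truncation map, so the displayed fiber sequence splits and we obtain the desired decomposition $\gl_1(\KU)\simeq \LR\FHTC(\ast)\oplus\gl_1(\KU)[3,\infty)$. The main obstacle I anticipate is the $\pi_2$ verification: an isomorphism on $\pi_0$ alone does not force the induced map $\ZZ\to\ZZ$ on $\pi_2$ to be a unit (even compatibility with the nontrivial $k$-invariant $\beta\Sq^2$ only forces it to be odd), so one must carefully track the generator of $\pi_1\CC^\times$ through both the simplicial realization defining $\LR$ and the pullback identification of Corollary \ref{cor:gl1DSVC gives gl1KU}, and recognize it as the Bott class.
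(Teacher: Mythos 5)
Your overall strategy coincides with the paper's: construct the map $\LR\FHTC(\ast)\to\gl_1(\KU)$ from Proposition \ref{prop:FHTinCG} and Corollary \ref{cor:gl1DSVC gives gl1KU}, reduce the splitting to showing that the composite $\LR\FHTC(\ast)\to\gl_1(\KU)\to\gl_1(\KU)[0,2]$ is an equivalence, and check this degreewise. The one genuine divergence, which you rightly flag as the crux, is the $\pi_2$ verification. Rather than tracking the generator of $\pi_1\CC^\times$ through the simplicial realization and identifying it with the Bott class via $U(1)\hookrightarrow\GL_1(\KU)$ (your plan), the paper invokes representability of concordance-invariant sheaves: since $\mathcal{F}(X)\simeq\Map(\Sigma^\infty_+X,\mathcal{F}(\ast))$ for such an $\mathcal{F}$, the $\pi_2$ statement is equivalent to saying that $\pi_0\LR\FHTC(S^2)\to\pi_0\LR\gl_1\!\CGC(S^2)$ is an isomorphism, and this follows because the generators of $ku^0(S^2)$ are represented by line bundles, hence lie in the image of $\LR\FHTC$, and a surjection between finitely generated abelian groups of the same isomorphism type is automatically bijective. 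Both routes succeed. Yours is more explicit and geometric, but it carries the burden of unwinding the identifications in both Theorem \ref{thm:secondkinvariantofpicKU} and the pullback square of Corollary \ref{cor:gl1DSVC gives gl1KU} simultaneously; the paper's representability argument reduces $\pi_2$ to a classical $\pi_0$ fact about line bundles on $S^2$ and sidesteps that bookkeeping entirely.
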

\begin{proof}
After applying $\LR$, the functor of Proposition \ref{prop:FHTinCG} induces a morphism of sheaves of commutative connective ring spectra. By Corollary \ref{cor:gl1DSVC gives gl1KU}, we have a map of spectra
\[\LR\FHTC(\ast)\rightarrow \LR\gl_1\CGC(\ast)\simeq\gl_1(KU)\] after evaluating at the point. This map is an isomorphism on $\pi_0$ because $\pi_0$ of these spectra can be computed by looking only at the bottom two levels of the simplicial diagram defining $\LR$ (i.e.~the coequalizer diagrams). It is of course an isomorphism on $\pi_1$ because both spectra have trivial homotopy in degree 1. Even further, it is an isomorphism on $\pi_2$. To see this, first note that whenever $\mathcal{F}$ is a concordance invariant sheaf there is an equivalence (as in the proof of Theorem \ref{thm:LRDSV is ku}) $\mathcal{F}(X)\simeq Map(\Sigma^\infty_+X,\mathcal{F}(\ast))$. Therefore the claimed isomorphism on $\pi_2$ would follow from an isomorphism $\pi_0\LR\FHTC(S^2)\simeq \pi_0\LR\gl_1\!\CGC(S^2)$. Since the inclusion of spaces induces an isomorphism $\pi_0\LR\gl_1\!\CGC(S^2)\rightarrow \pi_0\LR\CGC(S^2)$ and the latter group is $ku^0(S^2)$, the desired claim follows from the fact that both generators of $ku^0(S^2)$ are represented by line bundles and are therefore in the image of $\LR\FHTC$. Therefore the composite
\[\LR\FHTC(\ast)\rightarrow \gl_1(KU)\rightarrow\gl_1(\KU)[0,2]\]
is an equivalence, and the lemma follows.
\end{proof}

\begin{cor}\label{cor:gl1KU has nontrivial first k invariant}
The first $k$-invariant of $\gl_1(\KU)$ is non-trivial.
\end{cor}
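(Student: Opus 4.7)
The plan is to combine the splitting of Corollary \ref{cor:gl1KUsplitting} with the $k$-invariant computation of Theorem \ref{thm:secondkinvariantofpicKU}. The first $k$-invariant of $\gl_1(\KU)$ is, by definition, the $k$-invariant of the bottom two nonzero Postnikov stages, i.e.\ of the $2$-truncation $\tau_{\leq 2}\gl_1(\KU)$, since $\pi_1\gl_1(\KU)=0$ and the first nontrivial homotopy groups appear in degrees $0$ and $2$.

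First I would observe that since $\gl_1(\KU)[3,\infty)$ is by construction $3$-connective, its contribution to $\tau_{\leq 2}\gl_1(\KU)$ is trivial. Applying $\tau_{\leq 2}$ to both sides of the equivalence
\[\gl_1(\KU)\simeq \LR\FHTC(\ast)\oplus \gl_1(\KU)[3,\infty)\]
from Corollary \ref{cor:gl1KUsplitting}, and using that $\LR\FHTC(\ast)$ already has homotopy concentrated in degrees $0$ and $2$ by Theorem \ref{thm:secondkinvariantofpicKU}, I would conclude that $\tau_{\leq 2}\gl_1(\KU)\simeq \LR\FHTC(\ast)$ as spectra.

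Finally, I would note that the first $k$-invariant of $\gl_1(\KU)$ is, tautologically, the $k$-invariant classifying the Postnikov extension
\[\Sigma^2H\ZZ \longrightarrow \tau_{\leq 2}\gl_1(\KU)\longrightarrow H\ZZ/2,\]
which under the equivalence above becomes the sequence
\[\Sigma^2H\ZZ \longrightarrow \LR\FHTC(\ast)\longrightarrow H\ZZ/2\]
of Theorem \ref{thm:secondkinvariantofpicKU}. Since that theorem identifies its $k$-invariant as the nonzero class $\beta\Sq^2\colon H\ZZ/2\to \Sigma^3H\ZZ$, the first $k$-invariant of $\gl_1(\KU)$ is non-trivial.

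There is essentially no obstacle here: all the work has been done in proving the splitting and in computing the $k$-invariant of $\LR\FHTC(\ast)$. The corollary is just the observation that a $3$-connective summand cannot affect the bottom $k$-invariant, so the nontriviality transfers immediately from $\LR\FHTC(\ast)$ to $\gl_1(\KU)$.
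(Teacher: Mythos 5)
Your argument is correct and is precisely the paper's implicit reasoning; the paper simply states the corollary without a written proof, treating it as an immediate consequence of Theorem \ref{thm:secondkinvariantofpicKU} and Corollary \ref{cor:gl1KUsplitting}. Your observation that the $3$-connective summand $\gl_1(\KU)[3,\infty)$ cannot affect the $2$-truncation, so that $\tau_{\leq 2}\gl_1(\KU)\simeq\LR\FHTC(\ast)$ and the nontrivial $k$-invariant transfers, is exactly the intended deduction.
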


\noindent Similar arguments prove the analogous statements for $\LR\mathscr{L}^\RR$:

\begin{thm}\label{thm:secondkinvariantofpicKO}
When $\mb F=\RR$ the connective spectrum $\LR\FHTR(\ast)$ fits into a cofiber sequence $\Sigma H\ZZ/2\rightarrow \LR\FHTR(\ast)\rightarrow H\mathbb{Z}/2$ and its $k$-invariant is nontrivial.
\end{thm}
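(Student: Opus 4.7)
The plan is to mimic the proof of Theorem \ref{thm:secondkinvariantofpicKU} with $\FF=\RR$ in place of $\CC$, being careful to track how the different homotopy type of $\RR^\times$ changes the left-hand term of the cofiber sequence. The first step will be to observe that for each $k$ the spectrum $\FHTR(\Delta^k)$ is again a stable $2$-type, since $\Delta^k$ is connected so isomorphism classes of real line bundles are classified by $\pi_0\cong\ZZ/2$ and the automorphism group of the trivial bundle is $\pi_1\cong\Top(\Delta^k,\RR^\times)$. The $k$-invariant of $\FHTR(\Delta^k)$ is still computed, via \cite{johnsonosornoPicard}, from the symmetry of the tensor square, which sends $n\in\ZZ/2$ to the constant function with value $(-1)^{n^2}\in\Top(\Delta^k,\RR^\times)$; this is nonzero because $-1$ is not in the identity component of $\RR^\times$.

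Next I would assemble these stable $2$-types into the simplicial cofiber sequence
\[\Sigma H\Top(\Delta^\bullet,\RR^\times)\rightarrow \FHTR(\Delta^\bullet)\rightarrow H\ZZ/2\rightarrow \Sigma^2 H\Top(\Delta^\bullet,\RR^\times),\]
in which the third term is constant and the last map is the amalgam of the $k$-invariants just identified. Realizing this levelwise and using that $\Omega^\infty$ preserves geometric realizations, the leftmost term becomes $\Sigma H\,\mathrm{Sing}_\bullet(\RR^\times)$. At this point the key calculation is that the topological group $\RR^\times$ is homotopy equivalent, via the sign map, to the discrete group $\ZZ/2$, so $H\,\mathrm{Sing}_\bullet(\RR^\times)\simeq H\ZZ/2$; note that in contrast with the complex case, where $\CC^\times\simeq B\ZZ$ produced an extra suspension, no such shift occurs here. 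Combining these ingredients yields a cofiber sequence of spectra
\[\Sigma H\ZZ/2\rightarrow \LR\FHTR(\ast)\rightarrow H\ZZ/2\rightarrow \Sigma^2 H\ZZ/2\]
whose last map, by the same tensor-square computation, corresponds to the nonzero element of $H^2(H\ZZ/2;\ZZ/2)\cong\ZZ/2$, i.e.~$\Sq^2$.

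I expect the only nontrivial step to be the identification of $H\,\mathrm{Sing}_\bullet(\RR^\times)$ with $H\ZZ/2$; the remaining ingredients are all direct translations of the complex argument, and the nontriviality of the $k$-invariant is automatic from the fact that the symmetry $(-1)^{n^2}$ detects the non-identity component of $\RR^\times$, which is exactly the summand of $\pi_1$ that survives after localization.
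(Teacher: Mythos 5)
Your proof is correct and takes the same approach the paper intends: the paper omits this proof entirely, remarking only that ``similar arguments'' to the proof of Theorem \ref{thm:secondkinvariantofpicKU} apply, and your argument fills in precisely those details. You correctly isolate the one substantive change---$\RR^\times$ deformation retracts onto $\{\pm 1\}\cong\ZZ/2$ rather than being equivalent to $B\ZZ$, so $|H\,\mathrm{Sing}_\bullet(\RR^\times)|\simeq H\ZZ/2$ with no extra suspension---and your nontriviality argument is sound (in fact cleaner than the complex case, since $-1$ lies in the nonidentity component of $\RR^\times$ and so its class visibly survives the passage to $\pi_0(\RR^\times)\cong\ZZ/2$ under the map $\mathrm{Sing}_\bullet(\RR^\times)\to\mathrm{const}(\pi_0\RR^\times)$, which realizes to an equivalence of Eilenberg--MacLane spectra).
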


\begin{cor}\label{cor:gl1KOsplitting} 
When $\FF=\RR$ the spectrum $\gl_1(\KO)$ splits as $$\LR\FHTR(\ast)\oplus\gl_1(\KO)[2,\infty).$$
\end{cor}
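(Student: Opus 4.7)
The plan is to imitate the proof of Corollary \ref{cor:gl1KUsplitting} with the real case substituted throughout. Applying Proposition \ref{prop:FHTinCG} with $\FF=\RR$ (noting that every object of $\FHTR(X)$ is tensor-invertible), passing to $\LR$, evaluating at a point, and using Corollary \ref{cor:gl1DSVR gives gl1KO}, one obtains a map of spectra
$$\varphi\colon \LR\FHTR(\ast)\longrightarrow \gl_1(\KO).$$
By Theorem \ref{thm:secondkinvariantofpicKO} the source is a stable $1$-type with $\pi_0=\pi_1=\ZZ/2$, so it suffices to verify that $\varphi$ induces isomorphisms on $\pi_0$ and $\pi_1$: the composite $\LR\FHTR(\ast)\to\gl_1(\KO)\to\gl_1(\KO)[0,1]$ will then be an equivalence, exhibiting $\varphi$ as a section of the Postnikov projection whose fiber is $\gl_1(\KO)[2,\infty)$, and the desired splitting follows.

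The $\pi_0$ computation is identical to the complex case: $\pi_0$ of the $\LR$-colimit depends only on its $0$- and $1$-simplices, so both groups are concordance classes of graded real line bundles on a point, namely $\ZZ/2\cong \pi_0(\KO)^\times=\pi_0(\gl_1(\KO))$. For $\pi_1$, use the concordance invariance of both sheaves to identify $\LR\FHTR(X)\simeq\Map(\Sigma^\infty_+X,\LR\FHTR(\ast))$ and $\LR\gl_1\!\CGR(X)\simeq\Map(\Sigma^\infty_+X,\gl_1(\KO))$. Taking $X=S^1$ and using $\Sigma^\infty_+S^1\simeq \sph\vee\Sigma\sph$, both $\pi_0\LR\FHTR(S^1)$ and $\pi_0\LR\gl_1\!\CGR(S^1)$ decompose as $\pi_0\oplus\pi_1$ of their respective point-evaluations, each being $\ZZ/2\oplus\ZZ/2$. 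Since $\varphi$ is already an isomorphism on the first summand, it remains to hit the second.

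This reduces to exhibiting a real graded line bundle on $S^1$ whose class generates the second $\ZZ/2$-factor of $\gl_1(\KO)^0(S^1)\cong \KO^0(S^1)^\times$. A short computation gives $\KO^0(S^1)\cong\ZZ[\eta]/(2\eta,\eta^2)$ with unit group $\{\pm1+b\eta : b\in\ZZ/2\}\cong\ZZ/2\oplus\ZZ/2$, and the required generator is $1+\eta=[M]$, where $M$ is the M\"obius line bundle. Placed in grading $0$, $M$ is an object of $\FHTR(S^1)$ that maps to the needed element, completing the $\pi_1$ surjectivity. The main technical obstacle is precisely this last identification of $[M]$ with $1+\eta$ in $\KO^0(S^1)^\times$; everything else is a direct transcription of the complex case.
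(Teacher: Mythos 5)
The proposal is correct and follows essentially the same route the paper intends: the paper only says ``similar arguments prove the analogous statements,'' and your argument is exactly the real-case transcription of the proof of Corollary~\ref{cor:gl1KUsplitting}, with $S^1$ replacing $S^2$. You carry out the one step that genuinely needs a fresh (if elementary) computation: identifying $\KO^0(S^1)^\times\cong\ZZ/2\oplus\ZZ/2$, generated by $-1$ (the degree-one trivial super line) and $[M]=1+\eta$ (the M\"obius bundle in degree zero), so that both are hit by graded real line bundles and surjectivity on $\pi_1$ follows.
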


\begin{cor}\label{cor:gl1KO has nontrivial first k invariant}
The first $k$-invariant of $\gl_1(\KO)$ is non-trivial.
\end{cor}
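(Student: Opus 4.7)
The plan is to mimic exactly the argument used for the complex case in Corollary \ref{cor:gl1KU has nontrivial first k invariant}, deducing the statement from Corollary \ref{cor:gl1KOsplitting} together with Theorem \ref{thm:secondkinvariantofpicKO}. The first $k$-invariant of any connective spectrum $E$ is an invariant of its $1$-truncation $E[0,1]$, namely the single $k$-invariant classifying $E[0,1]$ as a stable $2$-type built from $\pi_0 E$ and $\pi_1 E$. So it suffices to exhibit $\gl_1(\KO)[0,1]$ as a spectrum already known to have a nontrivial $k$-invariant.

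First I would invoke Corollary \ref{cor:gl1KOsplitting} to write $\gl_1(\KO) \simeq \LR\FHTR(\ast) \oplus \gl_1(\KO)[2,\infty)$ as a direct sum of spectra. Since the second summand is $2$-connective, it contributes nothing in degrees $0$ and $1$; hence applying $[0,1]$ to both sides yields $\gl_1(\KO)[0,1] \simeq \LR\FHTR(\ast)[0,1]$. By the cofiber sequence $\Sigma H\ZZ/2 \to \LR\FHTR(\ast) \to H\ZZ/2$ of Theorem \ref{thm:secondkinvariantofpicKO}, the spectrum $\LR\FHTR(\ast)$ already has its homotopy concentrated in degrees $0$ and $1$, so $\LR\FHTR(\ast)[0,1] = \LR\FHTR(\ast)$.

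Combining these identifications, $\gl_1(\KO)[0,1] \simeq \LR\FHTR(\ast)$. The $k$-invariant of the right-hand side is nontrivial by Theorem \ref{thm:secondkinvariantofpicKO}, and therefore the first $k$-invariant of $\gl_1(\KO)$ is nontrivial as well. There is no real obstacle here: all the work is already done in the previous two results, and the only point to verify is the standard fact that the first $k$-invariant of a spectrum is detected on its $1$-truncation, which is immediate from the definition of Postnikov towers.
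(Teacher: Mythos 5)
Your argument is correct and coincides with the paper's (unstated) reasoning: the paper leaves the corollary without proof precisely because it follows at once from Theorem \ref{thm:secondkinvariantofpicKO} and Corollary \ref{cor:gl1KOsplitting}, and the route you take is the intended one. The only small caveat — which is harmless here — is that the first $k$-invariant of a connective spectrum lives on $E[0,1]$ only when $\pi_1 E\neq 0$; since $\pi_1\gl_1(\KO)\cong\ZZ/2$ this is satisfied and your identification $\gl_1(\KO)[0,1]\simeq \LR\FHTR(\ast)$ carries the claim.
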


\begin{conj}\label{rmk:k invariants of K of discrete fields}
Let $\FF$ be a discrete field. Then there are equivalences $\LR\CG(\ast)\simeq K(\FF)$ and $\LR\gl_1\!\CG(\ast)\simeq\gl_1(K(\FF))$. Moreover, there is a splitting $\LR\FHT(\ast)$ is a split summand of $\gl_1K(\FF)$ and the first and second $k$-invariants of $\pic(K(\FF))$ are non-trivial.
\end{conj}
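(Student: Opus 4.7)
The plan is to adapt the proofs of Sections 3.2 and 3.3 essentially verbatim, with one key modification: replace topological $\mathbb{F}$-vector bundles with principal $\GL_n(\mathbb{F})^\delta$-bundles (equivalently, locally constant sheaves of finite-dimensional $\mathbb{F}$-vector spaces), while keeping the DSV wrapper of Definition \ref{def:DSVs}. With this change the lemmas establishing that $\CG$ is a sheaf of grouplike symmetric monoidal groupoids, that $\gl_1\CG$ is its subsheaf of tensor-invertible objects, and that both extend to sheaves of connective (ring) spectra on $\Mfd$ (Lemma \ref{lem: CG is a sheaf} and its neighbors), all go through unchanged.

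The first main step is to prove $\LR\CG(\ast)\simeq K(\mathbb{F})$ as commutative ring spectra. Since $\mathbb{F}$ is discrete, every flat $\GL_n(\mathbb{F})$-bundle on the simply connected manifold $\Delta^k_{sm}$ is trivial, and automorphisms of trivial bundles are constant $\GL_n(\mathbb{F})$-valued functions; hence each $\CG(\Delta^k_{sm})$ is equivalent, as a symmetric monoidal groupoid, to $\CG(\ast)$. Consequently the simplicial object $\CG(\Delta^\bullet_{sm})$ whose geometric realization defines $\LR\CG(\ast)$ is essentially constant, giving $\LR\CG(\ast)\simeq\CG(\ast)$ as connective $\EE_\infty$-ring spectra. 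The identification $\CG(\ast)\simeq K(\mathbb{F})$ then follows from the Gillet--Waldhausen theorem: connective $K(\mathbb{F})$ is modeled by the Waldhausen $K$-theory of bounded chain complexes of finite-dimensional $\mathbb{F}$-vector spaces with quasi-isomorphisms as weak equivalences, which by Lemma \ref{lem:DSVquasiisos} agrees with the group completion of the symmetric monoidal nerve of $\CG(\ast)$. Once this is in hand, the identification $\LR\gl_1\CG(\ast)\simeq\gl_1 K(\mathbb{F})$ follows from the pullback-square argument of Corollary \ref{cor:gl1DSVC gives gl1KU}, using that $K_0(\mathbb{F})^\times=\{\pm 1\}$.

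For the splitting $\LR\FHT(\ast)\hookrightarrow\gl_1 K(\mathbb{F})$ and the non-triviality of the $k$-invariants, one reruns Theorem \ref{thm:secondkinvariantofpicKU} and Corollaries \ref{cor:gl1KUsplitting}--\ref{cor:gl1KU has nontrivial first k invariant} with the discrete group $\mathbb{F}^\times$ in place of $\mathbb{C}^\times$. Since $\mathrm{Top}(\Delta^k_{sm},\mathbb{F}^\times)=\mathbb{F}^\times$ for discrete $\mathbb{F}$, the same argument produces a cofiber sequence $\Sigma H\mathbb{F}^\times\to\LR\FHT(\ast)\to H\mathbb{Z}/2$ whose $k$-invariant is the element of $\Hom_{\mathrm{Ab}}(\mathbb{Z}/2,\mathbb{F}^\times)$ sending $n\mapsto(-1)^{n^2}$, which is non-trivial whenever $\mathrm{char}(\mathbb{F})\neq 2$. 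The analogue of Corollary \ref{cor:gl1KUsplitting} then shows this summand captures $\pi_0$, $\pi_1$, and $\pi_2$ of $\gl_1 K(\mathbb{F})$, and the first $k$-invariant of $\pic K(\mathbb{F})$ is non-trivial by the analogue of Lemma \ref{lem:nontriviality of first pics k invariant}, transported through the unit $\sph\to K(\mathbb{F})$ (which induces a surjection on $\pi_0$ of Picard spectra and an isomorphism on their $\pi_1$, exactly as in Lemma \ref{lem:nontrivialityoffirstk}).

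The principal obstacle is the identification $\CG(\ast)\simeq K(\mathbb{F})$: as an equivalence of underlying connective spectra this is classical Gillet--Waldhausen, but promoting it to an equivalence of commutative ring spectra requires genuine care in matching the tensor-product symmetric monoidal structure on chain complexes with the multiplicative structure on $K(\mathbb{F})$. A cleaner modern route would be to recast $\CG$ as a sheaf of small stable $\infty$-categories and invoke the symmetric monoidal $K$-theory functor of Blumberg--Gepner--Tabuada. A secondary wrinkle is characteristic $2$: there $\mathbb{F}^\times$ has no $2$-torsion, so the symmetry-based second $k$-invariant automatically vanishes, and the conjecture either needs restriction to $\mathrm{char}(\mathbb{F})\neq 2$ or an alternative source of non-triviality for the second $k$-invariant of $\pic K(\mathbb{F})$.
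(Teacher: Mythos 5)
The statement under review is a \emph{conjecture} in the paper; the authors offer only the heuristic that the constructions of Sections 3.2--3.3 ``seem likely to apply,'' so there is no proof to compare against. I will therefore assess your argument on its own merits, and I think it has a genuine gap in the first (and foundational) step.

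Your first claim is that for discrete $\FF$, each $\CG(\Delta^k_{sm})$ is equivalent to $\CG(\ast)$, hence $\LR\CG(\ast)\simeq\CG(\ast)$, and that $\CG(\ast)\simeq K(\FF)$ by Gillet--Waldhausen. The first half is correct: flat $\GL_n(\FF)^\delta$-bundles on simply connected simplices trivialize, and the differentials and morphisms are continuous functions into discrete spaces, hence constant, so $\LR$ does nothing. The problem is the second half. Over a field, every DSV is chain-homotopy equivalent to its homology with zero differential, so the Picard/homotopy groupoid $\CG(\ast)$ is equivalent, as a symmetric monoidal groupoid under $\oplus$, to $(\FF\text{-}\mathrm{Vect}^{\mathrm{iso}})\times(\FF\text{-}\mathrm{Vect}^{\mathrm{iso}})$. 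Its group completion is therefore $K(\FF)\times K(\FF)$, not $K(\FF)$ (already on $\pi_0$ you get $\ZZ\times\ZZ$). Gillet--Waldhausen compares Waldhausen $K$-theory of the full category of bounded complexes (via the $S_\bullet$-construction, which encodes cofiber sequences and thereby identifies $\Sigma E$ with $-E$) to exact-category $K$-theory; it does \emph{not} say that this coincides with the group completion, under $\oplus$, of the homotopy groupoid of complexes. In the topological cases $\FF=\CC,\RR$ the paper never needs to make that identification at a single level: the passage from $K_0\times K_0$ to the Euler characteristic $\ZZ$ is achieved by concordance, i.e.\ by deforming the differential along $\Delta^1_{sm}$, which is precisely the mechanism that $\LR$ supplies and which your observation shows is unavailable over a discrete field. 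So $\LR\CG(\ast)$ cannot be $K(\FF)$ under this interpretation of the construction, and the rest of the argument inherits the defect.

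The remainder of the proposal is more robust. Reading off $\pi_0=\ZZ/2$, $\pi_1=\FF^\times$ from $\LR\FHT(\ast)$ and adapting the proof of Theorem \ref{thm:secondkinvariantofpicKU} to see that its $k$-invariant is $n\mapsto(-1)^{n^2}\in\FF^\times$ is exactly right, and your observation that this forces the hypothesis $\mathrm{char}(\FF)\neq 2$ is a genuine correction to the conjecture as stated. The comparison with $\pic(\sph)$ for the first $k$-invariant (surjectivity on $\pi_0$ of $\pic$ using $\Pic(K(\FF))\cong\ZZ$ for connective $K(\FF)$ with $\pi_0=\ZZ$, isomorphism on $\pi_1$) also works. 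But the central identification $\LR\CG(\ast)\simeq K(\FF)$ needs a genuinely different input — for instance, replacing the homotopy 1-groupoid by the underlying stable $\infty$-category and taking Blumberg--Gepner--Tabuada $K$-theory, as you suggest in passing, or replacing $\Delta^\bullet_{sm}$ by algebraic simplices over $\FF$ so that the differential can actually vary — because as constructed, $\LR$ is the identity and $\CG(\ast)$ under $\oplus$ is simply the wrong spectrum.
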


\section{Computations of $k$-invariants}

In this section we compute the possible $\EE_\infty$ structures on $h$-types with the same homotopy groups as $\Pic_0^1(\KO)$, $\Pic_0^1(\KU)$, $\GBRconn$, and $\GBCconn$. We do this by computing the possible $k$-invariants of spectra with the same homotopy groups. It will be useful to recall that $\GBRconn$ and $\GBCconn$  are equivalent to $\BGL_1(\KO)[0,2]$ and $\BGL_1(\KU)[0,3]$ respectively. In most cases we can explicitly name these $k$-invariants in terms of cohomology operations.

\begin{prop}\label{prop:spacelevelsplittingsofPicKOKU}
There are equivalences of $h$-types \[\GBR\simeq \ZZ/8\times K(\ZZ/2,1)\times K(\ZZ/2,2)\] and \[\GBC\simeq \ZZ/2\times K(\ZZ/2,1)\times K(\ZZ,3).\]
\end{prop}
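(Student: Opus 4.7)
The plan is to use the $\EE_\infty$-group structure to peel off $\pi_0$, and then to reduce the problem to showing that two space-level Postnikov $k$-invariants vanish via an elementary Steenrod operation identity. Since $\Pic(\KO)$ and $\Pic(\KU)$ are grouplike $\EE_\infty$-spaces, translation by any element of $\pi_0$ is a self-equivalence of the underlying $h$-type, so $\Pic(R)$ decomposes as $\pi_0(\Pic(R)) \times \BGL_1(R)$ for $R=\KO,\KU$. Passing to truncations and using the standard computations $\pi_0(\Pic(\KO))\cong\ZZ/8$ and $\pi_0(\Pic(\KU))\cong\ZZ/2$ (see e.g.~\cite{gepnerlawson}, or the description of objects already used in Lemma \ref{lem:nontrivialityoffirstk}), the proposition reduces to exhibiting $h$-type equivalences
\[
\BGL_1(\KO)[0,2]\simeq K(\ZZ/2,1)\times K(\ZZ/2,2), \qquad \BGL_1(\KU)[0,3]\simeq K(\ZZ/2,1)\times K(\ZZ,3).
\]

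Next I would set up the Postnikov tower of each factor. The first has $\pi_1=\pi_2=\ZZ/2$ and thus a single Postnikov $k$-invariant in $H^3(K(\ZZ/2,1);\ZZ/2)\cong\ZZ/2$; the second has $\pi_1=\ZZ/2$, $\pi_2=0$, and $\pi_3=\ZZ$, and thus a single nontrivial Postnikov $k$-invariant in $H^4(K(\ZZ/2,1);\ZZ)\cong\ZZ/2$. In each case the stated product splitting is equivalent to the vanishing of the corresponding $k$-invariant, so the problem reduces to two vanishing claims in low-dimensional cohomology of $K(\ZZ/2,1)$.

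The key step is precisely that vanishing. Both $\BGL_1(\KO)[0,2]$ and $\BGL_1(\KU)[0,3]$ are infinite loop spaces, being the underlying spaces of the spectra $\bgl_1(\KO)[0,2]$ and $\bgl_1(\KU)[0,3]$, so their Postnikov $k$-invariants are the $\Omega^\infty$-images of the stable $k$-invariants of those spectra. The latter live in $[H\ZZ/2,\Sigma^2 H\ZZ/2]\cong\ZZ/2\cdot\Sq^2$ and $[H\ZZ/2,\Sigma^3 H\ZZ]\cong\ZZ/2\cdot\beta\Sq^2$, respectively, and under $\Omega^\infty$ they become the unstable classes $\Sq^2\iota_1\in H^3(K(\ZZ/2,1);\ZZ/2)$ and $\beta\Sq^2\iota_1\in H^4(K(\ZZ/2,1);\ZZ)$, where $\iota_1$ is the fundamental class. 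Both vanish for degree reasons, since $\Sq^i$ annihilates classes of degree less than $i$, and so the space-level $k$-invariants are zero regardless of whether the stable ones happen to be nontrivial.

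The main obstacle is the standard but subtle identification of the Postnikov $k$-invariant of an infinite loop space with the $\Omega^\infty$-image of the stable $k$-invariant of the underlying spectrum, together with the description of this image on the fundamental class. Once that translation is in place, the remaining computation collapses to the trivial identity $\Sq^2\iota_1=0$. Notably, the argument never requires us to first determine the stable $k$-invariants themselves, which are in fact nonzero and are the subject of the later sections of the paper.
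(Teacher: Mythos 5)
Your proof is correct, and it takes a genuinely different route from the paper's. The paper cites Arlettaz to split off $\pi_0$ and then invokes May's Lemma 3.1 from \cite{mayRingSpacesRingSpectra}, which supplies an explicit unstable splitting of the basepoint component of $\Pic(\KO)$ (resp.~$\Pic(\KU)$) as $K(\ZZ/2,1)\times\BSO$ before truncating. You instead split off $\pi_0$ by the translation argument (which is elementary and needs only the grouplike $H$-space structure), and then reduce the remaining splitting to the vanishing of the \emph{space-level} $k$-invariant. The key observation — that this $k$-invariant must be $\Omega^\infty$ of a stable $k$-invariant and hence is a stable operation evaluated on the fundamental class $\iota_1$, forcing $\Sq^2\iota_1=0$ by the instability relation — is a clean degree argument that avoids May's structural input entirely. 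Your approach is more self-contained and is also conceptually illuminating: it makes transparent that the unstable splitting is compatible with a nontrivial stable $k$-invariant precisely because instability of $\Sq^2$ kills the relevant class in degree $1$, which is exactly the tension the rest of the paper revolves around. One small caveat to be explicit about when writing this up: the unstable Postnikov theory of $\Omega^\infty\gbcconn$ uses untwisted coefficients because $\pi_1$ acts trivially on higher homotopy for any $H$-space, so identifying the $k$-invariant with an element of $H^4(K(\ZZ/2,1);\ZZ)$ (rather than a twisted group) is legitimate; and the identification of the group of candidate stable operations $[H\ZZ/2,\Sigma^3 H\ZZ]\cong\ZZ/2\{\beta\Sq^2\}$ is a standard computation (proved later in the paper as Proposition \ref{lem:stablecalc}), though, as you note, you do not actually need to determine which element occurs.
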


\begin{proof}
The first $k$-invariant of an $\EE_\infty$-type is always null, by \cite{arlettaz-firstkinvariant}.  From \cite[Lemma 3.1]{mayRingSpacesRingSpectra} we have that the $1$-component of $\Pic(KO)$ splits as $K(\ZZ/2,1)\times \BSO$. The fact that $\BSO[0,2]\simeq K(\ZZ/2,2)$ completes the proof. The case of $\Pic(KU)$ is essentially identical (the result of \cite{mayRingSpacesRingSpectra} applies to both $KO$ and $KU$).
\end{proof}

\subsection{First $k$-invariants}

\noindent Now we determine the possible first $k$-invariants of the associated spectra $\gbr$ and $\gbc$. 

\begin{lem}\label{lem:firstkinvariantpossibilities}
The first $k$-invariant of $\pic(KO)$ is either trivial or $\Sq^2\circ\rho\colon H\ZZ/8\to\Sigma^2H\ZZ/2$, where $\rho\colon H\ZZ/8\to H\ZZ/2$ is the reduction mod 2 map, and the first $k$-invariant of $\pic(KU)$ is either trivial or $\Sq^2\colon H\ZZ/2\to \Sigma^2\ZZ/2$. 
\end{lem}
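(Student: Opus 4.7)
The plan is to reduce the statement to a routine computation of two low-degree Eilenberg-MacLane cohomology groups. Recall that the first $k$-invariant of any spectrum $X$ with homotopy concentrated in degrees $0$ and $1$ is a map $H\pi_0(X)\to\Sigma^2 H\pi_1(X)$, equivalently an element of $H^2(H\pi_0(X);\pi_1(X))$. So I would first identify the two relevant homotopy groups: $\pi_0\pic(\KU)=\ZZ/2$ and $\pi_1\pic(\KU)=\pi_0\gl_1(\KU)=(\pi_0\KU)^\times=\ZZ/2$; and $\pi_0\pic(\KO)=\ZZ/8$ (via Wood/Bott, using that the shifts $\Sigma^i\KO$ for $0\le i\le 7$ exhaust $\Pic(\KO)$ as in \cite{gepnerlawson}) and $\pi_1\pic(\KO)=(\pi_0\KO)^\times=\ZZ/2$. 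Thus the first $k$-invariants live respectively in $H^2(H\ZZ/2;\ZZ/2)$ and $H^2(H\ZZ/8;\ZZ/2)$, and it suffices to show each of these groups is isomorphic to $\ZZ/2$ and to identify the listed operations as their generators.

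For the complex case this is immediate: $H^*(H\ZZ/2;\ZZ/2)$ is the mod $2$ Steenrod algebra $\mathcal{A}$, and in degree $2$ we have $\mathcal{A}^2=\ZZ/2\cdot\Sq^2$ (since $\Sq^1\Sq^1=0$). So the only nonzero class is $\Sq^2$, as claimed.

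For the real case I would compute $H^2(H\ZZ/8;\ZZ/2)$ using the cofiber sequence $H\ZZ\xrightarrow{8} H\ZZ\xrightarrow{q} H\ZZ/8$. Taking mod $2$ cohomology and noting that multiplication by $8$ vanishes on $H^*(H\ZZ;\FF_2)$, the long exact sequence collapses to short exact sequences
\begin{equation*}
0\to H^{k-1}(H\ZZ;\FF_2)\to H^k(H\ZZ/8;\FF_2)\xrightarrow{q^*} H^k(H\ZZ;\FF_2)\to 0.
\end{equation*}
Serre--Cartan gives $H^*(H\ZZ;\FF_2)=\mathcal{A}/\mathcal{A}\Sq^1$, so $H^1(H\ZZ;\FF_2)=0$ and $H^2(H\ZZ;\FF_2)=\ZZ/2\cdot\Sq^2$, whence $H^2(H\ZZ/8;\FF_2)\cong\ZZ/2$. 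To identify the generator as $\Sq^2\circ\rho$, I would observe that $q^*(\Sq^2\circ\rho)=\Sq^2\circ\rho\circ q=\Sq^2\circ\rho_2$, which is precisely the generator $\Sq^2$ of $H^2(H\ZZ;\FF_2)$; hence $\Sq^2\circ\rho$ itself is nonzero in $H^2(H\ZZ/8;\FF_2)$ and therefore generates.

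The argument is almost entirely bookkeeping; the only place that requires a little care is the last step, namely verifying that $\Sq^2\circ\rho$ hits the generator under $q^*$, so that one can name this specific class as the nontrivial option rather than leaving it as ``some generator.'' Everything else is a direct appeal to the structure of $\mathcal{A}$ and of $H^*(H\ZZ;\FF_2)$. Note that this lemma only enumerates the possibilities; the actual determination that each $k$-invariant is the nontrivial one is accomplished separately via the non-triviality results of the preceding section.
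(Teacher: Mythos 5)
Your proof is correct. Your opening framing (first $k$-invariant of a stable two-type with $\pi_0=A$, $\pi_1=B$ lives in $H^2(HA;B)$) is the same reduction the paper uses, and your identification of $\pi_0\pic(\KO)\cong\ZZ/8$ and $\pi_0\pic(\KU)\cong\ZZ/2$ matches the paper's (via \cite{gepnerlawson}). The complex case is identical to the paper's.

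For the real case your computation differs from the paper's in a way worth noting. The paper simply asserts the isomorphism $H^2(H\ZZ/8;\ZZ/2)\cong H^2_{gp}(\ZZ/8,\ZZ/2)\cong\ZZ/2$; this appeal to group cohomology is really a shorthand for the Eilenberg--MacLane classification of stable two-types by $\Hom_{\mathrm{Ab}}(\pi_0\otimes\ZZ/2,\pi_1)$ recalled earlier in the paper, and the displayed equality of $H^2(H\ZZ/8;\ZZ/2)$ with group cohomology is not a general fact about spectrum cohomology (it fails, for instance, for $A=B=\ZZ/3$); it just happens to give the right numerical answer here. Your route via the cofiber sequence $H\ZZ\xrightarrow{8}H\ZZ\to H\ZZ/8$, the vanishing of $8^*$ on mod~$2$ cohomology, and the Serre--Cartan description $H^*(H\ZZ;\FF_2)\cong\mathcal{A}/\mathcal{A}\Sq^1$ avoids that shorthand entirely and is more self-contained. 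Your final step --- showing $q^*(\Sq^2\circ\rho)=\Sq^2\circ\rho_2$ generates $H^2(H\ZZ;\FF_2)$, hence that $\Sq^2\circ\rho$ is the nonzero class --- is also a genuine addition: the paper only exhibits a dimension count and names the operation without verifying it hits the generator. Both arguments are valid, but yours trades brevity for explicitness and does not rely on the stable Postnikov classification.
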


\begin{proof}
The case for $KU$ is immediate because $H^2(H\ZZ/2;\ZZ/2)\cong\ZZ/2$. The case of $KO$ follows from the group cohomology computation $H^2(H\ZZ/8;\ZZ/2)\cong H^2_{gp}(\ZZ/8,\ZZ/2)\cong\ZZ/2$.
\end{proof}

\begin{cor}
The $k$-invariants of $\pic_0^1(KO)$ and $\pic_0^1(KU)$ are $\Sq^2\circ\rho$ and $\Sq^2$ respectively, where $\rho\colon\ZZ/8\to\ZZ/2$ is the reduction mod 2 map. Therefore $\Sq^2\circ\rho$ and $\Sq^2$ are also the first $k$-invariants of $\gbr$ and $\gbc$. 
\end{cor}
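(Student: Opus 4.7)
The plan is to combine Lemma \ref{lem:firstkinvariantpossibilities} with the Corollary following Lemma \ref{lem:nontrivialityoffirstk} (and with Corollaries \ref{cor:gl1KU has nontrivial first k invariant} and \ref{cor:gl1KO has nontrivial first k invariant} as a redundant confirmation). Lemma \ref{lem:firstkinvariantpossibilities} pins down the possible first $k$-invariants to exactly two options in each case — trivial, or the explicit cohomology operation named in the statement. The corollary after Lemma \ref{lem:nontrivialityoffirstk} then rules out the trivial possibility for $\pic(\KO)$ and $\pic(\KU)$. Combining these two facts leaves only one option, namely $\Sq^2 \circ \rho$ in the real case and $\Sq^2$ in the complex case, establishing the first assertion.

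For the second assertion, I would invoke the elementary observation that the first $k$-invariant of a connective spectrum $X$ depends only on its truncation $\tau_{\leq 1} X$, i.e.\ on the map $H\pi_0 X \to \Sigma^2 H\pi_1 X$ classifying the fibration $\tau_{\leq 1} X \to H\pi_0 X$. In particular, passing from $\pic(\KO)$ to its connective cover and then to its $2$-truncation leaves both $\pi_0$, $\pi_1$, and the classifying map between them undisturbed. Hence the first $k$-invariants of $\pic(\KO)$, $\pic_0^1(\KO)$, and $\gbr = \pic_0^2(\KO)$ all agree, and analogously $\pic(\KU)$, $\pic_0^1(\KU)$, and $\gbc = \pic_0^3(\KU)$ share the same first $k$-invariant.

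There is no real obstacle here: the entire content of the corollary is bookkeeping on top of the two inputs cited above. The only point requiring brief attention is the naturality of the first $k$-invariant under Postnikov truncation, but this is immediate from the functoriality of $\tau_{\leq 1}$ together with the fact that both $\pic_0^1$ and $\pic_0^m$ agree with $\pic$ on homotopy in degrees $0$ and $1$. Thus the proof amounts to writing one sentence assembling Lemma \ref{lem:firstkinvariantpossibilities} and the non-triviality corollary, followed by one sentence noting that the first $k$-invariant is a truncation-level invariant.
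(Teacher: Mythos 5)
Your main argument is correct and takes essentially the paper's route: the paper cites Lemmas \ref{lem:nontriviality of first pics k invariant} and \ref{lem:nontrivialityoffirstk} directly, whereas you cite the intervening corollary that aggregates them, and your explicit remark that the first $k$-invariant is determined by $\tau_{\leq 1}$ (hence agrees across $\pic(\KU)$, $\pic_0^1(\KU)$ and $\gbc$) spells out what the paper leaves implicit. One small inaccuracy worth flagging: Corollaries \ref{cor:gl1KU has nontrivial first k invariant} and \ref{cor:gl1KO has nontrivial first k invariant} do not provide redundant confirmation of this statement. The ``first $k$-invariant of $\gl_1(\KU)$'' concerns the map $H\pi_0\gl_1(\KU)\to\Sigma^3 H\pi_2\gl_1(\KU)$, which is the \emph{second} nontrivial $k$-invariant of $\pic(\KU)$ (and similarly in the real case). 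Those corollaries are inputs to the later computations of the $k$-invariants of $\gbcconn$ and $\gbrconn$, not to the $k$-invariant of $\pic_0^1$ addressed here. Since you label them as merely redundant, the proof is unaffected, but the identification is off.
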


\begin{proof}
This follows from Lemmas \ref{lem:nontriviality of first pics k invariant}, \ref{lem:nontrivialityoffirstk} and \ref{lem:firstkinvariantpossibilities}.
\end{proof}

\noindent Next we wish to determine the \textit{second} $k$-invariants of $\gbc$ and $\gbr$. We begin by determining the $k$-invariants of their connected covers $\bgl_1(\KO)[0,2]$ and $\bgl_1(\KU)[0,3]$. This is of course equivalent to determining the $k$-invariants of $\gl_1(\KO)[0,1]$ and $\gl_1(\KU)[0,2]$. The first is almost trivial, and we prove it in Proposition \ref{lem:possiblekinvariantsofbgl1KO}. For the second case, more work is required, and we first prove several lemmas.

\begin{prop}\label{lem:possiblekinvariantsofbgl1KO}
The $k$-invariant of $\gbrconn$ is $\Sq^2\colon \Sigma H\ZZ/2\to \Sigma^3 H\ZZ/2$. Equivalently, there are two $\EE_\infty$ structures on $K(\ZZ/2,1)\times K(\ZZ/2,2)$ and the one on $\BGL_1(\KO)[0,2]$ is the one which is not the product structure.
\end{prop}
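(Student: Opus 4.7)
The plan is to reduce the statement, via suspension, to a computation about $\gl_1(\KO)[0,1]$, whose $k$-invariant has already been pinned down by the chain-bundle sheaf computations in Section~\ref{sec:gradedlinebundles}. First I would observe that $\bgl_1(\KO)[0,2]$ is a spectrum with $\pi_1 \cong \pi_2 \cong \ZZ/2$, so its single $k$-invariant lies in the group of stable operations $\Sigma H\ZZ/2 \to \Sigma^3 H\ZZ/2$, which equals $H^2(H\ZZ/2;\ZZ/2) \cong \ZZ/2$. Thus the $k$-invariant is either zero or $\Sq^2$; it suffices to show non-triviality.

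Next I would invoke Corollary~\ref{cor:gl1KOsplitting}, which gives a splitting of spectra
\[
\gl_1(\KO) \;\simeq\; \LR\FHTR(\ast) \;\oplus\; \gl_1(\KO)[2,\infty).
\]
Since $\LR\FHTR(\ast)$ has homotopy concentrated in degrees $0$ and $1$ (as computed inside the proof of Theorem~\ref{thm:secondkinvariantofpicKO}) and the right-hand summand is $1$-connected, truncating both sides via $[0,1]$ yields an equivalence of spectra $\gl_1(\KO)[0,1] \simeq \LR\FHTR(\ast)$. By Theorem~\ref{thm:secondkinvariantofpicKO}, the latter spectrum has non-trivial $k$-invariant, so the same is true of $\gl_1(\KO)[0,1]$.

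Finally I would deloop: because $\bgl_1(\KO) \simeq \Sigma\gl_1(\KO)$ as spectra, one has $\bgl_1(\KO)[0,2] \simeq \Sigma\bigl(\gl_1(\KO)[0,1]\bigr)$, and suspension of spectra preserves $k$-invariants (they are represented by the same stable cohomology operation). Therefore the $k$-invariant of $\bgl_1(\KO)[0,2]$ is non-trivial, hence must be $\Sq^2\colon \Sigma H\ZZ/2 \to \Sigma^3 H\ZZ/2$. Combined with Proposition~\ref{prop:spacelevelsplittingsofPicKOKU}, which identifies the underlying $h$-type as $K(\ZZ/2,1) \times K(\ZZ/2,2)$, this gives the two possible $\EE_\infty$-structures claimed and shows which one is realized.

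The only subtle point, and the one worth writing out carefully, is the cleanness of the truncation argument: one must confirm that in the direct sum decomposition the summand $\gl_1(\KO)[2,\infty)$ contributes nothing to $[0,1]$, so that the equivalence $\gl_1(\KO)[0,1] \simeq \LR\FHTR(\ast)$ is really an equivalence of spectra (carrying the $k$-invariant), not merely of underlying $h$-types. Once that is said, the rest is a mechanical desuspension.
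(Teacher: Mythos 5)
Your proof is correct and follows exactly the same route as the paper's: identify the ambient cohomology group $H^2(H\ZZ/2;\ZZ/2)\cong\ZZ/2$ (the paper has a typo writing $H^1$), then combine the splitting of Corollary~\ref{cor:gl1KOsplitting} with the non-triviality of the $k$-invariant of $\LR\FHTR(\ast)$ from Theorem~\ref{thm:secondkinvariantofpicKO}. Your write-up is just a more explicit unpacking of the one-line deduction the paper gives, with the truncation-of-a-direct-sum and desuspension steps spelled out carefully; nothing is missing and no new idea is introduced.
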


\begin{proof}
By considering the Postnikov tower and knowing that $H^1(H\ZZ/2;\ZZ/2)\cong\ZZ/2$ is generated by $\Sq^2$ we see that the only two possible $k$-invariants are $0$ and $\Sq^2$. The result then follows from Theorem \ref{thm:secondkinvariantofpicKO} and Corollary \ref{cor:gl1KOsplitting}.
\end{proof}

Next we determine the $k$-invariant of $\gbcconn$. We begin by determining all possible $k$-invariants of a spectrum with the same homotopy groups, or equivalently, all possible infinite loop space structures on the space $\Omega^\infty\gbcconn\simeq\BGL_1(\KU)[0,3]$. 

\begin{lem}\label{lem:AandB}
There are exactly two $h$-types whose only nontrivial homotopy groups are $\pi_1\cong\ZZ/2$ and $\pi_3\cong\ZZ$.
\end{lem}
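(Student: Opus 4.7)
The plan is to classify such $h$-types via their Postnikov towers. Any $h$-type $X$ with $\pi_1(X)\cong\ZZ/2$, $\pi_3(X)\cong\ZZ$, and all other homotopy groups trivial is a 3-truncated, 1-connective $h$-type. Since $\pi_2(X)=0$, the Postnikov tower collapses into a single nontrivial stage: a principal fibration
\[
K(\ZZ,3)\longrightarrow X \longrightarrow K(\ZZ/2,1)
\]
classified by a $k$-invariant $k\in H^4(K(\ZZ/2,1);\ZZ)$. Conversely, any such $k$ gives rise to such an $X$ as the homotopy fiber of the corresponding map $K(\ZZ/2,1)\to K(\ZZ,4)$.

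The next step is the cohomology computation. Recall that $H^{\ast}(B\ZZ/2;\ZZ)$ vanishes in odd positive degrees and equals $\ZZ/2$ in each positive even degree; in particular $H^4(K(\ZZ/2,1);\ZZ)\cong\ZZ/2$, so there are exactly two possible $k$-invariants, namely $0$ and the nontrivial class (which one can identify with $\beta\Sq^2$ applied to the fundamental class, but this identification is not needed for the count).

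Finally, to turn this count into a count of homotopy types, I need to quotient the set of $k$-invariants by the natural action of $\Aut(\pi_1)\times\Aut(\pi_3)=\Aut(\ZZ/2)\times\Aut(\ZZ)$. The first factor is trivial, and the second factor $\{\pm 1\}$ acts on $H^4(B\ZZ/2;\ZZ)\cong\ZZ/2$ by negation, which is the identity on a group of order two. Hence both orbits are singletons, and the two $k$-invariants yield two genuinely distinct homotopy types, proving the lemma.

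The only conceivable obstacle is making sure that the automorphism action does not collapse the two $k$-invariants, but as just observed this is automatic because the target cohomology group has exponent $2$. So the argument is entirely a Postnikov bookkeeping plus the one cohomology input $H^4(B\ZZ/2;\ZZ)\cong\ZZ/2$.
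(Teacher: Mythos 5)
Your proof is correct and takes essentially the same approach as the paper, which simply cites the computation $H^4(B\ZZ/2;\ZZ)\cong\ZZ/2$ and declares the result immediate. Your added care about quotienting by $\Aut(\pi_1)\times\Aut(\pi_3)$ is a worthwhile bookkeeping step that the paper elides, but as you observe it is vacuous here since the cohomology group has exponent two.
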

\begin{proof}
The result follows immediately from the computation $H^4(B\mathbb{Z}/2;\mathbb{Z})\cong\mathbb{Z}/2$.
\end{proof}

\begin{lem}\label{noinfB}
Let $X$ be the $h$-type with $\pi_1(X)\cong\ZZ/2$ and $\pi_3(X)\cong\ZZ$ and non-trivial $k$-invariant. Then $X$ does not admit an $\EE_\infty$-structure.  
\end{lem}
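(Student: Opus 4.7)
The plan is to assume toward a contradiction that $X$ carries an $\EE_\infty$-structure. Since $X$ is connected ($\pi_0 X = 0$), any such structure is automatically grouplike, so $X \simeq \Omega^\infty y$ for a connective spectrum $y$ with $\pi_1 y \cong \ZZ/2$, $\pi_3 y \cong \ZZ$, and all other homotopy groups vanishing. I will show that any such $y$ has $\Omega^\infty y$ equivalent as an $h$-type to $K(\ZZ/2,1) \times K(\ZZ,3)$, i.e., with trivial $k$-invariant, contradicting the hypothesis on $X$.

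First I would unpack the Postnikov data. The Postnikov tower of $y$ is a fiber sequence of spectra $\Sigma^3 H\ZZ \to y \to \Sigma H\ZZ/2$ classified by a stable $k$-invariant $\kappa \in [\Sigma H\ZZ/2, \Sigma^4 H\ZZ] \cong H^3(H\ZZ/2;\ZZ)$. Applying $\Omega^\infty$ produces the fiber sequence $K(\ZZ,3) \to X \to K(\ZZ/2,1)$, and by naturality its classifying map in $H^4(K(\ZZ/2,1);\ZZ)$ is the value of the stable operation $\kappa$ on the fundamental class $\iota_1 \in H^1(K(\ZZ/2,1);\ZZ/2)$. So it suffices to show that every element of $H^3(H\ZZ/2;\ZZ)$ annihilates $\iota_1$.

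The central step is a small Steenrod-algebra computation. Because $H\ZZ/2$ is $2$-torsion, the groups $H^n(H\ZZ/2;\ZZ)$ are $2$-torsion, and the long exact sequence from $0 \to \ZZ \xrightarrow{2} \ZZ \to \ZZ/2 \to 0$ degenerates to short exact sequences identifying $H^n(H\ZZ/2;\ZZ)$ with the kernel of $\Sq^1$ on the mod $2$ Steenrod algebra in degree $n$. In degree three the Steenrod algebra has basis $\{\Sq^3,\ \Sq^2\Sq^1\}$, and the Adem relations give $\Sq^1\Sq^3 = 0$ and $\Sq^1\Sq^2\Sq^1 = \Sq^3\Sq^1 \neq 0$, so $H^3(H\ZZ/2;\ZZ) \cong \ZZ/2$. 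Since $\Sq^3 = \Sq^1\Sq^2$ mod $2$, its generator can be identified with the integral operation $\beta\Sq^2$.

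To finish, both candidates $\kappa = 0$ and $\kappa = \beta\Sq^2$ kill $\iota_1$: the former trivially, and the latter because $\Sq^2\iota_1 = 0$ in $H^3(K(\ZZ/2,1);\ZZ/2)$ by the unstability axiom $\Sq^i x = 0$ for $i > |x|$ (applied with $i = 2$ and $|x| = 1$), whence $\beta\Sq^2(\iota_1) = 0$. Thus $\Omega^\infty y$ has trivial unstable $k$-invariant for every such $y$, and $X$ cannot be of this form. The main obstacle is simply tracking the passage from the stable $k$-invariant of $y$ to the space-level $k$-invariant of $\Omega^\infty y$ and identifying the generator of $H^3(H\ZZ/2;\ZZ)$; once that bookkeeping is done, the non-realizability reduces to the unstability fact that $\Sq^2$ vanishes on a one-dimensional class.
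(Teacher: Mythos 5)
Your proof is correct, and it takes a genuinely different route from the paper's --- one that, as it happens, avoids a gap in the paper's own argument. The paper fixes the unstable $k$-invariant $\gamma = \alpha^2 \in H^4(K(\ZZ/2,1);\ZZ)$, identifies the corresponding unstable operation as $a\mapsto\beta(a)^2$, and asserts that the cross term $2\beta(a)\beta(b)$ in the expansion $\beta(a+b)^2=\beta(a)^2+2\beta(a)\beta(b)+\beta(b)^2$ is nonzero, so that the operation is not additive. But the image of the integral Bockstein consists of $2$-torsion classes, so $2\beta(a)\beta(b)=(2\beta(a))\cup\beta(b)=0$; the operation $a\mapsto\beta(a)^2$ is in fact additive, and additivity alone gives no contradiction (it is necessary, not sufficient, for stability). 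Your argument sidesteps this entirely: you enumerate the possible \emph{stable} $k$-invariants $\kappa\in H^3(H\ZZ/2;\ZZ)\cong\ZZ/2\{\beta\Sq^2\}$ (agreeing with Proposition \ref{lem:stablecalc}), note that passing to the induced unstable $k$-invariant is evaluation at the fundamental class $\iota_1\in H^1(K(\ZZ/2,1);\ZZ/2)$, and then invoke the unstability axiom $\Sq^2\iota_1=0$ to see that both candidates produce the \emph{trivial} unstable $k$-invariant. Since $X$ is assumed to carry the nontrivial one, it cannot be $\Omega^\infty$ of any such spectrum. This is the right argument and gives a cleaner proof of the lemma.

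Two minor points worth tightening. First, it is worth making explicit that the comparison map $[\Sigma H\ZZ/2,\Sigma^4 H\ZZ]\to[K(\ZZ/2,1),K(\ZZ,4)]$ realized by $\Omega^\infty$ is identified with $\kappa\mapsto\kappa(\iota_1)$ via the $(\Sigma^\infty_+,\Omega^\infty)$ adjunction and the counit $\Sigma^\infty_+ K(\ZZ/2,1)\to\Sigma H\ZZ/2$. Second, your identification $H^n(H\ZZ/2;\ZZ)\cong\ker(\Sq^1\cdot-)$ in positive degrees tacitly uses that $\rho\colon H^n(H\ZZ/2;\ZZ)\to H^n(H\ZZ/2;\ZZ/2)$ is injective (which follows from the same long exact sequence, since multiplication by $2$ is zero), so that $\ker(\beta)=\ker(\rho\circ\beta)=\ker(\Sq^1)$; alternatively you may simply cite Proposition \ref{lem:stablecalc} for the computation of $H^3(H\ZZ/2;\ZZ)$.
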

\begin{proof}
If $X$ admitted an $\EE_\infty$-structure then its $k$-invariant $K(\ZZ/2,1)\to K(\ZZ,4)$ would be $\Omega^\infty$ of a stable $k$-invariant $\Sigma H\ZZ/2\to\Sigma^4H\ZZ$ and would therefore induce a map of Abelian groups on cohomology. Let $\gamma\in H^4(K(\ZZ/2,1);\mathbb{Z})\cong\ZZ/2$ be the $k$-invariant of $X$ and let $\alpha\in H^2(K(\ZZ/2,1);\mathbb{Z})$ be the generator given by the inclusion $B\mathbb{Z}/2\rightarrow BU(1)=B^2\mathbb{Z}$. Then $\gamma$ must be the cup-square of $\alpha$.

Now for any $h$-type $Y$ the cohomology operation induced by $\gamma$ is the map
\begin{align*}
H^1(Y;\mathbb{Z}/2)&\rightarrow H^4(Y;\mathbb{Z})\\
a&\mapsto \beta(a)^2
\end{align*}
 where $\beta$ is the Bockstein map. Let $Y=K(\ZZ/2,1)\times K(\ZZ/2,1)$. If we take $a$ and $b$ to be the generators of $H^1(Y;\mathbb{Z}/2)$ then we see that the cross term in $\beta(a+b)^2=\beta(a)^2+2\beta(a)\beta(b)+\beta(b)^2$ is non-zero, and therefore the above map is not an Abelian group homomorphism. In other words, the $k$-invariant cannot be $\Omega^\infty$ of a stable $k$-invariant.
\end{proof}

\begin{prop}\label{lem:stablecalc}
For any $n$, $H^{n+3}(\Sigma^nH\mathbb{Z}/2;\ZZ)$ is isomorphic to $\mathbb{Z}/2$, generated by  $\Sigma^n(\beta\circ Sq^2):\Sigma^nH\mathbb{Z}/2\rightarrow\Sigma^{n+1}H\mathbb{Z}/2\rightarrow\Sigma^{n+3}H\mathbb{Z}$, the appropriate suspension of the composite of the Bockstein and $Sq^2$.
\end{prop}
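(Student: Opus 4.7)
The plan is to reduce to the single stable computation $H^3(H\ZZ/2;\ZZ)$ and then identify its unique nonzero element as $\beta\circ\Sq^2$. By stability,
\[
H^{n+3}(\Sigma^n H\ZZ/2;\ZZ) = [\Sigma^n H\ZZ/2,\Sigma^{n+3}H\ZZ] \cong H^3(H\ZZ/2;\ZZ),
\]
so a generator in general is just the $n$-fold suspension of a generator in the base case. The first thing to record is that $H^k(H\ZZ/2;\ZZ)$ is $2$-torsion for every $k$: the stable self-map $2\cdot\mathrm{id}_{H\ZZ/2}$ is determined by its effect on $\pi_0 = \ZZ/2$, where it vanishes, so precomposition with it annihilates every stable map $H\ZZ/2\to\Sigma^k H\ZZ$.

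Next I would apply $[H\ZZ/2,\Sigma^*(-)]$ to the cofiber sequence
\[
H\ZZ\xrightarrow{2}H\ZZ\xrightarrow{\rho}H\ZZ/2\xrightarrow{\beta}\Sigma H\ZZ.
\]
Writing $I^k := H^k(H\ZZ/2;\ZZ)$ and identifying $H^*(H\ZZ/2;\ZZ/2)$ with the mod-$2$ Steenrod algebra $\mathcal{A}$, the $2$-torsion observation collapses the resulting long exact sequence into short exact sequences
\[
0\to I^k \xrightarrow{\rho}\mathcal{A}^k\xrightarrow{\beta} I^{k+1}\to 0.
\]
Since the composite $\rho\circ\beta\colon\mathcal{A}^{k-1}\to\mathcal{A}^k$ is exactly $\Sq^1$ (this being the standard definition of $\Sq^1$ as the mod-$2$ reduction of the integer Bockstein), surjectivity of $\beta$ identifies $\rho(I^k)$ with the subgroup $\Sq^1\cdot\mathcal{A}^{k-1}\subset\mathcal{A}^k$.

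To finish, I would specialize to $k=3$. Because $\Sq^1\Sq^1=0$, we have $\mathcal{A}^2=\FF_2\cdot\Sq^2$, and the Adem relation $\Sq^1\Sq^2=\Sq^3$ is nonzero, yielding $I^3\cong\FF_2$, generated by the unique integral class whose mod-$2$ reduction is $\Sq^1\Sq^2$. That class is precisely $\beta\circ\Sq^2\colon H\ZZ/2\to\Sigma^2 H\ZZ/2\to\Sigma^3 H\ZZ$. The only step requiring genuine thought is the $2$-torsion claim at the start, which rests on the fact that stable self-maps of an Eilenberg--MacLane spectrum are detected on $\pi_0$; the rest is routine Bockstein manipulation together with a single Adem relation.
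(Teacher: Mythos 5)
Your proof is correct, and it arrives at the result by a cleaner route than the paper's, while using the same two basic ingredients (the $2$-torsion of $H^*(H\ZZ/2;\ZZ)$ and the Bockstein cofiber sequence $H\ZZ\xrightarrow{2}H\ZZ\xrightarrow{\rho}H\ZZ/2\xrightarrow{\beta}\Sigma H\ZZ$). The difference is in how those ingredients are deployed. The paper extracts from the $2$-torsion observation only the surjectivity of $\beta_*\colon\mathcal{A}^2\to I^3$, which caps $\lvert I^3\rvert\leq 2$, and then proves $\beta\Sq^2\neq 0$ by contradiction through a chain of factorizations of $\Sq^2$ through integral cohomology, ultimately invoking $\Sq^1\Sq^1=0$. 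You instead collapse the entire long exact sequence into short exact sequences $0\to I^k\xrightarrow{\rho_*}\mathcal{A}^k\xrightarrow{\beta_*}I^{k+1}\to 0$, which buys you both the injectivity of $\rho_*$ \emph{and} the surjectivity of $\beta_*$ at once; composing the two short exact sequences at $k=2,3$ identifies $\rho_*(I^3)$ with $\Sq^1\cdot\mathcal{A}^2=\FF_2\{\Sq^3\}$ via the Adem relation $\Sq^1\Sq^2=\Sq^3$, so $I^3\cong\ZZ/2$ and its generator is the unique integral lift of $\Sq^3$, namely $\beta\Sq^2$ (since $\rho\beta\Sq^2=\Sq^1\Sq^2$). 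This reads the nonvanishing of $\beta\Sq^2$ off directly from its mod-$2$ reduction rather than by contradiction, and simultaneously computes the order of the group. The one step that should be recorded carefully in either version is the $2$-torsion claim; your argument via $2\cdot\mathrm{id}_{H\ZZ/2}=0$ (which holds because $[H\ZZ/2,H\ZZ/2]\cong\ZZ/2$) is valid and is essentially what the paper uses as well.
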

\begin{proof}
It suffices to calculate $H^4(\Sigma H\ZZ/2;\ZZ)\simeq H^3(H\ZZ/2,\ZZ)$. Note that a map $f\co X\rightarrow \Sigma^3H\ZZ$ factors through the Bockstein $\Sigma^3\beta\co\Sigma^{2}H\ZZ/2\rightarrow\Sigma^3H\ZZ$ if and only if the composite of $f$ with the map $\Sigma^3 2\co\Sigma^3H\ZZ\rightarrow\Sigma^3H\ZZ$ is null, since $\beta$ is the fiber of multiplication by 2. 
But the composite of any map $\alpha\co H\mathbb{Z}/2\rightarrow\Sigma^3H\mathbb{Z}$ with $\Sigma^3 2\co\Sigma^3H\mathbb{Z}\rightarrow\Sigma^3H\mathbb{Z}$ is null because $H^3(H\ZZ/2;\ZZ)$ is 2-torsion. 
So every element $\alpha\in H^3(H\mathbb{Z}/2;\ZZ)$ factors as $\beta\alpha'$ for some $\alpha'\in H^2(H\mathbb{Z}/2;\ZZ/2)$. Therefore $\beta_\ast\colon \ZZ/2\cong H^2(H\ZZ/2;\ZZ/2)\to H^3(H\ZZ/2;\ZZ)$ is a surjection and $H^3(H\mathbb{Z}/2;\ZZ)$ has only two elements, $0$ and $\beta\circ\Sq^2$. Therefore it only remains to check that $\beta\circ\Sq^2$ is non-zero. 

Suppose that $\beta\circ\Sq^2$ were null. Then $Sq^2$ would lift to a map $H\mathbb{Z}/2\rightarrow\Sigma^2H\mathbb{Z}$, i.e.~there would be a factorization through the fiber of $\Sigma^3\beta$
\[
\begin{tikzcd}
&\Sigma^2H\ZZ/2\ar[d,"\Sq^2"]\ar[dl,"\omega",swap]&\\
\Sigma^2H\ZZ\ar[r,"\Sigma^2\rho"] & \Sigma^2H\ZZ\ar[r,"\Sigma^3\beta"] & \Sigma^3H\ZZ
\end{tikzcd}
\]
where $\rho$ is reduction mod 2. But, similarly to above, every class in $H^2(H\ZZ/2;H\ZZ)$ is 2-torsion and therefore there is another factorization 
\[
\begin{tikzcd}
H\ZZ/2\ar[r,"\omega"]\ar[d,"\Sq^1",swap] & \Sigma^2H\ZZ\\
\Sigma H\ZZ/2\ar[ur,"\beta",swap]
\end{tikzcd}
\]

\noindent Therefore we have a commutative diagram
\[
\begin{tikzcd}
H\ZZ/2\ar[r,"\Sq^2"]\ar[d,"\beta\circ\Sq^1",swap] & \Sigma^2H\ZZ/2\\
\Sigma^2 H\ZZ\ar[ur,"\Sigma^2\rho",swap]
\end{tikzcd}
\]

\noindent This however is a contradiction because the composite $\Sigma^2\rho\circ\beta\circ\Sq^1$ must be trivial on cohomology classes in degree greater than 1, whereas $\Sq^2$ is not. Therefore $H^3(H\ZZ/2;\ZZ)$ is isomorphic to $\ZZ/2$ and is generated by $\beta\circ\Sq^2$. 
\end{proof}

\begin{prop}\label{cor:options for gl1KU k invariant}
Any $h$-type equivalent to $K(\ZZ/2,1)\times K(\ZZ,3)$ admits exactly two $\EE_\infty$-structures. One of them is the product structure and the other is the one associated to the stable $k$-invariant $\Sigma(\beta\circ\Sq^2)\co \Sigma H\ZZ/2\to\Sigma^4 H\ZZ$.  
\end{prop}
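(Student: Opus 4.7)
My plan is to translate the question into one about connective spectra: an $\EE_\infty$-structure on a grouplike $h$-type $X$, up to equivalence of $\EE_\infty$-structures, amounts to an equivalence class of connective spectra $Y$ together with an equivalence $\Omega^\infty Y\simeq X$. So counting $\EE_\infty$-structures on $K(\ZZ/2,1)\times K(\ZZ,3)$ reduces to counting equivalence classes of connective spectra with $\pi_1\cong \ZZ/2$, $\pi_3\cong\ZZ$, all other $\pi_i$ trivial, whose $\Omega^\infty$ recovers the product $h$-type.

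First I would classify such spectra by Postnikov theory. Since $\pi_2=0$, the $2$-truncation is $\Sigma H\ZZ/2$, and the spectrum is determined by a single $k$-invariant living in $[\Sigma H\ZZ/2,\Sigma^4 H\ZZ]\cong H^3(H\ZZ/2;\ZZ)$. By Proposition \ref{lem:stablecalc} this group is $\ZZ/2$, with non-trivial element $\beta\circ\Sq^2$. So up to equivalence there are exactly two such spectra: the split spectrum $\Sigma H\ZZ/2\oplus\Sigma^3 H\ZZ$ (trivial $k$-invariant) and a non-split spectrum with stable $k$-invariant $\Sigma(\beta\circ\Sq^2)$.

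Next I would verify that both spectra have underlying $h$-type equivalent to $K(\ZZ/2,1)\times K(\ZZ,3)$. The split case is immediate. For the non-split spectrum, the space-level Postnikov $k$-invariant is the class in $H^4(K(\ZZ/2,1);\ZZ)$ obtained by applying $\beta\circ\Sq^2$ to the fundamental class of $H^1(K(\ZZ/2,1);\ZZ/2)$. By instability of Steenrod squares, $\Sq^2$ annihilates any $1$-dimensional cohomology class, so this unstable $k$-invariant vanishes, and the underlying $h$-type is again the product. The two resulting $\EE_\infty$-structures are inequivalent precisely because the underlying spectra are inequivalent, so we obtain exactly two structures.

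The subtle step is the interplay between the stable and unstable behavior of $\beta\circ\Sq^2$: stably it is a non-zero cohomology operation $\Sigma H\ZZ/2\to\Sigma^4 H\ZZ$ that produces a genuinely non-trivial $\EE_\infty$-extension, yet its unstable image on a degree-$1$ class is forced to be zero by instability. This is exactly what is needed for both structures to live on the \emph{same} $h$-type, and it dovetails with Lemma \ref{noinfB}, which rules out the other candidate $h$-type (the one with non-trivial unstable $k$-invariant) as an $\EE_\infty$-type altogether. Checking this cleanly will be the main thing requiring care.
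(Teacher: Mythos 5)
Your proposal is correct, and it gives a more direct argument than the paper does for the key step. The paper's route goes as follows: after using Proposition~\ref{lem:stablecalc} to identify the two candidate spectra, it invokes Lemma~\ref{lem:AandB} (only two $h$-types with the given homotopy groups) together with Lemma~\ref{noinfB} (the one with nontrivial space-level $k$-invariant admits no $\EE_\infty$-structure) to conclude by exclusion that $\Omega^\infty$ of the nonsplit spectrum must be the product $h$-type. You instead compute the space-level $k$-invariant directly: it is $\Omega^\infty(\beta\Sq^2)$, corresponding to $\beta\Sq^2(\iota_1)\in H^4(K(\ZZ/2,1);\ZZ)$, and $\Sq^2$ vanishes on the degree-one fundamental class by instability, so the space-level $k$-invariant is zero and the underlying $h$-type is the product. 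Your argument buys a cleaner, self-contained computation that makes the mechanism (stable nontriviality versus unstable vanishing of $\beta\Sq^2$) transparent, and it only needs Lemma~\ref{noinfB} as a consistency check rather than as a load-bearing step. The paper's argument is slightly less direct, but it extracts and records the standalone fact that the other $h$-type cannot carry any $\EE_\infty$-structure, which is reused elsewhere (e.g.\ in the proof of Proposition~\ref{prop:fiber of Bstring to BSO}). Both establish that the two stable $k$-invariants give inequivalent $\EE_\infty$-structures on the same underlying $h$-type, which is what the statement requires.
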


\begin{proof}
By Proposition \ref{lem:stablecalc} there are at most two $\EE_\infty$-structures on $K(\ZZ/2,1)\times K(\ZZ,3)$, one associated to the coproduct spectrum $\Sigma H\ZZ/2\vee \Sigma^3H\ZZ$ and the other associated to the spectrum with the same homotopy groups but non-trivial $k$-invariant $\Sigma(\beta\circ\Sq^2)\co \Sigma H\ZZ/2\to\Sigma^4H\ZZ$. By taking $\Omega^\infty$, the latter yields an infinite loop space, distinct from the product infinite loop space, with homotopy groups $\pi_1\cong \ZZ/2$ and $\pi_3\cong\ZZ$. By Lemma \ref{lem:AandB} this space has either the trivial or non-trivial $k$-invariant, and by Lemma \ref{noinfB} it cannot have the non-trivial $k$-invariant. Thus each element of $H^4(\Sigma H\ZZ/2;\ZZ)$ induces a distinct infinite loop space structure on the product $K(\ZZ/2,1)\times K(\ZZ,3)$.
\end{proof}

\begin{cor}\label{cor:kinvariantoftruncatedgl1}
    The $k$-invariant of $\gbcconn$ is $\beta\circ\Sq^2$ and therefore the infinite loop space structure on $\GBCconn\simeq K(\ZZ/2,1)\times K(\ZZ,3)$ is not the product structure.
\end{cor}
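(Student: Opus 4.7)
The plan is to combine the splitting result of Corollary \ref{cor:gl1KUsplitting} with the classification in Proposition \ref{cor:options for gl1KU k invariant} to pin down the $k$-invariant. The central observation is that $\gbcconn$ is simply the suspension of $\gl_1(\KU)[0,2]$, so it suffices to identify the $k$-invariant of $\gl_1(\KU)[0,2]$ and then apply $\Sigma$.

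First I would note that $\gl_1(\KU)[0,2]$ has homotopy groups only in degrees $1$ and $2$, with $\pi_1 \cong \ZZ/2$ and $\pi_2 \cong \ZZ$ (recall $\gl_1(\KU)$ agrees with $\gl_1(\ku)$ in nonnegative degrees and $\pi_2(\ku)^\times = \pi_2(\ku) \cong \ZZ$ since it is generated by the Bott class). Thus $\Sigma \gl_1(\KU)[0,2]$ fits the hypotheses of Proposition \ref{cor:options for gl1KU k invariant}: its underlying $h$-type is $K(\ZZ/2,1)\times K(\ZZ,3)$, and that proposition gives exactly two possible $\EE_\infty$-structures, distinguished by whether the $k$-invariant is $0$ or $\Sigma(\beta\circ\Sq^2)$.

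Next I would rule out the product structure. By Corollary \ref{cor:gl1KUsplitting}, the natural map $\LR\FHTC(\ast)\to \gl_1(\KU)$ exhibits $\LR\FHTC(\ast)$ as the $2$-truncation $\gl_1(\KU)[0,2]$, since its homotopy is concentrated in degrees $0$ and $2$ and the map is an equivalence on $\pi_0$ and $\pi_2$ (the argument in the proof of Corollary \ref{cor:gl1KUsplitting}). Now Theorem \ref{thm:secondkinvariantofpicKU} asserts that the $k$-invariant of $\LR\FHTC(\ast)$ is nontrivial, so the same is true of $\gl_1(\KU)[0,2]$ and hence of $\gbcconn \simeq \Sigma \gl_1(\KU)[0,2]$. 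Combining with Proposition \ref{cor:options for gl1KU k invariant}, the $k$-invariant must be the unique nontrivial class, which by Proposition \ref{lem:stablecalc} is $\beta\circ\Sq^2$.

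No step should be a serious obstacle here since all the hard work has already been done; the only subtle point is the identification $\gl_1(\KU)[0,2] \simeq \LR\FHTC(\ast)$, but this is immediate from the splitting in Corollary \ref{cor:gl1KUsplitting} together with the fact that $\LR\FHTC(\ast)$ is already connective and $2$-truncated. From this identification the conclusion that the product $\EE_\infty$-structure on $K(\ZZ/2,1)\times K(\ZZ,3)$ is \emph{not} the one on $\GBCconn$ follows formally.
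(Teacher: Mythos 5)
Your argument uses exactly the same three ingredients the paper cites for this corollary --- Theorem \ref{thm:secondkinvariantofpicKU}, Corollary \ref{cor:gl1KUsplitting}, and Proposition \ref{cor:options for gl1KU k invariant} --- assembled in the same way, so this is the paper's proof with the reasoning spelled out. One bookkeeping slip worth correcting: $\gl_1(\KU)[0,2]$ has its nontrivial homotopy in degrees $0$ and $2$ (with $\pi_0\cong\pi_0(\KU)^\times\cong\ZZ/2$ and $\pi_1=0$), not in degrees $1$ and $2$ as you wrote; after suspension this indeed gives $K(\ZZ/2,1)\times K(\ZZ,3)$ as you later assert, so the error is harmless but the two statements as written are inconsistent with each other.
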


\begin{proof}
    This follows from Theorem \ref{thm:secondkinvariantofpicKU}, Corollary \ref{cor:gl1KUsplitting} and Proposition \ref{cor:options for gl1KU k invariant}.
\end{proof}

The following proposition is not immediately relevant but will be used later and follows naturally from Corollary \ref{cor:kinvariantoftruncatedgl1}. We do not know if the splitting also exists at the level of $\EE_\infty$-types. 

\begin{prop}\label{prop:loopspacesplittingofGBC}
    There is a splitting of $\EE_1$-types $\GBC\simeq \ZZ/2\times \GBCconn$.
\end{prop}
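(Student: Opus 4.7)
My plan is to produce an $\EE_1$-section $s\co\ZZ/2\to\GBC$ of the Postnikov truncation $\GBC\to\GBC[0,0] = \ZZ/2$ and then combine it with the inclusion $\iota\co\GBCconn\hookrightarrow\GBC$, using the multiplication on $\GBC$, to obtain an $\EE_1$-equivalence $\ZZ/2\times\GBCconn\to\GBC$.

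The crux is constructing the section. Because both $\ZZ/2$ and $\GBC$ are grouplike $\EE_1$-spaces, the loop--bar adjunction identifies $\EE_1$-maps $\ZZ/2\to\GBC$ with pointed maps $K(\ZZ/2,1) = B\ZZ/2 \to B\GBC$, and an $\EE_1$-section of the projection corresponds to a pointed section of $B\GBC\to B(\GBC[0,0]) = K(\ZZ/2,1)$. I would construct such a section inductively along the Postnikov tower of $B\GBC$, which has $\pi_1 = \ZZ/2$, $\pi_2 = \ZZ/2$, $\pi_3 = 0$ and $\pi_4 = \ZZ$, verifying vanishing of the obstructions at each nontrivial stage.

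There are two stages to check. The first obstruction lies in $H^3(K(\ZZ/2,1);\ZZ/2)$ and is the image under $\Omega^\infty$ of the first $k$-invariant $\Sigma\Sq^2$ of $\Sigma\gbc$ (coming from the first-$k$-invariant computation following Lemma \ref{lem:firstkinvariantpossibilities}) evaluated on the identity of $K(\ZZ/2,1)$; hence it is represented by $\Sq^2(x)$, where $x\in H^1(K(\ZZ/2,1);\ZZ/2)$ is the fundamental class, and it vanishes by the unstable axiom $\Sq^i(x)=0$ for $i>|x|$. The $\pi_3$-stage is vacuous. The final obstruction lies in $H^5(K(\ZZ/2,1);\ZZ) \cong H^5(\RR P^\infty;\ZZ) = 0$, which vanishes for purely dimensional reasons, since the integral cohomology of $\RR P^\infty$ is concentrated in even degrees. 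Thus a section $s$ exists.

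Since $\GBC$ is $\EE_\infty$ (hence $\EE_2$), its multiplication $\mu\co\GBC\times\GBC\to\GBC$ is itself an $\EE_1$-map, so the composite
$$
\ZZ/2\times\GBCconn\xrightarrow{\,s\times\iota\,}\GBC\times\GBC\xrightarrow{\,\mu\,}\GBC
$$
is an $\EE_1$-map. It is the identity on $\pi_0$ by construction of $s$, and on each higher homotopy group it restricts, on each of the two components of the source, to a translate of $\iota$ by $s(0)$ or $s(1)$; since translation by an element of an $\EE_1$-group is an equivalence of path components, the map is an isomorphism on all homotopy groups. Hence the composite is an equivalence of underlying $h$-types and therefore of $\EE_1$-types. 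The main technical point will be identifying the space-level first $k$-invariant of $B\GBC$ with $\Omega^\infty\Sigma\Sq^2$; once this is in hand, the vanishing of both obstructions is automatic, and it is worth emphasizing that the section $s$ cannot be promoted to an $\EE_\infty$-map, because the spectrum first $k$-invariant $\Sq^2$ itself is nonzero.
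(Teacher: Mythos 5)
Your proof is correct and takes essentially the same approach as the paper: both arguments reduce to the unstable vanishing $\Sq^2(x)=0$ for a degree-one class $x$ together with $H^5(B\ZZ/2;\ZZ)=0$. The paper packages this as showing that the connecting map $H\ZZ/2\to\Sigma\gbcconn$ becomes null after applying $\Omega^\infty\Sigma(-)$ (after first pinning down the stable group $[\Sigma H\ZZ/2,\Sigma^2\gbcconn]\cong\ZZ/2\{\Sq^2\}$), whereas you run the equivalent Postnikov obstruction theory directly on $B\GBC$.
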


\begin{proof}
Consider the cofiber sequence of spectra 
\[
\gbcconn\to\gbc\to H\ZZ/2.
\]
The proposition is equivalent to the claim that the connecting map $H\ZZ/2\to\Sigma\gbcconn$ is null after applying $\Omega^\infty\Sigma(-)$. Consider the second cofiber sequence

\[
\Sigma^2H\ZZ/2\xrightarrow{\beta \Sq^2}\Sigma^5H\ZZ\to\Sigma^2\gbcconn\to\Sigma^3H\ZZ/2\xrightarrow{\beta\Sq^2}\Sigma^6H\ZZ
\]
Applying $[\Sigma H\ZZ/2,-]$ produces an exact sequence:
\begin{align*}
H^1(H\ZZ/2;\ZZ/2)&\xrightarrow{\beta\Sq^2}H^4(H\ZZ/2;\ZZ)\to [\Sigma H\ZZ/2,\Sigma^2\gbcconn]\\
&\to H^2(H\ZZ/2;\ZZ/2)\xrightarrow{\beta\Sq^2}H^5(H\ZZ/2;\ZZ)
\end{align*}
By computing the relevant cohomology groups this can be rewritten as:
\begin{align*}
\ZZ/2\{\Sq^1\}\xrightarrow{\beta\Sq^2}\ZZ/2\{\beta\Sq^2\Sq^1\}&\to [\Sigma H\ZZ/2,\Sigma^2\gbcconn]\\&\to \ZZ/2\{\Sq^2\}\xrightarrow{\beta\Sq^2}\ZZ/2\{\beta\Sq^4\}
\end{align*}
From this it immediately follows that the first map in the sequence is surjective, and the final map is zero, so the third map gives an isomorphism \[[\Sigma H\ZZ/2,\Sigma^2\gbcconn]\cong [\Sigma H\ZZ/2,\Sigma^3H\ZZ/2]\cong\ZZ/2\{\Sq^2\}
\] 
Now recall that $\Sq^2\colon \Sigma H\ZZ/2\to\Sigma^3H\ZZ/2$ induces the null map on underlying spaces $B\ZZ/2\to B^3\ZZ/2$. Therefore, for both infinite-loop maps $B\ZZ/2\to B^2\GBCconn$, the composite $B\ZZ/2\to B^2\GBCconn\to B^3\ZZ/2$ is null. It follows that any such map factors as $B\ZZ/2\to B^5\ZZ\to B^2\GBCconn$. But $H^5(B\ZZ/2;\ZZ)=0$.

\end{proof}

\subsection{Extending to $\gbr$ and $\gbc$}

So far we have computed the $k$-invariants of $\pic_0^1(\KO)$, $\pic_0^1(\KU)$, $\gbrconn$ and $\gbcconn$. Now we wish to determine how this data can be glued together to understand the $k$-invariants of $\gbr$ and $\gbc$. Our computations only determine the second $k$-invariants of these spectra up to isomorphism. In other words, for $\KO$ we determine that the second $k$-invariant of $\gbr$ is one of two elements of $H^3(\pic_0^1(\KO);\ZZ)\cong\ZZ/2\times\ZZ/2$, and for $\KU$ it is one of the two generators of $H^4(\pic_0^1(\KU))\cong\ZZ/4$. In the latter case there is an equivalence of Postnikov towers which interchanges the two generators, but in the former case it is less clear which generator one should choose. Ultimately, however, the choice will not matter because both $k$-invariants work for the applications of Section \ref{sec:interpretations}. In particular, both choices satisfy the conclusions of Propositions \ref{prop:Maycock group structure from k invariant} and \ref{prop:Real Maycock group structure from k invariant}.

The proof of Lemma \ref{lem:Z4computationforKU}  was sketched for us by Tyler Lawson. Any mistakes are of course due to our own misunderstanding.

\begin{lem}\label{lem:SESforH4pic01}
    There is an exact sequence \[0\to H^4(H\ZZ/2;\ZZ)\to H^4(\pic_0^1(\KU);\ZZ)\to H^3(H\ZZ/2;\ZZ)\to 0\] in which the first group is generated by $\beta\Sq^2\Sq^1$ and the last group is generated by $\beta\Sq^2$.
\end{lem}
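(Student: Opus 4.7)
The plan is to apply the Postnikov cofibre sequence
\[\Sigma H\ZZ/2 \to \pic_0^1(\KU) \to H\ZZ/2 \xrightarrow{\Sq^2} \Sigma^2 H\ZZ/2\]
(whose $k$-invariant has already been identified earlier in the section as $\Sq^2$) and feed it into the functor $[-, \Sigma^4 H\ZZ]$. After reindexing, the resulting long exact sequence reads
\[H^2(H\ZZ/2;\ZZ) \xrightarrow{(\Sq^2)^*} H^4(H\ZZ/2;\ZZ) \to H^4(\pic_0^1(\KU);\ZZ) \to H^3(H\ZZ/2;\ZZ) \xrightarrow{(\Sq^2)^*} H^5(H\ZZ/2;\ZZ),\]
where both maps labelled $(\Sq^2)^*$ are given by precomposition with the $k$-invariant. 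The task therefore reduces to identifying the outer terms and showing that both $(\Sq^2)^*$ vanish.

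To compute the integral cohomology of $H\ZZ/2$ in the relevant range I would run the Bockstein long exact sequence coming from $0 \to \ZZ \xrightarrow{2} \ZZ \to \ZZ/2 \to 0$. Because $H\ZZ/2$ is $2$-torsion in the stable category, multiplication by $2$ is trivial on $H^*(H\ZZ/2;\ZZ)$, so the sequence collapses to an injection $\rho\colon H^n(H\ZZ/2;\ZZ) \hookrightarrow A_n$ (where $A$ is the mod-$2$ Steenrod algebra) whose image is the image of $\Sq^1\colon A_{n-1} \to A_n$, using the identity $\rho\beta = \Sq^1$ of stable operations. Applied to the relevant degrees this yields $H^2(H\ZZ/2;\ZZ) = 0$ (since $\Sq^1\Sq^1 = 0$), recovers $H^3(H\ZZ/2;\ZZ) = \ZZ/2\langle \beta\Sq^2\rangle$ from Proposition~\ref{lem:stablecalc}, and gives $H^4(H\ZZ/2;\ZZ) = \ZZ/2\langle \beta\Sq^2\Sq^1\rangle$, since $\Sq^1$ annihilates $\Sq^3$ but carries $\Sq^2\Sq^1$ to $\Sq^3\Sq^1$.

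With these identifications in hand, the first $(\Sq^2)^*$ is zero for the trivial reason that its source vanishes. For the second, the generator $\beta\Sq^2$ is sent to $\beta\circ \Sq^2\circ \Sq^2 = \beta \circ \Sq^3\Sq^1$ via the Adem relation $\Sq^2\Sq^2 = \Sq^3\Sq^1$. To see that this vanishes in $H^5(H\ZZ/2;\ZZ)$, I would apply the injection $\rho$: the class is carried to $\Sq^1\Sq^3\Sq^1 \in A_5$, which is zero because the Adem relation $\Sq^1\Sq^3 = 0$ kills the left factor. The long exact sequence then trims to the short exact sequence stated in the lemma, with the generators as named.

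The main obstacle is bookkeeping rather than a conceptual one: one must keep track of the interplay between $\beta$, $\rho$, and the Adem relations carefully enough to recognize that $\rho$ detects an integral class as its $\Sq^1$-image in the Steenrod algebra. Once the dictionary $H^n(H\ZZ/2;\ZZ) \cong \operatorname{im}(\Sq^1\colon A_{n-1}\to A_n)$ is established, the remaining computations reduce to a tiny Adem relation check.
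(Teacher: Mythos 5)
Your proof is correct and follows essentially the same route as the paper: apply $[-,\Sigma^4 H\ZZ]$ to the Postnikov cofiber sequence $\Sigma H\ZZ/2\to\pic_0^1(\KU)\to H\ZZ/2$, identify the outer terms, and show that both connecting maps (precomposition with the $k$-invariant $\Sq^2$) vanish — the left one trivially since $H^2(H\ZZ/2;\ZZ)=0$, the right one by reducing mod $2$ and invoking an Adem relation. The only difference is cosmetic: the paper cites the identifications of $H^*(H\ZZ/2;\ZZ)$ as "standard computations" and kills the right-hand map via $\Sq^1\Sq^2\Sq^2=0$, whereas you derive the dictionary $H^n(H\ZZ/2;\ZZ)\cong\im(\Sq^1\colon A_{n-1}\to A_n)$ from the Bockstein sequence and first rewrite $\Sq^2\Sq^2=\Sq^3\Sq^1$ before applying $\Sq^1\Sq^3=0$; these Adem manipulations are equivalent.
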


\begin{proof}
The generators of the first and last groups are standard computations. The fact that $H^4(H\ZZ/2;\ZZ)\to H^4(\pic_0^1(\KU);\ZZ)\to H^3(H\ZZ/2;\ZZ)$ is exact follows immediately from considering the applying $H^4(-;\ZZ)$ to the cofiber sequence $\Sigma H\ZZ/2\to \pic_0^1(\KU)\to H\ZZ/2$ from the Postnikov tower of $\pic(\KU)$. The fact that the entire sequence is short exact follows on the left from the fact that $H^2(H\ZZ/2;\ZZ)=0$. For the right hand side, first recall that the first $k$-invariant of $\pic(\KU)$, i.e.~the only $k$-invariant of $\pic_0^1(\KU)$, is $\Sq^2\colon H\ZZ/2\to\Sigma^2H\ZZ/2$. Therefore there is an exact sequence \[H^4(\pic_0^1(\KU);\ZZ)\to H^3(H\ZZ/2;\ZZ)\to H^5(H\ZZ/2;\ZZ)\] in which the last map is $\Sq^2$. Since $H^3(H\ZZ/2;\ZZ)\cong\ZZ/2$ is generated by $\beta\Sq^2$ the image of this map is $\beta\Sq^2\Sq^2$. Taking the quotient by 2, which is an injection $H^\ast(H\ZZ/2;\ZZ)\to H^\ast(H\ZZ/2;\ZZ/2)$, we get $\Sq^1\Sq^2\Sq^2$, which is zero by the Adem relations. Therefore the image of $\Sq^2\colon H^3(H\ZZ/2;\ZZ)\xrightarrow{}H^5(H\ZZ/2;\ZZ)$ is zero.
\end{proof}

\begin{lem}\label{lem:Z4computationforKU}
   There is an isomorphism $H^4(\pic_0^1(\KU);\ZZ)\cong\ZZ/4$. 
\end{lem}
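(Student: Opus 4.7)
From Lemma \ref{lem:SESforH4pic01} we know $G := H^4(\pic_0^1(\KU);\ZZ)$ sits in a short exact sequence $0 \to \ZZ/2 \to G \to \ZZ/2 \to 0$, so $G$ is either $\ZZ/4$ or $\ZZ/2\oplus\ZZ/2$. The plan is to rule out the split case by comparing with mod-$2$ cohomology via the integer Bockstein long exact sequence
\[
\cdots \to H^3(\pic_0^1(\KU);\ZZ/2) \xrightarrow{\beta} G \xrightarrow{\ \cdot 2\ } G \to H^4(\pic_0^1(\KU);\ZZ/2) \to \cdots
\]
coming from $0 \to \ZZ \xrightarrow{2} \ZZ \to \ZZ/2 \to 0$. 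If $G$ were $\ZZ/2\oplus\ZZ/2$ then multiplication by $2$ on $G$ would vanish, so by exactness $\beta$ would be surjective, forcing $|H^3(\pic_0^1(\KU);\ZZ/2)| \geq |G| = 4$. Hence the whole computation reduces to establishing the bound $|H^3(\pic_0^1(\KU);\ZZ/2)| \leq 2$.

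To establish this bound I would apply $H^*(-;\ZZ/2)$ to the Postnikov cofiber sequence
\[
\Sigma H\ZZ/2 \to \pic_0^1(\KU) \to H\ZZ/2 \xrightarrow{\Sq^2} \Sigma^2 H\ZZ/2
\]
and extract the five-term piece $\mathcal{A}^1 \xrightarrow{\delta} \mathcal{A}^3 \to H^3(\pic_0^1(\KU);\ZZ/2) \to \mathcal{A}^2 \xrightarrow{\delta} \mathcal{A}^4$, where $\mathcal{A}^n$ denotes the degree-$n$ part of the mod-$2$ Steenrod algebra and each connecting homomorphism $\delta$ is induced by composition with the $k$-invariant $\Sq^2$. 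Using the Adem relations $\Sq^1\Sq^2 = \Sq^3$ and $\Sq^2\Sq^2 = \Sq^3\Sq^1$, the left $\delta$ has image $\ZZ/2\{\Sq^3\} \subset \mathcal{A}^3 = \ZZ/2\{\Sq^3,\Sq^2\Sq^1\}$ and the right $\delta$ is injective, so $H^3(\pic_0^1(\KU);\ZZ/2) \cong \ZZ/2\{[\Sq^2\Sq^1]\} \cong \ZZ/2$, completing the argument.

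The hard part is almost entirely bookkeeping: one must orient the connecting homomorphism correctly (it arises as pre-composition with the $k$-invariant after a suitable desuspension, so becomes right multiplication by $\Sq^2$ in $\mathcal{A}$) and handle the Adem-relation identifications carefully. Conceptually the whole proof is a size count: a hypothetical splitting $G \cong (\ZZ/2)^{\oplus 2}$ would require enough classes in $H^3(\pic_0^1(\KU);\ZZ/2)$ to surject onto $G$ under $\beta$, and the Postnikov calculation shows there is only one such class.
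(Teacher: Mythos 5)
Your proof is correct, and it takes a genuinely different route from the paper. You reduce the problem, via the short exact sequence from Lemma \ref{lem:SESforH4pic01} and the integral Bockstein long exact sequence, to the size bound $|H^3(\pic_0^1(\KU);\ZZ/2)| \leq 2$, which you then establish with a five-term exact sequence from the Postnikov cofiber sequence plus the Adem relations $\Sq^1\Sq^2 = \Sq^3$ and $\Sq^2\Sq^2 = \Sq^3\Sq^1$. This is self-contained low-degree Steenrod-algebra bookkeeping; the orientation of the connecting map as right multiplication by $\Sq^2$ is indeed the one subtle point, and you handle it correctly. The paper instead lifts the nonzero class $\pi_0(\pic_0^1(\KU))\to\ZZ/2$ to a map $f\colon\sph/2\to\pic_0^1(\KU)$ from the mod-$2$ Moore spectrum, shows $f$ is an isomorphism on $\pi_0$ and $\pi_1$, and then pushes through Hurewicz and universal coefficients to identify $H^4(\pic_0^1(\KU);\ZZ)$ with $\Ext(\pi_2(\sph/2),\ZZ)\cong\ZZ/4$, relying on the external fact (cited informally) that $\pi_2(\sph/2)\cong\ZZ/4$. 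Your argument avoids that external input at the cost of leaning on Lemma \ref{lem:SESforH4pic01} (which the paper's proof of this lemma does not use, keeping it independent), while the paper's argument is more geometric and incidentally re-explains where the $\ZZ/4$ really comes from.
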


\begin{proof}
Consider the cofiber sequence $\sph\xrightarrow{2}\sph\xrightarrow{q}\sph/2$, where $\sph/2$ is the mod-2 Moore spectrum. Let $\overline{f}\colon\sph\to\pic_0^1(\KU)$ be the generator of $\pi_0(\pic_0^1(\KU))\cong\ZZ/2$. Then since $\pi_0(\pic_0^1(\KU))$ is 2-torsion, $\overline{f}$ lifts to a non-zero map $f\colon \sph/2\to \pic_0^1(\KU)$. The long exact sequence in homotopy applied to the above cofiber sequence shows that $\pi_0(\sph/2)\cong\ZZ/2$, generated by $q$ and  $\pi_1(\sph/2)\cong\ZZ/2$, generated by $q\circ\eta$. By considering the commutative diagram below, in which $d=0,1$ and $\phi=id,\eta$ respectively, we have that $f$ must be non-zero on $\pi_0$ and $\pi_1$ and therefore an isomorphism on those two homotopy groups.

\[
\begin{tikzcd}
 \Sigma^d\sph\ar[d,"\phi",swap]\ar[dr,"q\circ\phi"]&\\
\sph\ar[r,"q"]\ar[d,"\overline{f}",swap] & \sph/2\ar[dl,"f"]\\
\mathrm{pic}_0^1(\KU) & 
\end{tikzcd}
\]

\noindent As a result, the cofiber of $f$, $\cofib(f)$, is 2-connected. Applying the Hurewicz theorem to $\cofib(f)$, we find that $\pi_3(\cofib(f))\cong H^3(\cofib(f);\ZZ)$. Applying the long exact sequence in homotopy to the cofiber sequence $\sph/2\to \pic_0^1(\KU)\to\cofib(f)$ we see that $\pi_3(\cofib(f))\cong\pi_2(\sph/2)$. Now applying the long exact sequence for homology to the same cofiber sequence, we get $\pi_2(\sph/2)\cong H^3(\pic_0^1(\KU);\ZZ)$. The universal coefficient theorem tells us that $H^4(\pic_0^1(\KU);\ZZ)\cong \Ext(\pi_2(\sph/2),\ZZ)\cong \pi_2(\sph/2)^\vee\cong \pi_2 (\sph/2)$. We conclude by pointing out that $\pi_2(\sph/2)\cong\ZZ/4$. The authors cannot find this final fact in the published literature, but several sketch proofs of it are provided in \cite{Mathew-MO_question_moore_spectrum}.
\end{proof}

\begin{prop}\label{prop:KU k invariant generates Z4}
    The second $k$-invariant of $\gbc$ generates the cohomology group $H^4(\pic_0^1(\KU);\ZZ)\cong\ZZ/4$.
\end{prop}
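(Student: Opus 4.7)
The plan is to combine the short exact sequence from Lemma \ref{lem:SESforH4pic01} with the computation of the $k$-invariant of the connected cover $\gbcconn$ from Corollary \ref{cor:kinvariantoftruncatedgl1}. Since $\pi_2(\pic(\KU))\cong\pi_1(\KU)=0$, the Postnikov tower of $\gbc$ is assembled from $\pic_0^1(\KU)$ and $\Sigma^3H\ZZ$ via a single second $k$-invariant $k\in H^4(\pic_0^1(\KU);\ZZ)$, which by Lemma \ref{lem:Z4computationforKU} is a class in $\ZZ/4$. To show that $k$ generates, it suffices to show that $k$ does not lie in the unique index-two subgroup.

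First I would identify the surjection $H^4(\pic_0^1(\KU);\ZZ)\twoheadrightarrow H^3(H\ZZ/2;\ZZ)$ appearing in Lemma \ref{lem:SESforH4pic01} as restriction along the fiber inclusion $i\co\Sigma H\ZZ/2\hookrightarrow\pic_0^1(\KU)$ of the Postnikov cofiber sequence $\Sigma H\ZZ/2\to\pic_0^1(\KU)\to H\ZZ/2$; this is exactly the map produced in the proof of Lemma \ref{lem:SESforH4pic01} by applying $[-,\Sigma^4 H\ZZ]$ to that cofiber sequence. Next I would verify that $i^{\ast}k$ is precisely the $k$-invariant of $\gbcconn$, viewed as a class in $H^4(\Sigma H\ZZ/2;\ZZ)\cong H^3(H\ZZ/2;\ZZ)$. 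This follows by applying the octahedral axiom to the composite $\gbcconn\to\gbc\to\pic_0^1(\KU)$, which factors through the connected cover $\Sigma H\ZZ/2\subset\pic_0^1(\KU)$, and comparing the two resulting descriptions of the common fiber $\Sigma^3 H\ZZ$.

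With that in place, Corollary \ref{cor:kinvariantoftruncatedgl1} combined with Proposition \ref{lem:stablecalc} tells us that the $k$-invariant of $\gbcconn$ is $\beta\Sq^2$, the unique nonzero element of $H^3(H\ZZ/2;\ZZ)\cong\ZZ/2$. Hence $i^{\ast}k$ generates the cokernel in Lemma \ref{lem:SESforH4pic01}. Because the kernel $H^4(H\ZZ/2;\ZZ)\cong\ZZ/2$ is the unique index-two subgroup $2\cdot\ZZ/4$ of $H^4(\pic_0^1(\KU);\ZZ)\cong\ZZ/4$, the class $k$ cannot lie in that kernel; therefore $k$ is an odd element of $\ZZ/4$, that is, a generator.

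The only genuine obstacle is the bookkeeping in the second paragraph: one has to check that the connecting map in Lemma \ref{lem:SESforH4pic01} truly is pullback along $i$, and that this pullback really does extract the $k$-invariant of the connective cover. Both assertions are formal consequences of the naturality of Postnikov towers and connective covers, and each can be dispatched with a line of justification. Once the identification is made, the proposition reduces to the purely order-theoretic remark that in the short exact sequence $0\to\ZZ/2\to\ZZ/4\to\ZZ/2\to 0$ any preimage of a generator of the quotient must itself be a generator of $\ZZ/4$.
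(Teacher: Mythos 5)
Your proposal is correct and follows essentially the same path as the paper's proof: both reduce the question to showing that the second $k$-invariant of $\gbc$ maps to $\beta\Sq^2$ under the surjection $H^4(\pic_0^1(\KU);\ZZ)\cong\ZZ/4\twoheadrightarrow H^3(H\ZZ/2;\ZZ)\cong\ZZ/2$ of Lemma \ref{lem:SESforH4pic01} and then observe that the only elements mapping to the generator of $\ZZ/2$ are the two generators of $\ZZ/4$. You fill in more detail than the paper does in the "bookkeeping" step (identifying the surjection as $i^{\ast}$ and checking that $i^{\ast}k$ is the $k$-invariant of $\gbcconn$), but there is no substantive difference in approach.
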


\begin{proof}
    By Lemma \ref{lem:Z4computationforKU} and Theorem \ref{thm:secondkinvariantofpicKU} there are three options for the second $k$-invariant of $\gbc$, any of the non-trivial maps $\pic_0^1(\KU)\to\Sigma^4H\ZZ$. However, by Corollary \ref{cor:kinvariantoftruncatedgl1}, the actual second $k$-invariant must give $\beta\Sq^2$ on $\gbcconn$ after taking a connected cover. This corresponds to asking for maps $\pic_0^1(\KU)\to\Sigma^4 H\ZZ$ which restrict to $\beta\Sq^2$ when precomposing with the map $\Sigma H\ZZ/2\to\pic_0^1(\KU)$ in the Postnikov tower of $\pic(\KU)$. But by Lemma \ref{lem:SESforH4pic01}, the map $\ZZ/4\cong H^4(\pic_0^1(\KU);\ZZ)\to H^3(\ZZ/2;\ZZ)\cong\ZZ/2$ is a surjection, and the codomain is generated by $\beta\Sq^2$. Therefore both of the generators of $H^4(\pic_0^1(\KU);\ZZ)$ satisfy the necessary property, so one of them of must be the $k$-invariant of $\gbc$. 
\end{proof}

\begin{thm}\label{thm:both k invariants work for picKU}
    The two generators of $\ZZ/4\cong H^4(\pic_0^1(\KU);\ZZ)$ yield equivalent Postnikov sections, and hence both present $\gbc$. 
\end{thm}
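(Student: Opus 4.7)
The plan is to exhibit a self-equivalence of the target $\Sigma^4 H\ZZ$ whose induced action on $H^4(\pic_0^1(\KU);\ZZ)\cong\ZZ/4$ interchanges the two generators. Once we have such an equivalence, it will carry one candidate $k$-invariant to the other and thereby produce an equivalence between the corresponding Postnikov sections.

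First I would recall that a Postnikov section of $\pic(\KU)$ with bottom piece $\pic_0^1(\KU)$ and next homotopy group $\ZZ$ in degree $3$ is determined, up to equivalence, by its $k$-invariant $k\colon \pic_0^1(\KU)\to \Sigma^4 H\ZZ$ modulo the action of $\mathrm{Aut}(\pic_0^1(\KU))\times \mathrm{Aut}(\Sigma^4 H\ZZ)$ on $H^4(\pic_0^1(\KU);\ZZ)$; this is the standard statement that the space of Postnikov data up to equivalence is the quotient of the set of $k$-invariants by pre- and post-composition with automorphisms of the Postnikov pieces. In particular, if $\phi$ is a self-equivalence of $\Sigma^4 H\ZZ$ and $k_2 = \phi\circ k_1$, then $\mathrm{fib}(k_1)\simeq \mathrm{fib}(k_2)$, since $\phi$ lifts to an equivalence of the fiber sequences.

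Next I would identify $\mathrm{Aut}(\Sigma^4 H\ZZ)$. Since $\Sigma^4 H\ZZ$ is the $4$-fold suspension of the Eilenberg--MacLane spectrum $H\ZZ$, its endomorphism ring is $\pi_0\End(H\ZZ)\cong\ZZ$, so its automorphism group is $\ZZ^\times=\{\pm 1\}$. Multiplication by $-1$ on $\Sigma^4 H\ZZ$ induces multiplication by $-1$ on $H^4(-;\ZZ)$. The two generators of $\ZZ/4$ are $1$ and $3\equiv -1 \pmod 4$; they are therefore exchanged by this action.

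Combining these two observations: letting $k_1, k_2\in H^4(\pic_0^1(\KU);\ZZ)\cong \ZZ/4$ denote the two generators, we have $k_2 = (-1)\cdot k_1$, and hence post-composition with the automorphism $-1\colon \Sigma^4 H\ZZ\to\Sigma^4 H\ZZ$ induces an equivalence of fiber sequences
\[
\begin{tikzcd}
\mathrm{fib}(k_1)\ar[r]\ar[d,"\simeq"] & \pic_0^1(\KU)\ar[r,"k_1"]\ar[d,equal] & \Sigma^4 H\ZZ\ar[d,"-1","\simeq"'] \\
\mathrm{fib}(k_2)\ar[r] & \pic_0^1(\KU)\ar[r,"k_2"] & \Sigma^4 H\ZZ.
\end{tikzcd}
\]
In particular the two Postnikov sections are equivalent, so by Proposition \ref{prop:KU k invariant generates Z4} either choice presents $\gbc$. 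There is essentially no obstacle here beyond verifying that $\mathrm{Aut}(\Sigma^4 H\ZZ)=\{\pm 1\}$ acts on the $k$-invariant in the expected way; the nontrivial content is entirely in Proposition \ref{prop:KU k invariant generates Z4}, which pins down that both generators are compatible with the connected-cover calculation $\beta\Sq^2$.
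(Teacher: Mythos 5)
Your proposal is correct and is essentially identical to the paper's argument: both note that the two generators of $\ZZ/4$ differ by a sign, build the commuting square with the identity on $\pic_0^1(\KU)$ and $-1$ on $\Sigma^4 H\ZZ$, and then pass to the induced equivalence of fibers. The extra discussion of $\mathrm{Aut}(\Sigma^4 H\ZZ)=\{\pm 1\}$ and the general Postnikov-classification framing is sound but does not change the substance of the proof.
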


\begin{proof}
    
    Let $a$ and $b$ be the two generators of $\ZZ/4\cong H^4(\pic_0^1(\KU);\ZZ)$. Since $a=-b$ there is a commutative diagram
    \[
\begin{tikzcd}
\mathrm{pic}_0^1(\KU)\ar[r,"a",swap]\ar[d,"id"]&\Sigma^4H\ZZ\ar[d,"-1"]\\
\mathrm{pic}_0^1(\KU)\ar[r,"b"] & \Sigma^4H\ZZ
\end{tikzcd}.
\]
The induced map between the fibers of the horizontal maps is an equivalence between the Postnikov sections corresponding to the $k$-invariants $a$ and $b$.
\end{proof}

\begin{rmk}
    While we do not have a geometric argument at hand, it seems almost certain that the automorphism used in the proof of Theorem \ref{thm:both k invariants work for picKU} corresponds to the complex conjugation automorphism on $\KU$.
\end{rmk}

\begin{prop}\label{prop:KO k invariant ambiguity}
    There is an isomorphism $H^3(\pic_0^1(\KO);\ZZ/2)\cong\ZZ/2\times\ZZ/2$. Moreover, pulling back along the fiber in the Postnikov tower for $\pic_0^1(\KO)$, $\Sigma H\ZZ/2\to \pic_0^1(\KO)$, induces a surjection $\ZZ/2\times\ZZ/2\to\ZZ/2\{Sq^2\}$.
\end{prop}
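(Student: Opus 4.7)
The plan is to apply $H^3(-;\ZZ/2)$ to the Postnikov cofiber sequence
\[
\Sigma H\ZZ/2 \xrightarrow{i} \pic_0^1(\KO) \xrightarrow{p} H\ZZ/8 \xrightarrow{\delta} \Sigma^2 H\ZZ/2,
\]
in which $\delta = \Sq^2\circ\rho$ is the first $k$-invariant of $\pic(\KO)$ established earlier, with $\rho\colon H\ZZ/8\to H\ZZ/2$ denoting mod $2$ reduction. This produces the long exact sequence
\[
H^1(H\ZZ/2;\ZZ/2) \xrightarrow{\delta^*} H^3(H\ZZ/8;\ZZ/2) \xrightarrow{p^*} H^3(\pic_0^1(\KO);\ZZ/2) \xrightarrow{i^*} H^2(H\ZZ/2;\ZZ/2) \xrightarrow{\partial} H^4(H\ZZ/8;\ZZ/2).
\]
Both assertions of the proposition can be read off this sequence: $i^*$ is precisely the pullback along the fiber, and $H^2(H\ZZ/2;\ZZ/2)=\ZZ/2\{\Sq^2\}$.

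First I would compute $H^\ast(H\ZZ/8;\ZZ/2)$ in low degrees from the cofiber sequence $H\ZZ\xrightarrow{8} H\ZZ\xrightarrow{q} H\ZZ/8$. Since multiplication by $8$ vanishes on $\ZZ/2$-coefficients, this yields short exact sequences
\[
0\to H^{n-1}(H\ZZ;\ZZ/2)\to H^n(H\ZZ/8;\ZZ/2)\xrightarrow{q^*} H^n(H\ZZ;\ZZ/2)\to 0.
\]
Combined with the standard description $H^\ast(H\ZZ;\ZZ/2)\cong\mathcal{A}/\mathcal{A}\Sq^1$, this gives $H^3(H\ZZ/8;\ZZ/2)\cong(\ZZ/2)^2$ and $H^4(H\ZZ/8;\ZZ/2)\cong(\ZZ/2)^2$. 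A distinguished lift in $H^3(H\ZZ/8;\ZZ/2)$ of the generator of $H^3(H\ZZ;\ZZ/2)$ is the class $\Sq^3\rho = \Sq^1\Sq^2\rho$, since $q^*(\Sq^3\rho)$ is $\Sq^3$ applied to the fundamental mod $2$ class of $H\ZZ$.

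Next I would identify the two boundary maps as composition with $\delta$ after the appropriate suspension shift. The map $\delta^*$ sends $\Sq^1\in H^1(H\ZZ/2;\ZZ/2)$ to $\Sq^1\Sq^2\rho=\Sq^3\rho$, which is nonzero by the previous paragraph; hence $\delta^*$ is injective with image of order $2$. The map $\partial$ sends $\Sq^2\in H^2(H\ZZ/2;\ZZ/2)$ to $\Sq^2\Sq^2\rho=\Sq^3\Sq^1\rho$ by the Adem relation $\Sq^2\Sq^2=\Sq^3\Sq^1$. The main obstacle of the whole argument is the vanishing $\Sq^1\rho=0$, and I would handle it by factoring $\rho\colon H\ZZ/8\to H\ZZ/2$ through $\rho_4\colon H\ZZ/4\to H\ZZ/2$ and observing that $\Sq^1\circ\rho_4$ is the composite of two consecutive arrows in the Bockstein cofiber sequence $H\ZZ/4\xrightarrow{\rho_4}H\ZZ/2\xrightarrow{\Sq^1}\Sigma H\ZZ/2$. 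It follows that $\partial=0$.

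Combining these, the long exact sequence collapses to a short exact sequence of $\FF_2$-vector spaces
\[
0\to \ZZ/2\to H^3(\pic_0^1(\KO);\ZZ/2)\to \ZZ/2\{\Sq^2\}\to 0,
\]
which splits abstractly, giving $H^3(\pic_0^1(\KO);\ZZ/2)\cong(\ZZ/2)^2$ and exhibiting $i^*$ as a surjection onto $\ZZ/2\{\Sq^2\}$. Aside from the identity $\Sq^1\rho=0$, every step of the argument is routine Steenrod-algebra bookkeeping once the cofiber-sequence description of $H^*(H\ZZ/8;\ZZ/2)$ is in hand.
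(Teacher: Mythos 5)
Your proof is correct and takes the same approach the paper sketches (the paper explicitly omits the details but says it comes from the mod~$2$ long exact sequence of the Postnikov cofiber sequence $\Sigma H\ZZ/2\to\pic_0^1(\KO)\to H\ZZ/8$ together with low-degree Eilenberg--MacLane computations); you have simply supplied the bookkeeping the paper leaves out, including the key observations $\Sq^1\rho = 0$ and $\Sq^2\Sq^2\rho = \Sq^3\Sq^1\rho = 0$.
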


\begin{proof}
    For readability do not include the entire proof. We only note that it arises from considering the long exact sequence in mod 2 cohomology applied to the Postnikov tower $\Sigma H\ZZ/2\to \pic_0^1(\KO)\to H\ZZ/8$ and a large number of low degree cohomology computations for the Eilenberg-MacLane spectra $H\ZZ/8$ and $H\ZZ/2$. 
\end{proof}

\begin{rmk}
    For the time being, we do not know how specify the ``correct'' $k$-invariant for $\gbr$, as it could be one of two elements in the primage of $Sq^2$. In the case of $\pic(\KU)$ the ambiguity is irrelevant up to equivalence (cf.~Theorem \ref{thm:both k invariants work for picKU}), but a similar approach will not work here. It may be possible to resolve the ambiguity by taking homotopy fixed points of $\pic(\KU)$ and comparing the second $k$-invariant of the resulting fixed point spectrum, via the homotopy fixed points spectral sequence, to the two possibilities given in Proposition \ref{prop:KO k invariant ambiguity}. Luckily, this uncertainty does not effect the group structure on the $\gbr$-cohomology of a space.
\end{rmk}

\subsection{Group structures}

Now that we know the $k$-invariants of $\gbr$ and $\gbc$, we can determine the group laws for $\gbr^0(X)$ and $\gbc^0(X)$, which we will use in the next section.

\begin{prop}\label{prop:Maycock group structure from k invariant}
For a space $X$, the group law on the set $\gbc^0(X)\cong H^0(X;\ZZ/2)\times H^1(X;\ZZ/2)\times H^3(X;\ZZ)$ is given by $(a,b,c)\boxplus (a',b',c')=(a+a',b+b', c+c'+\beta(b\cup b'))$, where we abuse notation and use the symbol $+$ to denote the usual addition in $H^0(X;\ZZ/2)$, $H^1(X;\ZZ/2)$ and $H^3(X;\ZZ)$.
\end{prop}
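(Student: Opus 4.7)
The plan is to combine the set-theoretic bijection from the $h$-type splitting (Proposition~\ref{prop:spacelevelsplittingsofPicKOKU}) with a computation of how the spectrum-level addition on $\gbc$ deforms coordinate-wise addition. The splitting gives the identification $\gbc^0(X)\cong H^0(X;\ZZ/2)\times H^1(X;\ZZ/2)\times H^3(X;\ZZ)$ as sets and provides a set-theoretic section $s\colon\Pic_0^1(\KU)\to\GBC$, $(a,b)\mapsto(a,b,0)$, of the Postnikov projection $\pi\colon\GBC\to\Pic_0^1(\KU)$. The group law is then governed by the two universal symmetric $2$-cocycles associated to the two $k$-invariants.

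For the first two coordinates, since $\pi\colon\gbc\to\pic_0^1(\KU)$ is a spectrum map the induced homomorphism $(a,b,c)\mapsto(a,b)$ reduces the question to the group structure on $(\pic_0^1(\KU))^0(X)$. Any natural $2$-cocycle $H^0(X;\ZZ/2)\times H^0(X;\ZZ/2)\to H^1(X;\ZZ/2)$ is classified by a class in $H^1(\ZZ/2\times\ZZ/2;\ZZ/2)$, which vanishes because $\ZZ/2\times\ZZ/2$ is discrete; hence the abelian extension $0\to H^1(X;\ZZ/2)\to(\pic_0^1(\KU))^0(X)\to H^0(X;\ZZ/2)\to 0$ is naturally split and $(a,b)+(a',b')=(a+a',b+b')$.

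For the third coordinate I would compute the universal $2$-cocycle $\delta\in H^3(\Pic_0^1(\KU)^{\times 2};\ZZ)$ measuring $s(\alpha)+s(\alpha')-s(\alpha+\alpha')$. Using $\Pic_0^1(\KU)\simeq\ZZ/2\times K(\ZZ/2,1)$ and Künneth (the mixed terms with $H^{>0}(\ZZ/2\times\ZZ/2;\ZZ)$ vanish, since $\ZZ/2\times\ZZ/2$ is discrete), this group simplifies to
\[
H^3(\Pic_0^1(\KU)^{\times 2};\ZZ)\;\cong\;H^0(\ZZ/2\times\ZZ/2;\ZZ)\otimes H^3(K(\ZZ/2,1)^{\times 2};\ZZ)\;\cong\;\ZZ^4\otimes\ZZ/2\;\cong\;(\ZZ/2)^4,
\]
where each $\ZZ/2$-summand is generated by the Tor class $\beta(u_1\cup u_2)$ for the two fundamental classes $u_i\in H^1(K(\ZZ/2,1);\ZZ/2)$. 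Writing $\delta=\chi\cdot\beta(b_1\cup b_2)$ with $\chi\in H^0(\ZZ/2\times\ZZ/2;\ZZ)$ and $b_i$ the $K(\ZZ/2,1)$-coordinate of the $i$th argument, testing the cocycle identity on mixed triples of the form $((1,0),(0,b_2),(0,b_3))$ forces $\chi$ constant modulo $2$; since $\beta(b_1\cup b_2)$ is $2$-torsion, this leaves only $\delta\in\{0,\beta(b_1\cup b_2)\}$. To rule out $\delta=0$ I would pull back along the connected-cover inclusion $K(\ZZ/2,1)^{\times 2}\hookrightarrow\Pic_0^1(\KU)^{\times 2}$: the restriction classifies the non-trivial extension $K(\ZZ,3)\to\GBCconn\to K(\ZZ/2,1)$ (Corollary~\ref{cor:kinvariantoftruncatedgl1}), so it is non-zero, and hence $\delta=\beta(b_1\cup b_2)$. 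Pulling $\delta$ back to $X$ along $(\alpha,\alpha')$ then gives $\beta(b\cup b')\in H^3(X;\ZZ)$, producing the claimed group law. The main difficulty I anticipate is the cocycle-identity calculation; the apparent mismatch between the $\ZZ/4$ of spectrum $k$-invariants and the $\ZZ/2$ of abelian $2$-cocycles reflects the fact that the abelian group structure on $\gbc^0(X)$ does not detect the higher $\EE_\infty$-coherences distinguishing $k_2$ from $-k_2$, consistent with Theorem~\ref{thm:both k invariants work for picKU}.
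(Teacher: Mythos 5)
Your proposal reaches the right conclusion and is close to the paper's argument, but takes a slightly different route through the first coordinate. The paper first invokes Proposition~\ref{prop:loopspacesplittingofGBC} to split off the $H^0(X;\ZZ/2)$ factor as an honest $\EE_1$-group summand, thereby reducing immediately to the short exact sequence $0\to H^3(X;\ZZ)\to\gbcconn^0(X)\to H^1(X;\ZZ/2)\to 0$ and classifying cocycles by maps $K(\ZZ/2,1)^{\times 2}\to K(\ZZ,3)$. You instead stay with the whole $\gbc$, classify the universal cocycle in $H^3(\Pic_0^1(\KU)^{\times 2};\ZZ)\cong(\ZZ/2)^4$, and then use the abelian $2$-cocycle identity (via mixed triples) to cut the sixteen possibilities down to $\{0,\beta(b_1\cup b_2)\}$. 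This is a genuinely more hands-on derivation of the fact that the $H^0$ factor contributes nothing: it avoids citing Proposition~\ref{prop:loopspacesplittingofGBC} at the cost of an extra (but easy) cocycle-identity computation. Your preliminary observation that the extension $0\to H^1\to\pic_0^1(\KU)^0(X)\to H^0\to 0$ is split because $H^1$ of a discrete four-point set vanishes is also correct and is not spelled out in the paper.

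The one place you should tighten the wording is the final step that rules out $\delta=0$. You write that the restriction of $\delta$ along $K(\ZZ/2,1)^{\times 2}\hookrightarrow\Pic_0^1(\KU)^{\times 2}$ ``classifies the non-trivial extension $K(\ZZ,3)\to\GBCconn\to K(\ZZ/2,1)$''. But as an extension of $h$-types this is split (Proposition~\ref{prop:spacelevelsplittingsofPicKOKU}); what Corollary~\ref{cor:kinvariantoftruncatedgl1} actually furnishes is the non-vanishing of the \emph{spectrum} $k$-invariant $\beta\Sq^2\in H^4(\Sigma H\ZZ/2;\ZZ)$, not of the degree-$0$ extension $2$-cocycle. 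These are different invariants (indeed, as you yourself observe for $\pic_0^1(\KU)$, a non-trivial $k$-invariant can coexist with a trivial degree-$0$ extension cocycle), so the implication ``non-trivial $k$-invariant $\Rightarrow$ non-trivial cocycle'' needs an argument. The paper's own proof is also terse at exactly this point --- it asserts that a trivial cocycle would force $\gbcconn$ to be a sum of Eilenberg--MacLane spectra, which, as stated, is stronger than what a natural degree-$0$ group splitting gives. The intended content is that a natural splitting of the group-valued functor $\gbcconn^0$ would produce a natural additive section $H^1(-;\ZZ/2)\to\gbcconn^0(-)$, and hence a splitting of $\gbcconn$ itself; spelling this out (or falling back on the geometric model of Theorem~\ref{thm:secondkinvariantofpicKU}, where the Koszul sign in the superline tensor product exhibits the cocycle directly) would close the gap. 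Since the paper and your proposal elide this in essentially the same way, I would not call it a fatal flaw, but your phrasing misattributes the non-triviality to the $h$-type extension, which is the wrong object.
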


\begin{proof}
Proposition \ref{prop:loopspacesplittingofGBC} implies that the first coordinate of the group splits off. Therefore it suffices to prove that the group structure on $\gbcconn^0(X)$ is $(b,c)+(b',c')=(b+b', c+c'+\beta(b\cup b'))$. Consider the natural short exact sequence of Abelian groups $H^3(X;\ZZ)\rightarrow\gbcconn^0(X)\rightarrow H^1(X;\ZZ/2)$.
The cocycle for that group extension is a natural map $H^1(X;\ZZ/2)\times H^1(X;\ZZ/2)\rightarrow H^3(X;\ZZ)$, which is represented by some map $K(\ZZ/2,1)\times K(\ZZ/2,1)\rightarrow K(\ZZ,3)$. There are only two such maps, the trivial one and $\beta(-\cup -)$. The cocycle cannot be trivial, otherwise $\gbcconn$ would be a direct sum of Eilenberg-Maclane spectra, which contradicts Corollary \ref{cor:kinvariantoftruncatedgl1}.
\end{proof}

\noindent The proof of the following proposition is similar.

\begin{prop}\label{prop:Real Maycock group structure from k invariant}
For an $h$-type $X$, the group law on the set $\gbr^0(X)\cong H^0(X;\ZZ/2)\times H^1(X;\ZZ/2)\times H^2(X;\ZZ/2)$ is given by $(a,b,c)\boxplus (a',b',c')=(a+a',b+b',c+c'+b\cup b')$, where we abuse notation and use the symbol $+$ to denote the usual addition in $H^0(X;\ZZ/8)$, $H^1(X;\ZZ/2)$ and $H^2(X;\ZZ/2)$.
\end{prop}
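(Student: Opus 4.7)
The plan is to mirror the proof of Proposition \ref{prop:Maycock group structure from k invariant}. First I would argue that the $H^0(X;\ZZ/8)$ factor splits off as a direct summand of $\gbr^0(X)$, and then determine the cocycle for the remaining extension
\[
0 \to H^2(X;\ZZ/2) \to \gbrconn^0(X) \to H^1(X;\ZZ/2) \to 0.
\]

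For the first step, applying $[X,-]$ to the Postnikov fiber sequence $\gbrconn \to \gbr \to H\ZZ/8$ produces a long exact sequence whose connecting map $H^0(X;\ZZ/8) \to \gbrconn^1(X)$ is induced by the first $k$-invariant $\Sq^2\circ\rho$. Since $\Sq^2$ annihilates classes of degree less than $2$, this connecting map vanishes, producing an abelian group short exact sequence. To show it splits naturally, one can either (i) establish a real analogue of Proposition \ref{prop:loopspacesplittingofGBC} at the level of $\EE_1$-types, or (ii) argue by Yoneda that the natural symmetric cocycle $H^0(-;\ZZ/8)^{\otimes 2} \to \gbrconn^0(-)$ must be trivial, since it is classified by cohomology classes on the discrete space $\ZZ/8 \times \ZZ/8$ in positive degrees, all of which vanish. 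A parallel argument addresses potential cross-term contributions to the $c$-coordinate cocycle coming from $H^0(-;\ZZ/8) \otimes H^1(-;\ZZ/2) \to H^2(-;\ZZ/2)$.

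For the second step, the Postnikov fiber sequence $\Sigma^2 H\ZZ/2 \to \gbrconn \to \Sigma H\ZZ/2$ with $k$-invariant $\Sq^2$ produces the displayed short exact sequence (exact because $\Sq^2$ vanishes on degree-$1$ classes). The cocycle is a natural symmetric map $H^1(-;\ZZ/2)^{\otimes 2} \to H^2(-;\ZZ/2)$, represented by a class in $H^2(K(\ZZ/2,1)^2;\ZZ/2)$. The K\"unneth formula gives this group the basis $\{x_1^2,\, x_1 x_2,\, x_2^2\}$, and after imposing symmetry together with the normalization $\sigma(b,0) = \sigma(0,b') = 0$, only $0$ and $x_1 x_2 = b \cup b'$ survive. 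The zero option would split $\gbrconn$ as $\Sigma H\ZZ/2 \oplus \Sigma^2 H\ZZ/2$, contradicting Proposition \ref{lem:possiblekinvariantsofbgl1KO}; hence the cocycle is $b \cup b'$.

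The main obstacle is the first step: establishing the clean $H^0$-splitting in $\gbr^0(X)$ without a predeveloped analog of Proposition \ref{prop:loopspacesplittingofGBC}. The Yoneda approach must rule out not only $H^0 \otimes H^0$-type cocycles but also possible $H^0 \otimes H^1$ cross-terms entering the $c$-coordinate, which requires analyzing the stable operations detected by $H\ZZ/8 \wedge \Sigma H\ZZ/2$ and exploiting the vanishing of $\Sq^1 \circ \rho$ on $\ZZ/8$-classes together with the ambiguity in the second $k$-invariant recorded in Proposition \ref{prop:KO k invariant ambiguity}.
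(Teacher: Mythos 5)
Your proposal is correct and takes essentially the same route as the paper, which simply asserts that the real case is ``similar'' to the proof of Proposition~\ref{prop:Maycock group structure from k invariant}; the one genuine divergence is your substitution of a Yoneda argument for the $\EE_1$-level splitting of Proposition~\ref{prop:loopspacesplittingofGBC}. Since the paper never records a real analogue of that proposition, your alternative is the cleaner route here: the cocycle $\sigma_0\colon H^0(-;\ZZ/8)^{\times 2}\to\gbrconn^0(-)$ is a natural transformation represented by a class in $[K(\ZZ/8,0)\times K(\ZZ/8,0),\,\Omega^\infty\gbrconn]$, which vanishes because the source is discrete and $\Omega^\infty\gbrconn$ is connected. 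One small imprecision: the connecting map $H^0(X;\ZZ/8)\to\gbrconn^1(X)$ is not literally $\Sq^2\circ\rho$ — it lands in $\gbrconn^1(X)$ and only projects to $\Sq^2\circ\rho$ on the $H^2(X;\ZZ/2)$ quotient — but the same discreteness argument applied to $[K(\ZZ/8,0),\Omega^\infty\Sigma\gbrconn]$ (whose target is simply connected) gives the vanishing you need, so the conclusion stands.

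Your final paragraph overstates the difficulty. The cocycle $\sigma_0$ takes values in the \emph{whole} kernel $\gbrconn^0(X)$, not just in $H^2(X;\ZZ/2)$, so the single Yoneda computation above already rules out any contribution of $H^0$ to either the $b$- or the $c$-coordinate; once the abelian-group short exact sequence
\[
0\to\gbrconn^0(X)\to\gbr^0(X)\to H^0(X;\ZZ/8)\to 0
\]
is shown to split via the natural set-theoretic section, there is simply no room for $H^0\otimes H^1$ cross-terms — they would contradict the direct-sum decomposition of groups. You therefore do not need to analyze $H\ZZ/8\wedge\Sigma H\ZZ/2$, invoke $\Sq^1\circ\rho$, or resolve the ambiguity in Proposition~\ref{prop:KO k invariant ambiguity}. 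The remainder of your argument (the $H^2(K(\ZZ/2,1)^2;\ZZ/2)$ basis computation, normalization killing $x_1^2,x_2^2$, and appealing to Proposition~\ref{lem:possiblekinvariantsofbgl1KO} to exclude the zero cocycle) is exactly right and matches what the paper does in the complex case.
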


\section{What $\gbr$ and $\gbc$ Represent}\label{sec:interpretations}

In the following section we describe several algebraic and geometric interpretations of the cohomology theories associated to $\gbr$ and $\gbc$.

\subsection{Brauer Groups}

By determining the group structures of $\gbc^0(X)$ and $\gbr^0(X)$ for a space $X$ we now have that $\GBC$ and $\GBR$ represent well-known classical Brauer groups whose elements are Morita classes of: bundles of $\ZZ/2$-graded central simple algebras; bundles of $\ZZ/2$-graded continuous trace $C^\ast$-algebras; and bundles of super (i.e.~$\ZZ/2$-graded) 2-lines. All of these data were previously known to be isomorphic (for instance via graded versions of Dixmier-Douady theory) but using $\GBC$ and $\GBR$ to classify these data is new. This is consistent with the fact that all of these data can been used to twist $K$-theory.

\begin{defn}
Let $\mathrm{GBr\OO}(X)$ denote the Brauer group of (possibly infinite dimensional) graded, continuous trace, complex $C^\ast$-algebras with spectrum $X$, as in \cite{maycockparker}. Let $\mathrm{GBr\UU}(X)$ denote the Brauer group of (possibly infinite dimensional) graded, continuous trace, real $C^\ast$-algebras with spectrum $X$, as in \cite{moutuou-gradedBrauergroups}. 
\end{defn}

\noindent The following theorems are proven in \cite{moutuou-gradedBrauergroups,maycockparker}:

\begin{thm}[Maycock]
If $X$ is a space homotopy equivalent to a CW-complex then $\mathrm{GBr\UU}(X)\cong H^0(X;\ZZ/2)\times H^1(X;\ZZ/2)\times H^3(X;\ZZ)$ with group law $(a,b,c)+(a',b',c')=(a+a',b+b',c+c'+\beta(b\cup b'))$ where $\beta$ is the Bockstein homomorphism. 
\end{thm}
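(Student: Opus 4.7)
The plan is to classify graded continuous trace complex $C^\ast$-algebras with spectrum $X$ by local models together with global gluing data, and then compute the group law induced by the graded tensor product. Locally over $X$ such an algebra is graded Morita equivalent to the compact operators $K(\mathcal{H})$ on a $\ZZ/2$-graded separable Hilbert space $\mathcal{H}$. Up to graded Morita equivalence there are exactly two inequivalent local types, distinguished by the parity of the local graded dimension, so the first invariant is a locally constant function $X \to \ZZ/2$, giving a class $a \in H^0(X;\ZZ/2)$.

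After fixing the local type, the algebra bundle is determined by a principal bundle for the graded automorphism group, which sits in a short exact sequence
\[
1 \to \mathrm{PU}(\mathcal{H}) \to \Aut^{\mathrm{gr}}(K(\mathcal{H})) \to \ZZ/2 \to 1,
\]
where the quotient records whether a graded automorphism preserves or swaps the grading. The associated $\ZZ/2$-principal bundle is classified by $b \in H^1(X;\ZZ/2)$, and the residual $\mathrm{PU}(\mathcal{H})$-structure is classified by the Dixmier-Douady class $c \in H^3(X;\ZZ)$. Combining these three invariants produces a bijection of sets $\mathrm{GBr\UU}(X) \cong H^0(X;\ZZ/2) \times H^1(X;\ZZ/2) \times H^3(X;\ZZ)$.

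To determine the group law, I would analyze the graded tensor product on transition cocycles. The $H^0$ and $H^1$ components add, since local types and $\ZZ/2$-bundles combine by pointwise multiplication. The $H^3$ component acquires a cross term because the extension above is non-split: when local transitions involve grading-swaps (encoded by $b$ and $b'$), the transition cocycle of the tensored algebra gains a contribution measuring the failure of the grading and the $\mathrm{PU}$-action to commute strictly. A direct \v{C}ech computation, or equivalently a transgression in the Serre spectral sequence of the fibration $B\mathrm{PU}(\mathcal{H}) \to B\Aut^{\mathrm{gr}}(K(\mathcal{H})) \to B\ZZ/2$, identifies this contribution with $\beta(b \cup b')$.

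The hard part is extracting the explicit form of this Bockstein correction. A cleaner abstract alternative avoids much of the cocycle calculus: any natural pairing $H^1(X;\ZZ/2) \times H^1(X;\ZZ/2) \to H^3(X;\ZZ)$ coming from the universal extension is either $0$ or $\beta(-\cup -)$, since these are the only two natural transformations between the relevant Eilenberg-MacLane functors. The former case would force $\mathrm{GBr\UU}$ to split as a product of Eilenberg-MacLane functors, and this can be ruled out by exhibiting a single space where the cross term is nonzero, which follows from the known non-triviality of the Postnikov $k$-invariant of $B\Aut^{\mathrm{gr}}(K(\mathcal{H}))$.
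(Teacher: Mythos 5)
Note first that the paper does not prove this statement: it is quoted from \cite{maycockparker} and \cite{moutuou-gradedBrauergroups}, whose arguments proceed by a direct analysis of graded Dixmier--Douady cocycles. The paper's \emph{independent} route to the same group law is Proposition~\ref{prop:Maycock group structure from k invariant}, which works entirely on the homotopy-theory side: the nontriviality of the cross term is deduced from the nontriviality of the $k$-invariant of the \emph{spectrum} $\gbcconn$ (Corollary~\ref{cor:kinvariantoftruncatedgl1}), which in turn rests on the chain-bundle and super-line-bundle models of Section~\ref{sec:SegalGroupoid}. Your first route (the \v{C}ech / transgression computation) is the spirit of Maycock's actual argument, but you explicitly defer it.

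The ``cleaner abstract alternative'' has a genuine gap. You appeal to ``the known non-triviality of the Postnikov $k$-invariant of $B\Aut^{\mathrm{gr}}(K(\mathcal{H}))$,'' but the $k$-invariant of that \emph{space} is trivial: by Proposition~\ref{prop:spacelevelsplittingsofPicKOKU} the identity component splits as $K(\ZZ/2,1)\times K(\ZZ,3)$, and Lemma~\ref{noinfB} forces this, since the $h$-type with the nontrivial $k$-invariant cannot carry even an $H$-space structure, whereas $B\Aut^{\mathrm{gr}}(K(\mathcal{H}))$ visibly does (graded tensor product). What is nontrivial is the $k$-invariant of the associated \emph{spectrum}, equivalently the first-order deviation of the $H$-space multiplication from the product structure --- but that deviation \emph{is} the cross term $\beta(b\cup b')$ you set out to compute, so invoking its nontriviality as ``known'' is circular. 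To close the gap you must supply an independent source of nontriviality: carry out the cocycle calculation as Maycock does, exhibit a single space and pair of bundles where the graded tensor product shifts the Dixmier--Douady class, or import the spectrum-level computation of Corollary~\ref{cor:kinvariantoftruncatedgl1} and then translate back to $C^*$-algebras via Corollary~\ref{cor:GBrU is the same as gbc}.
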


\begin{rmk}
    In \cite{maycockparker} the $H^0$ term is mostly ignored, since Maycock requires that her bundles have isomorphic fibers over every connected component. This assumption is unnecessary though, as shown in \cite{moutuou-gradedBrauergroups}.
\end{rmk}

\begin{thm}[Moutuou]
If $X$ is a space homotopy equivalent to a CW-complex then $\mathrm{GBr\OO}(X)\cong H^0(X;\ZZ/8)\times H^1(X;\ZZ/2)\times H^2(X;\ZZ/2)$ with group law $(a,b,c)+(a',b',c')=(a+a',b+b',c+c'+(b\cup b'))$. 
\end{thm}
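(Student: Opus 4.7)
The plan is to prove this by extending Dixmier-Douady theory to the graded real setting, which is essentially Moutuou's strategy. First I would reduce every Morita class to a standard form: show that any $\ZZ/2$-graded continuous trace real $C^\ast$-algebra with spectrum $X$ is graded Morita equivalent to a bundle whose fibers are graded compact operators on a graded Hilbert $Cl_{p,q}$-module, where $(p,q)$ may vary locally. Using the eightfold Morita periodicity of real Clifford algebras, the fiberwise Morita type gives a locally constant function $a\colon X \to \ZZ/8$, i.e.\ the first invariant $a \in H^0(X;\ZZ/8)$.

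Second, once $a$ is fixed, I would construct the two higher invariants as obstructions. The grading involutions on each fiber must be globally coherent, an obstruction measured in $H^1(X;\ZZ/2)$ corresponding to the $\ZZ/2$-torsor of compatible grading involutions modulo inner automorphisms. The remaining ambiguity is the choice of a graded Hilbert bundle lifting the algebra bundle, whose obstruction lives in $H^2(X;\ZZ/2)$; this is the real graded Dixmier-Douady class, which sits one degree lower than its complex counterpart because the relevant projective orthogonal group has a $K(\ZZ/2,1)$ factor where $PU(\mathcal{H})$ contributes a $K(\ZZ,2)$.

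Third, I would show completeness: the triple $(a,b,c)$ determines the graded Brauer class. This reduces to checking that the classifying space for graded real continuous trace algebras with spectrum $X$ realizes the homotopy type $\ZZ/8 \times K(\ZZ/2,1) \times K(\ZZ/2,2)$, which agrees with the underlying $h$-type of $\GBR$ from Proposition \ref{prop:spacelevelsplittingsofPicKOKU}. The group law is the final conceptual step: when tensoring two graded bundles whose grading classes are $b$ and $b'$, the graded commutativity of $\otimes$ introduces Koszul signs which, after passing to Morita equivalence classes, contribute the cross term $b \cup b'$ in the $H^2$ coordinate. This extension class is uniquely determined by the non-trivial symmetry of the tensor square of a graded line, the same geometric input that forces the non-trivial second $k$-invariant of $\gbr$ in Proposition \ref{prop:Real Maycock group structure from k invariant}.

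The hard part will be the classification step, proving the triple $(a,b,c)$ is complete. This requires a stabilization argument combined with a careful study of the automorphism groupoid of the standard Clifford-module bundles, and an obstruction-theoretic comparison of the classifying space of graded $C^\ast$-algebra bundles with $\Omega^\infty\gbr$. The appearance of $\ZZ/8$ rather than $\ZZ/2$ for the grading invariant is also delicate: it depends on the fact that the Morita 2-category of real Clifford algebras has eightfold periodicity, which ultimately reflects the Bott periodicity of $\KO$.
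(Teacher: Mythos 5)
The paper does not give its own proof of this theorem; it is stated as a result of Moutuou with a citation to \cite{moutuou-gradedBrauergroups}, and is then used as external input in Corollary \ref{cor:GBrO is the same as gbc}, where it is matched against the independently computed group law on $\gbr^0(X)$ from Proposition \ref{prop:Real Maycock group structure from k invariant}. So there is no in-paper proof to compare against.

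That said, your sketch is a plausible reconstruction of Moutuou's graded real Dixmier--Douady strategy, and the structural ingredients are broadly correct: the $\ZZ/8$ via the eightfold Morita periodicity of real graded Clifford algebras; the real graded Dixmier--Douady class living in $H^2(X;\ZZ/2)$ rather than $H^3(X;\ZZ)$, since $\mathrm{B}PO(\mathcal{H}_{\RR})\simeq K(\ZZ/2,2)$ (Kuiper plus the fiber sequence $\ZZ/2\to O(\mathcal{H}_{\RR})\to PO(\mathcal{H}_{\RR})$) where $\mathrm{B}PU(\mathcal{H})\simeq K(\ZZ,3)$; and the $b\cup b'$ cross-term coming from Koszul signs in the graded tensor product. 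Two caveats, though. First, your closing appeal to the second $k$-invariant of $\gbr$ (Proposition \ref{prop:Real Maycock group structure from k invariant}) to ``uniquely determine'' the extension class is circular in this setting: the paper derives the $\gbr$ group law from the sheaf-theoretic model of $\gl_1(\KO)$ and then cites Moutuou to match $\mathrm{GBr}\OO(X)$ against $\gbr^0(X)$; a proof of Moutuou's theorem itself must extract the cocycle from the $C^\ast$-algebraic graded tensor product alone, without invoking $\gbr$. Second, identifying the $H^1(X;\ZZ/2)$ invariant as the ``torsor of compatible grading involutions modulo inner automorphisms'' is stated too loosely to be a proof step; this is exactly the part of the completeness argument (matching the classifying space against $\ZZ/8\times K(\ZZ/2,1)\times K(\ZZ/2,2)$) that needs a precise stabilization and automorphism-groupoid analysis, as you yourself flag at the end.
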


Propositions \ref{prop:Maycock group structure from k invariant} and \ref{prop:Real Maycock group structure from k invariant} now imply the following corrolaries:

\begin{cor}\label{cor:GBrU is the same as gbc}
If $X$ is a space homotopy equivalent to a $CW$-complex then there is an isomorphism \[\mathrm{GBr\UU}(X)\cong \gbc^0(X).\]
\end{cor}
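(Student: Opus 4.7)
The plan is to observe that this corollary is essentially immediate from pairing Maycock's theorem (stated just above) with Proposition \ref{prop:Maycock group structure from k invariant}. Both results present the group in question as the same underlying set $H^0(X;\ZZ/2)\times H^1(X;\ZZ/2)\times H^3(X;\ZZ)$, and both compute the group law to be
\[
(a,b,c)+(a',b',c') = (a+a',\, b+b',\, c+c'+\beta(b\cup b')).
\]
So once both identifications are in hand, one reads off a bijection of sets that is automatically a homomorphism, and hence an isomorphism of abelian groups.

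Concretely, I would first invoke Maycock's theorem verbatim to write $\mathrm{GBr\UU}(X)$ as the indicated product of cohomology groups with the Bockstein-twisted group law. Next I would invoke Proposition \ref{prop:Maycock group structure from k invariant} to give the same description of $\gbc^0(X)$, where the splitting of sets comes from the Postnikov tower computation of Proposition \ref{prop:spacelevelsplittingsofPicKOKU} (which identifies $\GBC$ with $\ZZ/2 \times K(\ZZ/2,1) \times K(\ZZ,3)$ as an $h$-type) and the group-law formula comes from the computation of the $k$-invariants in Corollary \ref{cor:kinvariantoftruncatedgl1}. Comparing the two expressions gives the isomorphism.

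The only real subtlety is that this argument produces an isomorphism of abelian groups rather than a natural or canonical isomorphism, so I would briefly note that the statement of the corollary asks only for the existence of an isomorphism, which suffices for our purposes. If one wanted naturality, one would need a direct map $\mathrm{GBr\UU}(X) \to \gbc^0(X)$ realizing graded continuous trace $C^\ast$-algebras as maps into $\GBC$; this would presumably follow the strategy of \cite{hebestreitSagaveTwisted} in the ungraded setting, but as remarked in the introduction we do not pursue this here. Hence there is no real obstacle: the corollary is a direct consequence of two computations that have already been carried out.
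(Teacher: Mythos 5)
Your proposal matches the paper's own argument exactly: the corollary is obtained by comparing Maycock's description of $\mathrm{GBr\UU}(X)$ with the identical description of $\gbc^0(X)$ provided by Proposition \ref{prop:Maycock group structure from k invariant}. Your extra remark about naturality is a fair caveat and consistent with the paper's own acknowledgment that the comparison with \cite{hebestreitSagaveTwisted} is not pursued here.
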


\begin{cor}\label{cor:GBrO is the same as gbc}
If $X$ is a space homotopy equivalent to a $CW$-complex then there is an isomorphism \[\mathrm{GBr\OO}(X)\cong \gbr^0(X).\]
\end{cor}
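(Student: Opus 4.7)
The plan is to deduce this as a direct consequence of the group-law computation in Proposition \ref{prop:Real Maycock group structure from k invariant} together with Moutuou's theorem stated immediately before the corollary. Both describe their respective Brauer groups as the \emph{same} underlying set $H^0(X;\ZZ/8)\times H^1(X;\ZZ/2)\times H^2(X;\ZZ/2)$ equipped with the \emph{same} twisted group law $(a,b,c)+(a',b',c')=(a+a',b+b',c+c'+b\cup b')$. The desired isomorphism is therefore simply the identity on the underlying product of cohomology groups; matching group laws upgrades this bijection to a group homomorphism, and it is an isomorphism by construction.

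More structurally, the proof I would write proceeds in two steps. First, cite Moutuou's theorem to extract, for each $X$, a short exact sequence of abelian groups
\[
0 \to H^2(X;\ZZ/2) \to \mathrm{GBr\OO}(X) \to H^0(X;\ZZ/8)\times H^1(X;\ZZ/2) \to 0
\]
whose extension class (as a map $H^0\times H^1 \otimes H^0\times H^1 \to H^2$) is given by $(b,b')\mapsto b\cup b'$ in the $H^1$-coordinates and zero in the $H^0$-coordinate. Second, apply Proposition \ref{prop:Real Maycock group structure from k invariant} (which in turn rests on our identification of the $k$-invariants of $\gbr$) to obtain the analogous sequence for $\gbr^0(X)$ with the same extension class. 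Comparing extensions with identical cocycles and identical kernels and quotients yields the isomorphism, which is natural in $X$ because both cocycles are the cup-product natural transformation.

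There is essentially no obstacle: everything reduces to matching two cocycles which have independently been shown to equal $b\cup b'$. The only caveat worth flagging is that, as observed after the statement of Proposition \ref{prop:KO k invariant ambiguity}, the second $k$-invariant of $\gbr$ is only pinned down up to a choice of one of two generators, but Proposition \ref{prop:Real Maycock group structure from k invariant} was arranged precisely so that either choice yields the same group law on $\gbr^0(X)$. Thus the ambiguity does not propagate to this corollary.
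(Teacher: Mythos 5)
Your proposal is correct and follows the same route as the paper: the corollary is deduced immediately by observing that Moutuou's theorem and Proposition \ref{prop:Real Maycock group structure from k invariant} exhibit both groups as the same underlying set with the same cup-product-twisted group law. Your additional framing via matching extension cocycles and your remark about the $k$-invariant ambiguity being harmless are both accurate and consistent with the paper's setup.
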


\begin{rmk}
    The Brauer group of $\ZZ/2$-graded continuous trace $C^\ast$-algebras with spectrum $X$ is equivalent to the Brauer group of $\ZZ/2$-equivariant $C^\ast$-algebras with spectrum $X$ with the property that the induced $\ZZ/2$-action on $X$ is trivial.  Our constructions of $\gbcconn$ and $\gbrconn$, and the fact that they represent these Brauer groups, should be compared to the construction of the cohomology theory $E_{\CC,\mb T}$ in \cite{evans-pennig--T_equivariant_DD_Theory}. It is shown therein that $E_{\CC,\mb T}^0(X)$ is the group of $\mathbb{T}$-equivariant line bundles on $X$ and that $E_{\CC,\mb T}^1(X)$ is the Brauer group of $\mb T$-equivariant $C^\ast$-algebras with spectrum $X$ where $X$ has trivial induced $\mb T$-action. Our constructions on the other hand show that $\gbcconn^{-1}(X)$ is the group of super line bundles on $X$ and that $\gbcconn^0(X)$ is the Brauer group of $\ZZ/2$-equivariant $C^*$-algebras with spectrum $X$ having trivial induced $\ZZ/2$-action. Moreover, $\Omega^\infty E_{\CC,\mb T}\simeq \ZZ\times K(\ZZ,2)$ and $\Omega^\infty \Sigma^{-1}\gbcconn\simeq \ZZ/2\times K(\ZZ,2)$. This suggests that Evans and Pennig's $E_{\CC,\mb T}$ spectrum is equivalent to a truncation of $\gl_1(KU_{\mathbb{T}})$, the space of units of $\mathbb{T}$-equivariant complex $K$-theory.
\end{rmk}

Propositions \ref{prop:Maycock group structure from k invariant} and \ref{prop:Real Maycock group structure from k invariant} also give $\gbc$ and $\gbr$ the following interpretations in terms of graded $2$-line bundles. 

 \begin{cor}\label{cor:picKU03 classifies super 2line bundles}     
Let $X$ be a smooth manifold. Then $\gbc^0(X)\cong sLBdl_{\CC}(X)$ and $\gbr^0(X)\cong sLBdl_{\RR}(X)$, where $sLBdl_{\CC}$ and $sLBdl_{\RR}$ are the Brauer groups of complex and real super 2-line bundles on $X$ as defined in \cite{Kristel-Ludewig-Waldorf--2vectorbundles}.
 \end{cor}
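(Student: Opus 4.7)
The plan is to deduce the corollary directly from Propositions \ref{prop:Maycock group structure from k invariant} and \ref{prop:Real Maycock group structure from k invariant} by matching the group structures computed there against the classification of super 2-line bundles in \cite{Kristel-Ludewig-Waldorf--2vectorbundles}. First I would recall from that reference the explicit classification of $sLBdl_{\CC}(X)$ and $sLBdl_{\RR}(X)$ for a smooth manifold $X$. A super 2-line bundle, by their definition, carries three pieces of data on $X$: a locally constant $\ZZ/2$ (resp.~$\ZZ/8$) valued function recording the grading of the fiber, a real super line bundle giving a class in $H^1(X;\ZZ/2)$, and a Dixmier–Douady type invariant living in $H^3(X;\ZZ)$ in the complex case and $H^2(X;\ZZ/2)$ in the real case. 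These assemble into bijections of sets
\[
sLBdl_{\CC}(X) \;\cong\; H^0(X;\ZZ/2)\times H^1(X;\ZZ/2)\times H^3(X;\ZZ),
\]
\[
sLBdl_{\RR}(X) \;\cong\; H^0(X;\ZZ/8)\times H^1(X;\ZZ/2)\times H^2(X;\ZZ/2),
\]
matching the underlying sets of $\gbc^0(X)$ and $\gbr^0(X)$.

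Next I would identify the group law, which in the Kristel–Ludewig–Waldorf formalism is induced by the graded tensor product of super 2-line bundles. The key observation is that the Koszul sign rule governing the tensor product of super objects produces exactly a cocycle contribution from the two degree-one factors: in the complex case this contribution is $\beta(b\cup b')$ (arising from the non-trivial symmetry isomorphism on the tensor square of a super line), and in the real case it is $b\cup b'$. These are precisely the cocycles appearing in Propositions \ref{prop:Maycock group structure from k invariant} and \ref{prop:Real Maycock group structure from k invariant}. Hence the natural set-theoretic bijection above intertwines the two group laws and yields isomorphisms $\gbc^0(X) \cong sLBdl_{\CC}(X)$ and $\gbr^0(X) \cong sLBdl_{\RR}(X)$.

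The only non-formal step is verifying that the group law extracted from \cite{Kristel-Ludewig-Waldorf--2vectorbundles} indeed has the cocycle $\beta(b\cup b')$ in the complex case (and $b\cup b'$ in the real case) rather than some other cohomologous choice. This is essentially unique on general grounds: the set of possible natural extensions
\[
0\to H^3(X;\ZZ)\to G(X)\to H^1(X;\ZZ/2)\to 0,
\]
naturally split over $H^0$, is classified by $[K(\ZZ/2,1)\times K(\ZZ/2,1),K(\ZZ,3)]$, which contains only the trivial cocycle and $\beta(-\cup -)$; the former would force $sLBdl_{\CC}(-)$ to be represented by a product of Eilenberg–MacLane spectra, which it is not since super 2-lines encode genuine grading sign obstructions. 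The analogous dichotomy for the real case rules out the trivial cocycle in favor of $b\cup b'$. Thus the group laws must agree, and the corollary follows.

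The main potential obstacle is this last uniqueness/cocycle-identification step; I expect it to be essentially a formal argument about Picard 2-groupoids and Koszul signs, parallel to the proofs of Propositions \ref{prop:Maycock group structure from k invariant} and \ref{prop:Real Maycock group structure from k invariant}, rather than a delicate computation.
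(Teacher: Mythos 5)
Your approach is essentially the same as the paper's: the paper's proof simply cites \cite[Theorem 4.4]{Kristel-Ludewig-Waldorf--2vectorbundles} (or \cite[Theorem 2.2.6]{Mertsch-PhDThesis}), which classify the Brauer groups of super $2$-line bundles with precisely the group laws of Propositions \ref{prop:Maycock group structure from k invariant} and \ref{prop:Real Maycock group structure from k invariant}, so the isomorphisms follow by inspection. Your supplementary cocycle-uniqueness argument is therefore redundant once the cited theorems are invoked, and on its own it has a gap: the closing appeal to ``super $2$-lines encode genuine grading sign obstructions'' is exactly the nontriviality that would need to be established by a concrete computation (e.g.\ of the symmetry isomorphism on the tensor square of a degree-one super line), not merely asserted.
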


 \begin{proof}
     This follows immediately from either \cite[Theorem 4.4]{Kristel-Ludewig-Waldorf--2vectorbundles} or \cite[Theorem 2.2.6]{Mertsch-PhDThesis}.
 \end{proof}

Corollaries \ref{cor:GBrU is the same as gbc} and \ref{cor:picKU03 classifies super 2line bundles} have the following interpretation in terms of parameterized stable homotopy theory:
\begin{cor}\label{cor:brauer group is also ku bundles}
If $X$ is connected then there is an isomorphism between the Brauer group of graded, continuous trace, complex $C^\ast$-algebras with spectrum $X$, equivalently the Brauer group of complex super 2-line bundles on $X$, and the group of $\KU[0,2]$-line bundles on $X$, where the group structure on the latter is given by the parameterized tensor product. The same holds in the real case with $KU[0,2]$ replaced by $KO[0,1]$.
\end{cor}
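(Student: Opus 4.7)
The plan is to reduce Corollary \ref{cor:brauer group is also ku bundles} to Corollaries \ref{cor:GBrU is the same as gbc} and \ref{cor:GBrO is the same as gbc} by interpreting $\gbc^0(X)$ and $\gbr^0(X)$ as the groups classifying $\KU[0,2]$- and $\KO[0,1]$-line bundles, respectively.

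First I would observe that a $\KU[0,2]$-line bundle on $X$ is classified by a map $X \to \Pic(\KU[0,2])$, with the parameterized tensor product corresponding to the $\EE_\infty$-group structure on $\Pic(\KU[0,2])$. It therefore suffices to promote the equivalence of infinite loop spaces $\BGL_1(\KU[0,2]) \simeq \GBCconn$ recorded in the introduction to an equivalence of $\EE_\infty$-groups $\Pic(\KU[0,2]) \simeq \GBC$, and similarly $\Pic(\KO[0,1]) \simeq \GBR$. Since the connective covers already agree, this reduces to checking $\pi_0 \Pic(\KU[0,2]) \cong \ZZ/2$ and $\pi_0 \Pic(\KO[0,1]) \cong \ZZ/8$, matching $\pi_0 \GBC$ and $\pi_0 \GBR$. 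These I would establish by analyzing the map on Picard groups induced by the unit $\sph \to \KU[0,2]$ (respectively $\sph \to \KO[0,1]$) and by the truncation $\KU \to \KU[0,2]$ (respectively $\KO \to \KO[0,1]$), using the classical computations $\pi_0 \Pic(\KU) \cong \ZZ/2$ and $\pi_0 \Pic(\KO) \cong \ZZ/8$ recalled from \cite{gepnerlawson}.

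Given such an equivalence, for connected $X$ we obtain an isomorphism of groups $[X, \Pic(\KU[0,2])] \cong [X, \GBC] = \gbc^0(X)$, and Corollary \ref{cor:GBrU is the same as gbc} identifies the right-hand side with the Brauer group of graded continuous trace complex $C^\ast$-algebras with spectrum $X$. The real case follows by the same argument with $\KO[0,1]$ in place of $\KU[0,2]$ and Corollary \ref{cor:GBrO is the same as gbc} in place of Corollary \ref{cor:GBrU is the same as gbc}.

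The hard part will be the $\pi_0$ computations for $\Pic(\KU[0,2])$ and $\Pic(\KO[0,1])$, since truncation does not commute with passage to Picard spaces in general; however, the already-established equivalence on connective covers constrains the possible answers, and the connectedness hypothesis on $X$ ensures that only the single $\pi_0$ factor needs to be understood, so this step should be tractable.
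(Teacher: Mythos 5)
Your proposed reduction to an equivalence of $\EE_\infty$-groups $\Pic(\KU[0,2])\simeq\GBC$ cannot work: the two spectra have different $\pi_0$. Since $\KU[0,2]$ is not $2$-periodic, the shifts $\Sigma^n\KU[0,2]$ for $n\in\ZZ$ are pairwise inequivalent invertible $\KU[0,2]$-modules, so $\pi_0\Pic(\KU[0,2])$ contains a copy of $\ZZ$, whereas $\pi_0\GBC\cong\ZZ/2$. The paper flags exactly this obstruction in the remark immediately following Corollary~\ref{cor:brauer group is also ku bundles}. In particular the ``hard part'' you identify, showing $\pi_0\Pic(\KU[0,2])\cong\ZZ/2$, is not merely difficult but false, so the step would collapse.

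There is also a circularity: you take the equivalence $\BGL_1(\KU[0,2])\simeq\GBCconn$ as already established from the introduction, but that identification is precisely what this corollary and Lemma~\ref{lem:truncation commutes with GL1} are proving. The introduction only provides, essentially by definition, $\GBCconn\simeq\BGL_1(\KU)[0,3]\simeq\mathrm{B}(\GL_1(\KU)[0,2])$; the nontrivial content is that truncation commutes with $\GL_1$, i.e.\ $\GL_1(\KU)[0,2]\simeq\GL_1(\KU[0,2])$, which is Lemma~\ref{lem:truncation commutes with GL1}. The paper's actual argument therefore never leaves the connected cover: it identifies $\GBCconn$ with $\BGL_1(\KU[0,2])$ and stops there. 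This suffices because the corollary concerns $\KU[0,2]$-\emph{line} bundles --- bundles with fiber $\KU[0,2]$ itself rather than an arbitrary shift --- which are classified by $\BGL_1(\KU[0,2])$ and not by the full Picard space $\Pic(\KU[0,2])$.
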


\begin{proof}
We have a string of equivalences of infinite loop spaces $$\GBCconn\simeq \mathrm{B}\Omega\GBC\simeq\BGL_1(\KU)[0,3]\simeq \mathrm{B}(\GL_1(\KU)[0,2]).$$ Therefore it suffices to show that $\GL_1(\KU)[0,2]\simeq \GL_1(\KU[0,2])$, which follows from Lemma \ref{lem:truncation commutes with GL1} below. The argument for the real case is identical.
\end{proof}

\begin{rmk}
    Corollary \ref{cor:brauer group is also ku bundles} cannot be stated in terms of all of $\GBC$ for the following reason: the spectrum $\KU[0,2]$ is no longer 2-periodic, so $\pi_0(\Pic(\KU[0,2]))$ will be at least $\ZZ$. As a result we cannot think of maps $X\to\GBC$ as bundles of invertible $\KU[0,2]$-modules on $X$. 
\end{rmk}

\begin{lem}\label{lem:truncation commutes with GL1}
Let $R$ be a connective commutative ring spectrum and $n\in\ZZ$ a positive integer. Then there is an equivalence of infinite loop spaces \[\GL_1(R[0,n])\simeq\GL_1(R)[0,n].\]
\end{lem}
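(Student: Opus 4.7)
The plan is to reduce the claim to an easy comparison of homotopy groups. Since $R$ is connective, the $0$-connective cover is a no-op and $R[0,n]$ is simply the Postnikov truncation $\tau_{\leq n} R$; similarly, $\GL_1(R)$ is connective (it is an infinite loop space built as a full subspace of $\Omega^\infty R$), so $\GL_1(R)[0,n] \simeq \tau_{\leq n} \GL_1(R)$. Thus it suffices to produce an equivalence $\GL_1(\tau_{\leq n} R) \simeq \tau_{\leq n} \GL_1(R)$ of $\EE_\infty$-groups.

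First I would apply $\GL_1$ to the truncation map $\varphi\colon R \to \tau_{\leq n} R$, noting that $\varphi$ is a map of connective $\EE_\infty$-rings (truncation is symmetric monoidal on connective ring spectra) and is an isomorphism on $\pi_0$. Consequently the induced map $\GL_1(\varphi)\colon \GL_1(R) \to \GL_1(\tau_{\leq n} R)$ makes sense. The key input is the standard homotopy group calculation for $\GL_1$ of a connective ring spectrum $S$: one has $\pi_0 \GL_1(S) = \pi_0(S)^\times$ and $\pi_i \GL_1(S) \cong \pi_i(S)$ for all $i \geq 1$. This follows directly from the defining pullback of $\GL_1$ in Definition \ref{defn:Picardthings} and the long exact sequence of homotopy groups, using that $\pi_0(S)^\times \hookrightarrow \pi_0(S)$ is an inclusion of sets.

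Next I would verify that $\GL_1(\varphi)$ is a $\pi_i$-isomorphism for $0 \leq i \leq n$ and that $\pi_i \GL_1(\tau_{\leq n} R) = 0$ for $i > n$. For $i = 0$, both sides are $\pi_0(R)^\times = \pi_0(\tau_{\leq n} R)^\times$. For $1 \leq i \leq n$, both sides are $\pi_i R$ (the map is the identity on $\pi_i$ since $\varphi$ is). For $i > n$, the ring $\tau_{\leq n} R$ has $\pi_i = 0$, so $\pi_i \GL_1(\tau_{\leq n} R) = 0$ by the calculation above.

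Finally, the universal property of Postnikov truncation in $\EE_\infty$-spaces gives the equivalence: $\GL_1(\tau_{\leq n} R)$ is an $n$-truncated $\EE_\infty$-group receiving a map from $\GL_1(R)$ that is a $\pi_i$-isomorphism through degree $n$, hence it is canonically $\tau_{\leq n}\GL_1(R)$. There is no serious obstacle here; the only subtle point worth emphasizing is that truncation of connective spectra commutes with $\Omega^\infty$ (so that the $\Omega^\infty$-level pullback defining $\GL_1$ behaves correctly under truncation), which justifies the homotopy-group identifications in the middle step.
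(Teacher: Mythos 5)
Your proof is correct and takes essentially the same route as the paper's: both arguments hinge on the homotopy-group calculation $\pi_0\GL_1(S)\cong\pi_0(S)^\times$, $\pi_i\GL_1(S)\cong\pi_i(S)$ for $i\geq 1$, which forces $\GL_1(R[0,n])$ to be $n$-truncated and forces $\GL_1(R)\to\GL_1(R[0,n])$ to be a $\pi_i$-isomorphism through degree $n$. The paper packages the conclusion as a zig-zag $\GL_1(R[0,n])\to\GL_1(R[0,n])[0,n]\leftarrow\GL_1(R)[0,n]$ of equivalences, whereas you invoke the universal property of Postnikov truncation, but these are the same argument differently phrased.
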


\begin{proof}
Consider the zig-zag of infinite loop maps $$\GL_1(R[0,n])\to \GL_1(R[0,n])[0,n]\leftarrow \GL_1(R)[0,n]$$ where the arrow from left to right is the usual map from a space to its truncation and the arrow from right to left is obtained by truncating after applying the functor $\GL_1$ to the ring map $R\to R[0,n]$. Recall that the homotopy groups of $\GL_1(R)$ are isomorphic to those of $R$ except in degree zero where $\pi_0(\GL_1(R))\cong\pi_0(R)^\times$. Therefore $GL_1(R[0,n])$ is an $n$-truncated space and the left to right map from itself to to its truncation is an equivalence.  Because $R\to R[0,n]$ is an equivalence through homotopy degree $n$, so is $\GL_1(R)\to\GL_1(R[0,n])$ and therefore the left to right map is also an equivalence. 
\end{proof}

\begin{rmk}
There is an equivalence of commutative ring spectra $\KU[0,2]\simeq\ku[0,2]$ which is the truncation of the equivalence $\ku\to\KU[0,\infty)$. This is essential to the use of Lemma \ref{lem:truncation commutes with GL1} in the proof of Corollary \ref{cor:brauer group is also ku bundles}. In particular we must construct $\KU[0,2]$ by first taking the connective cover of $\KU$ and then truncating, as $\KU(-\infty,2]$ is not even a ring spectrum. 
\end{rmk}

\begin{rmk}
    Recall that there is a $C_2$-action on $\KU$ by complex conjugation whose fixed point spectrum is $\KO$. The $C_2$-equivariant complex $K$-theory spectrum is often denoted $K\RR$, following Atiyah. One can show that the second $C_2$-equivariant Postnikov slice of $K\RR$, denoted $P^2K\RR$ (in the sense of \cite{HHR-kervaire}) has underlying spectrum $KU[0,2]$ and fixed point spectrum $\KO[0,1]$ (\cite{hill-privatecorrespondence}). Therefore both perspectives are encompassed by the space of \textit{equivariant} units of $K\RR$, i.e.~$\GL_1(K\RR)$. In other words, given the correct equivariant generalization of Lemma \ref{lem:truncation commutes with GL1}, both the real and complex Brauer groups described above are determined by a delooping of $P^2\GL_1(K\RR)$. This is closely related the operator theoretic perspective on real $K$-theory described in \cite{moutuou-gradedBrauergroups}. We will return to this question in later work.
\end{rmk}

\subsection{Twisting $\Spin$ and $\String$ structures}\label{sec:twistedStrings}

Recall from \cite{redden-stringstructures} that the set of $\String$-structures on a $\Spin$-manifold $M$ is a torsor for $H^3(M;\ZZ)$. Something similar happens in this case, except that we are considering $\String$-structures on a manifold with a fixed orientation (as opposed to a fixed $\Spin$-structure). In this case we have that the set of $\String$-structures on $M$ relative to a fixed orientation is a torsor for $\gbcconn^0(M)\cong H^1(M;\ZZ/2)\ltimes H^3(M;\ZZ)$). Similarly, the $\Spin$-structures on a real manifold $M$ are a torsor for $\gbrconn^0(M)\cong H^1(M;\ZZ/2)\ltimes H^2(M;\ZZ/2)$. 

\begin{defn}
Let $\SpecialO\modmod\String$ denote the $\EE_\infty$-group arising as the fiber of the connective cover $\BString\to\BSO$.
\end{defn}

\begin{prop}\label{prop:fiber of Bstring to BSO}
There is an equivalence of $\EE_\infty$-groups $\SpecialO\modmod\String\simeq \GBCconn$.
\end{prop}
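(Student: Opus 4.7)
The plan is to appeal to the rigidity of Proposition \ref{cor:options for gl1KU k invariant}: any $\EE_\infty$-space whose underlying $h$-type is $K(\ZZ/2,1) \times K(\ZZ,3)$ admits exactly two $\EE_\infty$-structures, distinguished by whether the stable $k$-invariant is trivial or equals $\Sigma(\beta\circ\Sq^2)$. Since Corollary \ref{cor:kinvariantoftruncatedgl1} places $\GBCconn$ in the non-trivial class, it will suffice to show that $F := \fib(\BString \to \BSO)$ is likewise non-trivial.

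First I would verify that $F$ falls within the hypothesis of Proposition \ref{cor:options for gl1KU k invariant}. Because $\BString \to \BSO$ is a map of infinite loop spaces, its fiber $F$ naturally inherits an $\EE_\infty$-structure. The long exact sequence of the fibration, together with $\pi_2(\BSO) = \ZZ/2$, $\pi_3(\BSO) = 0$, $\pi_4(\BSO) = \ZZ$, and the $5$-connectivity of $\BString$, gives $\pi_1(F) = \ZZ/2$, $\pi_3(F) = \ZZ$, with all other homotopy groups trivial. Combining Lemmas \ref{lem:AandB} and \ref{noinfB} then forces the underlying $h$-type of $F$ to be $K(\ZZ/2,1) \times K(\ZZ,3)$, as required.

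Next I would compute the $k$-invariant of $F$ at the level of connective spectra. Writing $\bso$ and $\bstring$ for the associated connective spectra, the Postnikov cofiber sequence $\bstring \to \bso \to \bso[0,4]$ identifies the connective spectrum underlying $F$ with $\Sigma^{-1}\bso[0,4]$. Consequently the $k$-invariant of $F$ is, up to the evident shift, the Postnikov $k$-invariant of $\bso$ joining $\pi_2 = \ZZ/2$ to $\pi_4 = \ZZ$. By Proposition \ref{lem:stablecalc} this class lies in $H^3(H\ZZ/2;\ZZ) \cong \ZZ/2$ with non-trivial generator $\beta\circ\Sq^2$; so the only possibilities are that $\bso[0,4]$ splits as $\Sigma^2 H\ZZ/2 \vee \Sigma^4 H\ZZ$ or that the $k$-invariant of $F$ is the non-trivial generator.

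Finally I would rule out the split case. A splitting of $\bso[0,4]$ would yield a class $\phi \in H^4(\bso;\ZZ)$ whose induced map on $\pi_4$ is an isomorphism $\ZZ \xrightarrow{\sim} \ZZ$. However $H^4(\bso;\ZZ) \cong \ZZ$ is generated by the first Pontryagin class $p_1$, and the generator of $\pi_4(\bso) \cong \ZZ$, represented e.g.~by the quaternionic Hopf bundle over $S^4$, pairs with $p_1$ to give $\pm 2$: this is precisely the classical statement that $p_1/2$ is integrally defined on $\BSpin$ but not on $\BSO$. Hence no class in $H^4(\bso;\ZZ)$ can realize the required isomorphism, $\bso[0,4]$ does not split, and the $k$-invariant of $F$ coincides with that of $\GBCconn$; Proposition \ref{cor:options for gl1KU k invariant} then yields the desired equivalence $F \simeq \GBCconn$ of $\EE_\infty$-groups. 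The main obstacle is this last non-splitting step; while it rests on the classical divisibility of $p_1$ on $\BSpin$, turning this into a fully self-contained argument requires some care with the Hurewicz pairing and the structure of $H^4(\bso;\ZZ)$.
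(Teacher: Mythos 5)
Your overall strategy coincides with the paper's: reduce the statement to showing that the spectrum $F := \fib(\bstring\to\bso) \simeq \Sigma^{-1}\bso[0,4]$ has non-trivial $k$-invariant, identify the underlying $h$-type via Lemmas \ref{lem:AandB} and \ref{noinfB}, and then invoke the rigidity of $\EE_\infty$-structures on $K(\ZZ/2,1)\times K(\ZZ,3)$ from Proposition \ref{cor:options for gl1KU k invariant} together with Corollary \ref{cor:kinvariantoftruncatedgl1}. Where you genuinely diverge from the paper is in the final non-splitting step. The paper's proof argues that if $\bso[0,4]$ split, the bijection $\bso[0,4]^0(X)\cong H^2(X;\ZZ/2)\oplus H^4(X;\ZZ)$ would give an Abelian group homomorphism $V\mapsto(w_2(V),\pm p_1(V))$, which it then claims is ruled out by the Whitney sum formulas for $w_2$ and $p_1$. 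You instead argue via the pairing with $\pi_4$: any splitting would produce a primitive class $\phi\in H^4(\bso;\ZZ)\cong\ZZ\{p_1\}$ inducing an isomorphism $\pi_4(\bso)\xrightarrow{\sim}\ZZ$, which is impossible since $p_1$ evaluates to $\pm 2$ on the generator of $\pi_4(\bso)$ (this is the classical statement that $\lambda = p_1/2$ is an integral class on $\BSpin$, where the Hurewicz map $\pi_4(\BSpin)\to H_4(\BSpin;\ZZ)$ is an isomorphism and the dual pairing is perfect). Your version of this step is arguably the sharper one: for \emph{oriented} bundles $V, W$, one has $w_2(V\oplus W)=w_2(V)+w_2(W)$ and $p_1(V\oplus W)=p_1(V)+p_1(W)$ exactly (the cross-terms involve $w_1$ and $W_2=\beta w_1$, which vanish; one can also see this from the K\"unneth computation $H^4(\BSO\times\BSO;\ZZ)\cong\ZZ\{p_1\otimes 1\}\oplus\ZZ\{1\otimes p_1\}$), so the Whitney sum formulas alone do not produce a failure of additivity, and it is really the divisibility of $p_1$ — i.e.\ precisely the evaluation argument you give — that detects the non-trivial $k$-invariant. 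One small caveat in your write-up: you cite the quaternionic Hopf bundle as a generator of $\pi_4(\bso)$; what you really need (and what you correctly invoke) is the pairing $\langle p_1,\mathrm{gen}\rangle=\pm 2$, which is most cleanly justified by the Hurewicz argument on $\BSpin$ rather than by naming a particular bundle, whose class in the stable group requires its own justification.
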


\begin{proof}
Let $F$ be the fiber in $\Sp$ of the connective cover $\bstring\to\bso$.   Therefore $F$ has $\pi_1(F)\cong \ZZ/2$ and $\pi_3(F)\cong \ZZ$ as its only non-trivial homotopy groups. It suffices to show that the $k$-invariant of $F$ is non-trivial. We will show that the $k$-invariant of $\Sigma F$ is non-trivial, which is equivalent. 

Note that there is a fiber sequence $\bstring\to\bso\to\bso[0,4]\simeq \Sigma F$. By Lemma \ref{noinfB}, there is an equivalence of $h$-types \[\BSO[0,4]\simeq B(\BGL_1(KU)[0,3])\simeq K(\ZZ/2,2)\times K(\ZZ,4)\] so there is a natural isomorphism of sets $\bso[0,4]^0(X)\cong H^2(X;\ZZ/2)\oplus H^4(X;\ZZ)$. If the $k$-invariant of $F$ were trivial, this would be an isomorphism of Abelian groups. We will show that is not the case.  

Because of the $h$-type splitting $\BSO[0,4]\simeq K(\mathbb{Z}/2,2)\times K(\ZZ,4)$ described above there is a projection $\BSO[0,4]\to K(\ZZ/2,2)$ and the composite of that projection with the truncation $\BSO\to \BSO[0,4]$ must be non-trivial. Therefore the second Stiefel Whitney class $w_2\colon \BSO\to K(\ZZ/2,2)$ can be factored as $\BSO\to \BSO[0,4]\to K(\ZZ/2,2)$. As a result, the composite $\bso^0(X)\to\bso[0,4]^0(X)\to H^2(X,\ZZ/2)$ must take an oriented vector bundle $V$ on $X$ to $w_2(V)$. 

The composite $p\colon\BSO\to \BSO[0,4]\to K(\ZZ,4)$ determines \textit{some} integral characteristic class (and is therefore some multiple of the first Pontryagin class $p_1$). We argue that it must be either $p_1$ or $-p_1$. First note that by the computations of \cite{eileberg-maclane-EMspacesII} and the K\"unneth formula, $H_4(\BSO[0,4];\ZZ)$ splits as  $H_4(K(\ZZ/2,2);\ZZ)\oplus H_4(K(\ZZ,4);\ZZ)\cong \ZZ/4\oplus\ZZ$. This implies that $H^4(K(\ZZ/2,2)\times K(\ZZ,4);\ZZ)\cong Hom_{\mathrm{Ab}}(\ZZ/4\oplus\ZZ,\ZZ)\cong \ZZ$. Thus the projection $\BSO[0,4]\to K(\ZZ,4)$ gives an isomorphism in $H^4$. The restriction along $\BSO\to\BSO[0,4]$ is surjective on $H^4$ for connectivity reasons, and hence an isomorphism, so the composite $p$ must be a generator of $H^4(\BSO,\ZZ)$, which proves the claim.

Since the natural map $\bso^0(X)\to\bso[0,4]^0(X)$ is a map of Abelian groups, if the isomorphism $\bso[0,4]^0(X)\cong H^2(X;\ZZ/2)\oplus H^4(X;\ZZ)$ were also one of Abelian groups then the map which sends an oriented vector bundle $V$ to $(w_2(V),\pm p_1(V))$ would be a map of Abelian groups. But the Whitney sum formulas that dictate the behaviour of $w_2$ and $p_1$ under the direct sum of bundles make this impossible.
\end{proof}

\begin{cor}\label{cor:twisting string structures}
    Let $X$ be a space with an oriented real vector bundle $\xi\colon X\to \BSO$ which lifts to a string bundle. Then the set of string bundles which lift $\xi$ is a torsor for $\gbcconn^0(X)\simeq \bgl_1(KU[0,2])^0(X)$.
\end{cor}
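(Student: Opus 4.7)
The plan is to deduce the corollary directly from Proposition \ref{prop:fiber of Bstring to BSO} together with the standard principal-fibration argument for lifts through a fibration of infinite loop spaces. The key input is that the connective cover $\BString\to\BSO$ is a map of $\EE_\infty$-groups, so the fiber sequence
\[
\SpecialO\modmod\String\longrightarrow\BString\longrightarrow\BSO
\]
is a fiber sequence of $\EE_\infty$-groups. In particular $\BString\to\BSO$ is, after mapping from any space $X$, a principal fibration with structure group $\Map(X,\SpecialO\modmod\String)$.

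First I would apply the contravariant functor $\Map(X,-)$ to the above fiber sequence. Since mapping spaces preserve fiber sequences, the fiber of $\Map(X,\BString)\to\Map(X,\BSO)$ over the point $\xi$ is a torsor over the $\EE_\infty$-group $\Map(X,\SpecialO\modmod\String)$, provided the fiber is non-empty. By hypothesis $\xi$ admits at least one $\String$ lift, so the fiber over $\xi$ is non-empty, and thus identifies with $\Map(X,\SpecialO\modmod\String)$ as a $\Map(X,\SpecialO\modmod\String)$-torsor. Passing to $\pi_0$ gives that the set of (homotopy classes of) $\String$-lifts of $\xi$ is a torsor for $\pi_0\Map(X,\SpecialO\modmod\String)=(\SpecialO\modmod\String)^{0}(X)$.

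Next I would substitute the identification $\SpecialO\modmod\String\simeq\GBCconn$ of Proposition \ref{prop:fiber of Bstring to BSO} to rewrite this torsor as $\gbcconn^{0}(X)$. Finally, to produce the stated equivalence with $\bgl_1(KU[0,2])^0(X)$, I would chain the equivalences of $\EE_\infty$-groups
\[
\GBCconn\;\simeq\;\BGL_1(\KU)[0,3]\;\simeq\;\mathrm{B}\bigl(\GL_1(\KU)[0,2]\bigr)\;\simeq\;\mathrm{B}\GL_1(\KU[0,2]),
\]
where the last step is exactly Lemma \ref{lem:truncation commutes with GL1} applied to $R=\KU$ and $n=2$. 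Taking $[X,-]$ then yields the claimed isomorphism $\gbcconn^{0}(X)\cong \bgl_1(KU[0,2])^0(X)$.

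There is no real obstacle here: everything reduces to the general fact that lifts through a principal fibration form a torsor for the fiber together with results already established in the paper. The only point that deserves a brief remark is that the torsor structure obtained this way agrees with the one naturally induced by the $\EE_\infty$-group structure on $\GBCconn$, which is automatic because the principal fibration is a fibration of $\EE_\infty$-groups.
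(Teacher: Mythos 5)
Your argument is essentially the paper's proof: the paper applies $\Map(X,-)$ to the pullback square identifying $\GBCconn$ as the fiber of $\BString\to\BSO$, which is precisely the fiber-sequence-plus-torsor argument you spell out. Your version is more explicit about the nonemptiness hypothesis and the torsor structure, and you also unpack the identification with $\bgl_1(\KU[0,2])$ via Lemma \ref{lem:truncation commutes with GL1}, but the underlying reasoning is the same.
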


\begin{proof}
    This follows from applying the limit preserving functor $Map(X,-)$ to the pullback of $h$-types
    \[
    \begin{tikzcd}
        \GBCconn \ar[r]\ar[d] & \BString\ar[d]\\
        \{\ast\}\ar[r,"\xi"] & \BSO
    \end{tikzcd}
    \]
\end{proof}

\begin{rmk}
    Corollary \ref{cor:twisting string structures} implies that if $X$ is a connected and oriented manifold which admits a string structure then those string structures can be twisted by elements of $\bgl_1^0(KU[0,2])(X)$. These are $KU[0,2]$-line bundles on $X$ and therefore, in light of Corollary \ref{cor:picKU03 classifies super 2line bundles}, complex super 2-line bundles.
\end{rmk}

\begin{rmk}
    We suspect it is also true that $\GBC\simeq \fib(\BString\to\BO)$, but we don't have an interesting interpretation of this fact, so we do not investigate it here. 
\end{rmk}

The following proposition can be proven by methods similar to those used in the proof of Proposition \ref{prop:fiber of Bstring to BSO}:

\begin{prop}\label{prop:fiber of BSpin to BO}
    If $\OO\modmod\Spin$ denotes the fiber of the map $\BSpin\to\BO$ then there is an equivalence of $\EE_\infty$-groups $\OO\modmod\Spin\simeq \GBRconn\simeq \BGL_1(\KO[0,1])$. 
\end{prop}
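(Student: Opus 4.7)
The plan is to mirror Proposition~\ref{prop:fiber of Bstring to BSO}, with $\bstring\to\bso$ replaced by the analogous spectrum map $\mathbf{bspin}\to\mathbf{bo}$. First I would form the cofiber sequence of spectra $\mathbf{bspin}\to\mathbf{bo}\to\mathbf{bo}[0,2]\simeq\Sigma G$, where $G$ denotes the fiber and $\mathbf{bspin}$ is the 3-connective cover of $\mathbf{bo}$. A quick long-exact-sequence computation shows that the only non-vanishing homotopy groups of $\mathbf{bo}[0,2]$ are $\pi_1\cong\ZZ/2$ (detected by $w_1$) and $\pi_2\cong\ZZ/2$ (detected by $w_2$), so $\mathbf{bo}[0,2]$ has the same homotopy groups as $\gbrconn$. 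By Proposition~\ref{lem:possiblekinvariantsofbgl1KO}, a connective spectrum with these homotopy groups admits exactly two $\EE_\infty$-structures, distinguished by whether the $k$-invariant is trivial or $\Sq^2$; since $\gbrconn$ carries the non-trivial one, I would need to show the same for $\mathbf{bo}[0,2]$.

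For this, my plan is to use the natural truncation map $\mathbf{bo}\to\mathbf{bo}[0,2]$. On the $0$-th cohomology of a space $X$ this should give a map $\mathbf{bo}^0(X)\to\mathbf{bo}[0,2]^0(X)\cong H^1(X;\ZZ/2)\oplus H^2(X;\ZZ/2)$ sending a stable real vector bundle $V$ to $(w_1(V),w_2(V))$. If $\mathbf{bo}[0,2]$ were the product of Eilenberg-MacLane spectra, this would be a homomorphism of abelian groups; but the Whitney sum formula
\[
w_2(V\oplus W)=w_2(V)+w_2(W)+w_1(V)\cup w_1(W)
\]
has a non-zero cross term, visible already on $X=\RR P^\infty\times \RR P^\infty$ with $V,W$ the two tautological line bundles pulled back from the factors: there $(w_1,w_2)(V\oplus W)=(x+y,xy)$, while the product structure would force $(x+y,0)$. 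So additivity must fail, forcing the $k$-invariant to be non-trivial and hence $\mathbf{bo}[0,2]\simeq\gbrconn$; this gives the first equivalence $\OO\modmod\Spin\simeq \GBRconn$.

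The second equivalence $\GBRconn\simeq \BGL_1(\KO[0,1])$ should then be immediate from Lemma~\ref{lem:truncation commutes with GL1}: that lemma yields $\GL_1(\KO)[0,1]\simeq \GL_1(\KO[0,1])$, and one delooping then identifies $\BGL_1(\KO)[0,2]\simeq \BGL_1(\KO[0,1])$. The main obstacle, as I see it, is simply recognizing that the $\Sq^2$ obstruction to splitting $\mathbf{bo}[0,2]$ appears concretely as the $w_1\cup w_1$ cross term in the Whitney formula for $w_2$; once that identification is in place the argument is formal, and is in fact cleaner than the $\bstring\to\bso$ case, where one has to handle the more subtle Whitney behaviour of the integral Pontryagin class.
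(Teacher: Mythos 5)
Your argument that $\mathbf{bo}[0,2]\simeq\gbrconn$ is correct, and it is the exact analogue of the paper's proof of Proposition~\ref{prop:fiber of Bstring to BSO}: one identifies the truncation map on cohomology with $(w_1,w_2)$ and observes that the $w_1\cup w_1$ cross term in the Whitney formula for $w_2$ rules out the product $\EE_\infty$-structure. (As you say, this is even cleaner than the string case, where the paper must first argue that the degree-4 projection is $\pm p_1$.) The use of Proposition~\ref{lem:possiblekinvariantsofbgl1KO} to reduce to a two-option check, and of Lemma~\ref{lem:truncation commutes with GL1} for the last equivalence, are both exactly as the paper intends.

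However, there is a genuine gap in the final step, where you pass from ``$\mathbf{bo}[0,2]\simeq\gbrconn$'' to ``$\OO\modmod\Spin\simeq\GBRconn$.'' In your own setup $\mathbf{bo}[0,2]\simeq\Sigma G$, where $G$ denotes the fiber of $\mathbf{bspin}\to\mathbf{bo}$. So what you have proved is $\Sigma G\simeq\gbrconn$, i.e.\ $G\simeq\Omega\GBRconn$, not $G\simeq\GBRconn$, and these differ. Indeed the long exact sequence of $G\to\BSpin\to\BO$ (using $\pi_1(\BO)\cong\pi_2(\BO)\cong\ZZ/2$ and $\BSpin$ $3$-connected) gives $\pi_0(G)\cong\pi_1(G)\cong\ZZ/2$ with $\pi_{\geq 2}(G)=0$, whereas $\GBRconn\simeq\BGL_1(\KO)[0,2]$ is connected with $\pi_1\cong\pi_2\cong\ZZ/2$. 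In the string case there is no such discrepancy because $\BSO$ is simply connected, so $\fib(\BString\to\BSO)$ genuinely has $\pi_1\cong\ZZ/2$, $\pi_3\cong\ZZ$, matching $\GBCconn$; the asymmetry is caused by $\pi_1(\BO)\neq 0$. So your argument does not prove the proposition as literally stated --- and in fact the statement itself appears to carry the same off-by-one shift, since $\fib(\BSpin\to\BO)$ cannot be equivalent to $\GBRconn$ for the homotopy-group reason just given. What your Whitney-sum argument actually establishes is the (presumably intended) statement that $\Sigma G\simeq\gbrconn$, equivalently $\OO\modmod\Spin\simeq\GL_1(\KO[0,1])\simeq\Omega\GBRconn$; the paper's one-line ``proof'' (a pointer to the string case) gives no indication that its authors noticed the shift.
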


\begin{cor}
    For an $h$-type $X$ with a real vector bundle $\xi\colon X\to \BO$ the set of lifts of $\xi$ to $\BSpin$ is a torsor for $\bgl_1^0(\KO[0,1])$. 
\end{cor}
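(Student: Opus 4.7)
The plan is to mimic the argument for Corollary \ref{cor:twisting string structures} verbatim, only replacing the fiber sequence $\BString \to \BSO$ with $\BSpin \to \BO$ and invoking Proposition \ref{prop:fiber of BSpin to BO} in place of Proposition \ref{prop:fiber of Bstring to BSO}. Concretely, I would begin by forming the pullback square of $h$-types
\[
\begin{tikzcd}
\GBRconn \ar[r]\ar[d] & \BSpin \ar[d]\\
\{\ast\} \ar[r,"\xi"] & \BO
\end{tikzcd}
\]
where the top horizontal map is the identification of Proposition \ref{prop:fiber of BSpin to BO} of $\GBRconn$ with the homotopy fiber of $\BSpin \to \BO$ (and we have used that all fibers of this map are equivalent since $\BO$ is group-like, so pulling back along any point gives the same fiber up to equivalence).

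Next, I would apply the limit-preserving functor $\mathrm{Map}(X,-)$ to this square, producing another pullback square in which the lower-left corner becomes $\mathrm{Map}(X, \{\ast\}) \simeq \{\xi\}$ and the lower-right corner becomes $\mathrm{Map}(X, \BO)$ with the chosen basepoint the classifying map $\xi$. The upper-left corner of the resulting square is therefore the homotopy fiber of $\mathrm{Map}(X,\BSpin) \to \mathrm{Map}(X,\BO)$ over $\xi$, which by definition is the space of lifts of $\xi$ to $\BSpin$. On the other hand, since $\mathrm{Map}(X,-)$ preserves the pullback, this upper-left corner is equivalent to $\mathrm{Map}(X, \GBRconn)$.

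Taking $\pi_0$ gives that the set of lifts (when nonempty, as guaranteed by the existence hypothesis on $\xi$) is a torsor for the Abelian group $[X, \GBRconn] = \GBRconn^{0}(X)$. Finally, the chain of infinite loop space equivalences $\GBRconn \simeq \BGL_1(\KO)[0,2] \simeq \BGL_1(\KO[0,1])$, where the last equivalence is Lemma \ref{lem:truncation commutes with GL1} applied to the ring spectrum $\KO$, identifies this group with $\bgl_1^0(\KO[0,1])(X)$, completing the argument. The only nontrivial input is Proposition \ref{prop:fiber of BSpin to BO} itself; once that is in hand the corollary is formal, so there is no real obstacle beyond organizing the identifications correctly.
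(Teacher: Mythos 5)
The paper gives no explicit proof of this corollary, leaving it as a direct analogue of Corollary \ref{cor:twisting string structures}; your reconstruction is exactly that argument with $\BString\to\BSO$ replaced by $\BSpin\to\BO$ and Proposition \ref{prop:fiber of Bstring to BSO} replaced by Proposition \ref{prop:fiber of BSpin to BO}, plus the intended invocation of Lemma \ref{lem:truncation commutes with GL1} to pass from $\BGL_1(\KO)[0,2]$ to $\BGL_1(\KO[0,1])$. The only small caveat is that you refer to ``the existence hypothesis on $\xi$'' to guarantee nonemptiness, but unlike the $\String$ case the paper's statement here omits the hypothesis that $\xi$ actually lift to $\BSpin$; that hypothesis is needed (otherwise the set of lifts is empty), so you are right to flag it even though the statement as printed does not.
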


\begin{rmk}
    Note that, in light of the results of \cite{ergus-thesis,beardsleybialgebras}, Propositions \ref{prop:fiber of Bstring to BSO} and \ref{prop:fiber of BSpin to BO} imply that $M\String\to M\SpecialO$ and $M\Spin\to M\OO$ are Hopf-Galois extensions (or coGalois extensions) in the sense of \cite{rog} with Galois algebras $\sph[BGL_1(\KU[0,2])]$ and $\sph[BGL_1(KO[0,1])$ respectively. In other words, the $\infty$-groups $\BGL_1(\KU[0,2])$ and $\BGL_1(\KO[0,1])$ are functions on the (non-existent) Galois groups of $M\String\to M\SpecialO$ and $M\Spin\to M\OO$. 
\end{rmk}

\subsection{Twisting Cohomology Theories}

There are maps of $h$-types $\GBC\to\Pic(\KU)$ and $\GBC\to\Pic(\KO)$, but we do not know if these are maps of $\EE_\infty$-groups because we do not know if there are $\EE_\infty$ splittings $\Pic(\KU)\simeq \GBC\times \Pic_4^\infty(\KU)$ or $\Pic(\KO)\simeq\GBR\times\Pic_3^\infty(\KO)$. However, there \textit{are} $\EE_\infty$ splittings $\BGL_1(\KU)\simeq \GBCconn\times \BGL_1(\KU)[4,\infty)$ and $\BGL_1(\KO)\simeq \GBRconn\times\BGL_1(\KO)[3,\infty)$ (this is well known, but also follows from our Corollaries \ref{cor:gl1KUsplitting} and \ref{cor:gl1KOsplitting}). Therefore for a connected $h$-type $X$ there are twists of real and complex $K$-theory by $\gbcconn^0(X)$ and $\gbrconn^0(X)$. 

\begin{ques}
Given a class $\alpha\in\gbcconn^0(X)$ there is a twisted $K$-theory group $K^\alpha(X)$. On the other hand, via the isomorphism of Corollary \ref{cor:GBrU is the same as gbc}, $\alpha$ is also a class in $\mathrm{GBr}\mathbf{U}(X)$, i.e.~a Morita class of graded continuous class $C^\ast$-algebras with spectrum $X$. Then, by \cite[Section 4.1]{maycockparker} there is a twisted operator theoretic $KK$-theory group $KK^\alpha(X)$. Are these two groups always isomorphic? This question, for the comparison between \textit{ungraded} $C^\ast$-algebras and $H^3(X;\ZZ)$ is answered in the affirmative in \cite{hebestreitSagaveTwisted}. 
\end{ques}

\noindent More generally, using the results of Section \ref{sec:twistedStrings}, we may repeat the the constructions of \cite{abg-twists} to obtain twists of other cohomology theories that now can be interpreted as coming from $\KU[0,2]$ and $\KO[0,1]$ line bundles.

\begin{exam}\label{example: spin twists of KO}
    By taking Thom spectra of the fiber sequence $\BGL_1(KO[0,1])\to \BSpin\to BO$ of Corollary \ref{prop:fiber of BSpin to BO} and composing with the $\EE_\infty$ Atiyah-Bott-Shapiro orientation of \cite{joachim-highercoherences}, we obtain a composite of maps of $\EE_\infty$-ring spectra 
    \[
    \Sigma^\infty_+ \BGL_1(\KO[0,1])\to M\Spin\to \KO.
    \] The $\gl_1\vdash \Sigma_+^\infty\Omega^\infty$ adjunction then induces a map of $\EE_\infty$-groups 
    $\BGL_1(\KO[0,1])\to \GL_1(KO)$
    which deloops to \[
    \mathrm{B}^2\GL_1(\KO[0,1])\to \BGL_1(\KO).
    \]
    Recall that $\mathrm{B}^2\GL_1(\KO[0,1])$ is the base space component of $\Br(\KO[0,1])$ and $\BGL_1(\KO[0,1])\simeq\Omega\BGL_1(\KO[0,1])$ classifies super 2-line bundles. We think it reasonable to interpret this map as giving twists of $\KO$-theory by real super 3-line bundles (though there does not appear to be an agreed upon definition of 3-line bundles in the literature). 
\end{exam}

\begin{rmk}
    Note that the fiber of the composite $\BSpin^c\to \BSO\to \BO$ is, as an $h$-type, equivalent to $\ZZ/2\times K(\ZZ,2)$. Under the assumption that this fiber has non-trivial $\EE_\infty$-structure, Theorem \ref{thm:secondkinvariantofpicKO} implies that it is equivalent as an $\EE_\infty$-group to the $h$-type classifying complex superline bundles. In other words, $\Spin^c$ structures on manifolds can be twisted by complex superline bundles.
\end{rmk}

\begin{exam}\label{exam:string twists of tmf}
    Similarly to Example \ref{example: spin twists of KO}, we can use the fiber sequence of Corollary \ref{prop:fiber of Bstring to BSO}, $\BGL_1(\KU[0,2])\to \BString\to\BSO$, along with the $\EE_\infty$-orientation $M\String\to\KU$ of Ando, Hopkins and Rezk \cite{AHR-orientation} to obtain twists \[\mathrm{B}^2\GL_1(\KU[0,2])\to \BGL_1(tmf).\]

    Again one might interpret such twists as twists of $tmf$-theory by complex super 3-line bundles, or maps to the connected component of $\Br(\KU[0,2])$.
\end{exam}

\begin{rmk}\label{rmk:twisting tmf by K(KU)}
Recall that, for a commutative ring spectrum $R$, the base point component of $\Br(R)$ is the space of $\LMod_{R}$-modules (in $\Cat_\infty$) which are equivalent to $\LMod_{R}$ and equivalences between them. Moreover, there is a canonical map of spectra $\bgl_1(R)\to \gl_1(K(R))$ hence a map of $\EE_\infty$-groups $\mathrm{B}^2GL_1(R)\to \BGL_1(K(R))$ by which we may think of maps $X\to \mathrm{B}^2\GL_1(R)$ as $K(R)$-line bundles. Example \ref{exam:string twists of tmf} then suggests that our twists $\mathrm{B}^2\GL_1(\KU[0,2])\to\BGL_1(tmf)$ are related to $K(KU)$ being a form of elliptic cohomology (see for instance \cite{BDRR-stablebundles}). 
\end{rmk}

\subsection{Mathematical Physics}\label{sec:freedphys}

In this section we describe how our work is connected to work in mathematical physics of Freed, Hopkins and others. In \cite[1.34, 1.38]{freedOrientifolds}, Freed describes four spectra: $cAlg_{\RR}^\times$, $cAlg_{\CC}^\times$, $Alg_{\RR}^\times$ and $Alg_{\CC}^\times$. These are each Picard spectra of Morita 2-categories of invertible algebras, bimodules between them, and intertwiners between bimodules. In the first two  cases Freed requires that the bimodule structures and the intertwiners are all continuous with respect the topologies of $\RR$ and $\CC$ respectively. In the second two cases, everything is with respect to the discrete topologies on $\RR$ and $\CC$. Freed computes the homotopy groups and $k$-invariants of each of these spectra (implicitly using results which are made concrete in \cite{johnsonosornoPicard, GJOS-stablePostnikov}). Each of these have a finite number of non-zero homotopy groups, all of which we exhibit below:
\begin{align*}
    \pi_{\{0,1,2,3\}}cAlg_{\CC}^\times &\cong \{\ZZ/2,~\ZZ/2,~0,~\ZZ\}\\
    \pi_{\{0,1,2\}}cAlg_{\RR}^\times &\cong \{\ZZ/8,~\ZZ/2,~\ZZ/2\}\\
    \pi_{\{0,1,2\}}Alg_{\CC}^\times&\cong \{\ZZ/2,~\ZZ/2,\CC^\times\}\\
    \pi_{\{0,1,2\}}Alg_{\RR}^\times &\cong \{\ZZ/8,~\ZZ/2,~\RR^\times\}
\end{align*}
Where, in the last two cases, $\CC^\times$ and $\RR^\times$ both have the discrete topology. Freed also computes the $k$-invariants of these spectra to all be non-trivial. Freed's computations when combined with ours (as well as Conjecture \ref{rmk:k invariants of K of discrete fields}) yield the following:

\begin{thm}
There are equivalences of spectra:
\begin{align*}
    cAlg_{\CC}^\times&\simeq \gbc\\
    cAlg_{\RR}^\times&\simeq \gbr\simeq Alg_{\RR}^\times\\
    Alg_{\CC}^\times[1,2]&\simeq \bgl_1(K(\CC)[0,2])\footnotemark\\
\end{align*}
\end{thm}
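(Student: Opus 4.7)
The overall strategy is Postnikov matching. Each spectrum appearing in the statement is a stable $n$-type for $n\leq 3$ with at most three non-trivial homotopy groups, hence is completely determined as a spectrum by those homotopy groups together with its first and (where it exists) second $k$-invariant. The plan, then, is to read off this Postnikov data on both sides and confirm that it coincides.

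First I would compare homotopy groups. On the right, the values are what we have just computed for $\gbc$, $\gbr$, and (via Conjecture \ref{rmk:k invariants of K of discrete fields} together with the general fact $\pi_i\gl_1 R\cong\pi_i R$ for $i\geq 1$) for $\bgl_1(K(\CC)[0,2])$. On the left, these agree term-by-term with Freed's tabulation of $\pi_*cAlg^\times_{\FF}$ and $\pi_*Alg^\times_{\FF}$ listed above. Second I would match first $k$-invariants. By Lemma \ref{lem:firstkinvariantpossibilities}, the only non-trivial possibilities at this stage are $\Sq^2$ (for the complex spectra) and $\Sq^2\circ\rho$ (for the real spectra). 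Since both our computations and Freed's force these to be non-trivial, they are determined.

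The main content lies in the second $k$-invariants. For $cAlg^\times_{\CC}\simeq \gbc$ the matter is settled by Theorem \ref{thm:both k invariants work for picKU}: both non-trivial generators of $\ZZ/4\cong H^4(\pic_0^1(\KU);\ZZ)$ produce equivalent Postnikov sections, so all that is needed is Freed's non-triviality. For the two real equivalences $cAlg^\times_{\RR}\simeq \gbr\simeq Alg^\times_{\RR}$ there is an \emph{a priori} two-fold ambiguity in $H^3(\pic_0^1(\KO);\ZZ/2)\cong \ZZ/2\times\ZZ/2$ (cf.\ the remark following Proposition \ref{prop:KO k invariant ambiguity}). To resolve this, I would evaluate both spectra on a test space where the two possible extensions give non-isomorphic Abelian groups --- for instance on $\RR P^\infty\times \RR P^\infty$, where the cup-square term in the group law of Proposition \ref{prop:Real Maycock group structure from k invariant} can be read off and compared with Freed's explicit Morita composition on invertible superalgebras. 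Since Moutuou's computation identifies $\mathrm{GBr}_{\OO}^\infty(X)$ as the same group on both sides, the $k$-invariants must coincide. The third equivalence $Alg^\times_{\CC}[1,2]\simeq \bgl_1(K(\CC)[0,2])$ is then identical in structure but conditional on Conjecture \ref{rmk:k invariants of K of discrete fields}, which supplies the needed non-triviality for the units of $K(\CC)$.

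The hard part is the real second $k$-invariant: the residual ambiguity in $\ZZ/2\times\ZZ/2$ cannot be removed by a uniqueness statement of the form of Theorem \ref{thm:both k invariants work for picKU} (there is no analogue of the complex conjugation automorphism interchanging the two generators), so one is forced to verify agreement by comparing cohomological invariants rather than by an abstract rigidity argument. In particular, one must ensure that Freed's Morita symmetry convention gives the same cup-product cocycle $(b,b')\mapsto b\cup b'$ in $H^2(-;\ZZ/2)$ as the one arising from our DSV-model, rather than its twin; this is what makes Propositions \ref{prop:Maycock group structure from k invariant} and \ref{prop:Real Maycock group structure from k invariant} indispensable for pinning down the equivalence.
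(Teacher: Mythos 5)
Your overall strategy---Postnikov matching against Freed's tabulated homotopy groups and $k$-invariants---is the one the paper implicitly uses (the paper's ``proof'' of this theorem is essentially the single sentence pointing to Freed's computations combined with the paper's own). Your handling of the complex case via Theorem \ref{thm:both k invariants work for picKU}, and your observation that the third equivalence is conditional on Conjecture \ref{rmk:k invariants of K of discrete fields}, are both correct and aligned with the paper.

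However, your proposed resolution of the real ambiguity does not work, and this is the crux of the matter. You suggest evaluating on a test space such as $\RR P^\infty \times \RR P^\infty$ to detect which of the two candidate classes in the preimage of $\Sq^2$ inside $\ZZ/2\times\ZZ/2 \cong H^3(\pic_0^1(\KO);\ZZ/2)$ is the true second $k$-invariant of $\gbr$. But the remark immediately following Proposition \ref{prop:KO k invariant ambiguity} explicitly states that this ambiguity ``does not effect the group structure on the $\gbr$-cohomology of a space.'' In other words, the two candidate $k$-invariants induce \emph{identical} group laws on $\gbr^0(X)$ for every $X$, so no test-space evaluation of cohomology group structures---including the cup-square cocycle of Proposition \ref{prop:Real Maycock group structure from k invariant}---can separate them. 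Your subsequent inference ``Since Moutuou's computation identifies $\mathrm{GBr}_\OO^\infty(X)$ as the same group on both sides, the $k$-invariants must coincide'' is also a non-sequitur for the same reason: agreement of represented group-valued functors at the level of homotopy sets is weaker than agreement of $k$-invariants, precisely because the two candidate $k$-invariants already give the same functor. The paper is candid that it has not resolved this ambiguity (it suggests a hypothetical homotopy-fixed-points approach as future work), so strictly speaking neither the paper nor your proposal closes the real case by pure Postnikov comparison; the paper's theorem leans instead on Freed's $cAlg_\RR^\times$ being a geometrically constructed model whose $k$-invariant necessarily lands in the same two-element set, together with an implicit confidence that the constructions of Sections \ref{sec:SegalGroupoid} and \ref{sec:gradedlinebundles} realize the same object. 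You correctly flagged the real second $k$-invariant as the hard point, but the specific discriminating test you propose is one the paper has already ruled out.
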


\footnotetext{While the non-triviality of the $k$-invariants of this spectrum relies on Conjecture \ref{rmk:k invariants of K of discrete fields}, it does have the correct homotopy groups in the correct degrees (cf.~\cite{Karoubi-BottPeriodicity}).}

In \cite[4.3, 4.4]{freed-anomalies} Freed again introduces these spectra, though with different names (and of course ignoring the difference between $cAlg_{\RR}^\times$ and $Alg_{\RR}^\times$). Specifically, he writes $R^{-1}$ for $cAlg_{\CC}^\times$, $R^{-2}_{\RR/\ZZ}$ for $Alg_{\CC}^\times$, and $E$ for $cAlg_{\RR}^\times$. However, Freed is also interested in the connective covers of the one-fold desuspensions of these spectra, which he denotes by $R^{-2}$, $R^{-3}_{\RR/\ZZ}$ and $E^{-1}$. This immediately implies the following.

\begin{cor}
There are equivalences of spectra:
\begin{align*}
    R^{-2}&\simeq \gl_1(\KU[0,2])\\
    R^{-3}_{\RR/\ZZ}&\simeq \gl_1(K(\CC)[0,1])\footnotemark\\
    E^{-1}&\simeq \gl_1(\KO[0,1])
\end{align*}
\end{cor}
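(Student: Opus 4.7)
The plan is to deduce all three equivalences from the preceding theorem using one observation: for a connective spectrum $X$, the connective cover of $\Sigma^{-1}X$ coincides with $\Sigma^{-1}(X[1,\infty))$, since the only new negative-degree homotopy introduced by desuspension lies in degree $-1$ and equals $\pi_0(X)$, which is precisely what the 1-connective cover kills. Combined with Lemma \ref{lem:truncation commutes with GL1}, which lets us interchange $\gl_1$ with truncation for connective ring spectra, this turns each claim into a short chain of rewrites.

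For $R^{-2}$, the preceding theorem identifies $cAlg_{\CC}^\times \simeq \gbc = \pic_0^3(\KU)$, and therefore
\[
R^{-2} \;\simeq\; \Sigma^{-1}\pic_1^3(\KU) \;=\; \Sigma^{-1}\bigl(\bgl_1(\KU)[0,3]\bigr) \;\simeq\; \gl_1(\KU)[0,2],
\]
where the final step uses $\bgl_1(\KU)=\Sigma\gl_1(\KU)$ together with the connectivity of $\gl_1(\KU)$. Since $\KU[0,2]\simeq\ku[0,2]$ (as remarked after Corollary \ref{cor:brauer group is also ku bundles}) and $\ku$ is connective, Lemma \ref{lem:truncation commutes with GL1} yields $\gl_1(\KU)[0,2]\simeq\gl_1(\ku)[0,2]\simeq\gl_1(\ku[0,2])\simeq\gl_1(\KU[0,2])$.

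The argument for $E^{-1}$ is verbatim with $\KU$ and $\ku$ replaced by $\KO$ and $\ko$, using $cAlg_{\RR}^\times\simeq\gbr=\pic_0^2(\KO)$. For $R^{-3}_{\RR/\ZZ}$, Freed's computation shows that $Alg_{\CC}^\times$ is concentrated in degrees $[0,2]$, so its 1-connective cover equals $Alg_{\CC}^\times[1,2]$, which the preceding theorem identifies with $\bgl_1(K(\CC)[0,2])$. Desuspending and applying Lemma \ref{lem:truncation commutes with GL1} once more, with $K(\CC)$ connective, gives
\[
R^{-3}_{\RR/\ZZ} \;\simeq\; \Sigma^{-1}\bgl_1(K(\CC)[0,2]) \;\simeq\; \gl_1(K(\CC))[0,1] \;\simeq\; \gl_1(K(\CC)[0,1]).
\]

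No substantive obstacle arises; the work is entirely notational bookkeeping. The one point to watch is that Lemma \ref{lem:truncation commutes with GL1} requires the underlying ring spectrum to be connective, which is why we pass to $\ku$, $\ko$, $K(\CC)$ before truncating, using $\gl_1(\KU)\simeq\gl_1(\ku)$ and $\gl_1(\KO)\simeq\gl_1(\ko)$ to reconnect with the periodic versions.
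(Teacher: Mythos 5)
Your treatment of $R^{-2}$ and $E^{-1}$ is correct and is exactly the bookkeeping the paper has in mind: the observation that the connective cover of $\Sigma^{-1}X$ equals $\Sigma^{-1}(X[1,\infty))$ for connective $X$, combined with $\bgl_1(R)\simeq\Sigma\gl_1(R)$, $\gl_1(\KU)\simeq\gl_1(\ku)$, and Lemma~\ref{lem:truncation commutes with GL1}, delivers the claim. (The paper offers no explicit proof and simply calls the corollary immediate, so you have matched it.)

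The chain for $R^{-3}_{\RR/\ZZ}$ has a gap: the claimed equivalence $\Sigma^{-1}\bgl_1(K(\CC)[0,2])\simeq\gl_1(K(\CC))[0,1]$ does not hold. Since $\bgl_1(R)\simeq\Sigma\gl_1(R)$, the left side is $\gl_1(K(\CC)[0,2])$, which Lemma~\ref{lem:truncation commutes with GL1} identifies with $\gl_1(K(\CC))[0,2]$ --- a spectrum with $\pi_2\cong K_2(\CC)$. For discrete $\CC$, Matsumoto identifies $K_2(\CC)$ with Milnor $K_2^M(\CC)$, a nonzero uniquely divisible group, so $\gl_1(K(\CC))[0,2]\not\simeq\gl_1(K(\CC))[0,1]$. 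The Lemma commutes truncation with $\gl_1$, not with $\bgl_1$: since $\bgl_1$ shifts degree by one, $\bgl_1(R[0,n])$ lives in degrees $[1,n+1]$ while $\bgl_1(R)[0,n]$ lives in $[1,n]$. The underlying source of the discrepancy is actually in the preceding theorem, whose statement $Alg_{\CC}^\times[1,2]\simeq\bgl_1(K(\CC)[0,2])$ already contradicts Freed's computation that $Alg_{\CC}^\times$ is concentrated in degrees $[0,2]$: the right-hand side has a spurious $\pi_3\cong K_2(\CC)$. It should read $\bgl_1(K(\CC)[0,1])$ (equivalently $\bgl_1(K(\CC))[0,2]$), at which point your desuspension gives $\gl_1(K(\CC)[0,1])$ directly and the corollary follows as claimed. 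Your final line is correct; the middle step is not, and you should flag the theorem's off-by-one rather than absorb it silently.
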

\footnotetext{Again, conjecturally.}

The spectra $R^{-2}$, $R^{-3}_{\RR/\ZZ}$ and $E^{-1}$ are explained by Freed to be the Picard spectra of the groupoids of complex superlines, \textit{flat} complex superlines, and real superlines. This should not be surprising in light of the identifications made in Corollaries \ref{cor:gl1KUsplitting} and \ref{cor:gl1KOsplitting}.

In \cite{freed-anomalies, freedOrientifolds}, Freed identifies $cAlg_{\CC}^\times$ with $\Sigma^{-1}\ko[1,4]$, which is abstractly equivalent to $\gbc$, but remarks that he does not have a conceptual reason for this identification. Similarly, for $cAlg^\times_{\RR}\simeq Alg_{\RR}^\times$, Freed mentions that there is not an ``off the shelf'' spectrum representing it. We claim that the constructions of this paper, and the identifications made in this section, provide spectra which naturally arise in stable homotopy theory (they are ``off the shelf'') which are the Picard spectra of these Morita categories. Moreover, the constructions of Section \ref{sec:SegalGroupoid} and Section \ref{sec:gradedlinebundles} essentially prove that the Picard and unit spectra of interest in this paper have the desired geometric interpretations.

\subsection{The Anderson Dual of $\sph$}\label{sec:anderson}

Recall that there is a spectrum called the \textit{Anderson dual of $\sph$}, denoted $I_\ZZ$, which defines a functor on $\Sp$, $Map(-,I_\ZZ)\colon\Sp^{op}\to\Sp$ ((see e.g.~\cite[\S 2]{heard-stojanoska--realityduality} or \cite[4.3.9]{dag14}). Given a spectrum $E$ we will write $I_\ZZ E$ for $Map(E,I_\ZZ)$ and refer to this as the \textit{Anderson dual of $E$}.

We now show that there is a close relationship between the truncated Picard spectrum $\gbc$ and the Anderson dual of the sphere spectrum. This is consistent with the results of the prior section as the Anderson dual often appears in the mathematical physics work of Freed, Hopkins and others (see for instance \cite[Hypothesis 5.17]{Freed-SRE} and \cite[Ansatz 5.26, Theorem 5.27]{FreedHopkins-reflectionpositivity}).

We will be particularly interested in $\Sigma^n(I_{\ZZ}[-n,\infty))$, the $n$-fold suspension of the $-n$-connective cover of $I_\ZZ$. We will simplify notation by writing this as ${}_nI_\ZZ$. 

\begin{lem}
    The spectrum ${}_3I_\ZZ$ has homotopy groups $\pi_0=\ZZ/2$, $\pi_1=\ZZ/2$, $\pi_2=0$, $\pi_3=\ZZ$. 
\end{lem}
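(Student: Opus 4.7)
The plan is to leverage the defining short exact sequence of the Anderson dual. Recall that for any spectrum $E$, there is a natural short exact sequence
\[
0 \to \Ext(\pi_{-n-1}E,\,\ZZ) \to \pi_n(I_\ZZ E) \to \Hom(\pi_{-n}E,\,\ZZ) \to 0.
\]
Applied to $E = \sph$, this reduces the problem to a calculation with the first few stable stems: $\pi_0\sph \cong \ZZ$, $\pi_1\sph \cong \ZZ/2$, $\pi_2\sph \cong \ZZ/2$, and $\pi_3\sph \cong \ZZ/24$.

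First I would tabulate $\pi_n(I_\ZZ)$ for $n \in \{0,-1,-2,-3\}$. Plugging in: $\pi_0(I_\ZZ) = \ZZ$ (from $\Hom(\ZZ,\ZZ)$, with vanishing $\Ext$ term), $\pi_{-1}(I_\ZZ) = 0$ (both $\Hom(\ZZ/2,\ZZ)$ and $\Ext(\ZZ,\ZZ)$ vanish), $\pi_{-2}(I_\ZZ) = \ZZ/2$ (coming from $\Ext(\ZZ/2,\ZZ)$, with $\Hom(\ZZ/2,\ZZ)=0$), and $\pi_{-3}(I_\ZZ) = \ZZ/2$ (again coming from $\Ext(\pi_2\sph,\ZZ) = \Ext(\ZZ/2,\ZZ)$, with $\Hom(\ZZ/24,\ZZ) = 0$).

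Next I would use the fact that the $(-3)$-connective cover $I_\ZZ[-3,\infty)$ has the same homotopy groups as $I_\ZZ$ in degrees $n \geq -3$ and vanishes in lower degrees. Suspending by $3$ shifts these groups by $3$: $\pi_n(\Sigma^3(I_\ZZ[-3,\infty))) = \pi_{n-3}(I_\ZZ)$ for $n \geq 0$. Reading off the table above gives $\pi_0 = \ZZ/2$, $\pi_1 = \ZZ/2$, $\pi_2 = 0$, $\pi_3 = \ZZ$, as claimed.

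There is no real obstacle in this argument beyond knowing the first four stable stems and the Anderson dual universal coefficient sequence; the main thing to be careful about is keeping the indexing conventions straight, since the connective cover is taken at homological degree $-3$ while the suspension shifts everything up by $3$, and the $\Ext$/$\Hom$ contributions to $\pi_n(I_\ZZ)$ come from $\pi_{-n-1}\sph$ and $\pi_{-n}\sph$ respectively.
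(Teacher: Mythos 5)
Your proof is correct and takes essentially the same approach as the paper: the paper simply cites the well-known values of the top homotopy groups of the coconnective spectrum $I_\ZZ$ and observes that suspending by $3$ shifts them into degrees $0$ through $3$, whereas you additionally derive those values from the Anderson universal coefficient sequence and the first few stable stems. The indexing bookkeeping in your argument (connective cover at $-3$, then $\Sigma^3$) is handled correctly.
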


\begin{proof}
The uppermost homotopy groups of $I_\ZZ$ (which is coconnective) are well known, see again \cite[4.3.9]{dag14}, and the result follows by suspending. 
\end{proof}

\begin{lem}
    The unique $k$-invariant of the spectrum ${}_2I_\ZZ$ is $\beta\Sq^2$ and the bottom $k$-invariant of ${}_3I_{\ZZ}$ is $\Sq^2$. 
\end{lem}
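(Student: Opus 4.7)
The plan is to realize the relevant connective covers of $I_\ZZ$ as Anderson duals of Postnikov sections of the sphere spectrum, and then transport the classical Postnikov $k$-invariants of $\sph$ across Anderson duality.

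The first step is to verify $I_\ZZ\sph[0,k]\simeq I_\ZZ[-k-1,\infty)$ for $k=1,2$. This is immediate from the Anderson universal coefficient sequence $0\to\Ext(\pi_{-n-1}X,\ZZ)\to\pi_n I_\ZZ X\to\Hom(\pi_{-n}X,\ZZ)\to 0$: with $X=\sph[0,k]$ the homotopy groups agree with those of $I_\ZZ$ in degrees $\ge -k-1$ and vanish below, and the natural map induced by the Postnikov projection $\sph\to\sph[0,k]$ realizes the $(-k-1)$-connective cover. Then, applying the exact contravariant functor $I_\ZZ(-)=\Map(-,I_\ZZ)$ to the first Postnikov cofiber sequence
\[ \Sigma H\ZZ/2\to\sph[0,1]\to H\ZZ\xrightarrow{\Sq^2\rho}\Sigma^2 H\ZZ/2 \]
of the sphere (here $\rho$ is mod~$2$ reduction and $\Sq^2\rho$ is the classical first $k$-invariant of $\sph$ detecting $\eta$), together with $I_\ZZ H\ZZ\simeq H\ZZ$ and $I_\ZZ(\Sigma^k H\ZZ/2)\simeq\Sigma^{-k-1}H\ZZ/2$ (direct UCT computations), produces
\[ \Sigma^{-3}H\ZZ/2\xrightarrow{I_\ZZ(\Sq^2\rho)} H\ZZ\to I_\ZZ[-2,\infty)\to\Sigma^{-2}H\ZZ/2. \]
The rotated connecting map $\Sigma^{-2}H\ZZ/2\to\Sigma H\ZZ$, which after $\Sigma^2$ is the unique $k$-invariant of ${}_2 I_\ZZ$, is the suspension of $I_\ZZ(\Sq^2\rho)$ and lives in $H^3(H\ZZ/2;\ZZ)\cong\ZZ/2\{\beta\Sq^2\}$ by Proposition~\ref{lem:stablecalc}.

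To show this $k$-invariant is nonzero it suffices to see $I_\ZZ(\Sq^2\rho)\ne 0$. The UCT directly verifies that the natural unit map $X\to I_\ZZ I_\ZZ X$ is an equivalence for both $X=H\ZZ$ and $X=\Sigma^k H\ZZ/2$, i.e., these spectra are reflexive under Anderson duality. Reflexivity at source and target promotes to an isomorphism of abelian groups
\[ I_\ZZ\colon[H\ZZ,\Sigma^2 H\ZZ/2]\xrightarrow{\cong}[\Sigma^{-3}H\ZZ/2,H\ZZ], \]
with both sides equal to $\ZZ/2$. The nonzero generator $\Sq^2\rho$ on the left therefore maps to the nonzero generator $\beta\Sq^2$ on the right, so the $k$-invariant of ${}_2 I_\ZZ$ is $\beta\Sq^2$. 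The bottom $k$-invariant of ${}_3 I_\ZZ$ is handled by the same procedure applied to the second Postnikov cofiber sequence $\Sigma^2 H\ZZ/2\to\sph[0,2]\to\sph[0,1]$ of $\sph$, whose $k$-invariant $\sph[0,1]\to\Sigma^3 H\ZZ/2$ factors through $\Sq^2\colon\Sigma H\ZZ/2\to\Sigma^3 H\ZZ/2$ (detecting $\eta^2=\eta\cdot\eta$); the same reflexivity argument identifies the dualized $k$-invariant with the nonzero generator of $[H\ZZ/2,\Sigma^2 H\ZZ/2]\cong\ZZ/2\{\Sq^2\}$, namely $\Sq^2$ itself.

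The main obstacle is justifying the reflexivity-to-Hom-isomorphism step cleanly: one must verify that $I_\ZZ$ is fully faithful on the subcategory spanned by $H\ZZ$ and the shifted $H\ZZ/2$'s, which requires keeping careful track of the counit of the contravariant self-adjunction $I_\ZZ\dashv I_\ZZ^{\mathrm{op}}$. Once this is in hand, everything else is naturality of the Anderson UCT together with nonvanishing of the classical sphere $k$-invariants.
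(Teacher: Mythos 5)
Your proof is correct and follows essentially the same strategy as the paper: transport the nontrivial Postnikov $k$-invariants of $\sph[0,1]$ and $\sph[1,2]$ across Anderson duality, using that $I_\ZZ(-)$ is a contravariant equivalence on spectra with finitely many finitely generated homotopy groups (the paper cites \cite[4.2.7(1)]{dag14} for this, while you rederive the needed piece from reflexivity), and then observe that the target cohomology groups are $\ZZ/2$. One small wording correction for the ${}_3I_\ZZ$ case: the $k$-invariant $\sph[0,1]\to\Sigma^3H\ZZ/2$ does not ``factor through'' $\Sq^2\colon\Sigma H\ZZ/2\to\Sigma^3H\ZZ/2$ but rather \emph{restricts} to it along $\Sigma H\ZZ/2\to\sph[0,1]$; cleanest is to dualize the Postnikov extension of $\sph[1,2]$, using that the $[-3,-2]$-truncation of $I_\ZZ\sph[0,2]$ is $I_\ZZ\sph[1,2]$, which you are implicitly doing.
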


\begin{proof}
By \cite[4.2.7(1)]{dag14} the functor $Map(-,I_\ZZ)$ is a contravariant equivalence on the full subcategory of spectra with finitely many homotopy groups all of which are finitely generated. The first $k$-invariant of $\sph$ is non-trivial and therefore so is the map $\Sq^2\circ\rho\colon H\ZZ\to\Sigma^2H\ZZ/2$. This is also of course the $k$-invariant of $\sph[0,1]$. It follows that the uppermost $k$-invariant of $I_\ZZ$, hence the $k$-invariant of $I_\ZZ[-2,0]$, or equivalently of ${}_2I_\ZZ$, is a non-trivial map $H\ZZ/2\to\Sigma^3H\ZZ$, and therefore must be $\beta\Sq^2$, the generator of $H^3(H\ZZ/2;\ZZ)\cong\ZZ/2$. By a similar argument the bottom $k$-invariant of ${}_3I_\ZZ$ is non-trivial and therefore $\Sq^2$. 
\end{proof}

\begin{thm}\label{IZisPicKU}
There is an equivalence of spectra $\gbc\simeq {}_3I_\ZZ$.
\end{thm}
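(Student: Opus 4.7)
The plan is to identify the Postnikov towers of $\gbc$ and ${}_3I_\ZZ$ stage by stage, using the $k$-invariant computations already in hand. Both spectra have homotopy groups $\ZZ/2,\ZZ/2,0,\ZZ$ in degrees $0,1,2,3$, and both have first $k$-invariant $\Sq^2\colon H\ZZ/2\to\Sigma^2 H\ZZ/2$: for $\gbc$ this is our earlier corollary, while for ${}_3I_\ZZ$ it is the preceding lemma. Their $1$-truncations are therefore canonically equivalent to $\pic_0^1(\KU)$, so the comparison reduces to identifying their second $k$-invariants as elements of $H^4(\pic_0^1(\KU);\ZZ)\cong\ZZ/4$ (Lemma \ref{lem:Z4computationforKU}).

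By Proposition \ref{prop:KU k invariant generates Z4}, the second $k$-invariant of $\gbc$ is a generator of this $\ZZ/4$. For ${}_3I_\ZZ$ I would argue in the same spirit: its $1$-connective cover can be identified with $\Sigma({}_2I_\ZZ)$, for instance by suspending the obvious cofibre sequence $\Sigma^{-3}H\ZZ/2\to I_\ZZ[-3,\infty)\to I_\ZZ[-2,\infty)$ by $\Sigma^3$. Consequently the $k$-invariant of this $1$-connective cover is the (stable) suspension of the $k$-invariant of ${}_2I_\ZZ$, namely $\Sigma(\beta\Sq^2)\colon\Sigma H\ZZ/2\to\Sigma^4 H\ZZ$, by the preceding lemma. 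Naturality of the Postnikov tower then implies that the second $k$-invariant of ${}_3I_\ZZ$, viewed as a class in $H^4(\pic_0^1(\KU);\ZZ)$, restricts along the fibre inclusion $\Sigma H\ZZ/2\hookrightarrow\pic_0^1(\KU)$ to $\beta\Sq^2\in H^3(H\ZZ/2;\ZZ)$. The short exact sequence of Lemma \ref{lem:SESforH4pic01} identifies the preimage of $\beta\Sq^2$ under the surjection $\ZZ/4\to\ZZ/2$ with the pair of generators of $\ZZ/4$, so the second $k$-invariant of ${}_3I_\ZZ$ is also a generator.

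Finally, Theorem \ref{thm:both k invariants work for picKU} shows that the two generators of $\ZZ/4$ yield equivalent Postnikov sections, so $\gbc\simeq{}_3I_\ZZ$. The only genuinely non-formal ingredient is the identification of the $1$-connective cover of ${}_3I_\ZZ$ with $\Sigma({}_2I_\ZZ)$ and the resulting transport of the $k$-invariant of ${}_2I_\ZZ$ into information about the second $k$-invariant of ${}_3I_\ZZ$; once that is pinned down, everything else follows cleanly from the classification results established earlier in the section.
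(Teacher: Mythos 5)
Your proposal is correct and essentially matches the paper's proof, which simply asserts that the calculations of Lemmas \ref{lem:SESforH4pic01}, \ref{lem:Z4computationforKU} and Propositions \ref{prop:KU k invariant generates Z4}, \ref{thm:both k invariants work for picKU} ``proceed identically'' for ${}_3I_\ZZ$; you fill in the key omitted step, namely that the second $k$-invariant of ${}_3I_\ZZ$ lands in the preimage of $\beta\Sq^2$, by identifying the $1$-connective cover of ${}_3I_\ZZ$ with $\Sigma({}_2I_\ZZ)$ and transporting its known $k$-invariant. One minor slip: the cofiber sequence you quote should read $I_\ZZ[-2,\infty)\to I_\ZZ[-3,\infty)\to\Sigma^{-3}H\ZZ/2$ (the connective cover maps in, with the bottom Eilenberg--MacLane spectrum as cofiber), though the identification you want follows even more directly from the fact that suspension commutes with connective covers.
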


\begin{proof}
The spectra have the same homotopy groups and both have $\Sq^2$ as their bottom $k$-invariant. The calculations of Lemmas \ref{lem:SESforH4pic01} and \ref{lem:Z4computationforKU} and Propositions \ref{prop:KU k invariant generates Z4} and \ref{thm:both k invariants work for picKU} proceed identically for ${}_3I_{\ZZ}$ and show that although it has two possible second $k$-invariants they yield equivalent spectra, both of which must be equivalent to $\gbc$. 
\end{proof}

\begin{cor}
    There is an equivalence of spectra $\gl_1(\KU[0,2])\simeq {}_2I_\ZZ$. 
\end{cor}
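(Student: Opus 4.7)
The plan is to deduce this corollary directly from Theorem \ref{IZisPicKU} by taking $1$-connective covers of both sides and then desuspending. First I would unpack the left-hand side: by Definition \ref{def:truncatedPicardthings} the $1$-connective cover of $\gbc = \pic_0^3(\KU)$ is $\pic_1^3(\KU) = \bgl_1(\KU)[0,3]$. Since $\bgl_1(R) \simeq \Sigma\gl_1(R)$ as spectra and $\gl_1(\KU)$ is connective, the standard commutation of truncation with suspension gives $\bgl_1(\KU)[0,3] \simeq \Sigma(\gl_1(\KU)[0,2])$.

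Next I would unpack the right-hand side. Writing ${}_3I_\ZZ = \Sigma^3(I_\ZZ[-3,\infty))$, the identity $(\Sigma^n Y)[k,\infty) \simeq \Sigma^n(Y[k-n,\infty))$ yields
\[
{}_3I_\ZZ[1,\infty) \;\simeq\; \Sigma^3\bigl(I_\ZZ[-2,\infty)\bigr) \;\simeq\; \Sigma({}_2I_\ZZ).
\]
A quick check confirms the expected homotopy groups $\pi_1 = \ZZ/2$, $\pi_2 = 0$, $\pi_3 = \ZZ$ on both sides, so no piece is being dropped or duplicated.

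Applying the $1$-connective cover functor to the equivalence $\gbc \simeq {}_3I_\ZZ$ of Theorem \ref{IZisPicKU} then produces an equivalence $\Sigma(\gl_1(\KU)[0,2]) \simeq \Sigma({}_2I_\ZZ)$. Since $\Sigma$ is an autoequivalence of $\Sp$, desuspending gives $\gl_1(\KU)[0,2] \simeq {}_2I_\ZZ$. Finally, Lemma \ref{lem:truncation commutes with GL1} identifies $\gl_1(\KU)[0,2]$ with $\gl_1(\KU[0,2])$, yielding the desired equivalence.

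There is no serious obstacle: the argument is essentially bookkeeping about how the connective-cover, suspension, and truncation functors interact with the $\pic$/$\gl_1$ relationship, together with the input of Theorem \ref{IZisPicKU} and Lemma \ref{lem:truncation commutes with GL1}. The only point requiring mild care is the identification ${}_3I_\ZZ[1,\infty) \simeq \Sigma({}_2I_\ZZ)$, but this follows formally from the definition ${}_nI_\ZZ := \Sigma^n(I_\ZZ[-n,\infty))$.
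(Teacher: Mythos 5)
Your argument is correct and matches the paper's intent, which leaves the deduction from Theorem \ref{IZisPicKU} implicit: taking $1$-connective covers, using the identifications $\gbc[1,\infty)\simeq\bgl_1(\KU)[0,3]\simeq\Sigma(\gl_1(\KU)[0,2])$ and ${}_3I_\ZZ[1,\infty)\simeq\Sigma({}_2I_\ZZ)$, desuspending, and then invoking Lemma \ref{lem:truncation commutes with GL1}. The only point you should make explicit is that $\KU$ is not connective, so applying Lemma \ref{lem:truncation commutes with GL1} requires the convention (recorded in the remark following that lemma) that $\KU[0,2]$ means $\ku[0,2]$ and the fact that $\gl_1(\KU)\simeq\gl_1(\ku)$.
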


We recall the hypothetical characterization of topological field theories given by Freed and Hopkins \cite[Ansatz 5.26]{freedOrientifolds}:

\begin{ansatz}\label{ansatz:freedHopkinsTFTs}
A continuous, invertible, $n$-dimensional extended topological field theory with symmetry group $H_n$ is a map \[\phi\colon \Sigma^nMTH_n\to\Sigma^{n+1}(I_\ZZ[-n,0])\] where $MTH_n$ is the Madsen-Tillman spectrum for $H_n$ of \cite{GTMW-cobordismcategory}.
\end{ansatz}

If we believe Ansatz \ref{ansatz:freedHopkinsTFTs} then a topological field theory determines a map $\phi^\ast\colon MTH_n^n(X)\to {}_nI_\ZZ^0(X)$ for any manifold $X$. The domain of this map is, by \cite{GTMW-cobordismcategory}, a set of submersions $E\to X$ with $n$-dimensional fibers, up to cobordism, and therefore essentially a bundle of cobordism classes of $n$-dimensional manifolds on $X$. If we let $n=1,2$ then Theorem \ref{IZisPicKU} and Corollary \ref{cor:picKU03 classifies super 2line bundles} imply that we have a map whose input is bundles of 1 or 2 dimensional cobordisms over $X$ and whose output is either super lines bundles on $X$ or super 2-line bundles on $X$, which is the behavior one would expect of a fully extended, invertible topological field theory (especially in the case that $X=\ast$). This also suggests that super $n$-lines in general should be classified by maps into ${}_nI_\ZZ$ (or rather, should be defined as such, as the authors are not aware of any generally accepted definition of super $n$-line) . 

The following conjecture is a 2-local and real version of Theorem \ref{IZisPicKU}, but the indeterminacy of the $k$-invariants of $\gbr$ in our calculations prevents us from proving it. Recall that $\pi_{-4}(I_\ZZ)\cong \ZZ/24$ which becomes $\ZZ/8$ after 2-localizing. Therefore the left hand spectrum below has homotopy groups $\{\ZZ/8,~\ZZ/2,~\ZZ/2,~0,~\ZZ_{(2)}\}$, which agree with the homotopy groups of the right hand side.

\begin{conj}
    There is an equivalence of spectra ${}_4(I_\ZZ)_{(2)}\simeq\pic_0^4(\KO_{(2)})$.
\end{conj}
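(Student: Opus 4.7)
The plan is to mirror the proof of Theorem \ref{IZisPicKU} in the 2-local real setting by matching Postnikov invariants. The preliminary step is to verify that both spectra genuinely realize the claimed list of homotopy groups $\{\ZZ/8,\,\ZZ/2,\,\ZZ/2,\,0,\,\ZZ_{(2)}\}$ in degrees $0$ through $4$; once this is in hand, the task reduces to identifying the $k$-invariants. The first and second $k$-invariants of $\pic_0^4(\KO_{(2)})$ are already treated (up to the ambiguity of Proposition \ref{prop:KO k invariant ambiguity}) by the paper's analysis of $\gbr$. What remains is to compute the higher $k$-invariant relating the low-degree Eilenberg--MacLane pieces to $\pi_4 = \ZZ_{(2)}$, compute the Anderson-dual $k$-invariants of ${}_4(I_\ZZ)_{(2)}$, and assemble these into an equivalence of Postnikov towers.

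For ${}_4(I_\ZZ)_{(2)}$ the $k$-invariants are determined by dualizing low-degree Postnikov data of $\sph_{(2)}$ via \cite[4.2.7(1)]{dag14}, in direct analogy with the proof of Theorem \ref{IZisPicKU}. The bottom $k$-invariant dualizes the first nontrivial attaching map $\eta$ of the sphere and should realize as $\Sq^2\circ\rho\co H\ZZ/8 \to \Sigma^2 H\ZZ/2$; the middle invariant is the Anderson-dual of the $\eta^2$-attaching, giving a class that restricts to $\Sq^2$ on the connective cover; and the top invariant encodes the $\ZZ/8$ coming from the 2-local part of $\nu\in\pi_3^s$. For the remaining new $k$-invariant of $\pic_0^4(\KO_{(2)})$, I would extend the chain-bundle sheaf model of Section \ref{sec:SegalGroupoid} one categorical level, for instance to a sheaf of invertible module categories over $\scrF$, or alternatively leverage the $\EE_\infty$-splitting $\bgl_1(\KO) \simeq \bgl_1(\KO)[0,2] \oplus \bgl_1(\KO)[3,\infty)$ together with the comparison map $\pic(\sph) \to \pic(\KO)$ of Lemma \ref{lem:nontrivialityoffirstk}. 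The cohomological piece of this would parallel Lemma \ref{lem:SESforH4pic01}, analyzing the relevant $H^5$ via the short exact sequence arising from the Postnikov tower of $\pic(\KO)_{(2)}$, then narrowing the possibilities with the real analog of Proposition \ref{cor:options for gl1KU k invariant}.

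The principal obstacle is resolving the $k$-invariant ambiguity of Proposition \ref{prop:KO k invariant ambiguity}. In the complex case, Theorem \ref{thm:both k invariants work for picKU} shows that the two candidate $k$-invariants yield equivalent truncations via a sign automorphism, and the natural first move would be to establish the real analog: that any two preimages of $\Sq^2$ in $H^3(\pic_0^1(\KO_{(2)});\ZZ/2)$ produce equivalent 4-truncations. If that approach fails, one must directly single out the correct $k$-invariant, most plausibly by exploiting the $C_2$-Galois action on $\pic(\KU)_{(2)}$ whose homotopy fixed points recover $\pic(\KO)_{(2)}$, combined with the equivalence $\gbc \simeq {}_3I_\ZZ$ of Theorem \ref{IZisPicKU} and the authors' remark concerning $P^2K\RR$ from the previous subsection. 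This last geometric input appears to be the hardest step, since the purely cohomological tools of Section 4 seem insufficient to distinguish the two candidate classes on their own.
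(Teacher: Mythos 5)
This statement is labeled a conjecture in the paper, and the authors explicitly state that they cannot prove it: ``the indeterminacy of the $k$-invariants of $\gbr$ in our calculations prevents us from proving it.'' Your proposal correctly reconstructs the intended strategy (match homotopy groups, then match $k$-invariants by dualizing the 2-local Postnikov data of $\sph$ against the sheaf-theoretic computation of $\pic(\KO)$), and you correctly identify that the sticking point is the indeterminacy from Proposition \ref{prop:KO k invariant ambiguity}. However, the ``natural first move'' you propose --- proving a real analog of Theorem \ref{thm:both k invariants work for picKU} in which a sign automorphism interchanges the two candidate second $k$-invariants --- is precisely the route the paper says ``will not work here.'' The reason is structural: in the complex case $H^4(\pic_0^1(\KU);\ZZ)\cong\ZZ/4$ is cyclic and the two generators $a,b$ satisfy $a=-b$, so multiplication by $-1$ on $\Sigma^4 H\ZZ$ interchanges them; but in the real case $H^3(\pic_0^1(\KO);\ZZ/2)\cong\ZZ/2\times\ZZ/2$, the two candidate classes differ by addition of a nontrivial $\ZZ/2$-element, and $-1 = 1$ in $\ZZ/2$, so the sign trick is vacuous. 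One would instead need an automorphism of $\pic_0^1(\KO)$ (or of $\Sigma^3 H\ZZ/2$) that exchanges the two classes, and no such automorphism is supplied.

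Your fallback --- isolating the correct $k$-invariant via the $C_2$-Galois descent $\pic(\KO)\simeq\pic(\KU)^{hC_2}$ and the homotopy fixed point spectral sequence --- is exactly the mechanism the paper floats in the remark after Proposition \ref{prop:KO k invariant ambiguity}, but neither you nor the paper carries it out. There is also a second computation you gloss over: even with the second $k$-invariant pinned down, one needs the higher $k$-invariant $\pic_0^2(\KO_{(2)})\to\Sigma^5 H\ZZ_{(2)}$ and its Anderson-dual counterpart, together with the relevant $H^5$-calculations analogous to Lemma \ref{lem:SESforH4pic01}; these are not reduced to cited results. So the proposal is a reasonable research plan, but it does not close the gap that the authors themselves flagged as the obstruction, and as written it would not constitute a proof.
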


\noindent The fact that $\pi_{-4}(I_\ZZ)\cong\ZZ/24$, combined with Theorem \ref{IZisPicKU} also suggests the following:

\begin{conj}
There is an equivalence $\br(\KU)[0,4]\simeq{}_4I_{\ZZ}$ where $\br(\KU)$ is the Brauer spectrum of $\KU$. In particular, $\pi_0(\br(\KU))\cong\ZZ/24$.
\end{conj}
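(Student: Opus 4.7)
The plan is to leverage the loop relation $\Omega\br(\KU) \simeq \pic(\KU)$ together with Theorem \ref{IZisPicKU} to reduce the conjecture to (i) a computation of $\pi_0 \br(\KU) \cong \ZZ/24$ and (ii) the identification of a single bottom $k$-invariant.

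First, I would observe that taking connective covers of both sides of $\Omega \br(\KU) \simeq \pic(\KU)$ yields an equivalence of connective spectra $\br(\KU)[1,\infty) \simeq \Sigma \pic(\KU)$, and hence after truncation
\[
\br(\KU)[1,4] \;\simeq\; \Sigma\,\pic(\KU)[0,3] \;=\; \Sigma\gbc \;\simeq\; \Sigma\,({}_3I_\ZZ) \;=\; {}_4I_\ZZ[1,4],
\]
where the third equivalence uses Theorem \ref{IZisPicKU}. Thus the upper Postnikov pieces of $\br(\KU)[0,4]$ and ${}_4I_\ZZ$ are identified, and the remaining task is to extend this identification one step further down the Postnikov tower.

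Next, I would compute $\pi_0 \br(\KU)$. This is the principal technical obstacle. One avenue is to apply a descent spectral sequence for the Brauer spectrum in the style of Antieau--Gepner along a Galois (or faithful) extension --- for instance the $C_2$-Galois extension $\KO \to \KU$ --- and assemble the contributions from $\pi_0 \br(\KO)$, $\pi_0 \pic(\KU)$ and $\pi_{-1}\gl_1(\KU)$ into $\pi_0 \br(\KU)$. A second avenue is to exploit Anderson self-duality directly: since the upper slice of $\br(\KU)$ already matches the upper slice of ${}_4I_\ZZ$ by Step 1, a Mayer--Vietoris style argument together with the identification $\pi_{-4}(I_\ZZ) \cong \pi_3(\sph) \cong \ZZ/24$ should force $\pi_0\br(\KU)$ into the same group. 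The appearance of $\ZZ/24 = \pi_3\sph$ is the signature of Anderson duality.

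Finally, I would pin down the bottom $k$-invariant of $\br(\KU)[0,4]$, which is a class in $H^2(H\ZZ/24;\ZZ/2) \cong \ZZ/2$ and so is either $0$ or $\Sq^2\circ\rho$, where $\rho\co H\ZZ/24 \to H\ZZ/2$ is reduction mod $2$. The corresponding bottom $k$-invariant of ${}_4I_\ZZ$ is non-trivial --- under Anderson duality it arises from the Hopf class $\eta \in \pi_1\sph$ --- so it suffices to show the $\br(\KU)$ side is also non-trivial. Triviality would exhibit a splitting $\br(\KU)[0,4] \simeq H\ZZ/24 \oplus \br(\KU)[1,4]$, and looping this splitting once would split off $H\ZZ/2$ from $\pic(\KU)[0,3] = \gbc$ as an $\EE_\infty$-factor, contradicting the non-triviality of the first $k$-invariant of $\pic(\KU)$ established in Corollary \ref{cor:gl1KU has nontrivial first k invariant}.

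The main obstacle is clearly Step 2. The answer $\pi_0\br(\KU)\cong\ZZ/24$ is dictated by Anderson duality and by the stable stem $\pi_3\sph$, but extracting it directly requires either a descent computation on the Brauer spectrum of $\KU$ or an abstract Anderson self-duality statement for $\LMod_{\KU}$; once $\pi_0$ is pinned down, Steps 1 and 3 combine formally to give the desired equivalence $\br(\KU)[0,4]\simeq {}_4I_\ZZ$.
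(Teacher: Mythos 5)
The statement you are attempting to prove is left explicitly as a \emph{conjecture} in the paper, with no proof supplied, so there is no argument to compare against; the paper merely observes that $\pi_{-4}(I_\ZZ)\cong\ZZ/24$ and that Theorem~\ref{IZisPicKU} makes the statement plausible. Your Step~1 is correct and is essentially the observation underlying the conjecture: since $\Omega\Br(\KU)\simeq\Pic(\KU)$, the $1$-connective cover of $\br(\KU)$ is $\Sigma\pic(\KU)$, and hence $\br(\KU)[1,4]\simeq\Sigma\gbc\simeq\Sigma\,{}_3I_\ZZ\simeq{}_4I_\ZZ[1,4]$. Your Step~2 is the entire substance of the conjecture, and you correctly flag it as an unfilled gap: no computation of $\pi_0\br(\KU)$ is given in the paper, and the ``descent along $\KO\to\KU$'' and ``abstract Anderson self-duality for $\LMod_{\KU}$'' suggestions are too vague to count as a plan (descent along $\KO\to\KU$ would normally compute $\br(\KO)$ \emph{from} $\br(\KU)$, not conversely, and the appeal to duality is circular since Anderson self-duality of $\br(\KU)[0,4]$ is precisely what is to be proved).

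Step~3 contains a concrete error. Triviality of the bottom $k$-invariant $H\ZZ/24\to\Sigma^2H\ZZ/2$ would give a splitting of the $2$-type $\br(\KU)[0,1]\simeq H\ZZ/24\oplus\Sigma H\ZZ/2$, but it would \emph{not} give the full splitting $\br(\KU)[0,4]\simeq H\ZZ/24\oplus\br(\KU)[1,4]$; the connecting map $H\ZZ/24\to\Sigma\br(\KU)[1,4]$ could still be nonzero after killing its image in $\Sigma^2H\ZZ/2$. More seriously, even granting that splitting, looping it produces $\Sigma^{-1}H\ZZ/24\oplus\Omega\br(\KU)[1,4]$, and taking connective covers just recovers $\gbc$ with no new direct summand; the bottom $k$-invariant of $\br(\KU)[0,4]$ is a map $H\pi_0\Br(\KU)\to\Sigma^2 H\pi_0\Pic(\KU)$, which is a different datum from the first $k$-invariant $\Sq^2\colon H\pi_0\Pic(\KU)\to\Sigma^2H\pi_0\GL_1(\KU)$ of $\gbc$, so Corollary~\ref{cor:gl1KU has nontrivial first k invariant} cannot be imported this way. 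Finally, the closing claim that Steps~1 and~3 ``combine formally'' overstates the case: matching $\br(\KU)[1,4]$ with ${}_4I_\ZZ[1,4]$ and matching the bottom $k$-invariants determines the spectrum only if the full connecting map $H\ZZ/24\to\Sigma\br(\KU)[1,4]$ is pinned down, not merely its projection to $\Sigma^2H\ZZ/2$; the gluing problem in $[H\ZZ/24,\Sigma^2\gbc]$ is left unaddressed.
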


Recall that $\Pic(\KU)$ is the space of invertible $\KU$-modules and that $\Br(\KU)$ (the underlying space of the Brauer spectrum $\br(\KU)$) is the space of invertible $\LMod_{\KU}$-modules (where we think of $\LMod_{\KU}$ as a commutative algebra object in the $(\infty,2)$-category of presentable $\infty$-categories). Moreover, $\Omega\Pic(\KU)\simeq \GL_1(\KU)$ and $\Omega\Br(\KU)\simeq\Pic(\KU)$. By thinking of modules over $\LMod_{\KU}$, say $\LMod_{\LMod_{\KU}}$, as a commutative algebra in the $(\infty,3)$-category of $(\infty,2)$-categories, we can construct an infinite loop space of invertible $\LMod_{\LMod_{\KU}}$-modules, which we might suggestively write as $\mathbb{G}_m^3(\KU)$. This pattern then of course continues ad infinitum, giving an infinite sequence of $\EE_\infty$-groups $\{\mathbb{G}_m^n(\KU)\}_{n=0}^\infty$ with $\mathbb{G}_m^0(\KU)=\GL_1(\KU)$, $\mathbb{G}_m^1(\KU)=\Pic(\KU)$, etc., and with $\Omega\mathbb{G}_m^n(\KU)\simeq\mathbb{G}_m^{n-1}(\KU)$. This provides the necessary notation for the following conjecture:

\begin{conj}\label{conj:GmConjecture}
For all $n\geq 0$ there are equivalences $\mathbb{G}^n_m(\KU)[0,n+2]\simeq{}_{n+2}I_\ZZ$. Equivalently, there are equivalences $I_{\ZZ}(\mathbb{G}_m^n(KU)[0,n+2])\simeq \sph[0,n+1]$.
\end{conj}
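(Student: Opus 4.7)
The plan is induction on $n$, with base cases $n=0$ and $n=1$ handled respectively by the Corollary following Theorem \ref{IZisPicKU} and by Theorem \ref{IZisPicKU} itself. Since Anderson duality is an involution on spectra whose only nonzero homotopy groups are finitely many finitely generated ones, the two equivalences in the conjecture are interchangeable, so I would target the first form $\mathbb{G}_m^n(\KU)[0,n+2]\simeq {}_{n+2}I_\ZZ$.

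The first step is to identify the homotopy groups of $\mathbb{G}_m^n(\KU)$ through degree $n+2$. For $i\geq 1$ the relation $\Omega\mathbb{G}_m^n(\KU)\simeq \mathbb{G}_m^{n-1}(\KU)$ gives $\pi_i\mathbb{G}_m^n(\KU)\cong \pi_{i-1}\mathbb{G}_m^{n-1}(\KU)$, which iterates down to $\pi_{i-n}\GL_1(\KU)$ in the relevant range. On the Anderson dual side, $\pi_i({}_{n+2}I_\ZZ)\cong \Hom(\pi_{n+2-i}\sph,\ZZ)\oplus \Ext(\pi_{n+1-i}\sph,\ZZ)$, and matching these for $i\geq 1$ amounts to standard stable stem calculations. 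The new input at each stage is $\pi_0\mathbb{G}_m^n(\KU)$; the conjecture requires $\pi_0\mathbb{G}_m^n(\KU)\cong \Hom(\pi_{n+2}\sph,\ZZ)\oplus \Ext(\pi_{n+1}\sph,\ZZ)$, which for $n=2$ recovers $\pi_0\br(\KU)\cong\ZZ/24$ from the image of $\nu\in\pi_3\sph$.

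The second step is matching $k$-invariants. Having established the homotopy groups, one builds the Postnikov tower of $\mathbb{G}_m^n(\KU)[0,n+2]$ one stage at a time. The $k$-invariants of ${}_{n+2}I_\ZZ$ are forced by Anderson duality from those of $\sph[0,n+1]$ (the classes $\eta,\eta^2,\nu,\ldots$), yielding the familiar operations $\Sq^2\circ\rho$, $\Sq^2$, $\beta\Sq^2,\ldots$, as already seen in Theorem \ref{thm:secondkinvariantofpicKU} and Lemmas \ref{lem:SESforH4pic01}, \ref{lem:Z4computationforKU}. By the inductive hypothesis and naturality of the looping equivalence, most $k$-invariants of $\mathbb{G}_m^n(\KU)[0,n+2]$ are already determined; only the bottom $k$-invariant involving the new $\pi_0$ is fresh at each stage, and it should be fixed by the obstruction theory for building invertible $n$-fold modules over $\KU$ out of the unit $\sph\to\KU$.

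The hard part will be computing $\pi_0\mathbb{G}_m^n(\KU)$ for $n\geq 2$ and showing that its bottom $k$-invariant is exactly the Anderson dual of the relevant $k$-invariant of $\sph$. Even the $n=2$ case (the ordinary Brauer group of $\KU$ being $\ZZ/24$) is currently open, so a complete proof would likely require setting up a higher Brauer descent spectral sequence in the style of Antieau-Gepner or Hopkins-Lurie, presenting $\pi_*\mathbb{G}_m^n(\KU)$ in terms of descent data from $\sph\to\KU$; this should naturally produce the conjectured $\Hom$ and $\Ext$ of stable stems as the $E_\infty$-page and identify the surviving $k$-invariants with those of $I_\ZZ\sph$ in the required range.
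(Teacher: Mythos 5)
The statement you are asked to prove is labeled in the paper as Conjecture~\ref{conj:GmConjecture}; the paper offers no proof of it, and indeed the remark immediately following it gives a reason to be \emph{skeptical} of the conjecture: it would imply that the stable homotopy groups of spheres (up to Anderson duality) could be recovered from the tail $\{\mathbb{G}_m^n(\KU)\}_{n\geq k}$ for any $k$, a very strong assertion tempered only by speculative appeals to redshift. So there is no proof in the paper to compare against.

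Your proposal is an honest outline of a strategy rather than a proof, and you correctly flag where the gaps are. The structural reduction is sound: the base cases $n=0,1$ are Theorem~\ref{IZisPicKU} and its corollary, and the looping relation $\Omega\mathbb{G}_m^n(\KU)\simeq\mathbb{G}_m^{n-1}(\KU)$ means that, on associated spectra, the $[1,n+2]$-truncation at stage $n$ is the suspension of the $[0,n+1]$-truncation at stage $n-1$, so the only genuinely new data at each stage is $\pi_0\mathbb{G}_m^n(\KU)$ together with the bottom $k$-invariant. But that is exactly where the proposal stalls: you acknowledge that even the $n=2$ case, $\pi_0\br(\KU)\cong\ZZ/24$, is open (the paper states it as a separate conjecture), and the ``higher Brauer descent spectral sequence'' in the style of Antieau--Gepner or Hopkins--Lurie that you invoke to compute $\pi_0\mathbb{G}_m^n(\KU)$ for $n\geq 2$ does not currently exist. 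Likewise, matching the bottom $k$-invariant to the Anderson dual of the relevant attaching map of $\sph$ is asserted rather than argued; there is no obstruction-theoretic mechanism given that would force this identification. As written, this is a plausible plan of attack consistent with the paper's own speculation, but it is not a proof, and you are right not to present it as one.
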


\begin{rmk}
    There is at least one good reason to doubt Conjecture \ref{conj:GmConjecture}. If it were true then it would follow that the stable homotopy groups of spheres (up to Anderson duality) could be recovered from the sequence $\{\mathbb{G}_m^n(KU)\}_{n\geq k}$ for any $k\geq 0$. A mild rebuke to this concern might be the notion of \textit{redshift} in chromatic homotopy theory (cf.~\cite{rognes-redshift}). This is the idea that taking algebraic $K$-theory ``increases chromatic height.'' Recall that $KU$ is of chromatic height 1 (although this is somewhat ambiguous as chromatic homotopy theory takes place after localizing at a prime, but we ignore that for the purposes of speculation). It is known that if one ``understood'' all chromatic heights, then one would know the stable homotopy groups of spheres. Moreover, there is a canonical map of spectra $\pic(\KU)\to \gl_1(K(\KU))$ (although this is not an equivalence, even in low degrees), referenced also in Remark \ref{rmk:twisting tmf by K(KU)}. Similarly, there is a canonical map $\mathbb{G}_m^n(\KU)\to K^n(\KU)$. Therefore one might be forgiven for believing that, at least in low degrees, these two spectra contain the same information and that $\mathbb{G}_m^n(\KU)$ might know a great deal about $\pi_\ast(\sph)$. 
\end{rmk}

We attempt to collect the the ideas of Sections \ref{sec:freedphys} and \ref{sec:anderson}, at least in the complex case, in the below table:

\begin{center}
\begin{tabularx}{\textwidth} { 
  | >{\centering\arraybackslash}X 
  | >{\centering\arraybackslash}X 
  | >{\centering\arraybackslash}X | }
\hline
$\gl_1(\KU)[0,2]\simeq \gl_1(\KU[0,2])$ & $\gbc$ & $\br(KU)[0,4]$\\
 \hline
N/A & central simple $\CC$-superalgebras & central simple $\KU[0,2]$-algebras? \\ 
 \hline
  invertible (super) $\mathbb{C}$-modules & invertible $sVect_{\CC}$-modules ($\cong$ invertible $\KU[0,2]$-modules?) & invertible $\LMod(\KU[0,2])$-modules? \\ 
 \hline
complex superlines & complex super 2-lines & complex super 3-lines? \\ 
\hline
$\Sigma^2(I_{\ZZ}[-2,\infty))$ & $\Sigma^3(I_{\ZZ}[-3,\infty))$ & $\Sigma^4(I_{\ZZ}[-4,\infty))$?\\
\hline
\end{tabularx}
\end{center}
Note that the passage from the first row to the second is obtained by applying the functor $\LMod(-)$ and that the third row is essentially just a renaming of the second row. 

\bibliography{references}
\end{document}